\title{Infinite periodic groups of even exponents}
\author{Rémi Coulon}
\begin{document}

\selectlanguage{english}

\distabstrue
\setmidfalse
\intvaldfrenchtrue
\setbftrue

\maketitle

\begin{abstract}
	We give a new proof that free Burnside groups of sufficiently large even exponents are infinite.
	The method is very flexible and can also be used to study (partially) periodic quotients of any group which admits an action on a hyperbolic space satisfying a weak form of acylindricity.
\end{abstract}

%\selectlanguage{french}
%
%\begin{abstract}{\itshape Groupes périodiques infinis d'exposants pairs.}
%	Dans cette article, nous donnons une nouvelle preuve du fait que les groupes de Burnside libres d'exposants pairs suffisamment grands sont infinis.
%	Cette méthode est très souple et sert aussi pour étudier des quotients (partiellement) périodiques de n'importe quel groupe muni d'une action par isométries sur un espace hyperbolique vérifiant une forme faible d'acylindricité.
%\end{abstract}
%
%
%\selectlanguage{english}

\medskip
{
\footnotesize  
\noindent
\textit{Keywords.} Burnside groups; periodic groups, geometric group theory; small cancellation theory; hyperbolic geometry. \\
\noindent
\textit{MSC.} 20F50, 20F65, 20F67, 20F06

}

\tableofcontents

%%%%%%%%%%%%%%%%%%%%%%%%%%%%%%%%%%%%%%%%%%%%%%%%%%%%%%%%%%%%%%%%%%%%%%%%%%%%%%%%%%%%
%%%%%%%%%%%%%%%%%%%%%%%%%%%%%%%%%%%%%%%%%%%%%%%%%%%%%%%%%%%%%%%%%%%%%%%%%%%%%%%%%%%%
%
\section{Introduction}
%
%%%%%%%%%%%%%%%%%%%%%%%%%%%%%%%%%%%%%%%%%%%%%%%%%%%%%%%%%%%%%%%%%%%%%%%%%%%%%%%%%%%%
%%%%%%%%%%%%%%%%%%%%%%%%%%%%%%%%%%%%%%%%%%%%%%%%%%%%%%%%%%%%%%%%%%%%%%%%%%%%%%%%%%%%

%%%%%%%%%%%%%%%%%%%%%%%%%%%%%%%%%%%%%%%%%%%%%%%%%%%%%%%%%%%%%%%%%%%%%%%%%%%%%%%%%%%%
%
\subsection*{Historical background}
%
%%%%%%%%%%%%%%%%%%%%%%%%%%%%%%%%%%%%%%%%%%%%%%%%%%%%%%%%%%%%%%%%%%%%%%%%%%%%%%%%%%%%

A group $G$ \emph{has exponent $n \in \N$}, if $g^n = 1$ for every $g \in G$.
In 1902, Burnside asked whether every finitely generated group of finite exponent was necessarily finite \cite{Burnside:1902vi}.
Despite its simplicity, this question remained open for a long time and motivated many developments in group theory.
The class of groups of exponent $n$ forms a group variety whose free elements are the \emph{free Burnside groups of exponent $n$}.
More concretely, the free Burnside group of rank $r$ and exponent $n$, that we denote by $\burn rn$, admits the following presentation
\begin{equation*}
	\burn rn = \left< a_1, \dots, a_r \mid x^n, \ \forall x\right>.
\end{equation*}
A major breakthrough in the subject was achieved by Novikov and Adian in 1968 \cite{Novikov:1968tp}.
They proved that $\burn rn$ is infinite provided $r\geq 2$ and $n$ is a sufficiently large \emph{odd} exponent.
Later Ol'shansk\u\i\ provided an alternative proof of the same result \cite{Olshanskii:1982ts}.
Despite these progresses the case of \emph{even} exponents held up longer.
It was only in the early 90's that Ivanov \cite{Ivanov:1994kh} and Lysenok \cite{Lysenok:1996kw} independently proved that free Burnside groups of sufficiently large even exponents are also infinite.

The aforementioned results rely (more or less explicitly) on an iterated version of small cancellation theory (using combinatorics on words and/or reasoning on van Kampen diagrams).
Historically, the respective works of Novikov-Adian and Ol'shanski\u\i\ appeared before the theory of hyperbolic spaces/groups was formalized and developed by Gromov in \cite{Gromov:1987tk}.
However working with groups acting on hyperbolic spaces provides a perfect framework offering new insights into small cancellation theory.
See for instance \cite{Guirardel:2014aa, Coulon:2016uz} for a survey on the topic.
Using this geometric point of view, Delzant and Gromov revisited the Burnside problem making an \emph{explicit} use of hyperbolic geometry \cite{Delzant:2008tu}.
Nevertheless their work only applies to odd exponents.

\paragraph{Main results.}
In this article we provide a new approach to the free Burnside groups of \emph{even} exponents based on the geometrical ideas of Delzant and Gromov \cite{Delzant:2008tu}.
More precisely we prove the following statement (compare with Ivanov~\cite{Ivanov:1994kh} and Lysenok~\cite{Lysenok:1996kw}).

\begin{theo}
\label{mth: free burnside}
	Let $r \geq 2$.
	There exists a critical exponent $n_0 \in \N$ such that for every integer $n \geq n_0$, the free Burnside group $\burn rn$ is infinite.
\end{theo}

Not only is our approach substantially shorter than the one of Lysenok and Ivanov (200 and 300 pages respectively) it also gives a way to produce (partially) periodic quotients of many groups as soon as they carry a certain form of negative curvature, far beyond the single instance of hyperbolic groups.
Let us mention a few examples.
The next theorem is originally due to Ol'shanski\u\i\ and Ivanov \cite{Olshanskii:1991tt,Ivanov:1996va} answering a question of Gromov \cite{Gromov:1987tk}.
Given an arbitrary group $G$, we write $G^n$ for the (normal) subgroup of $G$ generated by the $n$-th power of all its elements.

\begin{theo}[Ol'shanski\u\i-Ivanov \cite{Ivanov:1996va}]
	Let $G$ be a non-elementary hyperbolic group.
	There exist $p,n_0 \in \N$, such that for every integer $n \geq n_0$ that is a multiple of $p$, the quotient $G / G^n$ is infinite.
	Moreover
	\begin{equation*}
		\bigcap_{n \geq 1} G^n = \{1\}.
	\end{equation*}
\end{theo}

More generally if $G$ is a group acting acylindrically on a Gromov hyperbolic space $X$ (see \autoref{sec: construction of periodic groups} for a precise definition) then for arbitrarily large exponents $n \in \N$, we are able to produce a partially $n$-periodic quotient of $G$ (\autoref{res: partial periodic quotient - acyl}), i.e. a quotient $Q$ of $G$ such that 
\begin{enumerate}
	\item every elliptic subgroup of $G$ (for its action on $X$) embeds in $Q$,
	\item for every $q \in Q$, either $q$ is the image of an elliptic element of $G$ or $q^n = 1$.
\end{enumerate}
Applied to the mapping class group of a surface acting on the curve graph it yields the following statement

\begin{theo}
\label{res: partial periodic quotient mcg}	
	Let $\Sigma$ be a compact surface of genus $g$ with $k$ boundary components such that $3g +k - 3 >1$.
	There exist $p,n_0 \in \N$ such that for every integer $n \geq n_0$ which is a multiple of $p$, there exists a quotient $Q$ of the mapping class group $\mcg \Sigma$ with the following properties.
	\begin{enumerate}
		\item If $E$ is a subgroup of $\mcg \Sigma$ that does not contain a pseudo-Anosov element, then the projection $\mcg \Sigma \onto Q$ induces an isomorphism from $E$ onto its image.
		\item Let $f$ be a pseudo-Anosov element of $\mcg \Sigma$.
		Either $f^n = 1$ in $Q$ or $f$ coincide in $Q$ with a periodic or a reducible element.
		\item There are infinitely many elements in $Q$ which are not the image of a periodic or reducible element of $\mcg \Sigma$.
		Any non-trivial element in the kernel of $\mcg \Sigma \onto Q$ is pseudo-Anosov.
	\end{enumerate}
\end{theo}

Bass-Serre theory also provides examples of groups acylindrically on a tree for which our approach works (\autoref{res: quotient amalgamated products}).
For instance if $G = A\ast B$ is a free product, the corresponding quotient $Q$ corresponds to the \emph{$n$-periodic product} of $A$ and $B$, see for instance \cite{Adian:1976vg}.
Note that the same strategy could also be used to study the outer automorphism group of free Burnside groups of even exponents, extending  some other work of the author \cite{Coulon:2013up}.
Nevertheless to limit the length of the article we decided not to detail that part.

%%%%%%%%%%%%%%%%%%%%%%%%%%%%%%%%%%%%%%%%%%%%%%%%%%%%%%%%%%%%%%%%%%%%%%%%%%%%%%%%%%%%
%
\subsection*{A geometrical approach}
%
%%%%%%%%%%%%%%%%%%%%%%%%%%%%%%%%%%%%%%%%%%%%%%%%%%%%%%%%%%%%%%%%%%%%%%%%%%%%%%%%%%%%

Let us highlight a few important ideas involved in the proofs.
For simplicity we restrict our attention to free Burnside groups as this case already covers all the difficulties.
As shown by Ivanov and Lysenok, free Burnside groups of (sufficiently large) odd or even exponent have a considerably different algebraic structure.
For instance if $n$ is odd, every finite subgroup of $\burn rn$ is cyclic.
By contrast if $n$ is even, $\burn rn$ contains arbitrarily long direct products of dihedral groups.
Nevertheless the global strategy to study those groups remains the same.

\paragraph{A sequence of approximation groups.}
Let $n \in \N$ be a large exponent.
All known strategies for studying Burnside groups start in the same way: one produces by induction an \emph{approximation sequence} of hyperbolic groups
\begin{equation}
\label{eqn: intro - sequence}
	\free r = G_0 \onto G_1 \onto G_2 \dots \onto G_k \onto G_{k+1} \onto \dots
\end{equation}
whose direct limit is exactly $\burn rn$.
At each step $G_{k+1}$ is obtained from $G_k$ by adding new relations of the form $h^n = 1$, where $h$ runs over the set of all ``small'' loxodromic elements of $G_k$.
The goal is to prevent this sequence to collapse to a finite group.
This is achieved by small cancellation arguments.
The novelty of our method is to use a geometric point of view on small cancellation \emph{à la} Delzant-Gromov in the context of torsion groups of even exponent.

\paragraph{Geometric small cancellation.}
Let $S$ be a finite set and $R$ a collection of cyclically reduced words of $\F(S)$.
Assume for simplicity that $R$ is invariant under taking cyclic permutations and inverses and write $\ell$ for the length of its shortest element.
Given $\lambda \in (0, 1)$, the group presentation $\left<S | R\right>$ satisfies the classical $C''(\lambda)$ small cancellation condition\footnote{The more classical small cancellation condition $C'(\lambda)$ requires that the length of any piece $u$ contained in a relation $r$ satisfies $|u| < \lambda |r|$. We use here the stronger, uniform condition $C''(\lambda)$ where the length of pieces is small compared to \emph{any} relation in $R$.} if every prefix $u$ of two distinct relations in $R$ has length at most $\abs u < \lambda \ell$.
For small values of $\lambda$, one understands precisely the properties of the corresponding group $\bar G = \F(S)/ \normal R$.
For instance if $\lambda \leq 1/6$, then $\bar G$ is hyperbolic.
The $C''(\lambda)$ condition can advantageously be reformulated as follows.
Let $X$ be the Cayley graph of $\F(S)$ with respect to $S$.
The presentation $\left<S | R\right>$ satisfies the $C''(\lambda)$ condition if for every distinct $r_1, r_2 \in R$, the overlap between the respective axes of $r_1$ and $r_2$ has length less then $\lambda \ell$, where $\ell$ is also the smallest translation length of an element in $R$.

With this idea in mind, one can extend the small cancellation theory to the context of hyperbolic groups \cite{Gromov:1987tk,Olshanskii:1993dr,Delzant:1996it} (or more generally of groups acting on a hyperbolic space).
Let $G$ be a non-elementary group acting properly co-compactly by isometries on a hyperbolic space $X$ and $R$ a subset of $G$, which is invariant under conjugation.
Roughly speaking we will say that $R$ satisfies a small cancellation condition if given any two distinct $r_1, r_2 \in R$, the length $\Delta(r_1,r_2)$ on which the respective axes of $r_1$ and $r_2$ fellow travel is very small compare to the translation lengths $\norm{r_1}$ and $\norm{r_2}$ (see \autoref{fig: overlap relations}).
\begin{figure}[htbp]
	\centering
	\includegraphics[page=1, width=0.8\textwidth]{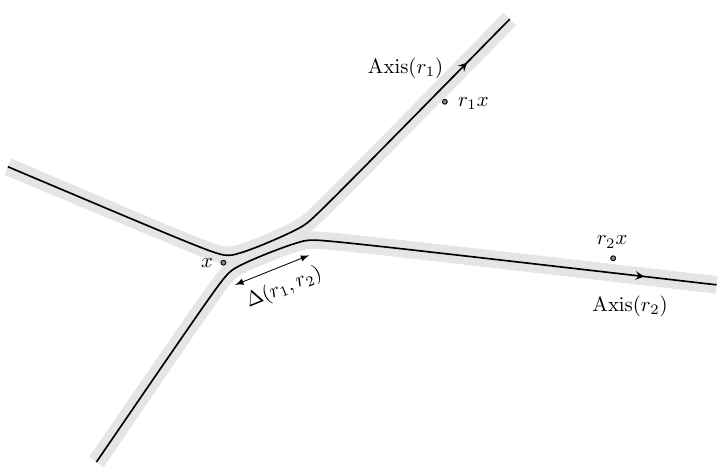}
	\caption{
		Overlap between two relations seen in the hyperbolic space $X$.
		The translation length $\norm {r_i}$ of $r_i$ is roughly the distance between $x$ and $r_ix$.
	}
	\label{fig: overlap relations}
\end{figure}
In this situation, the quotient $\bar G = G / \normal R$ is still a non-elementary hyperbolic group \cite{Gromov:1987tk,Delzant:1996it,Olshanskii:1993dr}.
Under this hypothesis, Gromov explains in \cite{Gromov:2001wi} how to let this group act on a hyperbolic space $\bar X$ whose geometry is finer than the one of the Cayley graph of $\bar G$, see also \cite{Delzant:2008tu,Coulon:2011il,Coulon:2014fr}.
Assume for instance that $\bar G$ is a quotient of the form $\bar G = G/ \normal{h^n}$ where $h$ is a loxodromic element and $n$ a (large) exponent.
Then $\bar M = \bar X / \bar G$ can be seen as an orbifold (whose fundamental group is $\bar G$ and universal cover is $\bar X$) and comes with an analog of Margulis' thin/thick decomposition for hyperbolic manifolds.
The thin part corresponds to the neighborhood of a single singular point whose isotropy group is exactly the maximal finite subgroup $\bar F \subset \bar G$ containing the image of $h$.
The pre-image in $\bar X$ of the thin part is roughly speaking the collection of all $\bar G$-translates of an $\bar F$-invariant hyperbolic disc $\mathcal D \subset \bar X$.
Moreover there exists a natural map $q \colon \bar F \to \dihedral[n]$, where $\dihedral[n]$, is the dihedral group of order $2n$, such that the action of $\bar F$ on $\mathcal D$ is identified via $q$ to the natural action of $\dihedral[n]$ on the disc.

Adopting this point of view, we associate to each approximation group $G_k$ in (\ref{eqn: intro - sequence}) a hyperbolic space $X_k$ on which $G_k$ acts properly co-compactly.
The goal will be to prove that, at each step, the new relations defining $G_{k+1}$ will satisfy a small cancellation condition in the above sense (i.e. relative to the action of $G_k$ on the space $X_k$).
It has the following main advantage: almost every needed property of the relations defining $G_k$ is captured by the hyperbolicity of $X_k$.
Consequently when studying the quotient map $G_k \onto G_{k+1}$ one can completely forget the relations defining $G_k$ and rely only on the geometry of $X_k$.
Following Delzant-Gromov \cite{Delzant:2008tu}, this allows us to formulate -- unlike in \cite{Novikov:1968tp,Olshanskii:1982ts,Lysenok:1996kw,Ivanov:1994kh} -- the induction hypothesis used to build the approximation sequence (\ref{eqn: intro - sequence}) in a rather compact form (see \autoref{res: induction step}).

\paragraph{A Margulis' lemma.}
As mentioned above, the main challenge when building the approximation sequence (\ref{eqn: intro - sequence}) is to make sure that $G_k$ is not eventually finite.
This will not happen if, at each step, the relations (of the form $h^n = 1$) used to define $G_{k+1}$ from $G_k$ satisfy a small cancellation condition.
Therefore, given any two loxodromic elements $g_1, g_2 \in G_k$ which do not generate an elementary subgroup, one needs to control \emph{uniformly, independently of $k$}, the ratio
\begin{equation}
\label{eqn: intro - margulis ratio}
	\frac{\Delta(g_1,g_2)}{\max\left\{ \norm {g_1}, \norm {g_2} \right\} }
\end{equation}
where $\Delta(g_1,g_2)$ measures the ``overlap'' between the respective axes of $g_1$ and $g_2$ in $X_k$ (see \autoref{fig: overlap relations}).
If $X_k$ was a simply connected manifold with \emph{pinched} negative sectional curvature, such estimate would follow from Margulis' Lemma.
However hyperbolicity only provides an upper bound for the curvature of the space.
To bypass this difficulty, one usually uses assumptions on the action of the group (e.g. the fact that the action is proper co-compact or acylindrical).
For instance, a first (naive) attempt to bound the ratio (\ref{eqn: intro - margulis ratio}) could work as follows.
Suppose that $g_1,g_2 \in G_k$ are two loxodromic elements such that $\Delta(g_1, g_2) > N \max \{\norm{g_1}, \norm{g_2}\}$.
It is a standard exercise of hyperbolic geometry to see that there exists a point $x \in X_k$ such that for every $i \in \intvald 0{N-1}$, the commutator $[g_1, g_2^i]$ moves $x$ by at most $100 \delta_k$ (where $\delta_k$ is the hyperbolicity constant of $X_k$), see \autoref{fig: naive attempt}.
\begin{figure}[htbp]
	\centering
	\includegraphics[page=2, width=0.8\textwidth]{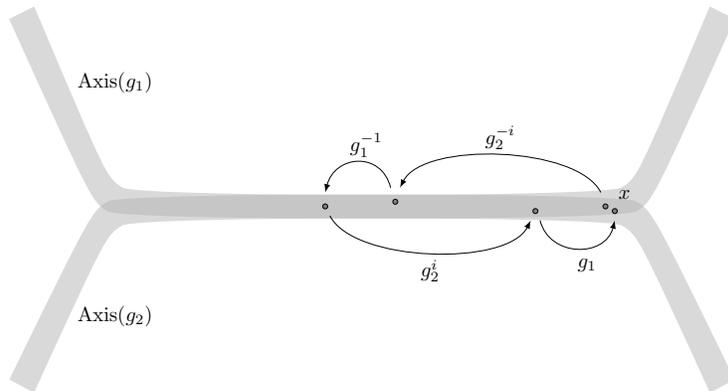}
	\caption{
		The point $x$ is hardly moved by the commutator $[g_1, g_2^i]$.
	}
	\label{fig: naive attempt}
\end{figure}
In particular, if $N$ exceeds the number of elements in the set $\set{u \in G_k}{\dist {ux}x \leq 100\delta_k}$, then $g_1$ commutes with a power of $g_2$, thus $\group{g_1,g_2}$ is non-elementary.
So, roughly speaking, the ratio (\ref{eqn: intro - margulis ratio}) is bounded above by the cardinality of the (almost) stabilizers of points in $X_k$.
This strategy has a major weakness though: if $n$ is an even exponent, the cardinality of finite subgroups is not uniformly bounded along the sequence $(G_k)$.
During the process we will indeed encounter points in $X_k$ with arbitrarily large stabilizers.
Hence this method cannot be used to keep the ratio (\ref{eqn: intro - margulis ratio}) uniformly bounded.
Any refinement of the above argument using acylindrical actions of $G_k$ on $X_k$ -- see for instance \cite{Hull:2016aa,Dahmani:2017ef} -- shall fail in the same way.

To bypass this difficulty one associates to the action of $G_k$ on $X_k$ several numerical invariants:
\begin{enumerate}
	\item $A(G_k,X_k,d)$ is characterized as follows: if $S$ is any finite subset of $G_k$ generating a non-elementary subgroup, then the set of points in $X_k$ which are moved by a distance at most $d$ by every element of $S$ has diameter at most $A(G_k, X_k,d)$ (see \autoref{def: acyl inv}).
	\item $\nu(G_k,X_k)$ is the smallest integer $m$ with the following property: let $g,h \in G_k$ with $h$ loxodromic . 
	If $g$, $hgh^{-1}$, $h^2gh^{-2}$, \dots, $h^mgh^{-m}$ generate an elementary subgroup, then so do $g$ and $h$ (see \autoref{def: nu inv}).
\end{enumerate}
The quantity $A(G_k, X_k,d)$ can be thought of as a local version of the ratio (\ref{eqn: intro - margulis ratio}).
Indeed, its definition only involves ``small'' elements.
Combined with the $\nu$-invariant one recovers the following global analogue of Margulis' Lemma: if $g_1,g_2 \in G_k$ do not generate an elementary subgroup, then
\begin{equation*}
	\Delta(g_1,g_2) \leq \left[\nu(G_k, X_k) + 2\right] \max\left\{ \norm {g_1}, \norm {g_2} \right\}+ A(G_k,X_k,400\delta_k)+ 1000\delta_k,
\end{equation*}
see for instance \cite[Proposition~3.34]{Coulon:2016if} or \autoref{res: margulis lemma}.
Consequently, in order to make sure that for every $k \in \N$, the relations defining $G_{k+1}$ from $G_k$ satisfy a suitable small cancellation condition, it suffices to control (among others) the values of $A(G_k,X_k, 400\delta_k)$ and $\nu(G_k,X_k)$ all along the approximation sequence (\ref{eqn: intro - sequence}).
This was done in \cite{Coulon:2016if} in the absence of even torsion.

As soon as even torsion is involved, the situation becomes much more delicate.
In particular, the $\nu$-invariant does not behave very well when passing to a quotient (see for instance the discussion and examples at the beginning of \autoref{sec: invariants quotient - mixed}).
It results from the fact that the algebraic structure of finite subgroups of $\burn rn$ is rather intricate, see for instance Lysenok \cite{Lysenok:2007ks}.

\begin{rema}
	Note that Hull also developed a small cancellation theory in the context of acylindrically hyperbolic groups \cite{Hull:2016aa}. 
	However this work does not provide the necessary tools to control the small cancellation parameters along the sequence (\ref{eqn: intro - sequence}) and thus to build infinite torsion groups with \emph{bounded} exponents.
	As we explained above, it is not possible to control the acylidricity parameters for the action of $G_k$ on $X_k$.
	The purpose of this article is precisely to develop the implements needed to bypass this difficulty.	
\end{rema}

\paragraph{Structure of elementary subgroups.}
As we mentioned earlier, if $n$ is odd, then every maximal finite subgroup of $\burn rn$ is isomorphic to the cyclic group $\cyclic[n]$.
Moreover finite subgroups ``stabilize'' along the approximation sequence (\ref{eqn: intro - sequence}).
More precisely we have the following property: if $F_0$ is a non-trivial finite subgroup of some $G_k$, then for every $\ell \geq k$, every finite subgroup of $G_\ell$ containing the image of $F_0$ actually comes from a finite subgroup $F$ of $G_k$ which already contains $F_0$.
By contrast, if $n$ is even, finite subgroups may ``grow'' when taking successive quotients.
Let us illustrate this fact with the following toy example.

\begin{exam}
    Assume that $n$ is a (large) even exponent.
    Start with the free group $G_0 = \free 2$ generated by $a$ and $b$ and set 
    \begin{equation*}
    	G_1 = G_0 / \normal{a^n,b^n, (ba)^n}.
    \end{equation*}
    In $G_1$ the elements $s_0 = a^{n/2}$, $s_1= b^{n/2}$ and $s_2 = (ba)^{n/2}$ all generate a subgroup isomorphic to $\cyclic[2]$.
    Consequently in the quotient 
    \begin{equation*}
    	G_2 = G_1/\normal{(s_0s_1)^n, (s_0s_2)^n}
    \end{equation*}
    $s_0$ and $s_1$ generate a subgroup isomorphic to the dihedral group $\dihedral[n]$ of order $2n$.
    In particular, $s_0$ commutes with the involution $u_1 = s_0 (s_0s_1)^{n/2}$.
    Similarly $s_0$ commutes with the involution $u_2 = s_0(s_0s_2)^{n/2}$.
    Form now the quotient 
    \begin{equation*}
    	G_3 = G_2/\normal{(u_1u_2)^n}.
    \end{equation*}
    Note that $s_0$, $u_1$ and $u_2$ generate a subgroup isomorphic to $\cyclic[2] \times \dihedral[n]$.
    In this example, $F_1 = \group{s_0} = \cyclic[2]$ is a finite subgroup of $G_1$.
    Its image in $G_2$ (\resp $G_3$) embeds in $F_2 = \group{s_0,s_1} = \dihedral[n]$ (\resp $F_3 = \group{s_0, u_1,u_2} = \cyclic[2] \times \dihedral[n]$).
    However $F_2$ (\resp $F_3$) is not the image of a finite subgroup of $G_1$ (\resp $G_2$).
    We stopped our example after three steps. 
    However one can proceed further and embeds $F_0$ in an arbitrarily large product of the form $\cyclic[2] \times \dots \times \cyclic[2] \times \dihedral[n]$.
\end{exam}

The previous example suggests that $G_k$ -- and thus $\burn rn$ -- contains nested copies of dihedral groups.
As mentioned above, one of the advantages of the space $X_k$ (compare to the Cayley graph of $G_k$) is that one ``sees'' some of these dihedral groups acting as the isometry group of a disc.
Unfortunately this is not sufficient to capture all the properties of finite subgroups of $G_k$.
To sort a bit this nested structure, we introduce the notion of \emph{dihedral germ}.
A dihedral germ of $G_k$ is an elliptic subgroup $C$ (for its action on $X_k$) containing a subgroup $C_0$ which is normalized by a loxodromic element and such that $[C: C_0]$ is a power of $2$.
As suggested by the terminology, the dihedral germs are exactly the finite subgroups of $G_k$ that may eventually grow in $G_{k+1}$, i.e. be embedded in a larger finite subgroup of $G_{k+1}$ that does not come from $G_k$.
This typically arises as follows.

\begin{exam}
	Assume that $A$ is a finite subgroup of $G_k$ and $C$ a subgroup of $A$ of index $2$ which is normalized by a loxodromic element $h \in G_k$.
	In particular, $A$ is a dihedral germ.
	Suppose for simplicity that $h^n$ is trivial in $G_{k+1}$.
	Let $a \in A \setminus C$.
	Seen in $G_{k+1}$, the group $C$ has index $2$ in both $A = \group{C, a}$ and $B = \group{C, h^{n/2}}$.
	In particular, $A$ and $B$ generate an elementary subgroup $E$ of $G_{k+1}$ which is (most of the time) isomorphic to $E = A \ast_C B$ and such that $E/C = \dihedral$.
	As an element of $G_{k+1}$ the product $t = a h^{n/2}$ has infinite order.
	However, since $\burn rn$ is the direct limit of $(G_k)$, we have $t^n = 1$ in $G_\ell$ for some $\ell > k+1$.
	The image in $G_\ell$ of $E$ is actually isomorphic to $E/\group{t^n}$ which is a finite group that strictly contains the dihedral germ $A$.
	Nevertheless there is no finite subgroup $F$ of $G_k$ containing $A$ such that the canonical quotient map $G_k \onto G_\ell$ induces an isomorphism from $F$ onto $E/\group{t^n}$.
\end{exam}

It turns out that the dihedral germs of $G_k$ are exactly its $2$-subgroups (when studying general periodic quotients different from free Burnside groups, those dihedral germs are slightly more complicated).
A careful analysis of dihedral germs allows us to prove that every finite subgroup of $G_k$ embeds in a direct product of the form $\dihedral[n] \times \dihedral[n_2] \times \dots \times \dihedral[n_2]$ where $n_2$ is the largest power of $2$ dividing $n$, see also  \cite{Ivanov:1994kh,Lysenok:1996kw}.
In particular, finite subgroups of $G_k$ share many identities with finite dihedral groups.
Those identities can be used to control a variation on the $\nu$-invariant which captures both geometric and algebraic features of the groups $G_k$ and behaves better when taking quotients (see \autoref{def: nu inv stg}).
Once we control the $\nu$-invariant along the sequence (\ref{eqn: intro - sequence}), a uniform estimate of the quantity $A(G_k, X_k,d)$ follows rather easily (\autoref{res: local-to-global acyl}).
Those two invariants (together with the injectivity radius of $G_k$ acting on $X_k$) provide a sufficient control to show that each group $G_{k+1}$ is actually a small cancellation quotient of $G_k$.
Therefore it is hyperbolic and non-elementary, which ensures that at the limit $\burn rn$ is infinite.

\paragraph{Critical exponent.}
All these arguments actually only work provided $n$ is divisible by a large power of $2$, namely $128$ for free Burnside groups.
Nevertheless, given any two integers $p,n \in \N$, the group $\burn r{pn}$ naturally maps onto $\burn rn$.
Since Burnside groups of large odd exponents are known to be infinite, we can conclude that free Burnside groups of sufficiently large exponents are infinite.
The works of Ivanov \cite{Ivanov:1994kh} and Lysenok \cite{Lysenok:1996kw} have a similar restriction. 
They require $n$ to be divisible by $2^9$ and $16$ respectively.
Using \cite{Lysenok:2007ks} as a ``black box'' our proof can be adapted for large exponents $n$ which are only divisible by $16$. 
However for completeness and simplicity we preferred to detail our own understanding of finite subgroups of $\burn rn$.

According to \autoref{mth: free burnside}, there exists a critical exponent $N_0$ such that for every integer $n \geq N_0$, the group $\burn rn$ is infinite.
In our method, $N_0$ is directly related to the parameters of the Small cancellation Theorem, see Equations (\ref{eqn: recale - hyp})--(\ref{eqn: recale - inj}).
Since our approach to small cancellation is \emph{qualitative} we do not provide an explicit value of $N_0$.
Nevertheless, for a general group $G$ we stress how this critical exponent depends on the action of $G$ on a hyperbolic space $X$ (see \autoref{res: partial periodic quotient - gal}).
An interested reader could go through all the arguments with a \emph{quantitative} point of view to get an estimate of $N_0$.
However the resulting $N_0$ would most likely be very large.

%%%%%%%%%%%%%%%%%%%%%%%%%%%%%%%%%%%%%%%%%%%%%%%%%%%%%%%%%%%%%%%%%%%%%%%%%%%%%%%%%%%%
%
\subsection*{Outline of the paper}
%
%%%%%%%%%%%%%%%%%%%%%%%%%%%%%%%%%%%%%%%%%%%%%%%%%%%%%%%%%%%%%%%%%%%%%%%%%%%%%%%%%%%%
The proof that we present here is essentially self-contained.
Beside hyperbolic geometry, the arguments only rely on geometrical small cancellation theory which is now well understood.
See for instance \cite{Coulon:2016uz,Guirardel:2014aa} for a survey on the topic.
For the benefit of the reader we did not attempt to write the shortest possible proof.
In particular, we added in the course of the article numerous discussions, examples and figures to highlight the main difficulties and illustrate the important results.

In \autoref{sec: hyp} we make a short review of hyperbolic geometry.
We define in \autoref{sec: invariants} all the geometric and algebraic invariants needed to control the small cancellation parameters when building the approximation sequence (\ref{eqn: intro - sequence}).
In \autoref{sec: sc} we first review the main properties of small cancellation theory.
Given a group $G$ acting on a hyperbolic space, the goal is to understand the properties of the quotient $\bar G$ obtained from $G$ by adjoining relations of the form $h^n = 1$, where $n$ is a large even integer.
In particular, we study the elementary subgroups of $\bar G$ (\autoref{sec: sc - elem subgroups}) as well as the geometric/algebraic invariants of $\bar G$ (\autoref{sec: invariants quotient}).
This section follows closely the work of Delzant-Gromov \cite{Delzant:2008tu}.
\autoref{sec: periodic} collects all the previous work. 
We first state and prove the induction hypothesis used to produce the approximation sequence (\ref{eqn: intro - sequence}), see \autoref{res: induction step}.
Then we apply our main result (\autoref{res: partial periodic quotient - gal}) to various examples (\autoref{sec: examples}) such as free Burnside groups, periodic quotients of hyperbolic groups, etc.

\paragraph{Acknowledgment.}
The author is grateful to the \emph{Centre Henri Lebesgue} ANR-11-LABX-0020-01 for creating an attractive mathematical environment.
He acknowledges support from the Agence Nationale de la Recherche under Grant \emph{Dagger} ANR-16-CE40-0006-01.
He is greatly indebted to Thomas Delzant for his valuable comments and suggestions that helped improving the exposition of the article.

%%%%%%%%%%%%%%%%%%%%%%%%%%%%%%%%%%%%%%%%%%%%%%%%%%%%%%%%%%%%%%%%%%%%%%%%%%%%%%%%%%%%
%%%%%%%%%%%%%%%%%%%%%%%%%%%%%%%%%%%%%%%%%%%%%%%%%%%%%%%%%%%%%%%%%%%%%%%%%%%%%%%%%%%%
%
\section{Hyperbolic geometry}
%
%%%%%%%%%%%%%%%%%%%%%%%%%%%%%%%%%%%%%%%%%%%%%%%%%%%%%%%%%%%%%%%%%%%%%%%%%%%%%%%%%%%%
%%%%%%%%%%%%%%%%%%%%%%%%%%%%%%%%%%%%%%%%%%%%%%%%%%%%%%%%%%%%%%%%%%%%%%%%%%%%%%%%%%%%
\label{sec: hyp}

We recall here a few basic facts about hyperbolic spaces in the sense of Gromov \cite{Gromov:1987tk}.
A reader familiar with the subject can directly jump to \autoref{sec: invariants} where we define the important invariants associated to the action of a group on a hyperbolic space.
We included precise references for the \emph{quantitative} results.
Some of the them only provides a proof in the context of geodesic metric spaces. 
However, by relaxing if necessary the constants, which we do here, the same arguments work in the more general setting of length spaces.
For the rest, we refer the reader to Gromov's original article \cite{Gromov:1987tk} or the numerous literature on the subject, e.g. \cite{Coornaert:1990tj,Ghys:1990ki,Bowditch:1991wl,Bridson:1999ky}.

%%%%%%%%%%%%%%%%%%%%%%%%%%%%%%%%%%%%%%%%%%%%%%%%%%%%%%%%%%%%%%%%%%%%%%%%%%%%%%%%%%%%
%
\subsection{General facts}
%
%%%%%%%%%%%%%%%%%%%%%%%%%%%%%%%%%%%%%%%%%%%%%%%%%%%%%%%%%%%%%%%%%%%%%%%%%%%%%%%%%%%%
\label{sec: hyp - gal facts}

\paragraph{Four point inequality.}
Let $X$ be a metric length space.
In this article all the paths are rectifiable and parametrized by arc length.
Given two points $x,y \in X$, we write $\dist[X] xy$ or simply $\dist xy$ for the distance between them.
The Gromov product of three points $x,y,z \in X$ is defined as
\begin{equation*}
	\gro xyz = \frac 12 \left( \dist xz + \dist yz - \dist xy \right).
\end{equation*}
Let $\delta \in \R_+^*$.
We assume that $X$ is $\delta$-hyperbolic, i.e. for every $x,y,z,t \in X$ we have
\begin{equation}
\label{eqn : hyp four points - 1}
	\gro xyt \geq \min\left\{\gro xzt , \gro zyt \right\} - \delta.
\end{equation}
or equivalently
\begin{equation}
\label{eqn : hyp four points - 2}
	\dist xy + \dist zt \leq \max \left\{ \dist xz + \dist yt, \dist xt + \dist yz \right\} + 2\delta.
\end{equation}
In this context, the Gromov product has the following useful interpretation.
For every $x,y,z \in X$, the quantity $\gro yzx$ is roughly the distance between $x$ and any geodesic $\geo yz$ between $y$ and $z$.
More precisely, we have
\begin{equation*}
	\gro yzx  \leq d(x, \geo yz) \leq \gro yzx  + 4\delta,
\end{equation*}
see for instance \cite[Chapitre~3, Lemme~2.7]{Coornaert:1990tj}.

%\begin{rema*}
%\label{rem: meaning gromov product}
%	In this article, the metric spaces we are going to consider are not necessarily geodesic.
%	In particular, we cannot speak of the distance between a point and a geodesic.
%	One option to bypass this technical issue would be to replace geodesics by $(1,\epsilon)$-quasi-geodesics for arbitrarily small $\epsilon >0$ (see below for the definition).
%	However this solution is rather burdensome.
%	Instead we prefer to work with Gromov's products.
%	The reader should keep in mind that an inequality of the form ${\gro yzx \leq d}$ means that $x$ is close to a ``geodesic'' between $y$ and $z$.
%\end{rema*}

\paragraph{Boundary at infinity.}
Let $o$ be a base point in $X$.
A sequence $(y_n)$ of points of $X$ \emph{converges at infinity} if $\gro{y_n}{y_m}o$ diverges to infinity as $n$ and $m$ tend to infinity.
The set $\mathcal S$ of all such sequences is endowed with an equivalence relation defined as follows: two sequences $(y_n)$ and $(y'_n)$ are equivalent if
\begin{equation*}
	\lim_{n \to \infty} \gro{y_n}{y'_n}o = +\infty.
\end{equation*}
The \emph{boundary at infinity} of $X$, denoted by $\partial X$ is the quotient of $\mathcal S$ by this equivalence relation.
If the sequence $(y_n)$ is an element in the class of $\xi \in \partial X$, we say that $(y_n)$ \emph{converges} to $\xi$ and write
\begin{equation*}
	\lim_{n \to \infty} y_n = \xi.
\end{equation*}
The Gromov product of three points can be extended to the boundary:
given $x\in X$ and $y,z \in X \cup \partial X$, we define 
\begin{equation*}
	\gro yz x = \inf \set{\liminf_{n\rightarrow + \infty} \gro {y_n}{z_n}x}{\lim_{n \to \infty} y_n = y,\ \lim_{n \to \infty} z_n = z}
\end{equation*}
Let $x \in X$.
Let $(y_n)$ and $(z_n)$ be two sequences of points of $X$ respectively converging to $y$ and $z$ in $X \cup \partial X$.
It follows from (\ref{eqn : hyp four points - 1}) that
\begin{equation}
\label{eqn: estimate gromov product boundary}
	\gro yzx \leq \liminf_{n\rightarrow + \infty} \gro {y_n}{z_n}x \leq \limsup_{n\rightarrow + \infty} \gro {y_n}{z_n}x \leq \gro yzx + k\delta,
\end{equation}
where $k$ is the number of points in $\{y,z\}$ which belong to $\partial X$.
Moreover, for every $t \in X$, for every $x,y,z \in X \cup \partial X$, the four point inequality (\ref{eqn : hyp four points - 1}) leads to
\begin{equation}
\label{eqn: hyperbolicity condition with boundary}
	\gro xzt \geq \min\left\{\gro xyt, \gro yzt \right\} - \delta.
\end{equation}
The isometry group of $X$ naturally acts on $\partial X$ preserving Gromov's products.

\paragraph{Busemann cocycles.}
To every point $\xi \in \partial X$, we would like to associate a Busemann cocycle.
However the space $X$ is neither locally compact, nor geodesic. 
To that end we proceeds as follows.
Given any point $\xi \in \partial X$ and a base point $o \in X$, we first define a map $b \colon X \to \R$ by
\begin{equation*}
	 b(x) = \gro o\xi x - \gro x\xi o
\end{equation*}
and then let 
\begin{equation*}
	\begin{array}{rccc}
		c \colon & X \times X & \to & \R \\
		&(x,y) & \to & b(x) - b(y).
	\end{array}
\end{equation*}
The map $c$ is obviously a cocycle, i.e. 
\begin{equation*}
	c(x_1,x_3) = c(x_1,x_2) + c(x_2,x_3),\quad \forall x_1,x_2,x_3 \in X.
\end{equation*}
We call $c$ a \emph{Busemann cocycle at $\xi$} (\emph{based at $o$}).
Note that $c$ \emph{does} depend on the base point $o$.
Nevertheless for every $x,y \in X$,
\begin{equation}
\label{eqn: effective computation busemann}
		\abs{c(x,y) - \left[\gro y\xi x - \gro x\xi y\right]} \leq 3\delta.
\end{equation}
In particular, any two cocycles at $\xi$ differ by at most $6\delta$.
Moreover $c$ is almost $1$-Lipschitz, i.e.
\begin{equation}
\label{eqn: busemann cocycle lipschitz}
	\abs{c(x,y)}\leq \dist xy + 2\delta, \quad \forall x,y \in X.
\end{equation}

\begin{lemm}
\label{res: variation four point w/ cocycle}
	Let $\xi \in \partial X$ and $c$ a Busemann cocycle at $\xi$.
	For every $x,y,y'\in X$ we have
	\begin{equation*}
		\dist y{y'} \leq \abs{c(y,y')} + 2 \max \left\{ \gro x\xi y, \gro x\xi{y'}\right\} + 8\delta.
	\end{equation*}
\end{lemm}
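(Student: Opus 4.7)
The plan is to reduce the inequality to a comparison of Gromov products based at $y$ and $y'$, exploiting the algebraic identity
\[
\gro{y'}{\xi_n}{y} + \gro{y}{\xi_n}{y'} = \dist y{y'},
\]
which holds exactly for any $\xi_n \in X$ by expanding the definitions of the Gromov products. Choosing a sequence $(\xi_n)$ converging to $\xi$ and passing to the limit, inequality (\ref{eqn: estimate gromov product boundary}) (with $k=1$, since a single boundary point is involved) should give
\[
\dist y{y'} \leq \gro{y'}\xi y + \gro y\xi{y'} + 2\delta.
\]

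Next, the cocycle is designed to appear (up to $3\delta$) as the difference of these two Gromov products. Indeed, (\ref{eqn: effective computation busemann}) yields
\[
\left| \gro{y'}\xi y - \gro y\xi{y'} \right| \leq |c(y,y')| + 3\delta,
\]
so $\gro{y'}\xi y + \gro y\xi{y'} \leq |c(y,y')| + 2\min\{\gro{y'}\xi y, \gro y\xi{y'}\} + 3\delta$. It therefore suffices to bound this minimum by $\max\{\gro x\xi y, \gro x\xi{y'}\}$ up to a constant depending only on $\delta$.

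This last step I plan to obtain from two applications of the four-point inequality on the boundary (\ref{eqn: hyperbolicity condition with boundary}), applied at $y$ and at $y'$ with the auxiliary point $x$:
\[
\gro x\xi y \geq \min\{\gro x{y'}y, \gro{y'}\xi y\} - \delta, \quad \gro x\xi{y'} \geq \min\{\gro xy{y'}, \gro y\xi{y'}\} - \delta.
\]
Either inequality can complete the argument on its own: if $\gro{y'}\xi y \leq \gro x\xi y + \delta$, or symmetrically $\gro y\xi{y'} \leq \gro x\xi{y'} + \delta$, I combine with the previous paragraph to conclude. Otherwise, both force $\gro x{y'}y \leq \gro x\xi y + \delta$ and $\gro xy{y'} \leq \gro x\xi{y'} + \delta$; then the elementary identity $\gro x{y'}y + \gro xy{y'} = \dist y{y'}$ gives directly
\[
\dist y{y'} \leq \gro x\xi y + \gro x\xi{y'} + 2\delta \leq 2\max\{\gro x\xi y, \gro x\xi{y'}\} + 2\delta,
\]
which is even stronger than required (the cocycle term being unnecessary in this case).

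The only real obstacle is careful bookkeeping of the $\delta$-slack from three independent sources: the passage to the limit in Gromov products terminating at a boundary point, the cocycle approximation (\ref{eqn: effective computation busemann}), and the four-point inequality at infinity. Summing these contributions in the worst case (the ``good'' branch, where all three appear) produces an error of at most $8\delta$, matching the statement.
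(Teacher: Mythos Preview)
Your argument is correct and even yields a slightly better constant ($7\delta$ in the ``good'' branch, if you track the additions carefully). The paper's proof is more compressed: it invokes a single four-point consequence from \cite[Lemma~2.2(ii)]{Coulon:2014fr}, namely
\[
\dist y{y'} \leq \abs{\dist y{z_n} - \dist{y'}{z_n}} + 2\max\{\gro x{z_n}y, \gro x{z_n}{y'}\} + 2\delta,
\]
rewrites $\abs{\dist y{z_n} - \dist{y'}{z_n}} = \abs{\gro{y'}{z_n}y - \gro y{z_n}{y'}}$, and then passes to the limit, using (\ref{eqn: estimate gromov product boundary}) on the max term and (\ref{eqn: effective computation busemann}) on the difference term. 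So the paper performs the four-point analysis at the finite stage and limits once at the end, whereas you pass to the limit immediately via the additive identity and then run the four-point inequality directly at infinity, which forces your case split. Your route is more self-contained (no external lemma needed); the paper's is shorter because the cited inequality already packages the $\max$ term.
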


\begin{proof}
	Let $(z_n)$ be a sequence of points of $X$ converging to $\xi$.
	It follows from the four point inequality that for every $n \in \N$,
	\begin{align*}
		\dist y{y'} 
		& \leq \abs{\dist y{z_n} - \dist{y'}{z_n}} + 2 \max \left\{ \gro x{z_n} y, \gro x{z_n}{y'}\right\} + 2\delta \\
		& \leq \abs{\gro{y'}{z_n}y - \gro y{z_n}{y'}} + 2 \max \left\{ \gro x{z_n} y, \gro x{z_n}{y'}\right\} + 2\delta
	\end{align*}
	see for instance \cite[Lemma~2.2~(ii)]{Coulon:2014fr}.
	The conclusion follows by taking the limit and applying (\ref{eqn: estimate gromov product boundary}).
\end{proof}

We denote by $\partial_hX$ the set of all Busemann cocycles obtained as above.
The isometry group of $X$ naturally acts on $\partial_hX$:
if $g$ is an isometry of $X$ and $c$ a Busemann cocycle at $\xi \in \partial X$, then the map $gc \colon X\times X \to \R$ defined by $(gc)(z,z') = c(g^{-1}z,g^{-1}z')$ is a Busemann cocycle at $g\xi$.

\paragraph{Quasi-geodesics.}
Let $\kappa \in \R_+^*$ and $\ell \in \R_+$.
A \emph{$(\kappa, \ell)$-quasi-isometric embedding} is a map $f \colon X_1 \to X_2$ between two metric spaces such that for every $x,x' \in X_1$,
\begin{equation*}
	\kappa^{-1}\dist x{x'} - \ell \leq \dist{f(x)}{f(x')} \leq \kappa \dist x{x'} + \ell.
\end{equation*}
A \emph{$(\kappa, \ell)$-quasi-geodesic} is a $(\kappa,\ell)$-quasi-isometric embedding $\gamma \colon I \to X$  from an interval $I$ of $\R$ into $X$.
Recall that all the paths we consider are rectifiable by arc length.
Hence, if $\gamma \colon I\to X$ is a $(\kappa, \ell)$-quasi-geodesic, we have the following more accurate inequalities:
\begin{equation*}
	\kappa^{-1} \dist st - \ell \leq \dist{\gamma(s)}{\gamma(t)} \leq \dist st, \quad \forall s,t \in I.
\end{equation*}
A path is an \emph{$L$-local $(\kappa, \ell)$-quasi-geodesic} if its restriction to any interval of length $L$ is a $(\kappa, \ell)$-quasi-geodesic.
If $\gamma \colon \R_+ \to X$ is $(\kappa, \ell)$-quasi-geodesic, then there exists a unique point $\xi \in \partial X$ such that for every sequence of real numbers $(t_n)$ diverging to infinity we have $\lim_{n \to \infty} \gamma(t_n) = \xi$.
We view $\xi$ as the endpoint at infinity of $\gamma$ and write $\xi  = \gamma(\infty)$.
In this article, we mostly work with local $(1, \ell)$-quasi-geodesics.
Therefore we use the version of the stability of local quasi-geodesics below.
We refer to \cite[Theorem~1.13]{Bridson:1999ky} for the proof.

\begin{prop}[Stability of quasi-geodesics]
\label{res: stability qg}
	Let $\ell, L \in \R_+$ and $\gamma \colon I \to X$ be an $L$-local $(1, \ell)$-quasi-geodesic.
	If $L > 4\ell + 8\delta$, then the following holds.
	\begin{enumerate}
		\item \label{enu:stability qg - nghbrhd}
		For every $s,t,t' \in I$, with $t \leq s \leq t'$, we have $\gro{\gamma(t)}{\gamma(t')}{\gamma(s)} \leq \ell/2 + 2\delta$.
		\item \label{enu:stability qg - qc}
		For every $x \in X$, for every $y,y' \in X$, lying on $\gamma$, we have $d(x,\gamma) \leq \gro y{y'}x + \ell/2 + 4\delta$.
		\item \label{enu:stability qg - global qg}
		The path $\gamma$ is a (global) $(\kappa, \ell)$-quasi-geodesic, where $\kappa = \frac L{L - 2(\ell + 2 \delta)}$.
	\end{enumerate}
	In particular, the Hausdorff distance between two $L$-local $(1, \ell)$-quasi-geodesics with the same endpoints (eventually in $\partial X$) is at most $\ell + 6\delta$.
\end{prop}

Although the space $X$ is not geodesic, its boundary satisfies a visibility property: for every $x \in X$, $\xi \in \partial X$, for every $\ell > 10\delta$ and $L \geq 0$, there exists an $L$-local $(1, \ell)$-quasi-geodesic $\gamma \colon \R_+ \to X$  (which is also a global quasi-geodesic) such that $\gamma(0) = x$ and $\gamma(\infty) = \xi$, compare with \cite[Lemma~2.9]{Coulon:2016if}.

\begin{lemm}
\label{res: computing cocycle from qg}
	Let $\ell \in \R_+$ and $L > 4 \ell + 8\delta$.
	Let $\gamma \colon \R_+ \to X$ be an $L$-local $(1, \ell)$-quasi-geodesic and $\xi = \gamma(\infty)$ its endpoint at infinity.
	Let $c$ be a Busemann cocycle at $\xi$.
	For every $s,t \in \R_+$ with $t \geq s$, we have 
	\begin{equation*}
		\abs{c(\gamma(s),\gamma(t)) - \dist{\gamma(s)}{\gamma(t)}} \leq \ell + 8\delta.
	\end{equation*}
\end{lemm}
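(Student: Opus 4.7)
My approach is to use the effective formula (\ref{eqn: effective computation busemann}) to reduce the claim to estimating the two Gromov products $\gro{z}{\xi}{y}$ and $\gro{y}{\xi}{z}$, where I write $y = \gamma(s)$ and $z = \gamma(t)$. The geometric picture is that along the quasi-geodesic $\gamma$ the three points $y$, $z$, $\xi$ appear in that order, so in a tree I would expect $\gro{z}{\xi}{y} \approx d(y,z)$ and $\gro{y}{\xi}{z} \approx 0$; the lemma amounts to making this precise.

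First, I would pick an approximating sequence $\xi_n = \gamma(u_n)$ for $\xi$ with $u_n \geq t$ and $u_n \to \infty$. For the second Gromov product, I apply \autoref{res: stability qg}\ref{enu:stability qg - nghbrhd} directly to the triple $s \leq t \leq u_n$, which gives $\gro{\gamma(s)}{\gamma(u_n)}{\gamma(t)} \leq \ell/2 + 5\delta$. Taking $n \to \infty$ and using (\ref{eqn: estimate gromov product boundary}) (with only one boundary point) yields
\begin{equation*}
  0 \leq \gro{y}{\xi}{z} \leq \tfrac{\ell}{2} + 5\delta.
\end{equation*}

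Next, for the first Gromov product, I expand
\begin{equation*}
  \gro{\gamma(t)}{\gamma(u_n)}{\gamma(s)} = \tfrac{1}{2}\bigl[d(\gamma(s),\gamma(t)) + d(\gamma(s),\gamma(u_n)) - d(\gamma(t),\gamma(u_n))\bigr].
\end{equation*}
Applying \autoref{res: stability qg}\ref{enu:stability qg - nghbrhd} now to the triple $s \leq t \leq u_n$, but with $\gamma(t)$ as base point, gives $\gro{\gamma(s)}{\gamma(u_n)}{\gamma(t)} \leq \ell/2 + 5\delta$, which translates into
\begin{equation*}
  d(\gamma(s),\gamma(u_n)) \geq d(\gamma(s),\gamma(t)) + d(\gamma(t),\gamma(u_n)) - \ell - 10\delta.
\end{equation*}
Combined with the trivial triangle inequality $d(\gamma(s),\gamma(u_n)) \leq d(\gamma(s),\gamma(t)) + d(\gamma(t),\gamma(u_n))$, this sandwiches $\gro{\gamma(t)}{\gamma(u_n)}{\gamma(s)}$ between $d(y,z) - \ell/2 - 5\delta$ and $d(y,z)$. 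Passing to the limit and using (\ref{eqn: estimate gromov product boundary}) again (to control both liminf and limsup within $\delta$ of $\gro{z}{\xi}{y}$), I obtain
\begin{equation*}
  d(y,z) - \tfrac{\ell}{2} - 6\delta \leq \gro{z}{\xi}{y} \leq d(y,z).
\end{equation*}

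Finally, substituting both estimates into (\ref{eqn: effective computation busemann}) gives
\begin{equation*}
  \bigl|c(y,z) - d(y,z)\bigr| \leq \tfrac{\ell}{2} + 6\delta + \tfrac{\ell}{2} + 5\delta + 3\delta = \ell + 14\delta,
\end{equation*}
which is well within the claimed $\ell + 15\delta$. The computations are elementary four-point manipulations once the stability lemma is in place; the only mild subtlety I expect is bookkeeping the passage from interior estimates to the boundary value $\xi$ via (\ref{eqn: estimate gromov product boundary}), which costs one extra $\delta$ on each side.
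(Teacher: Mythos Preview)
Your proof is correct and follows exactly the approach the paper indicates: the paper's proof is the single sentence ``The lemma directly follows from (\ref{eqn: effective computation busemann}) and \autoref{res: stability qg}~\ref{enu:stability qg - nghbrhd},'' and you have simply unpacked precisely those two ingredients. Your bookkeeping (including the extra $\delta$ from passing to the boundary via (\ref{eqn: estimate gromov product boundary})) is accurate and even yields the slightly sharper constant $\ell + 14\delta$.
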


\begin{proof}
	The lemma directly follows from (\ref{eqn: effective computation busemann}) and \autoref{res: stability qg}~\ref{enu:stability qg - nghbrhd}.
\end{proof}

\paragraph{Quasi-convex subsets.}
Let $\alpha \in \R_+$.
A subset $Y$ of $X$ is \emph{$\alpha$-quasi-convex} if for every $x \in X$, for every $y,y' \in Y$, $d(x,Y) \leq \gro y{y'}x + \alpha$.
Assume now that $Y$ is connected by rectifiable paths.
The length metric on $Y$ induced by the restriction of $\distV[X]$ to $Y$ is denoted by $\distV[Y]$.
We say that $Y$ is \emph{strongly quasi-convex} if it is $2 \delta$-quasi-convex and for every $y,y' \in Y$,
\begin{equation}
\label{eqn: def strongly qc}
	\dist[X]y{y'} \leq \dist[Y]y{y'} \leq \dist[X]y{y'} + 8\delta.
\end{equation}
A $(1, \ell)$-quasi-geodesic is $(\ell/2 + 2\delta)$-quasi-convex, compare with the proof of \autoref{res: stability qg}~\ref{enu:stability qg - qc}.
More generally, every $L$-local $(1, \ell)$-quasi-geodesic is $(\ell/2 + 4\delta)$-quasi-convex, provided $L > 4\ell + 8\delta$, see \autoref{res: stability qg}~\ref{enu:stability qg - qc}.
If $Y$ is an $\alpha$-quasi-convex subset of $X$, then for every $A \geq \alpha $, its $A$-neighborhood, that we denote by $Y^{+A}$, is $2\delta$-quasi-convex \cite[Proposition~2.13]{Coulon:2014fr}.
Similarly, for every $A > \alpha + 2\delta$, the \emph{open} $A$-neighborhood of $Y$, i.e. the set of points $x \in X$ such that $d(x,Y) < A$, is strongly quasi-convex \cite[Lemma~2.13]{Coulon:2016if}.

Let $x$ be a point of $X$.
A point $y \in Y$ is an \emph{$\eta$-projection} of $x$ on $Y$ is $\dist xy \leq d(x,Y) + \eta$.
A $0$-projection is simply called a \emph{projection}.

\begin{rema*}
	We adopt the convention that the diameter of the empty set is zero, whereas the distance from a point to the empty set is infinite.
\end{rema*}

\begin{lemm}[Compare with {\cite[Proposition~2.1]{Coornaert:1990tj}} or {\cite[Lemma~2.12]{Coulon:2014fr}}]
\label{res: proj qc}
	Let $\alpha \in \R_+$ and $Y$ an $\alpha$-quasi-convex subset of $X$.
	Let $x,x' \in X$.
	\begin{enumerate}
		\item If $p$ is an $\eta$-projection of $x$ on $Y$, then for every $y \in Y$, we have $\gro xyp\leq \alpha + \eta$.
		\item If $p$ and $p'$ are respectively $\eta$- and $\eta'$-projection of $x$ and $x'$ on $Y$ then 
		\begin{equation*}
			\dist p{p'} \leq \max \{ \dist x{x'} - \dist xp - \dist{x'}{p'} + 2 \epsilon, \epsilon \},
		\end{equation*}
		where $\epsilon = 2\alpha + \delta + \eta + \eta'$.
	\end{enumerate}
\end{lemm}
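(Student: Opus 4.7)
The plan is to extract both statements directly from the definition of $\alpha$-quasi-convexity of $Y$ combined with the four point inequality.

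For the first assertion, I would apply the $\alpha$-quasi-convexity of $Y$ to the two points $p, y \in Y$ with reference point $x$, which gives $d(x, Y) \leq \gro pyx + \alpha$. Since $p$ is an $\eta$-projection of $x$ on $Y$, we have $\dist xp \leq d(x, Y) + \eta \leq \gro pyx + \alpha + \eta$. The elementary identity $\gro xyp + \gro pyx = \dist xp$ (obtained by expanding the two Gromov products) then immediately yields $\gro xyp \leq \alpha + \eta$.

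For the second assertion, I would feed part (1) back into itself: since $p' \in Y$, the first assertion gives $\gro x{p'}p \leq \alpha + \eta$, and since $p \in Y$, it gives $\gro p{x'}{p'} \leq \alpha + \eta'$. Unpacking these Gromov products produces the two lower bounds
\begin{align*}
	\dist x{p'} &\geq \dist xp + \dist p{p'} - 2(\alpha + \eta), \\
	\dist p{x'} &\geq \dist p{p'} + \dist{p'}{x'} - 2(\alpha + \eta'),
\end{align*}
which after summation give
\begin{equation*}
	\dist x{p'} + \dist p{x'} \geq \dist xp + 2\dist p{p'} + \dist{p'}{x'} - 2(2\alpha + \eta + \eta').
\end{equation*}

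I would then confront this lower bound with the four point inequality (\ref{eqn : hyp four points - 2}) applied to the quadruple $(x, p, p', x')$, namely
\begin{equation*}
	\dist x{p'} + \dist p{x'} \leq \max\left\{\dist x{x'} + \dist p{p'},\ \dist xp + \dist{p'}{x'}\right\} + 2\delta,
\end{equation*}
and split into two cases according to which term realises the maximum. If it is $\dist xp + \dist{p'}{x'}$, one rearrangement gives $2\dist p{p'} \leq 2(2\alpha + \eta + \eta') + 2\delta$, hence $\dist p{p'} \leq \epsilon$. If instead the maximum is $\dist x{x'} + \dist p{p'}$, the same manipulation produces $\dist p{p'} \leq \dist x{x'} - \dist xp - \dist{x'}{p'} + 2\epsilon$. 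Taking the maximum of the two right-hand sides gives the announced bound. The only subtlety is choosing the correct permutation of the four points $x, p, p', x'$ in (\ref{eqn : hyp four points - 2}) so that the two branches of the maximum align precisely with the two regimes of the conclusion; once this is set up, everything else is bookkeeping.
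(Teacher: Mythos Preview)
Your proof is correct. The paper does not actually prove this lemma in-text; it is quoted with a citation to \cite[Lemma~2.12]{Coulon:2014fr}, so there is no ``paper's own proof'' to compare against here. That said, the argument you give is the standard one and matches what one finds in the cited reference: part~(1) is exactly the combination of the quasi-convexity inequality with the identity $\gro xyp + \gro pyx = \dist xp$, and part~(2) is obtained by feeding part~(1) into the four point inequality~(\ref{eqn : hyp four points - 2}) with the quadruple $(x,p',p,x')$, then splitting on which term realises the maximum. Your bookkeeping is accurate in both cases and recovers $\epsilon = 2\alpha + \delta + \eta + \eta'$ on the nose.
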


\begin{lemm}[{Compare with \cite[Lemme~2.2.2]{Delzant:2008tu} or \cite[Lemma~2.13]{Coulon:2014fr}}]
\label{res: intersection of thickened quasi-convex}
	Let $Y_1, \dots , Y_m$ be a collection of subsets of $X$ such that $Y_j$ is $\alpha_j$-quasi-convex, for every $j \in \intvald 1m$.
	For all $A \geq 0$, we have
	\begin{equation*}
		\diam \left( Y_1^{+A} \cap \dotsc \cap Y_m^{+A} \right) 
		\leq \diam \left( Y_1^{+\alpha_1+3\delta} \cap \dotsc \cap Y_m^{+\alpha_m+3\delta} \right) +2A + 4\delta.
	\end{equation*}
\end{lemm}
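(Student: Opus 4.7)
The plan is to pick two points on a near-geodesic between any two points of the larger intersection and show that these ``interior'' points already lie in the smaller intersection.

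Fix $x, x' \in Y_1^{+A} \cap \dots \cap Y_m^{+A}$ and set $D = \dist{x}{x'}$. If $D \leq 2A + 4\delta$ the inequality is immediate (using the convention that $\diam \emptyset = 0$). Otherwise, because $X$ is a length space, I would fix an arbitrarily small $\epsilon > 0$ and a $(1, \epsilon)$-quasi-geodesic $\gamma \colon [0, L] \to X$ joining $x$ to $x'$ with $L \leq D + \epsilon$, then set
\begin{equation*}
    z = \gamma(A + 2\delta) \qquad \text{and} \qquad z' = \gamma(L - A - 2\delta).
\end{equation*}
By construction $\dist{x}{z}, \dist{x'}{z'} \leq A + 2\delta$, while the quasi-geodesic property gives $\dist{z}{z'} \geq D - 2A - 4\delta - \epsilon$.

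The heart of the proof is the claim that $z, z' \in Y_1^{+\alpha_1 + 3\delta} \cap \dots \cap Y_m^{+\alpha_m + 3\delta}$, up to an error controlled by $\epsilon$. Granting this, one immediately gets $D - 2A - 4\delta - O(\epsilon) \leq \diam\bigl(\bigcap_j Y_j^{+\alpha_j + 3\delta}\bigr)$; letting $\epsilon \to 0$ and taking the supremum over $x, x'$ yields the lemma. To prove the claim, fix $j$ and a small $\eta > 0$, and let $p_j, p'_j$ be $\eta$-projections of $x, x'$ on $Y_j$, so that $\dist{x}{p_j}, \dist{x'}{p'_j} \leq A + \eta$. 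The $\alpha_j$-quasi-convexity of $Y_j$ gives $d(z, Y_j) \leq \gro{p_j}{p'_j}{z} + \alpha_j$, so it suffices to show that $\gro{p_j}{p'_j}{z} \leq 3\delta$ up to negligible error. The heuristic is transparent from the tree approximation of the five-point configuration $\{x, x', p_j, p'_j, z\}$: the projections $p_j$ and $p'_j$ branch off the side $[x, x']$ within distance $A$ of the endpoints, and $z$ was chosen to lie at distance strictly greater than $A + \delta$ from both $x$ and $x'$ along $\gamma$, hence between the two branch points in the tree; in this case $z$ is essentially on the geodesic from $p_j$ to $p'_j$, forcing $\gro{p_j}{p'_j}{z}$ to be of order $\delta$.

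The main obstacle is to turn this tree-heuristic into a rigorous $\delta$-estimate without leaving the length-space setting. Concretely, I would iterate the four-point inequality~(\ref{eqn : hyp four points - 2}): first applied to $(x, x', p_j, p'_j)$ to force a nearly tight lower bound $d(p_j, p'_j) \geq D - 2A + O(\delta)$ (by ruling out the ``wrong'' maximum), and then to the quadruples $(z, x, p_j, p'_j)$ and $(z, x', p_j, p'_j)$ to transfer this control onto $d(p_j, z)$ and $d(p'_j, z)$. Substituting into the definition $\gro{p_j}{p'_j}{z} = \tfrac{1}{2}[d(p_j,z) + d(p'_j,z) - d(p_j,p'_j)]$ and carefully collecting the $\delta$-defects yields $\gro{p_j}{p'_j}{z} \leq 3\delta + O(\epsilon, \eta)$ uniformly in $j$. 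Sending $\epsilon, \eta \to 0$ then establishes the membership and completes the proof.
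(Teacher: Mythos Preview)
The paper does not provide its own proof of this lemma; it is quoted verbatim from \cite[Lemma~2.13]{Coulon:2014fr} and used as a black box. So there is nothing in the present paper to compare your argument against.

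That said, your approach is correct and is essentially the standard one. The only place where your sketch is slightly imprecise is the description of how the four-point inequality yields $\gro{p_j}{p'_j}{z} \leq 3\delta$: the cleanest route is to work with Gromov products rather than distances. Starting from $\gro{x}{x'}{z} \leq \epsilon/2$, the inequality $\min\{\gro{x}{p_j}{z}, \gro{p_j}{x'}{z}\} \leq \gro{x}{x'}{z} + \delta$ together with the lower bound $\gro{x}{p_j}{z} \geq \dist{x}{z} - \dist{x}{p_j} \geq 2\delta - \epsilon - \eta$ forces $\gro{p_j}{x'}{z} \leq \delta + \epsilon/2$. One more application, using $\gro{p'_j}{x'}{z} \geq \dist{x'}{z} - \dist{x'}{p'_j}$ large, then gives $\gro{p_j}{p'_j}{z} \leq 2\delta + \epsilon/2$, which is comfortably below $3\delta$ for small $\epsilon$. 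Your ``distance'' formulation of the same idea can be made to work but is more awkward to organize; the Gromov-product phrasing is what makes the bookkeeping clean.
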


%%%%%%%%%%%%%%%%%%%%%%%%%%%%%%%%%%%%%%%%%%%%%%%%%%%%%%%%%%%%%%%%%%%%%%%%%%%%%%%%%%%%
%
\subsection{Isometries}
%
%%%%%%%%%%%%%%%%%%%%%%%%%%%%%%%%%%%%%%%%%%%%%%%%%%%%%%%%%%%%%%%%%%%%%%%%%%%%%%%%%%%%
\label{sec: hyp - isom}

An isometry $g$ of $X$ is either \emph{elliptic} (its orbits are bounded) \emph{parabolic} (its orbits admit exactly one accumulation point in $\partial X$) or \emph{loxodromic} (its orbits admit exactly two accumulation points in $\partial X$).
In order to measure the action of $g$ on $X$ we used the \emph{translation length} and the \emph{stable translation length} respectively defined by
\begin{equation*}	
	\norm[X] g = \inf_{x \in X}\dist {gx}x
	\quad \text{and} \quad
	\snorm[X] g = \lim_{n \to \infty} \frac 1n \dist{g^nx}x.
\end{equation*}
If there is no ambiguity, we will omit the space $X$ from the notations.
These lengths are related by
\begin{equation}
\label{eqn: regular vs stable length}
	\snorm g \leq \norm g \leq \snorm g + 8\delta,
\end{equation}
compare with \cite[Chapitre~10, Proposition~6.4]{Coornaert:1990tj}.
In addition, $g$ is loxodromic if and only if $\snorm g > 0$.
In such a case the accumulation points of $g$ in $\partial X$ are
\begin{equation*}
	g^- = \lim_{n \to \infty} g^{-n}x
	\quad \text{and} \quad
	g^+ = \lim_{n \to \infty} g^nx.
\end{equation*}
They are the only points of $X\cup\partial X$, fixed by $g$.

\begin{lemm}
\label{res: cyl in inv qc}
	Let $g$ be a loxodromic isometry of $X$.
	Let $\ell, L \in \R_+$, with $L > 4 \ell+ 8\delta$.
	Let $\gamma \colon \R \to X$ be a bi-infinite $L$-local $(1, \ell)$-quasi-geodesic between $g^-$ and $g^+$.
	Let $Y$ be a non-empty $\group g$-invariant $\alpha$-quasi-convex subset of $X$.
	Then $\gamma$ lies in the $(\alpha + \ell/2 + 4\delta)$-neighborhood of $Y$.
\end{lemm}

\begin{proof}
	Let $x$ be a point on $\gamma$.
	It follows from the stability of quasi-geodesics that $\gro{g^-}{g^+}x \leq \ell/2 +2\delta$ (\autoref{res: stability qg}).
	We fix a point $y\in Y$.
	Since $Y$ is $\alpha$-quasi-convex, we have $d(x,Y) \leq \gro{g^{-n}y}{g^ny}x + \alpha$, for every $n \in \N$.
	We pass to the limit as $n$ approaches infinity and use (\ref{eqn: estimate gromov product boundary}) to get 
	\begin{equation*}
		d(x,Y) \leq \gro{g^-}{g^+}x + \alpha + 2\delta \leq \alpha + \ell/2 + 4\delta. \qedhere
	\end{equation*}
\end{proof}

The next lemma is a weak variation on the quasi-convexity of the distance function in a $\delta$-hyperbolic space.
\begin{lemm}[See {\cite[Lemma~2.26]{Coulon:2014fr}}]
\label{res: quasi-convexity distance isometry}
	Let $x$, $x'$ and $y$ be three points of $X$.
	If $g$ is an isometry of $X$, then $\dist {gy}y \leq \max\left\{ \dist {gx}x, \dist {gx'}{x'} \right\} + 2 \gro x{x'}y  + 6 \delta$.
\end{lemm}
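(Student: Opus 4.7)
The plan is to obtain the estimate by a single application of the four-point inequality \eqref{eqn : hyp four points - 2}, exploiting the fact that $g$ is an isometry so that several of the distances appearing in the inequality collapse to known quantities.

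First, I would apply the four-point inequality to the quadruple $(y, gy, gx, gx')$, which yields
\begin{equation*}
	\dist{y}{gy} + \dist{gx}{gx'} \leq \max\bigl\{\dist{y}{gx} + \dist{gy}{gx'},\ \dist{y}{gx'} + \dist{gy}{gx}\bigr\} + 2\delta.
\end{equation*}
Since $g$ is an isometry, $\dist{gx}{gx'} = \dist{x}{x'}$, $\dist{gy}{gx} = \dist{y}{x}$, and $\dist{gy}{gx'} = \dist{y}{x'}$. Substituting these identities on both sides gives
\begin{equation*}
	\dist{y}{gy} + \dist{x}{x'} \leq \max\bigl\{\dist{y}{gx} + \dist{y}{x'},\ \dist{y}{gx'} + \dist{y}{x}\bigr\} + 2\delta.
\end{equation*}

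Next, I would bound the mixed distances $\dist{y}{gx}$ and $\dist{y}{gx'}$ by the triangle inequality, namely $\dist{y}{gx} \leq \dist{y}{x} + \dist{x}{gx}$ and $\dist{y}{gx'} \leq \dist{y}{x'} + \dist{x'}{gx'}$. Plugging these into the right-hand side and factoring out the common term $\dist{y}{x} + \dist{y}{x'}$ from the maximum, I obtain
\begin{equation*}
	\dist{y}{gy} + \dist{x}{x'} \leq \dist{y}{x} + \dist{y}{x'} + \max\bigl\{\dist{gx}{x},\ \dist{gx'}{x'}\bigr\} + 2\delta.
\end{equation*}
Rearranging and recognizing $\dist{y}{x} + \dist{y}{x'} - \dist{x}{x'} = 2\gro{x}{x'}{y}$ from the definition of the Gromov product yields the desired inequality (in fact with constant $2\delta$, which comfortably fits inside the stated $6\delta$).

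There is no real obstacle here; the only subtlety is choosing the correct quadruple for the four-point inequality so that isometry invariance cancels the three ``diagonal'' distances and leaves exactly $\dist{y}{gy}$ together with controllable cross terms. The slack between my $2\delta$ and the stated $6\delta$ provides room in case one wishes to avoid the triangle inequality step by directly invoking quasi-convexity estimates, but the straightforward route above is the cleanest.
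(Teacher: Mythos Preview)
Your argument is correct; in fact it yields the sharper constant $2\delta$ in place of $6\delta$. Note that the paper does not supply its own proof of this lemma: it simply cites \cite[Lemma~2.26]{Coulon:2014fr}, so there is no in-paper proof to compare against, but your single application of the four-point inequality to $(y,gy,gx,gx')$ together with the isometry identities and the triangle inequality is exactly the kind of direct argument one would expect for this statement.
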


Let $S$ be a set of isometries of $X$.
Its \emph{energy} is defined by
\begin{equation*}
	\nrj S = \inf_{x \in S} \max_{s \in S} \dist {sx}x.
\end{equation*}
Given a number $d \in \R_+$, we denote by $\fix{S,d}$ the set of points which are moved by $S$ by a distance less than $d$, i.e. 
\begin{equation}
\label{eqn: def mov}
	\fix{S,d} = \set{x \in X}{\forall g \in S,\ \dist{gx}x \leq d}.
\end{equation}
If the set $S = \{g\}$ is reduced to a single isometry, then $\nrj S = \norm g$ and we simply write $\fix{g,d}$ for $\fix{S,d}$.

\begin{lemm}
\label{res: fix qc}
	Let $S$ be a set of isometries and $d >\max\{ \nrj S,5\delta\}$.
	Then $\fix{S, d}$ is non-empty and $8\delta$-quasi-convex.
	Moreover it satisfies the following properties.
	\begin{enumerate}
		\item \label{enu: fix qc - translation}
		For every $x \in X\setminus \fix{S,d}$, we have 
		\begin{equation*}
			\sup_{g \in S} \dist{gx}x \geq 2 d\!\left(x,\fix{S,d}\right) + d - 10\delta.
		\end{equation*}
		\item \label{enu: fix qc - neighborhood}
		Let $x \in X$ and $A \in \R_+$. 
		If $\dist{gx}x \leq d + 2A$ for every $g \in S$, then $x$ is $(A + 5\delta)$-close to  $\fix{S,d}$.
	\end{enumerate}
\end{lemm}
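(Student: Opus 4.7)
The strategy is to prove the three assertions in the order (2), then (1), then quasi-convexity, since (2) is the core geometric estimate while the other two follow from it in a few lines using \autoref{res: quasi-convexity distance isometry}.

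For (2), given $x$ with $\dist{gx}x \leq d + 2A$ for every $g \in S$, I fix any $z \in \mov{S,d}$. If $\dist{x}{z} \leq A + 7\delta$, the point $y = z$ works. Otherwise, I take an $L_0\delta$-local $(1,\delta)$-quasi-geodesic $\gamma$ from $x$ to $z$ (which exists in any length space) and set $y = \gamma(A + 7\delta)$. The desired estimate $\dist{gy}y < d$ for every $g \in S$ is obtained by applying the four point inequality \eqref{eqn : hyp four points - 2} to the quadruple $\{x, z, gy, y\}$. The isometry property $\dist{gy}{gz} = \dist{y}{z}$, together with $\dist{x}{y} + \dist{y}{z} \approx \dist{x}{z}$ (from the stability of quasi-geodesics, \autoref{res: stability qg}), yields an upper bound of the form $\dist{gy}y \leq \dist{gz}z + O(\delta) < d$, with the $7\delta$ in the statement calibrated to absorb all the $O(\delta)$ terms thanks to the hypothesis $d > 7\delta$.

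Property (1) is then a direct contrapositive of (2). For $x \notin \mov{S,d}$ with $r = d(x, \mov{S,d})$, choosing $A = r - 7\delta - \epsilon$ for small $\epsilon > 0$ (when $r \geq 7\delta$; otherwise the inequality is automatic since some $g \in S$ has $\dist{gx}x \geq d$) forces the failure of (2), so there exists $g \in S$ with $\dist{gx}x > d + 2A$, and letting $\epsilon \to 0$ gives $\sup_{g \in S} \dist{gx}x \geq 2r + d - 14\delta$. For the $10\delta$-quasi-convexity, given $x \in X$ and $y, y' \in \mov{S,d}$, \autoref{res: quasi-convexity distance isometry} applied to any $g \in S$ with the triple $(y, y', x)$ yields $\dist{gx}x < d + 2(\gro{y}{y'}{x} + 3\delta)$; then (2) applied with $A = \gro{y}{y'}{x} + 3\delta$ gives $d(x, \mov{S,d}) \leq \gro{y}{y'}{x} + 10\delta$, which is precisely the quasi-convexity.

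The main obstacle is establishing (2) with the sharp constant $7\delta$. A naive application of \autoref{res: quasi-convexity distance isometry} to $(x, z, y)$ only gives $\dist{gy}y \leq d + 2A + O(\delta)$, which is far too weak. The genuine content of (2) is that in a hyperbolic space the displacement $y \mapsto \dist{gy}y$ actually \emph{decreases} at rate essentially $2$ as $y$ moves from $x$ toward $\mov{S,d}$ --- a phenomenon which goes beyond the $2$-Lipschitz character of the function and is specific to hyperbolicity. Extracting this decay requires the full four point inequality played against the isometric nature of $g$, with careful tracking of constants to stay within the $7\delta$ budget.
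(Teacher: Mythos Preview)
Your reductions of (1) and of the quasi-convexity to (2) are correct, and the quasi-convexity step matches the paper's. The gap is in your direct proof of (2).

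The claimed bound $\dist{gy}{y} \leq \dist{gz}{z} + O(\delta)$ is false, and the sketch you give cannot produce it. Notice that your argument makes no use of where $y$ sits on the geodesic, nor of the hypothesis $\dist{gx}{x} \leq d + 2A$: if it worked, it would apply equally with $y = x$, forcing $\dist{gx}{x} \leq \dist{gz}{z} + O(\delta)$, which is absurd when $A$ is large. Concretely, in a tripod with center $c$ where $g$ swaps two legs and fixes the third, take $z$ on the fixed leg (so $\dist{gz}{z} = 0$) and $y$ on a swapped leg; then $\dist{gy}{y} = 2\dist{y}{c}$ is not $O(\delta)$. What the four-point inequality on $\{x,z,gy,y\}$ together with $\dist{gy}{gz} = \dist{y}{z}$ actually yields is only $\dist{gy}{y} \leq \max\{\dist{gx}{x}, \dist{gz}{z}\} + 2\gro{x}{z}{y} + 2\delta$, i.e.\ the content of \autoref{res: quasi-convexity distance isometry}; this is the weak estimate you yourself dismiss in the last paragraph.

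The paper takes a different route: it proves (1) first for a \emph{single} isometry $g$, exploiting that $\mov{g,d}$ is $\langle g\rangle$-invariant. Let $y$ be an $\eta$-projection of $x$ onto $\mov{g,d}$; then $gy$ is an $\eta$-projection of $gx$, and since $\dist{gy}{y}$ is just below $d$ one can join $y$ to $gy$ by a $(1,\epsilon)$-quasi-geodesic lying entirely inside $\mov{g,d}$. Now $y$ and $gy$ are near-projections of $x$ and $gx$ onto that short quasi-geodesic, and \autoref{res: proj qc} gives $\dist{y}{gy} \leq \dist{gx}{x} - 2\dist{x}{y} + O(\delta)$. This is exactly the rate-$2$ decay you correctly identify as the heart of the matter, but it is extracted via projections onto an \emph{invariant} set, not via the four-point inequality anchored at an arbitrary $z$. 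Point (2) is then deduced from (1), and the quasi-convexity from (2), the latter exactly as you wrote.
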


\begin{rema*}
	The \og converse\fg\ of \ref{enu: fix qc - neighborhood} is obvious.
	Indeed the $A$-neighborhood of $\fix{S,d}$ is contained in $\fix{S, d + 2A}$, for every $A \in \R_+$.
	Although the objects are defined in a slightly different way, the proof works verbatim as in \cite[Proposition~2.28]{Coulon:2014fr}.
	Nevertheless for completeness we reproduce it here.
\end{rema*}

\begin{proof}
	We first prove Point~\ref{enu: fix qc - translation} when $S$ is reduced to a single element, say $g$.
	We denote by $Y$ the ``open version'' of $\fix{g,d}$, i.e.
	\begin{equation*}
		Y = \set{x \in X}{\forall g \in S,\ \dist{gx}x < d}.
	\end{equation*}
	Since $d > \nrj S$, the set $Y$ is non-empty.
	It is also $\group g$-invariant and contained in $\fix{g,d}$.
	Let $x \in X \setminus \fix{g,d}$.
	Let $\eta > 0$ and $y$ be an $\eta$-projection of $x$ on $Y$.
	Since $x$ does not belongs to $\fix{g,d}$, one observes that $\dist{gy}y \geq d - 2\eta$.
	We now fix $\epsilon \in (0,\eta)$ such that $\dist {gy}y + \epsilon < d$ and choose a $(1, \epsilon)$-quasi-geodesic $\gamma \colon I \to X$ joining $y$ to $gy$, so that $\gamma$ is entirely contained in $Y$ (this is where the strict inequality in the definition of $Y$ plays a role).
	In particular, $y$ and $gy$ are respective $\eta$-projections of $x$ and $gx$ on $\gamma$, which is $(\epsilon/2 + 2\delta)$-quasi-convex.
	Consequently \autoref{res: proj qc} yields
	\begin{equation}
	\label{eqn: fix qc - proj}
		d - 2\eta \leq \dist{gy}y \leq \max \left\{\dist{gx}x - 2 \dist xy + 6\eta + 10\delta, 3\eta + 5\delta \right\}.
	\end{equation}
	Recall that $d >5\delta$.
	Taking $\eta > 0$ arbitrarily small leads to
	\begin{equation}
	\label{eqn: fix qc - single element}
		 \dist{gx}x \geq 2 d(x,\fix{g,d}) + d - 10\delta.
	\end{equation}
	We now prove Point~\ref{enu: fix qc - translation} for a general set $S$.
	Let $x \in X\setminus \fix{S,d}$.
	Note that $\fix{S,d}$ is exactly the intersection of all $\fix{g,d}$ where $g$ runs over $S$.
	Hence there exists $g \in S$ such that $x$ does not belong to $\fix{g,d}$.
	Applying (\ref{eqn: fix qc - single element}) we get 
	\begin{equation*}
		 \dist{gx}x 
		 \geq 2 d(x,\fix{g,d}) + d - 10\delta 
		 \geq 2 d(x,\fix{S,d}) + d - 10\delta.
	\end{equation*}
	and Point~\ref{enu: fix qc - translation} follows.
	Point~\ref{enu: fix qc - neighborhood} is a direct consequence of Point~\ref{enu: fix qc - translation}.
	We are left to prove that $\fix{S,d}$ is quasi-convex.
	Let $y$ and $y'$ be two points of $\fix{S,d}$.
	Let $x$ be a point of $X$.
	\autoref{res: quasi-convexity distance isometry} yields for every $g \in S$,
	\begin{equation*}
		\dist {gx}x 
		\leq \max \left\{ \dist {gy}y, \dist{gy'}{y'}\right\} + 2\gro y{y'}x + 6 \delta 
		\leq d + 2 \gro y{y'}x + 6\delta.
	\end{equation*}
	It follows then from Point~\ref{enu: fix qc - neighborhood} that $d(x,\fix{S,d}) \leq \gro y{y'}x + 8\delta$.
\end{proof}

\begin{lemm}
\label{res: cyl in mov}
	Let $\ell, L \in \R_+$, with $L > 4\ell + 8\delta$.
	Let $g$ be a loxodromic isometry of $X$.
	Let $\gamma \colon \R \to X$ be a bi-infinite $L$-local $(1, \ell)$-quasi-geodesic joining $g^-$ to $g^+$.
	\begin{enumerate}
		\item 
		For every $d > \max\{ \norm g, 5\delta\}$, the path $\gamma$ is contained in $\fix{g, d + \ell + 24\delta}$.
		\item 
		Conversely, if $\norm g > 8\delta$, the for every $d \in \R_+$, the set $\fix{g,d}$ is contained in the $A$-neighborhood of $\gamma$, where
		\begin{equation*}
			A = \frac 12 (d - \norm g) + \frac 12\ell + 13\delta.
		\end{equation*}
	\end{enumerate}
\end{lemm}

\begin{proof}
	The fist part of the statement is a consequence of \autoref{res: cyl in inv qc} applied to $Y = \fix{g,d}$.
	Let us focus on the second part.
	Without loss of generality we can assume that $\fix{g,s}$ is non-empty.
	Let $\eta > 0$ such that $\norm g > 8\eta + 8\delta$.
	Let $y \in X$ such that $\dist {gy}y \leq \norm g + \eta$.
	Consider $\nu \colon \intval 0L \to X$ be a $(1,\eta)$-quasi-geodesic from $y$ to $gy$.
	We extend this path to $\group g$-invariant $L$-local $(1, 2 \eta)$-quasi-geodesic $\nu \colon \R \to X$ by letting $\nu(mL + t) = g^m \nu(t)$ for every $m \in \Z$ and $t \in [0, L)$.
	Let $x \in \fix{g,d}$ and $p = \nu(t)$ a projection of $x$ on $\nu$.
	Since $\nu$ is $\group g$-invariant, $gp = \nu(t +L)$ is also a projection of $gx$ on $\nu$. 
	Moreover $\dist{gp}p \geq \norm g > 8\eta + 8\delta$.
	Recall that $\nu$ restricted to $\intval t{t+M}$ is $(\eta + 2 \delta)$-quasi-convex.
	It follows from the projection a quasi-convex (\autoref{res: proj qc}) that 
	\begin{equation*}
		d \geq \dist {gx}x \geq 2 \dist xp + \norm g  - 4\eta - 10\delta
	\end{equation*}
	Note that $L \geq \norm g > 8\eta + 8\delta$.
	By stability of quasi-geodesics (\autoref{res: stability qg}) we get $\gro{g^-}{g^+}p \leq \eta + 2\delta$.
	Since $\gamma$ is $(\ell/2 + 4\delta)$-quasi-convex, we get 
	\begin{align*}
		d(x, \gamma) 
		\leq \gro{g^-}{g^+}x  + \ell/2 + 6\delta
		& \leq \dist xp + \gro{g^-}{g^+}p + \ell/2 + 6\delta \\
		& \leq \frac 12(d - \norm g) + \ell/2 + 3\eta + 13\delta.	
	\end{align*}
	This holds for every sufficiently small $\eta$, whence the result.
\end{proof}

\begin{lemm}
\label{res: translation length vs cocycle}
	Let $\xi \in \partial X$ and $c$ a Busemann cocycle at $\xi$.
	Let $g$ be an isometry of $X$ fixing $\xi$.
	There exists $\epsilon \in \{\pm 1\}$, such that for every $x \in X$, we have $\abs{c(gx,x) + \epsilon\snorm g} \leq 6\delta$.
\end{lemm}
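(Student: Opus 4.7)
The plan is to show that $h(x) := c(gx,x)$ is almost constant on $X$, and then to identify its value by computing the asymptotic growth rate of $c(g^n x, x)$ along an orbit. Since $g$ fixes $\xi$, the pushforward $gc$ is another Busemann cocycle at $\xi$, and any two such cocycles differ by at most $6\delta$. This yields the almost-invariance $\abs{c(gu,gv) - c(u,v)} \leq 6\delta$ for all $u,v \in X$, which combined with the cocycle identity $c(gx,x) - c(gy,y) = c(gx,gy) - c(x,y)$ gives the oscillation bound $\abs{h(x) - h(y)} \leq 6\delta$. Summing the cocycle along an orbit then produces, for every $n \geq 1$,
\begin{equation*}
c(g^n x, x) = \sum_{i=0}^{n-1} h(g^i x), \qquad \text{hence} \qquad \abs{\frac{c(g^n x, x)}{n} - h(x)} \leq 6\delta.
\end{equation*}
It will thus be enough to show that $c(g^n x, x)/n$ converges to $-\epsilon \snorm g$ for some $\epsilon \in \{\pm 1\}$.

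Suppose first that $g$ is loxodromic with $g^+ = \xi$. I would pick a bi-infinite $L_0\delta$-local $(1,\delta)$-quasi-geodesic $\gamma \colon \R \to X$ from $g^-$ to $g^+$ and set $y = \gamma(0)$. Each translate $g^n \gamma$ is again an $L_0\delta$-local $(1,\delta)$-quasi-geodesic with the same endpoints at infinity, so \autoref{res: stability qg} bounds its Hausdorff distance to $\gamma$ by $7\delta$, uniformly in $n$. Thus $g^n y$ lies within $7\delta$ of some $\gamma(s_n)$, with $s_n \to +\infty$. Combining the almost-$1$-Lipschitz property (\ref{eqn: busemann cocycle lipschitz}) with \autoref{res: computing cocycle from qg} applied to the positive half of $\gamma$ then produces a constant $C = C(\delta)$, independent of $n$, such that $\abs{c(g^n y, y) + d(y, g^n y)} \leq C$. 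Dividing by $n$ and using $d(y,g^n y)/n \to \snorm g$ yields $c(g^n y, y)/n \to -\snorm g$; the bound $\abs{c(g^n x, x) - c(g^n y, y)} \leq 2 d(x,y) + 4\delta$ transfers this limit to an arbitrary base point $x$, and the first step then gives $\abs{h(x) + \snorm g} \leq 6\delta$, so $\epsilon = +1$ works. The case $\xi = g^-$ is handled by the same argument applied to $g^{-1}$, producing $\abs{h(x) - \snorm g} \leq 6\delta$ and $\epsilon = -1$.

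Finally, when $g$ is parabolic or elliptic with $g\xi = \xi$, one has $\snorm g = 0$, and the almost-Lipschitz bound $\abs{c(g^n x, x)} \leq d(g^n x, x) + 2\delta$ together with $d(g^n x, x)/n \to 0$ forces $c(g^n x, x)/n \to 0$, so $\abs{h(x)} \leq 6\delta$ and either choice of $\epsilon$ is valid. The main technical point is the uniform estimate on $c(g^n y, y)$ in the loxodromic case; it is essential here that the stability of quasi-geodesics applies simultaneously to all translates $g^n \gamma$ with a bound independent of $n$.
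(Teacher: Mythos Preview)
Your proof is correct and follows essentially the same approach as the paper's: both establish the telescoping bound $\abs{c(g^nx,x) - n\,c(gx,x)} \leq 6n\delta$ from the fact that $g^{-k}c$ is again a Busemann cocycle at $\xi$, then identify the limit $c(g^nx,x)/n$ via a quasi-geodesic to $\xi$ using \autoref{res: computing cocycle from qg}. The only technical difference is in the loxodromic case: the paper builds an explicit $h$-periodic $L_0\delta$-local $(1,2\delta)$-quasi-geodesic by concatenating translates of a short segment (so that $hy$ lies exactly on it), whereas you take a fixed bi-infinite quasi-geodesic in $\Gamma_g$ and invoke the stability estimate of \autoref{res: stability qg} to place each $g^ny$ within $7\delta$ of it; both routes feed into the same lemma and yield the same conclusion.
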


\begin{proof}	
	Let $x \in X$.
	Let $n \in \N$.
	Observe that
	\begin{equation*}	
			c(g^nx,x) 
			= \sum_{k=0}^{n-1} c(g^{k+1}x,g^kx)
			= \sum_{k=0}^{n-1} g^{-k} c (gx,x).
	\end{equation*}
	Since $g$ fixes $\xi$, for every $k \in \N$, the map $g^{-k}c$ is a Busemann cocycle at $\xi$, and therefore differs from $c$ by at most $6\delta$.
	Thus $\abs{c(g^nx,x) - nc (gx,x)} \leq 6n \delta$.
	As Busemann cocycles are almost $1$-Lipschitz, we get
	\begin{equation*}
		\abs{c(gx,x)} \leq \frac 1n \abs{c(g^nx,x)} + 6\delta \leq \frac 1n \dist{g^nx}x + \left(6 + \frac 2n\right)\delta.
	\end{equation*}
	Taking the limit yields $\abs{c(gx,x)} \leq \snorm g + 6\delta$.
	In particular, the result holds if $g$ is either elliptic or parabolic.

	Assume now that $g$ is loxodromic.
	There exists $\epsilon \in \{ \pm 1\}$ such that $\xi$ is the attractive point of $g^\epsilon$.
	We fix $\eta > 0$.
	Note that $\norm{g^n} > 8\eta + 8\delta$, for every sufficiently large $n \in \N$.
	We fix such an exponent $n$ and write $h = g^{\epsilon n}$.
	Let $y\in X$ be a point such that $\dist {hy}y \leq \norm h+ \eta$.
	We choose a $(1, \eta)$-quasi-geodesic $\gamma \colon \intval 0L \to X$ joining $y$ to $h y$ and extend $\gamma$ to a bi-infinite path $\gamma \colon \R \to X$ as follows: for every $t \in [0, L)$, for every $n \in \Z$, we let $\gamma(nL + t) = h\gamma(t)$.
	It follows from our choice of $y$ that $\gamma$ is an $L$-local $(1, 2\eta)$-quasi-isometry from $h^-$ to $h^+ = \xi$.
	Applying \autoref{res: computing cocycle from qg}, we get 
	\begin{equation}
	\label{eqn: translation length vs cocycle}
		\abs{c(y,hy) - \dist{hy}y} \leq 2\eta +8\delta.
	\end{equation}
	Observe that $h^{-1}c$ and $c$ are two cocycles at $\xi$ (since $h$ fixes $\xi$), hence they differ by at most $6\delta$.
	The cocycle property yields $\abs{c(g^nx,x)+ \epsilon c(y,hy)} \leq 12\delta$.
	Thus (\ref{eqn: translation length vs cocycle}) becomes $\abs{c(g^nx,x) + \epsilon\dist{g^ny}y} \leq 2\eta + 20\delta$.
	Recall that $c(g^nx,x)$ and $nc(gx,x)$ differs by at most $6n\delta$.
	Hence
	\begin{equation*}
		\abs{c(gx,x) + \frac \epsilon n \dist{g^ny}y} 
		\leq \frac 1n \abs{c(g^nx,x)+ \epsilon \dist{g^ny}y} + 6\delta
		\leq 6\delta + \frac 1n (2\eta + 20\delta).
	\end{equation*}
	The result follows by taking the limit as $n$ approaches infinity.
\end{proof}

\begin{lemm}
\label{res: isom fixing xi moving geo}
	Let $x \in X$ and $\xi \in \partial X$.
	Let $\ell, L \in \R_+$ with $L > 4 \ell + 8\delta$.
	Let $\gamma \colon \R_+ \to X$ be an $L$-local $(1, \ell)$-quasi-geodesic ray from $x$ to $\xi$.
	For every $d \in \R_+$, there exists $t_d \in \R_+$ with the following property: if $g$ is an isometry fixing $\xi$ and satisfying $\dist {gx}x \leq d$, then there exists $\epsilon \in \{\pm 1\}$ such that for every $t\geq t_d$ we have
	\begin{equation*}
		\dist{\gamma(t + \epsilon \snorm g)}{g \gamma(t)} \leq 2\ell + 20\delta.
	\end{equation*}
\end{lemm}

\begin{proof}
	The path $\gamma$ is a global quasi-geodesic (\autoref{res: stability qg}) thus there exists $t_d  \in \R_+$ such that for every $t \geq t_d$, we have 
	$\dist{\gamma(t)}x > d + \ell/2 + 3\delta$.
	Let $g$ be an isometry fixing $\xi$ such that $\dist{gx}x \leq d$.
	It follows from the triangle inequality that $\gro {gx}x{\gamma(t)} > \ell/2 + 3\delta$ whenever $t \geq t_d$.
	Let $c$ be a Busemann cocycle at $\xi$.
	According to \autoref{res: translation length vs cocycle} there exists $\epsilon \in \{\pm 1\}$ such that for every $x \in X$, we have $\abs{c(gx,x) + \epsilon \snorm g} \leq 6\delta$.
	Let $t \geq t_d$.
	For simplicity we write $y_1 = \gamma(t)$ and $y_2 = \gamma(t+ \epsilon \snorm g)$.
	Since $\gamma$ is an $L$-local $(1, \ell)$-quasi-geodesic, $c(y_1, y_2)$ differs from $\epsilon \snorm g$ by at most $\ell + 8\delta$ (\autoref{res: computing cocycle from qg}).
	Hence 
	\begin{equation*}
		\abs{c(gy_1,y_2)}
		\leq \abs{ c(gy_1, y_1) + c(y_1, y_2)}
		\leq \ell + 14\delta.
	\end{equation*}
	By \autoref{res: stability qg}, $\gro x\xi{y_2} \leq \ell/2 + 2\delta$ and $\gro{gx}\xi{gy_1} \leq \ell/2 + 2\delta$.
	The four point inequality (\ref{eqn: hyperbolicity condition with boundary}) yields
	\begin{equation*}
		\min\left\{ \gro {gx}\xi{y_2}, \gro{gx}x{y_2} \right\} \leq \gro x\xi{y_2} + \delta \leq \ell/2 + 3\delta.
	\end{equation*}
	It follows from our choice of $t_d$ that the minimum cannot by achieved by $\gro{gx}x{y_2}$, hence $\gro {gx}\xi{y_2} \leq \ell/2 + 3\delta$.
	Applying \autoref{res: variation four point w/ cocycle} we get 
	\begin{equation*}
		\dist{gy_1}{y_2} 
		\leq \abs{c(gy_1,y_2)} + 2\max\left\{ \gro x\xi{gy_1}, \gro x\xi{y_2}  \right\} + 8\delta
		\leq 2\ell + 20\delta. 
		\qedhere
	\end{equation*}

\end{proof}

The same arguments can be used to prove the following lemma.

\begin{lemm}
\label{res: loxo moving geo}
	Let $g$ be a loxodromic isometry.
	Let $L, \ell \in \R_+$, with $L > 4\ell +8\delta$.
	Let $\gamma \colon \R_+ \to X$ be an $L$-local $(1, \ell)$-quasi-geodesic from $g^-$ to $g^+$.
	Then for every $t \in \R$, for every $n \in \Z$, we have
	\begin{equation*}
		\dist{\gamma(t + n\snorm g)}{g^n \gamma(t)} \leq 2\ell + 20\delta. 
	\end{equation*}
\end{lemm}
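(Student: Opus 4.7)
The plan is to mirror the proof of \autoref{res: isom fixing xi moving geo}, with the isometry $g^n$ playing the role of $g$. Note that $g^n$ also fixes $\xi = g^+$ and has stable translation length $n \snorm g$. By replacing $g$ with $g^{-1}$ and simultaneously reversing the parameterization of $\gamma$ when $n < 0$ (the reversed curve is still an $L$-local $(1, \ell)$-quasi-geodesic, now between the fixed points of $g^{-1}$), and by the triviality of the case $n = 0$, reduce to $n \geq 1$. Fix $t \in \R$ and set $y_1 = \gamma(t)$, $y_2 = \gamma(t + n \snorm g)$.

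Let $c$ be a Busemann cocycle at $\xi$. Applying \autoref{res: translation length vs cocycle} directly to the isometry $g^n$, one obtains $\left| c(g^n y_1, y_1) + n \snorm g \right| \leq 6\delta$, the sign being fixed because $\xi = (g^n)^+$ for $n \geq 1$; crucially, this estimate is \emph{independent of $n$}. Combined with \autoref{res: computing cocycle from qg} applied along the ray $\gamma|_{[t, \infty)}$ heading to $\xi$, which yields $c(y_1, y_2) \approx \dist{y_1}{y_2}$ up to $\ell + 15\delta$, the cocycle identity $c(g^n y_1, y_2) = c(g^n y_1, y_1) + c(y_1, y_2)$ produces a bound on $\left| c(g^n y_1, y_2) \right|$ of the order $2\ell + 21\delta$.

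For the Gromov products entering \autoref{res: variation four point w/ cocycle}, I would proceed exactly as in the previous proof: choose a base point $x$ far back on $\gamma$ (on the ray toward $g^-$). The point $y_2$ lies on the ray $\gamma|_{[t, \infty)}$ heading to $\xi$, so $\gro x\xi{y_2}$ is bounded by $\ell/2 + 5\delta$ via \autoref{res: stability qg}. The point $g^n y_1$ lies within Hausdorff distance $2\ell + 5\delta$ of $\gamma$, since $g^n \gamma$ is another bi-infinite $L$-local $(1, \ell)$-quasi-geodesic from $g^-$ to $g^+$ and the stability part of \autoref{res: stability qg} bounds the Hausdorff distance between any two such curves; hence $\gro x\xi{g^n y_1}$ is also controlled, of order $3\ell/2 + 15\delta$. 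Applying \autoref{res: variation four point w/ cocycle} to $y = g^n y_1$ and $y' = y_2$ then yields the desired estimate $\dist{g^n y_1}{y_2} \leq 5\ell + 59\delta$.

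The principal obstacle is to keep the comparison between $c(y_1, y_2)$ and $n \snorm g$ uniform in $n$: the interval $[t, t + n \snorm g]$ may exceed length $L$, so the sub-path of $\gamma$ on it is only a global $(2, \ell)$-quasi-geodesic rather than a $(1, \ell)$-quasi-geodesic, and a naive chaining of the $n = 1$ estimate would produce an error growing linearly in $n$. Overcoming this is precisely why the argument must apply \autoref{res: translation length vs cocycle} \emph{directly} to the single isometry $g^n$ (whose fixed point is still $\xi$) rather than iterating the $n = 1$ case: one then exploits the bi-infinite structure of $\gamma$ and the $g$-equivariance of Busemann cocycles at $\xi$ so that the Busemann progress along $\gamma$ matches the parameter shift up to an error that does not accumulate with $n$.
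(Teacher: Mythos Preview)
Your approach follows the paper's template (the paper merely says ``the same arguments'' as for \autoref{res: isom fixing xi moving geo}), and the overall structure is right. The gap is in the passage to $|c(g^ny_1,y_2)|\le 2\ell+21\delta$. Writing $c(g^ny_1,y_2)=c(g^ny_1,y_1)+c(y_1,y_2)$, applying \autoref{res: translation length vs cocycle} to $g^n$ indeed gives $|c(g^ny_1,y_1)+n\snorm g|\le 6\delta$ uniformly in $n$, but this says nothing about the second summand: one still needs $|c(y_1,y_2)-n\snorm g|\le 2\ell+15\delta$, i.e.\ via \autoref{res: computing cocycle from qg} the estimate $\bigl|\dist{\gamma(t)}{\gamma(t+n\snorm g)}-n\snorm g\bigr|\le\ell$. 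That last bound follows from the local $(1,\ell)$-property only when $n\snorm g\le L$; for larger $n$ one merely has the global $(2,\ell)$-estimate and the shortfall can accumulate. Your final paragraph names this obstacle but does not resolve it---the $g$-equivariance of Busemann cocycles controls $c(g^ny_1,y_1)$, whereas $c(y_1,y_2)$ is a purely $\gamma$-internal quantity, untouched by the dynamics of $g$.

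In fact the statement fails for large $n$ under the hypotheses as written: take $X=\R$ (hence $\delta$-hyperbolic for any $\delta>0$), $g$ a unit translation, and let $\gamma$ be arc-length parametrized with a backward segment of length $\ell/3$ inserted once every $L$ units. Then $\gamma$ is an $L$-local $(1,\ell)$-quasi-geodesic from $g^-$ to $g^+$, yet $\dist{\gamma(t+n)}{g^n\gamma(t)}\sim \tfrac{2\ell}{3L}\,n$. Your argument (and the paper's) is correct once one adds the restriction $|n|\snorm g\le L$; this is harmless in the paper's applications of the lemma, where $L$ is chosen freely.
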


%%%%%%%%%%%%%%%%%%%%%%%%%%%%%%%%%%%%%%%%%%%%%%%%%%%%%%%%%%%%%%%%%%%%%%%%%%%%%%%%%%%%
%
\subsection{Group action}
%
%%%%%%%%%%%%%%%%%%%%%%%%%%%%%%%%%%%%%%%%%%%%%%%%%%%%%%%%%%%%%%%%%%%%%%%%%%%%%%%%%%%%
\label{sec: hyp - gp action}

\paragraph{Classification of actions.}
Let $G$ be a group acting by isometries on $X$.
Its \emph{limit set} $\Lambda(G)$ is the set of accumulation points in $\partial X$ of some (hence any) orbit of $G$.
The action of $G$ on $X$  is \emph{elliptic} (\resp \emph{parabolic}, \emph{loxodromic}, \emph{non-elementary}) if $\Lambda(G)$ is empty (\resp contains exactly $1$ point, exactly $2$ points, at least $3$ points).
If there is no ambiguity regarding the action, we simply say that $G$ is \emph{elliptic} (\resp \emph{parabolic}, \emph{loxodromic}, \emph{non-elementary}).

\paragraph{Elliptic action.}
Even though $X$ is not necessarily locally compact, a group $G$ is elliptic if and only if its orbits are bounded \cite[Proposition~3.5]{Coulon:2016if}.
Elliptic groups actually have very small orbits.
\begin{lemm}[Compare with {\cite[Proposition~2.3.4]{Delzant:2008tu}} or {\cite[Corollary~2.38]{Coulon:2014fr}}]
\label{res: fix set elliptic}
	Let $G$ be an elliptic group of isometries of $X$.
	The set $\fix{G, 5\delta}$ is non-empty.
	Moreover if $Y$ is a non-empty $G$-invariant $\alpha$-quasi-convex subset of $X$, then $\fix{G,10\delta}$ intersects the $\alpha$-neighborhood of $Y$. 
\end{lemm}

\paragraph{Loxodromic action.}
Let $G$ be a loxodromic group.
In particular, it contains a loxodromic isometry, say $g$ \cite[Proposition~3.6]{Coulon:2016if}.
Note that $g^-$ and $g^+$ are the two points of $\Lambda(G)$.
Moreover every element of $G$ preserves $\{g^-, g^+\}$.
We denote by $G^+$ the subgroup of $G$ fixing \emph{pointwise} $\{g^-, g^+\}$.
It has index at most $2$ in $G$.
If $G = G^+$ we say that $G$ is \emph{preserves the orientation}.

Let $\Gamma$ be the union of all $L$-local $(1, \delta)$-quasi-geodesics from $g^-$ to $g^+$ with $L> 12\delta$.
The \emph{cylinder} $Y$ of $G$ is the set
\begin{equation}
\label{eqn: def cylinder}
	Y = \set{x \in X}{ d(x,\Gamma) < 20\delta}.
\end{equation}
It is a strongly quasi-convex subset of $X$, see \cite[Lemma~3.13]{Coulon:2016if}.
By construction $\partial Y = \{g^-, g^+\} = \Lambda(G)$.

\begin{lemm}
\label{res: normal elliptic fixing cyl}
	Let $g \in G$ be a loxodromic element.
	Let $F$ be an elliptic subgroup of $G$ normalized by $g$.
	Let $\ell, L \in \R_+$, with $L > 4\ell + 8\delta$.
	Let $\gamma$ be an $L$-local $(1,\ell)$-quasi-geodesic from $g^-$ to $g^+$.
	Then $\gamma$ is contained in $\fix{F,\ell + 30\delta}$.
\end{lemm}

\begin{proof}
	Since $g$ normalizes $F$, the set $\fix{F,6\delta}$ is a $\group g$-invariant $8\delta$-quasi-convex subset (\autoref{res: fix qc}).
	By \autoref{res: cyl in inv qc}, the path $\gamma$ is contained in the $(\ell/2 + 12\delta)$-neighborhood of $\fix{F,6\delta}$.
\end{proof}

\paragraph{Non-elementary action.}
The next lemma is an improved version of the classical ping-pong argument.
It provides a simple criterion to ensure that a group is non-elementary.

\begin{lemm}[See {\cite[Lemma~3.24]{Coulon:2016if}}]
\label{res: non elementary subgroup sufficient condition}
	Let $A \geq 0$.
	Let $x \in X$.
	Let $G$ be a group of isometries of $X$ generated by two elements $u$ and $v$ such that 
	\begin{enumerate}
		\item $2\gro {u^{\pm 1}x}{v^{\pm 1}x}x < \min \{\dist {ux}x, \dist {vx}x \} -A - 8\delta$,
		\item $2\gro{ux}{u^{-1}x}x < \dist {ux}x + A$,
		\item $2\gro{vx}{v^{-1}x}x < \dist {vx}x + A$.
	\end{enumerate}
	Then $G$ is non-elementary.
\end{lemm}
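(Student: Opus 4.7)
The plan is to produce two loxodromic elements of $G$ with pairwise disjoint pairs of fixed points in $\partial X$, thereby showing that $\Lambda(G)$ contains at least four distinct boundary points and hence that $G$ is non-elementary. The natural candidates for these elements are $w := uv$ and $w' := vu$ (or a conjugate such as $uwu^{-1}$).

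I would first lower-bound the displacement $\dist{wx}x$. Since $u$ is an isometry,
\[
\dist{wx}x = \dist{vx}{u^{-1}x} = \dist{vx}x + \dist{u^{-1}x}x - 2\gro{vx}{u^{-1}x}x,
\]
and inserting hypothesis~(1) yields $\dist{wx}x > \max\{\dist{ux}x,\dist{vx}x\} + A + 8\delta$. Writing $L' := \dist{wx}x$ and using the tautological identity $2\gro{wx}{w^{-1}x}x = 2L' - \dist{w^2x}x$, I would then bound $\gro{wx}{w^{-1}x}x$ from above by bounding $\dist{w^2x}x$ from below: expanding $\dist{w^2 x}x = \dist{vuvx}{u^{-1}x}$ via Gromov products and repeatedly applying hypotheses (1)--(3) together with the four-point inequality (\ref{eqn : hyp four points - 1}) transported by the isometries $u,v$ of $X$, the expected outcome is a bound of the form $2\gro{wx}{w^{-1}x}x < L' + A'$ for some $A'$ comparable to $A$, which is precisely the analogue of hypothesis~(2) with $w$ in place of $u$. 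By $G$-equivariance of Gromov products, this inequality propagates to every triple $w^{n-1}x, w^n x, w^{n+1}x$, so that at each interior vertex the Gromov product is bounded by $(L' + A')/2$ independently of $n$. The piecewise geodesic through the orbit $(w^n x)_{n \in \Z}$ is therefore an $L_0\delta$-local quasi-geodesic with controlled parameters, and \autoref{res: stability qg} promotes it to a bi-infinite (global) quasi-geodesic. Hence $w$ is loxodromic, with well-defined fixed points $w^\pm \in \partial X$.

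Running the symmetric argument with $w' := vu$ (or with $uwu^{-1}$) produces a second loxodromic element with fixed points $w'^\pm$; hypothesis~(1) forbids any coincidence among $w^\pm$ and $w'^\pm$, since such a coincidence would force sequences defining these boundary points to realize Gromov products $\gro{u^{\pm 1}x}{v^{\pm 1}x}x$ exceeding the uniform upper bound in~(1). Consequently $\Lambda(G)$ contains at least four distinct points and $G$ is non-elementary. The main obstacle is the quantitative accounting in the middle step: the slack $-A-8\delta$ in~(1) and the $+A$ summands in~(2)--(3) are precisely what is needed to absorb every additive $\delta$-defect introduced by the four-point inequality, the stability of quasi-geodesics, and the non-geodesicity of $X$ emphasized in \autoref{rem: meaning gromov product}; verifying that the resulting inequalities close up strictly is the technical heart of the proof.
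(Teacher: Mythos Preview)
The paper does not prove this lemma; it is quoted verbatim from \cite[Lemma~3.24]{Coulon:2016if}. So there is no ``paper's own proof'' to compare against, and I will simply assess your argument on its merits.

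Your overall strategy---produce $w=uv$ as a loxodromic element and then exhibit a conjugate $w'$ with distinct fixed-point pair---is sound, and the displacement estimate for $wx$ is correct. The cleanest way to carry out the loxodromicity step, incidentally, is to look at the \emph{finer} broken path $x,\,ux,\,uvx,\,uvux,\,\dots$ rather than the coarse orbit $(w^nx)$: each segment has length at least $\min\{\dist{ux}x,\dist{vx}x\}$ and each joint has Gromov product bounded by $(\min-A-8\delta)/2$ thanks to~(1), so a direct induction (no appeal to \autoref{res: stability qg}, which would require segments of length $\geq L_0\delta$) shows the path is globally quasi-geodesic.

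There is, however, a genuine gap in your final step. With $w=uv$ and $w'=vu=u^{-1}wu$, the four attracting/repelling points lie in the ``directions'' of $ux$, $v^{-1}x$, $vx$, $u^{-1}x$ respectively. Hypothesis~(1) separates every $u$--$v$ pair, so it gives $w^+\neq w'^+$ and $w^-\neq w'^-$. But it says nothing about the pairs $(w^+,w'^-)$ and $(w^-,w'^+)$, which correspond to the directions $(ux,u^{-1}x)$ and $(v^{-1}x,vx)$: these are governed by hypotheses~(2) and~(3), not~(1). Concretely, if $w^+=w'^-$ then $u$ swaps $w^\pm$, and one deduces $\gro{ux}{u^{-1}x}x\geq \dist{ux}x-O(\delta)$; feeding this into~(2) yields $\dist{ux}x< A+O(\delta)$, which contradicts the lower bound $\dist{ux}x>A+8\delta$ coming from~(1). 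So the argument closes, but only after invoking~(2) and~(3) in combination with~(1)---your sentence ``hypothesis~(1) forbids any coincidence'' is false as stated and is exactly where the roles of~(2) and~(3) enter.
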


\paragraph{Gentle action.}
In order to complete the description of loxodromic groups started above, we introduce a (harmless) additional assumption.
\begin{defi}
\label{def: gentle action}
	The action of $G$ on $X$ is \emph{gentle} if every loxodromic subgroup $H$ preserving the orientation splits as a semi-direct product $H = \sdp F\Z$ where $F$ consists exactly of all elliptic elements of $H$.
\end{defi}
If every loxodromic subgroup is virtually cyclic, then the action of $G$ is automatically gentle.
From now on we assume that the action of $G$ on $X$ is gentle.
Let $H$ be a loxodromic subgroup of $G$ and $H^+$ the subgroup of $H$ fixing pointwise $\Lambda(H)$.
Let $F$ be the set of all elliptic elements of $H^+$.
It follows from our assumption that $F$ is an elliptic normal subgroup of $H$ and is maximal for these properties.
The quotient $H/F$ is either isomorphic to $\Z$ if $H$ preserves the orientation (i.e. $H = H^+$) or the infinite dihedral group $\dihedral$ otherwise.
Observe that if $H$ is generated by two elliptic subgroups, then $H$ cannot preserve the orientation.

\begin{lemm}
\label{res: canonical proj to dihedral}
	Let $H$ be a loxodromic subgroup of $G$.
	If $p \colon H \to \dihedral$ is a morphism whose kernel is elliptic, then this kernel is exactly the maximal normal elliptic subgroup $F$ of $H$.
\end{lemm}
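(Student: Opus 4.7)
My plan is to prove the two inclusions $\ker p \subseteq F$ and $F \subseteq \ker p$ separately.

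The first inclusion is immediate: by hypothesis $\ker p$ is elliptic, and as the kernel of a morphism from $H$ it is automatically normal in $H$. Since $F$ has just been defined as the maximal elliptic normal subgroup of $H$, this gives $\ker p \subseteq F$.

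For the reverse inclusion, I would pass to quotients. The inclusion $\ker p \subseteq F$ induces a canonical surjection $\pi \colon H/\ker p \twoheadrightarrow H/F$, and my goal is to show $\pi$ is injective. First, I would identify both groups. By the paragraph preceding the lemma (using that the action is gentle), $H/F$ is either $\Z$ or $\dihedral$. On the other hand, $p$ descends to an injection $H/\ker p \hookrightarrow \dihedral$. Moreover $H/\ker p$ must be infinite: otherwise $\ker p$ would be a finite-index elliptic subgroup of $H$, whose $H$-orbits would be unions of finitely many bounded sets and hence bounded, contradicting the fact that $H$ is loxodromic. Therefore $H/\ker p$ is an infinite subgroup of $\dihedral$, i.e.\ it is isomorphic to $\Z$ or to $\dihedral$.

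The proof then concludes by a short case analysis on the pair $(H/\ker p,\, H/F)$. The case $(\Z, \dihedral)$ is excluded because $\dihedral$ is not a quotient of $\Z$, and the case $(\dihedral, \Z)$ is excluded because the abelianisation of $\dihedral$ is $(\Z/2\Z)^2$, so $\dihedral$ admits no surjection onto $\Z$. In the remaining two cases $(\Z,\Z)$ and $(\dihedral,\dihedral)$, the target and source of $\pi$ are isomorphic, and both $\Z$ and $\dihedral$ are Hopfian (the latter, for instance, because any non-trivial normal subgroup of $\dihedral$ either contains $\langle t^n\rangle$ for some $n\geq 1$ with finite quotient, or else has order at most $2$ in the quotient). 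Hence $\pi$ must be an isomorphism, its kernel $F/\ker p$ is trivial, and $F = \ker p$. The main conceptual point is the observation that $H/\ker p$ lies in $\{\Z,\dihedral\}$; the rest is elementary group theory, so I do not anticipate a serious obstacle.
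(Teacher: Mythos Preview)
Your proof is correct, but it takes a different route from the paper's. Both start with the immediate inclusion $\ker p \subseteq F$. For the reverse, the paper argues directly that $p(F)=1$: it fixes a loxodromic element $h\in H$, observes that the preimage under $p$ of any finite subgroup of $\dihedral$ is a finite extension of $\ker p$ and hence elliptic, so $p(h)$ must be a nonzero translation; it then shows $p(F)$ contains no nonzero translation (else some $h^m$ would lie in $F$), so $p(F)$ is finite; finally, since $p(h)$ normalises $p(F)$ and a nonzero translation in $\dihedral$ normalises no nontrivial finite subgroup, $p(F)=1$. Your argument instead passes to quotients, identifies both $H/\ker p$ and $H/F$ as lying in $\{\Z,\dihedral\}$, rules out the mixed cases, and invokes the Hopfian property. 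Your approach is more structural and would adapt to other Hopfian targets; the paper's is more hands-on and avoids any case analysis. One small remark: your parenthetical justification for $\dihedral$ being Hopfian is a bit garbled (every nontrivial normal subgroup of $\dihedral$ in fact contains some $\langle t^n\rangle$ with $n\geq 1$, so the quotient is always finite), but the conclusion is of course correct.
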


\begin{proof}	
	By assumption the kernel of $p$ is an elliptic normal subgroup of $H$, hence it is contained in $F$.
	Let us prove the other inclusion.
	We fix a loxodromic element $h \in H$.
	According to our assumption, the pre-image under $p$ of any finite subgroup of $\dihedral$ is elliptic (as a finite extension of an elliptic subgroup).
	Hence $p(h)$ belongs to $\cyclic^* \subset \dihedral$.	
	Observe then that $p(F)$ does not contain any element of $\Z\setminus\{0\}$.
	Indeed otherwise there would exist $m \in \Z\setminus\{0\}$ and $u \in F$ such that $p(h^m) = p(u)$.
	Since the kernel of $p$ is contained in $F$, the element $h^m$ should belong to $F$ which contradicts the fact that $h$ is loxodromic.
	Hence $p(F)$ is a finite subgroup of $\dihedral$.
	As $h$ normalizes $F$, its image $p(h)$ normalizes $p(F)$ which forces $p(F)$ to be trivial.
\end{proof}

Given a loxodromic element $g \in G$, we write $E(g)$ for the subgroup of $G$ preserving $\{g^-,g^+\}$.
It is the maximal elementary subgroup of $G$ containing $g$.
The group $E^+(g)$ stands for the maximal subgroup of $E(g)$ fixing pointwise $\{g^-,g^+\}$.

\begin{defi}(Primitive element)
\label{def: primitive}
	Let $g \in G$ be a loxodromic element.
	Let $F$ be the maximal normal elliptic subgroup of $E(g)$.
	We say that $g$ is \emph{primitive} if its image in $E^+(g)/F\equiv \cyclic$ generates the group.
\end{defi}

%%%%%%%%%%%%%%%%%%%%%%%%%%%%%%%%%%%%%%%%%%%%%%%%%%%%%%%%%%%%%%%%%%%%%%%%%%%%%%%%%%%%
%%%%%%%%%%%%%%%%%%%%%%%%%%%%%%%%%%%%%%%%%%%%%%%%%%%%%%%%%%%%%%%%%%%%%%%%%%%%%%%%%%%%
%
\section{Invariants of a group action}
%
%%%%%%%%%%%%%%%%%%%%%%%%%%%%%%%%%%%%%%%%%%%%%%%%%%%%%%%%%%%%%%%%%%%%%%%%%%%%%%%%%%%%
%%%%%%%%%%%%%%%%%%%%%%%%%%%%%%%%%%%%%%%%%%%%%%%%%%%%%%%%%%%%%%%%%%%%%%%%%%%%%%%%%%%%
\label{sec: invariants}

Let $X$ be a $\delta$-hyperbolic length space and $G$ a group acting gently by isometries on $X$ (see \autoref{def: gentle action}).
Note that for the moment, we have not made any serious assumption on the group $G$ or the space $X$.
In order to study the action of $G$ on $X$ we define several numerical invariants.
Those quantities will be useful later to estimate the small cancellation parameters needed to run the induction leading to the infiniteness of Burnside groups.
We define two types of invariants.
The first kind, namely the \emph{injectivity radius} $\inj[X]G$, the \emph{acylindricity constant} $A(G,X)$ as well as the \emph{$\nu$-invariant} $\nu(G,X)$ are purely \emph{geometric}.
Those invariants (or some variations of them) already appeared in \cite[D\'efinition~2.4.1]{Delzant:2008tu} and \cite[Definition~3.40]{Coulon:2016if}.
Unfortunately they are not sharp enough to handle even torsion.
More precisely the $\nu$-invariant, does not behave well when passing to quotient.
Therefore we also define (among others) a strong variation $\nu_{\rm{stg}}(G,X)$ of the $\nu$-invariant, which has a mixed nature: it reflects both the \emph{geometric} and \emph{algebraic} features of $G$.

%%%%%%%%%%%%%%%%%%%%%%%%%%%%%%%%%%%%%%%%%%%%%%%%%%%%%%%%%%%%%%%%%%%%%%%%%%%%%%%%%%%%
%
\subsection{Geometric invariants}
%
%%%%%%%%%%%%%%%%%%%%%%%%%%%%%%%%%%%%%%%%%%%%%%%%%%%%%%%%%%%%%%%%%%%%%%%%%%%%%%%%%%%%
\label{sec: invariants - geometric}

\begin{defi}[Injectivity radius]
\label{def: inj rad}
	The \emph{injectivity radius} of $G$ on $X$ is the quantity
	\begin{equation*}
		\inj[X]G = \inf \set{\snorm[X]g}{g \in G \ \text{loxodromic}}
	\end{equation*}
\end{defi}

\begin{defi}[Acylindricity]
\label{def: acyl inv}
	Let $d \in \R_+$.
	The \emph{acylindricity parameter at scale $d$}, denoted by $A(G,X,d)$, is
	\begin{equation*}
		 A(G,X,d) = \sup_{S \subset G} \diam \left(\fix{S,d}\right),
	\end{equation*}
	where $S$ runs over all subsets of $G$ generating a \emph{non-elementary} subgroup.
\end{defi}

\begin{defi}
\label{res: weak acylindricity}
	The action of $G$ on the space $X$ is \emph{weakly acylindrical} if the map ${d \mapsto A(G,X,d)}$ is bounded above by an affine function of $d$.
\end{defi}

The previous definition is designed to compensate for the fact that we do not control from below the curvature of $X$.
Its definition is modeled on the Margulis lemma for manifolds with pinched negative curvature

For our next invariant, we adopt the following terminology borrowed from Lysenok \cite{Lysenok:1996kw}.
A \emph{chain of length $m$} is a tuple $\mathcal C = (g_0, \dots, g_m)$ of elements of $G$ for which there exists $h \in G$ such that for every $k \in \intvald 0{m-1}$, we have $g_{k+1} = hg_kh^{-1}$.
The element $h$ is called a \emph{conjugating element} of $\mathcal C$.
Note that such an element is not necessarily unique.

\begin{defi}[$\nu$-invariant]
\label{def: nu inv}
	The quantity $\nu(G,X)$ is the smallest integer $\nu$ with the following property:
	if $\mathcal C = (g_0,\dots, g_\nu)$ is a chain of length $\nu$ generating an elementary subgroup and $h$ a \emph{loxodromic} conjugating element of $\mathcal C$, then $\group{g_0,h}$ is elementary.
\end{defi}

The $\nu$-invariant is useful to prove the following local-to-global phenomenon.

\begin{prop}
\label{res: local-to-global acyl}
	For every $d \in \R_+$, we have
	\begin{equation*}
		A(G,X,d) \leq \left[\nu(G,X)+3\right]d +  A(G,X, 400\delta) +24\delta.	
	\end{equation*}
	In particular, if $A(G,X,400\delta)$ and $\nu(G,X)$ are finite, then the action of $G$ on $X$ is weakly acylindrical.
\end{prop}

Before proving \autoref{res: local-to-global acyl} we focus on the following lemmas.

\begin{lemm}
\label{res: margulis lemma}
	Fix $d_1 = 320\delta$ and $d_2 = 400\delta$.
	Let $g$ and $h$ be two elements of $G$ which generate a non-elementary subgroup.
	\begin{enumerate}
		\item \label{enu: margulis lemma - short g}
		Assume that $h$ is loxodromic.
		Let $L \in \R_+$, with $L > 12\delta$ and $\gamma \colon \R \to X$ be an $L$-local $(1, \delta)$-quasi-geodesic from $h^-$ to $h^+$.
		Then
		\begin{equation*}
			\diam \left( \fix{g,d_1} \cap \gamma\right) \leq  \nu(G,X)\norm h + A(G,X, d_2) + 2\delta.
		\end{equation*}
		\item \label{enu: margulis lemma - large g}
		Assume that both $g$ and $h$ are loxodromic.
		There exists $L > 12 \delta$, with the following property.
	 	If $\gamma_g, \gamma_h \colon \R \to X$ are two $L$-local $(1, \delta)$-quasi-geodesics from $g^-$ to $g^+$, and $h^-$ to $h^+$ respectively, then
		\begin{equation*}
			\diam \left( \gamma_g^{+8\delta} \cap \gamma_h^{+8\delta} \right) \leq  \norm g + \norm h + \nu(G,X)\max\left\{\norm g, \norm h\right\} + A(G,X, d_2) + 20\delta.
		\end{equation*}
	\end{enumerate}
\end{lemm}

\begin{rema*}
	A similar statement is proved in \cite{Coulon:2016if} closely following the ideas of Delzant and Gromov \cite{Delzant:2008tu}.
	For completeness, we reproduce it here.
\end{rema*}

\begin{proof}
	For simplicity we let $\nu = \nu(G,X)$.
	We start with Point~\ref{enu: margulis lemma - short g}.
	Assume that contrary to our claim,
	\begin{equation*}
		\diam\left(\fix{g,d_1}\cap \gamma\right)  >  \nu\norm h + A(G,X,d_2) + 2\delta.
	\end{equation*}
	In particular, there exist $x = \gamma(s)$ and $x'= \gamma(s')$ lying in $\fix{g,d_1}$ such that $\dist x{x'} >  \nu\norm h + A(G,X,d_2) + 2\delta$.
	Without loss of generality we can assume that $s < s'$, so that
	\begin{equation*}
			s' - s \geq \dist x{x'} >  \nu\norm h + A(G,X, d_2) + 2\delta.
	\end{equation*}
	Since $\fix{g,d_1}$ is $8\delta$-quasi-convex (\autoref{res: fix qc}), $\gamma$ restricted to $\intval s{s'}$ lies entirely in the $11\delta$-neighborhood of $\fix{g,d_1}$.
	We now fix $t = s + A(G,X,d_2) +2\delta$.
	Let $r \in \intval st$ and $k \in \intvald 0\nu$.
	Note that $r_k = r + k \snorm h$ belongs to $\intval s{s'}$.
	Thus $\dist{g\gamma(r_k)} {\gamma(r_k)}\leq d_1 + 22\delta$.
	Applying \autoref{res: loxo moving geo} we obtain
	\begin{equation*}
		\dist{h^{-k}gh^{k}\gamma(r)}{\gamma(r)}
		\leq \dist{gh^k \gamma(r)}{h^k\gamma(r)}
		\leq \dist{g \gamma(r_k)}{\gamma(r_k)} +44\delta
		\leq d_1 + 66\delta.
	\end{equation*}
	In other words the restriction of $\gamma$ to $\intval st$ is contained in $\fix{S, d_2}$, where $S$ is the set $S = \{g,h^{-1}gh, \dots, h^{-\nu}gh^\nu\}$.
	Consequently the diameter of $\fix{S, d_2}$ is larger that $A(G,X,d_2)$, and thus $S$ generate an elementary subgroup.
	Recall that $h$ is loxodromic.
	It follows from the definition of $\nu$ that $g$ and $h$ generate an elementary subgroup which contradicts our assumption.

	We now focus on Point~\ref{enu: margulis lemma - large g}.
	Up to permuting $g$ and $h$, we can assume that $\norm h \geq \norm g$.
	We fix 
	\begin{equation*}
		\ell = \nu\norm h + A(G,X, d_2) + 4\delta
		\quad \text{and} \quad
		L =  \ell + \norm g + \norm h.
	\end{equation*}
	Let $\gamma_g, \gamma_h \colon \R \to X$ as in the statement of Point~\ref{enu: margulis lemma - large g}.
	Assume that contrary to our claim we have
	\begin{equation*}
			\diam \left(  \gamma_g^{+8\delta} \cap \gamma_h^{+8\delta}  \right) 
			>  \norm g + (\nu+1)\norm h + A(G,X,d_2) + 20\delta
			\geq L + 16\delta.
	\end{equation*}
	We fix two points $x,y \in X$ lying in the $8\delta$-neighborhood of both $\gamma_g$ and $\gamma_h$ such that 
	\begin{equation*}
			\dist xy >  L+ 16\delta.
	\end{equation*}
	Up to changing the origin of $\gamma_g$ and $\gamma_h$ we can assume that $\gamma_g(0)$ and $\gamma_h(0)$ are projections of $x$ on $\gamma_g$ and $\gamma_h$ respectively.
	We write $\gamma_g(s)$ and $\gamma_h(s')$ for projections of $y$ on $\gamma_g$ and $\gamma_h$ respectively.
	Note that $s,s'\in(L, \infty)$.
	
	We claim that $\dist{\gamma_g(r)}{\gamma_h(r)} \leq 57\delta$, for every $r \in \intval 0L$.
	Since $\gamma_g$ is an $L$-local $(1, \delta)$-quasi-geodesic, we have $\gro{\gamma_g(0)}{\gamma_g(s)}{\gamma_g(r)} \leq 3\delta$ (\autoref{res: stability qg}), thus $\gro xy{\gamma_g(r)} \leq 19\delta$.
	Similarly $\gro xy{\gamma_h(r)} \leq 19\delta$.
	Moreover, the quantities $\dist{\gamma_g(r)}x$ and $\dist{\gamma_h(r)}x$ differ by at most $17\delta$.
	It follows from the four point inequality -- see for instance \cite[Lemma~2.2~(2)]{Coulon:2014fr} -- that $\dist{\gamma_g(r)}{\gamma_h(r)}\leq 57\delta$,
	which completes the proof of our claim.
	
	According to \autoref{res: loxo moving geo}, $g$ (\resp $h$) acts on $\gamma_g$ (\resp $\gamma_h$) almost like a translation of length $\snorm g$ (\resp $\snorm h$). 
	Hence for every $r \in \R_+$, such that  $r \leq\ell$,
	\begin{equation*}
		\dist{gh \gamma_h(r)}{hg \gamma_h(r)}\leq 316\delta \leq d_1,
	\end{equation*}
	compare with \autoref{fig: naive attempt}.
	Consequently the path $\gamma_h$ restricted to $\intval 0{\ell}$ is contained in $\fix{u, d_1}$ where $u = g^{-1}h^{-1}gh$.
	Thus, applying \autoref{res: intersection of thickened quasi-convex} we get
	\begin{equation*}
		\diam \left( \fix{u,d_1}\cap \gamma_h\right) \geq \ell -\delta> \nu\norm h + A(G,X,d_2) + 2\delta.
	\end{equation*}
	It follows from the previous discussion that $u$ and $h$ generates an elementary subgroup.
	Hence $g^{-1}hg$ and $h$ generate an elementary subgroup.
	Since $h$ is loxodromic, $g$ fixes $h^-$ and $h^+$, therefore $g$ and $h$ generate an elementary subgroup, which contradicts our assumption.
\end{proof}

\begin{lemm}
\label{res: margulis lemma bis}
	Fix $d_2 = 400\delta$.
	Let $g$ and $h$ be two elements of $G$ which generate a non-elementary subgroup and set $S = \{g,h\}$.
	If $\norm h  \geq 28\delta$, then for every $d \in \R_+$ we have
	\begin{equation*}
		\diam \left( \fix{S,d} \right) \leq \left[ \nu(G,X) + 3\right] d + A(G,X,d_2) + 24\delta.
	\end{equation*}
\end{lemm}

\begin{proof}
	For simplicity we write $\nu$ for $\nu(G,X)$.
	Without loss of generality we can assume that $d > \lambda(S)$. 
	In particular, $d > \max\{\norm g, \norm h\}$.
	
	Assume first that $\norm g < 28\delta$.
	We fix $L > 12 \delta$ and an $L$-local $(1, \delta)$-quasi-geodesic $\gamma_h$ from $h^-$ to $h^+$.
	It follows from \autoref{res: cyl in mov}, that $\fix{h,d}$ lies in the $d/2$-neighborhood of $\gamma_h$.
	On the other hand $\fix{g,d}$ lies in the $d/2$-neighborhood of $\fix{g,28\delta}$.
	Combining these observations with \autoref{res: intersection of thickened quasi-convex} we get
	\begin{align*}
		\diam \left( \fix{S,d} \right) 
		& \leq \diam \left( \fix{g,10\delta}^{+d/2} \cap \gamma_h^{+d/2} \right) \\
		& \leq \diam \left(\fix{g,28\delta}^{+11\delta} \cap \gamma_h^{+8\delta}\right) + d + 4\delta \\
		& \leq \diam \left(\fix{g,66\delta} \cap \gamma_h\right) + d + 20\delta
	\end{align*}
	It follows then from \autoref{res: margulis lemma} that 
	\begin{equation*}
		\diam \left( \fix{S,d} \right)  \leq (\nu + 1)d + A(G,X,d_2) + 22\delta.
	\end{equation*}
	Assume now that $\norm g \geq 28\delta$.
	In particular $g$ is loxodromic.
	By \autoref{res: margulis lemma}  we can find $\gamma_g$ and $\gamma_h$ two $L$-local $(1,\delta)$-quasi-geodesics with $L> 12\delta$, joining $g^-$ to $g^+$ and $h^-$ to $h^+$ respectively such that 
	\begin{equation*}
		\diam \left( \gamma_g^{+8\delta} \cap \gamma_h^{+8\delta} \right) \leq   (\nu + 2)d + A(G,X, d_2) + 20\delta.
	\end{equation*}
	It follows from \autoref{res: cyl in mov}, that $\fix{g,d}$ and $\fix{h,d}$ lies in the $d/2$-neighborhood of $\gamma_g$ and $\gamma_h$ respectively.
	Using \autoref{res: intersection of thickened quasi-convex} as above we get
	\begin{align*}
		\diam \left( \fix{S,d} \right) 
		&\diam \left( \gamma_g^{+d/2} \cap \gamma_h^{+d/2} \right)\\
		& \leq\diam \left( \gamma_g^{+8\delta} \cap \gamma_h^{+8\delta} \right) + d + 4\delta \\
		& \leq  (\nu + 3)d + A(G,X, d_2) + 24\delta.\qedhere
	\end{align*}
\end{proof}

\begin{proof}[Proof of \autoref{res: local-to-global acyl}]
	For simplicity we write $\nu$ for $\nu(G,X)$.
	We distinguish two cases.
	Assume first that $\norm g < 28\delta$, for every $g \in S$.
	It follows from \autoref{res: fix qc} that $\fix{g,d}$ is contained in the $d/2$-neighborhood of $\fix{g, 28\delta}$, for every $g \in S$.
	Combined with \autoref{res: intersection of thickened quasi-convex} we get
	\begin{align*}
		\diam\left(\fix{S,d}\right)
		& \leq \diam\left(  \bigcap_{g \in S} \fix{g, 28\delta}^{+11\delta} \right) + d + 4\delta\\
		& \leq \diam\left(  \bigcap_{g \in S} \fix{g, 50\delta} \right) + d + 4\delta \\
		&\leq A(G,X,50\delta) + d + 4\delta.
	\end{align*}
	Assume now that there exists $h \in S$ such that $\norm h \geq 28\delta$.
	In particular $h$ is loxodromic.
	It follows from our assumption that there exists $g \in S$ such that $g$ and $h$ do not generate an elementary subgroup.
	Applying \autoref{res: margulis lemma bis}, we get
	\begin{equation*}
		\diam\left(\fix{S,d}\right)
		\leq \diam\left(\fix{\{g,h\},d}\right)
		\leq  (\nu + 3)d + A(G,X, 400\delta) + 24\delta.
	\end{equation*}
	In both cases, the diameter of $\fix{S,d}$ is bounded above by $A(G,X,400\delta) + 24\delta$, whence the result.	
\end{proof}

\begin{lemm}
\label{res: stab of para pt is para}
	Let $P$ be a parabolic subgroup of $G$.
	Let $\xi \in \partial X$ be the unique accumulation point of $P$.
	If the action of $G$ on $X$ is weakly acylindrical, then $\stab \xi$ is parabolic as well.
\end{lemm}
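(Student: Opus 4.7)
We argue by contradiction: assume $\stab \xi$ is not parabolic. Since $\stab \xi \supseteq P$ and $P$ is not elliptic, $\stab \xi$ must contain a loxodromic element $h$, and since $h$ fixes $\xi$ we may assume (after possibly inverting $h$) that $h^+ = \xi$. Let $F$ denote the subgroup of elliptic elements of $E^+(h)$, so that $Fx$ is bounded. Because $|\Lambda(P)| = 1$, no element of $P$ is loxodromic, hence every $p \in P$ has $\snorm p = 0$. On the other hand, $P$ has unbounded orbits (being parabolic), so we may pick $p \in P$ with $\dist{px}{x} > \diam(Fx)$.

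Set $\nu = \nu(G,X)$ and form the chain $\mathcal C = (p_0, \dots, p_\nu)$ with $p_k = h^k p h^{-k}$, whose conjugating element $h$ is loxodromic. Fix $L \geq \max\{L_0\delta, \nu\snorm h\}$ and an $L$-local $(1,\delta)$-quasi-geodesic $\gamma \colon \R_+ \to X$ with $\gamma(0) = x$ and $\gamma(\infty) = \xi$. Three applications of \autoref{res: isom fixing xi moving geo} (to $h^{-k}$ with $\epsilon = -1$, to $p$ with $\snorm p = 0$, and to $h^k$ with $\epsilon = +1$, all of which fix $\xi$), composed along the decomposition $p_k\gamma(t) = h^k \cdot p \cdot h^{-k}\gamma(t)$, yield a constant $d = O(\delta)$ and a threshold $T$ (depending on $\dist{px}{x}$, $\norm h$ and $\nu$) such that $\dist{p_k\gamma(t)}{\gamma(t)} \leq d$ for every $k \in \intvald 0\nu$ and every $t \geq T$. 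Since $\gamma(t)$ escapes every ball as $t \to \infty$, the set $\mov{\mathcal C, d}$ has infinite diameter. As $A(G,X)$ and $\nu$ are finite, \autoref{res: thickening fix set} forces $\group{\mathcal C}$ to be elementary.

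By the defining property of $\nu(G,X)$ (applied with the loxodromic conjugating element $h$), the subgroup $\group{p,h}$ is then elementary. Containing the loxodromic $h$, it is itself loxodromic with limit set $\{h^-,\xi\}$; hence $p$ preserves this pair, and since $p\xi = \xi \neq h^-$ we conclude $p h^- = h^-$, i.e.\ $p \in E^+(h)$. By the gentle action hypothesis (\autoref{def: gentle action}), $E^+(h) = F \rtimes \Z$. If $p \in F$, then $\dist{px}{x} \leq \diam(Fx)$, contradicting the choice of $p$. Otherwise $p$ projects to a non-trivial element of $\Z$ and is therefore loxodromic, contradicting that $p \in P$ and $|\Lambda(P)| = 1$. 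This contradiction proves that $\stab \xi$ is parabolic.

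The main obstacle is the uniform estimate $\dist{p_k\gamma(t)}{\gamma(t)} \leq d$ valid for all $k \in \intvald 0\nu$ simultaneously: one must verify that a single threshold $T$ satisfies all the preconditions of \autoref{res: isom fixing xi moving geo} that arise from the three-step decomposition, keeping in mind that the hypothesis of that lemma depends on $\dist{h^{\pm k}x}{x}$, on $\dist{px}{x}$, and on $\snorm{h^k} = k \snorm h$. Once this estimate is established, the remainder of the argument is a routine combination of the $\nu$-invariant property with the algebraic structure of elementary loxodromic subgroups provided by the gentle action hypothesis.
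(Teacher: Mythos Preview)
Your proof is correct and follows the same overall strategy as the paper: assume a loxodromic $h \in \stab\xi$, form the chain $(h^k p h^{-k})_{0 \leq k \leq \nu}$, show the corresponding almost-fixed set has infinite diameter, deduce the chain generates an elementary subgroup, and then invoke the $\nu$-invariant to reach a contradiction.

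You overcomplicate two steps, however. For the infinite-diameter estimate, the three-step decomposition is unnecessary: since both $p$ and $h$ fix $\xi$, each conjugate $p_k = h^k p h^{-k}$ also fixes $\xi$, and $\snorm{p_k} = \snorm p = 0$ because $p_k$ is conjugate to the non-loxodromic element $p$. A \emph{single} application of \autoref{res: isom fixing xi moving geo} to $p_k$ yields $\dist{p_k\gamma(t)}{\gamma(t)} \leq 5\ell + 59\delta$ for all sufficiently large $t$; this is precisely how the paper argues, and it dissolves what you identify as the ``main obstacle''. For the final contradiction, the paper notes that the argument applies to \emph{every} $g \in P$, whence $P \subset E(h)$; since subgroups of a loxodromic group are either elliptic or loxodromic, this immediately contradicts $P$ being parabolic. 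This avoids your device of choosing a specific $p$ with $\dist{px}{x} > \diam(Fx)$ (which relies on the fact, stated just after \autoref{def: gentle action}, that $F$ is itself elliptic).
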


\begin{proof}
	For simplicity we let $E = \stab \xi$.
	Assume that contrary to our claim that $E$ is not parabolic.
	In particular, $E$ contains a loxodromic element $h$.
	Up to replacing $h$ by a power of $h$ we can assume that $\norm h > 8\delta$.
	Let $g \in P$.
	We set $S = \{g,h\}$ and $L =  \norm h + 100\delta$.
	Note that $\fix{S,L}$ has infinite diameter (\autoref{res: isom fixing xi moving geo}).
	It follows from the definition of weak acylindricitay that the subgroup of $G$ generated by $g$ and $h$ is elementary.
	Consequently $P$ is contained in the maximal elementary subgroup $E(h)$ of $G$ containing $h$.
	Nevertheless the subgroups of $E(h)$ are either elliptic or loxodromic, which contradicts our assumption.
\end{proof}

%%%%%%%%%%%%%%%%%%%%%%%%%%%%%%%%%%%%%%%%%%%%%%%%%%%%%%%%%%%%%%%%%%%%%%%%%%%%%%%%%%%%
%
\subsection{Mixed invariants}
%
%%%%%%%%%%%%%%%%%%%%%%%%%%%%%%%%%%%%%%%%%%%%%%%%%%%%%%%%%%%%%%%%%%%%%%%%%%%%%%%%%%%%
\label{sec: invariants - mixed}

As explained in the previous section, the combination of the acylindricity parameter $A(G,X,400\delta)$ and the $\nu$-invariant provides a useful substitute to the Margulis lemma.
Nevertheless the latter invariant does no behave well when passing to quotient (see \autoref{sec: invariants quotient}).
To bypass this difficulty we consider a stronger version of the $\nu$-invariant whose mixed nature combines both geometric and algebraic features of $G$.
More precisely the algebraic part captures the properties of a special class of elementary subgroups that we define now.

\paragraph{Dihedral germs and dihedral pairs.}
Recall that $\dihedral$ stands for the infinite dihedral group.
Given $m \in \N$, we denote by 
\begin{equation*}
	\dihedral[m] = \left< \mathbf s, \mathbf r \mid \mathbf s^2, (\mathbf s\mathbf r)^2, \mathbf r^m \right>
\end{equation*}
the dihedral group of order $2m$ and by $\cyclic[m]$ the cyclic group of order $m$.
Note that $\dihedral[1] = \cyclic[2]$.
By convention $\dihedral[0]$ is the trivial group.
We think of $\dihedral[m]$ as the isometry group of the plane preserving a regular $m$-gon.
This motivates the following terminology.
The subgroup $\group{\mathbf r}$ is a normal subgroup called the \emph{rotation subgroup}.
Its elements are also called \emph{orientation preserving}.
The \emph{signature} is the morphism $\epsilon \colon \dihedral[m] \to \cyclic[2]$, where $\cyclic[2]$ is the quotient of $\dihedral[m]$ by the rotation subgroup.
An element of $\dihedral[m]$ that does not preserve the orientation is called a \emph{reflection}.

We adopt a similar terminology for $\dihedral$.
In particular, its \emph{rotation subgroup} (or \emph{translation subgroup}) is the maximal subgroup isomorphic to $\cyclic$.
If $m \neq 2$, the rotation subgroup of $\dihedral[m]$ is algebraically completely determined: it is the unique cyclic subgroup of order $m$.
Otherwise it should be thought an implicit piece of information attached to $\dihedral[2]$.

\begin{defi}
\label{def: dihedral germ}
	A subgroup $C$ of $G$ is called a \emph{dihedral germ} if it contains an elliptic subgroup $C_0$ which is normalized by a loxodromic element and such that $[C:C_0]$ is a power of $2$.
\end{defi}

Note that dihedral germs are elliptic.
Being a dihedral germ is invariant under conjugation.
Without any further assumption on the structure of loxodromic subgroups, it is not true in general that being a dihedral germ is invariant by taking subgroup.

\begin{defi}
\label{def: dihedral pair}
	A \emph{dihedral pair} is a pair $(E,C)$ of subgroups such that $C$ is a dihedral germ which is also normal in $E$ and $E/C$ embeds in a (finite or infinite) dihedral group.
	A subgroup $E$ of $G$ has \emph{dihedral shape} if there exists a subgroup $C$ such that $(E,C)$ is a dihedral pair.
\end{defi}

Every subgroup with dihedral shape is elementary.
Indeed such a group is virtually the extension of an elliptic subgroup by a cyclic group.
Note that the morphism from $E/C$ to a dihedral group is in general not unique.

\begin{lemm}
\label{res: loxo have dihedral shape}
	Let $E$ be a loxodromic subgroup of $G$ and $C$ a subgroup of $E$.
	Then $(E,C)$ is a dihedral pair if and only if $C$ is the maximal elliptic normal subgroup of $E$.
\end{lemm}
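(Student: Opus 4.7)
The plan is to prove the two implications separately, leveraging Lemma~\ref{res: canonical proj to dihedral}, which characterizes the maximal elliptic normal subgroup $F$ of a loxodromic group as the kernel of any morphism to $\dihedral$ whose kernel is elliptic.

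For the \emph{if} direction, I would assume that $C$ is the maximal elliptic normal subgroup of $E$ and check the three conditions defining a dihedral pair. First, $C$ is a dihedral germ: take $C_0 = C$ itself, which is elliptic by hypothesis; it is normalized by any loxodromic element of $E$ (which exists since $E$ is loxodromic) because $C$ is normal in $E$; and $[C:C_0]=1=2^0$. Second, $C$ is normal in $E$ by hypothesis. Third, by the structural discussion preceding Lemma~\ref{res: canonical proj to dihedral}, the quotient $E/C$ is isomorphic either to $\Z$ or to $\dihedral$, and both embed in $\dihedral$ (with $\Z$ identified to the rotation subgroup).

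For the \emph{only if} direction, I would assume that $(E,C)$ is a dihedral pair, so that $C$ is elliptic, normal in $E$, and there exists an embedding $E/C \hookrightarrow D$ for some dihedral group $D$, either finite or infinite. The crucial observation is that $D$ cannot be finite: otherwise $E/C$ would be finite, so $C$ would have finite index in $E$; since $C$ is elliptic its orbits are bounded, hence the union of the orbits of finitely many coset representatives would also be bounded, forcing $E$ itself to be elliptic and contradicting the assumption that $E$ is loxodromic. Consequently $D = \dihedral$, and composing the quotient $E \onto E/C$ with the embedding produces a morphism $p \colon E \to \dihedral$ whose kernel is exactly $C$, which is elliptic. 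Lemma~\ref{res: canonical proj to dihedral} then identifies $C$ with the maximal elliptic normal subgroup $F$ of $E$.

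The only subtle point, which I do not anticipate as a serious obstacle, is the finite/infinite dichotomy for $D$ in the second direction. Everything else is a direct bookkeeping of the definitions together with the already available description of loxodromic subgroups under the gentle action hypothesis.
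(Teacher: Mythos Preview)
Your proposal is correct and follows essentially the same route as the paper: for the forward direction you rule out a finite dihedral target (since a finite extension of an elliptic group is elliptic) and then invoke Lemma~\ref{res: canonical proj to dihedral}, exactly as the paper does; for the converse the paper simply writes ``obvious'', and what you spell out is precisely the intended verification.
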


\begin{proof}
	Assume that $(E,C)$ is a dihedral pair.
	In particular, $E/C$ embeds in a dihedral group.
	Note that this dihedral group cannot be finite.
	Indeed otherwise $E$ would be a finite extension of the elliptic subgroup $C$, hence an elliptic subgroup as well.
	Consequently $E/C$ embeds in $\dihedral$.
	It follows from \autoref{res: canonical proj to dihedral} that $C = F$.
	The converse statement is obvious.
\end{proof}

\paragraph{Strong $\nu$-invariant.}

\begin{defi}[strong $\nu$-invariant]
\label{def: nu inv stg}
	The quantity $\nu_{\rm{stg}}(G,X)$ is the smallest integer $\nu$ with the following property:
	if $\mathcal C = (g_0,\dots, g_\nu)$ is a chain generating an elementary subgroup and $h$ a conjugating element of $\mathcal C$ such that
	\begin{itemize}
		\item either $h$ is loxodromic,
		\item or $\group{g_0, \dots, g_{\nu-1}}$ is contained in a dihedral germ,
	\end{itemize}
	then $\group{g_0,h}$ is elementary with dihedral shape.
\end{defi}

	One observes easily that $\nu(G,X) \leq \nu_{\rm{stg}}(G,X)$.
	Let us mention an example where these two invariants are not equal.
	
\begin{exam}
\label{exa: strict inequality nu}		
	Observe first that if $G = G_1 \ast G_2$ is a free product acting on its Bass-Serre tree $T$, then $\nu(G, T) \leq 2$.
	Consider indeed $g, h \in G$ with $h$ loxodromic such that the subgroup $E = \group{g,hgh^{-1},h^2gh^{-2}}$ is elementary.
	Without loss of generality we can assume that $g$ is non trivial.
	We first claim that the subgroup $E_0 = \group{g, hgh^{-1}}$ cannot be elliptic.
	Assume on the contrary that $E_0$ fixes a point say $x \in T$.
	As $G$ is a free product, $x$ is the \emph{unique} fixed point of $g$.
	Nevertheless $hgh^{-1}$ also fixes $x$ (as it belongs to $E_0$), hence $g$ fixes $h^{-1}x$.
	This forces $hx = x$ which contradicts the fact that $h$ is loxodromic and completes the proof of the claim.
	Since $T$ is a tree, $G$ does not admit any finitely generated parabolic subgroup, hence $E_0$ is loxodromic.
	Observe now that the elementary subgroup $E$ is generated by $E_0$ and $hE_0h^{-1}$.
	Consequently $h$ necessarily belongs to the maximal elementary loxodromic subgroup containing $E_0$.
	Therefore $g$ and $h$ generate an elementary subgroup.
	This proves that $\nu(G,T) \leq 2$ as announced.
	
	In this setting, every elliptic subgroup which is normalized by a loxodromic element is trivial.
	Hence a subgroup of $G$ is a dihedral germ if an only if it is a finite $2$-group.
	Let us now consider a more precise example.
	We fix $m \in \N$ and let $A = \cyclic[2]^{m+1}$.
	For every $i \in \intvald 0m$, we write $g_i$ for a generator of the $i$-th factor $\cyclic[2]$ in $A$.
	We denote by $G_1$ the following HNN extension of $A$
	\begin{equation*}
		G_1 = \left< A, h \mid hg_ih^{-1} = g_{i+1},\ \forall i \in \intvald 0{m-1}\right>.
	\end{equation*}
	Let $G = G_1 \ast \Z$.
	It follows from the construction that 
	\begin{equation*}
		\group{g_0, hg_0h^{-1}, \dots, h^mg_0h^{-m}} = \group{g_0, \dots, g_m} = A
	\end{equation*}
	 is a dihedral germ.
	On the other hand, the subgroup $\group{g_0,h}$ corresponds to $G_1$ which is not virtually cyclic, thus it cannot have dihedral shape.
	This shows that $\nu_{\rm{stg}}(G,T) \geq m$.
	In particular, if $m >2$, then $\nu(G,T) < \nu_{\rm{stg}}(G,T)$.
	
	Note that in this example, the difference between $\nu(G,X)$ and $\nu_{\rm{stg}}(G,X)$ comes from the algebraic structure of elliptic subgroups.
	It emphases the fact that $\nu_{\rm{stg}}(G,X)$ is not a purely geometric invariant.
\end{exam}

\paragraph{Model collections.}
As we will see later, controlling the strong $\nu$-invariant is a key ingredient to handle even torsion and which was not needed to study free Burnside groups of odd exponents.
It requires a fine understanding of the structure of dihedral pairs.
We complete this section by a last notion designed to describe those subgroups.

A \emph{model collection} is a family $\boldsymbol{\mathcal E}$ of (abstract) torsion groups.
Its \emph{exponent} $\mu = \mu(\boldsymbol{\mathcal E})$ is the smallest positive integer such that $\mathbf g^\mu = 1$, for every $\mathbf E \in \boldsymbol{\mathcal E}$, for every $\mathbf g \in \mathbf E$.

\begin{defi}
\label{def: elemen structure}
	Let $p \in \N$ and $\boldsymbol{\mathcal E}$ be a model collection.
	We say that a dihedral pair $(E,C)$ \emph{has type $(\boldsymbol{\mathcal E},p)$} if there exist $k \in \N$ and a morphism $\phi \colon E \to \mathbf E$, where $\mathbf E \in \boldsymbol{\mathcal E}$ such that the map $\phi$ extends to an embedding from $E$ into $E/C \times \dihedral[p]^k \times \mathbf E$.
\end{defi}

\begin{rema*}	
	A reader only interested in \emph{free} Burnside groups can read the entire article by taking for $\boldsymbol{\mathcal E}$ the collection that consists only of the trivial group. 
	The exponent of this trivial model collection is $1$.
\end{rema*}

We now fix an integer $p \in \N$ and a model collection $\boldsymbol{\mathcal E}$ and write $\mu = \mu(\boldsymbol{\mathcal E})$ for its exponent.
Saying that $(E, C)$ has type $(\boldsymbol{\mathcal E},p)$ means that, up to a residual factor $\mathbf E \in \boldsymbol{\mathcal E}$, the group $E$ essentially embeds into a direct product of dihedral groups.
In particular, we can exploit the \emph{algebraic identities} of dihedral groups to recover information about $E$.
The next two statements give simple but essential examples of this idea.
Other applications will arise later in the article.

\begin{prop}
\label{res: dihedral complexity}
	Let $(E,C)$ be a dihedral pair with type $(\boldsymbol{\mathcal E}, p)$.
	Let $\mathcal C = (g_0,g_1, \dots, g_{\mu+2})$ be a chain of $G$ and $h$ a conjugating element of $\mathcal C$.
	If $g_0$ and $h$ belong to a subgroup $E$, then $g_{\mu +2}$ belongs to $\group{g_0,g_1, \dots, g_{\mu+1}}$.
\end{prop}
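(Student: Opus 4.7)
The plan is to prove the explicit identity
\[
	g_{\mu+2} = g_\mu g_0^{-1} g_2
\]
in $E$ (which makes sense since all the $g_i$ belong to $E$ by hypothesis); the conclusion follows immediately because $g_0$, $g_2$, and $g_\mu$ all lie in $\group{g_0,\dots,g_{\mu+1}}$.

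A direct computation valid in any group yields
\[
	g_{\mu+2} g_2^{-1} = h^{\mu+2} g_0 h^{-\mu} g_0^{-1} h^{-2} = h^2 [h^\mu, g_0] h^{-2}
	\quad\text{and}\quad
	g_\mu g_0^{-1} = [h^\mu, g_0],
\]
so the target identity is equivalent to saying that $h^2$ commutes with $[h^\mu, g_0]$ in $E$. I would verify this commutation factor by factor through the embedding $\iota \colon E \hookrightarrow E/C \times \dihedral[p]^k \times \mathbf E$ provided by the type $(\boldsymbol{\mathcal E}, p)$.

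In each dihedral factor --- namely $E/C$, which embeds in a dihedral group by the very definition of a dihedral pair, as well as every copy of $\dihedral[p]$ --- both $h^2$ and $[h^\mu, g_0]$ lie in the rotation subgroup: the first because squares are always orientation-preserving; the second because $[h^\mu, g_0] = (h^\mu g_0 h^{-\mu}) \cdot g_0^{-1}$ is a product of two elements sharing the parity of $g_0$ (conjugation and inversion both preserve orientation in a dihedral group). Since the rotation subgroup is abelian, $h^2$ and $[h^\mu, g_0]$ commute there.

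In the last factor $\mathbf E$, the definition of $\mu = \mu(\boldsymbol{\mathcal E})$ forces $\phi(h)^\mu = 1$, so $\phi([h^\mu, g_0]) = 1$ and the commutation becomes trivial. By injectivity of $\iota$, the identity $h^2 [h^\mu, g_0] h^{-2} = [h^\mu, g_0]$ then holds in $E$, which is exactly $g_{\mu+2} = g_\mu g_0^{-1} g_2$. The main conceptual step --- essentially the only obstacle --- is guessing the right word: the choice $g_\mu g_0^{-1} g_2$ is dictated by the requirement that the formula simultaneously respect the dihedral algebra on the first two types of factors (through the rotation-subgroup argument) and the $\mu$-periodicity of conjugation by $h$ on $\mathbf E$.
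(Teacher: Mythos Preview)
Your proof is correct and follows essentially the same strategy as the paper: verify a group identity factor by factor through the embedding $E \hookrightarrow E/C \times \dihedral[p]^k \times \mathbf E$, using that the rotation subgroup of a dihedral group is abelian for the first factors and that $\mu$-th powers vanish in $\mathbf E$ for the last.

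The only difference is the choice of identity. The paper introduces the products $u_i = g_i g_{i+1}\cdots g_{i+\mu-1}$, notes that $u_0 = (g_0h)^\mu h^{-\mu}$ vanishes in $\mathbf E$, and then proves $u_3 u_2^{-1} u_0 u_1^{-1} = 1$ via Lysenok's identity $[\mathbf y^2,[\mathbf y,\mathbf x]]=1$ in dihedral groups. Your route is more direct: you observe that $[h^\mu,g_0]$ already vanishes in $\mathbf E$ (because $\phi(h)^\mu=1$), so there is no need to pass through the $u_i$'s, and the resulting formula $g_{\mu+2} = g_\mu g_0^{-1} g_2$ is simpler. Both arguments rest on the same dihedral fact (commutators and squares are rotations, hence commute with each other); yours just applies it to a shorter word.
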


\begin{proof}
	By assumption, there exist $k \in \N$, a group $\mathbf E \in \boldsymbol{\mathcal E}$, and a morphism $\phi \colon E \to \mathbf E$, such that $\phi$ extends to an embedding $E\into E/C  \times \dihedral[p]^k \times \mathbf E$.
	For every $i \in \intvald 03$ we let 
	\begin{equation*}
		u_i = g_ig_{i+1} \cdots g_{i+\mu-1}.
	\end{equation*}
	Note that it suffices to prove that $u_3u_2^{-1}u_0u_1^{-1} = 1$.
	To that end, we have to check that this identity holds in every factor of $E/C \times \dihedral[p]^k \times \mathbf E$.
	It was observed by Lysenok \cite[Proposition~15.10]{Lysenok:1996kw} that if $\mathbf x$ and $\mathbf y$ are two elements of $\dihedral$ then 
	\begin{equation*}
		\left(\mathbf y^3\mathbf x\mathbf y^{-3}\right)\left(\mathbf y^2\mathbf x^{-1}\mathbf y^{-2}\right)\mathbf x\left(\mathbf y\mathbf x^{-1}\mathbf y^{-1}\right)
		=\left[ \mathbf y^2, \left[\mathbf y,\mathbf x\right] \right]
		=1.
	\end{equation*}
	Hence the identity $u_3u_2^{-1}u_0u_1^{-1} = 1$ holds in $E/C$ as well as in any factor $\dihedral[p]$.
	On the other hand, we observe that $u_0 = (g_0h)^\mu h^{-\mu}$.
	By the very definition of the exponent $\mu$, the element $\phi(u_0) \in \mathbf E$ is trivial and thus so are its conjugates $\phi(u_1)$, $\phi(u_2)$ and $\phi(u_3)$.
	Hence the identity $u_3u_2^{-1}u_0u_1^{-1} = 1$ also holds in $\mathbf E$, and the proof is complete.
\end{proof}

Let $n \in \N$.
Let $\boldsymbol \Pi = \dihedral[p_1] \times \dots \times \dihedral[p_k] \times\mathbf E$ be a direct product of dihedral groups with some $\mathbf E \in \boldsymbol{\mathcal E}$ where $p_i$ divides $n$ for every $i \in \intvald 1k$.
The signature $\epsilon_i \colon \dihedral[p_i] \to \cyclic[2]$ induces a morphism $\boldsymbol \Pi \to (\cyclic[2])^k$, whose kernel is $\boldsymbol \Pi_+ = \cyclic[p_1] \times \dots \times \cyclic[p_k] \times \mathbf E$ is the \emph{pure rotation subgroup}.
Given a subgroup $\mathbf A$ of $\boldsymbol \Pi$, its \emph{reflection rank} is the dimension of the image of $\mathbf A$ in $\boldsymbol \Pi/\boldsymbol \Pi_+$ (seen as a $\cyclic[2]$-vector space).
The next lemma is a variation on Ivanov \cite[Lemma~16.2]{Ivanov:1994kh}.

\begin{lemm}
\label{res: norm in prod of dihedral}
	Let $\mathbf A$ be a subgroup of $\boldsymbol \Pi$ and $r$ its reflection rank.
	We assume that $2^{r + 3}\mu$ divides $n$.
	For every $\mathbf h \in \boldsymbol \Pi$, normalizing $\mathbf A$, there exists $\mathbf a \in \mathbf A$ with the following properties.
	\begin{enumerate}	
		\item \label{enu: norm in prod of dihedral - cent A}
		$\mathbf h^{n/4}\mathbf a^{-1}$ centralizes $\mathbf A$.
		\item \label{enu: norm in prod of dihedral - cent P}
		$[\mathbf h^{n/4}\mathbf a^{-1},\mathbf b]$ centralizes $\boldsymbol \Pi$, for every $\mathbf b \in \boldsymbol \Pi$.
	\end{enumerate}
\end{lemm}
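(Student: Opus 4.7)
The plan is to first extract strong structural information about $\mathbf h^{n/4}$ from the divisibility hypothesis, then reduce condition~\ref{enu: norm in prod of dihedral - cent P} to a purely algebraic constraint on $\mathbf a$, and finally build $\mathbf a$ by induction on the reflection rank $r$. Decomposing $\mathbf h = (\mathbf h_1, \dots, \mathbf h_k, \mathbf h_E)$ along the factors of $\boldsymbol \Pi$, the hypothesis $2^{r+3}\mu \mid n$ (with $r \geq 0$) yields both $\mu \mid n/4$ and $8 \mid n$. The first divisibility gives $\mathbf h_E^{n/4} = 1$, while the second forces $\mathbf h_i^{n/4} = 1$ whenever $\mathbf h_i$ is a reflection of $\dihedral[p_i]$ (a reflection has order~$2$ and $n/4$ is even). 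Consequently $\mathbf h^{n/4}$ lies entirely in the pure rotation subgroup $\boldsymbol \Pi_+$ and has trivial $\mathbf E$-component. A factor-wise computation shows moreover that $[\mathbf h^{n/4}, \mathbf b] \in Z(\boldsymbol \Pi)$ for every $\mathbf b \in \boldsymbol \Pi$: in each $\dihedral[p_i]$ the commutator of a rotation $\mathbf r_i^{j_i n/4}$ with a reflection equals $\mathbf r_i^{-j_i n/2}$, which sits in $Z(\dihedral[p_i])$ because $p_i \mid n$.

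Using the identity $[\mathbf h^{n/4}\mathbf a^{-1}, \mathbf b] = \mathbf h^{n/4}[\mathbf a^{-1}, \mathbf b]\mathbf h^{-n/4} \cdot [\mathbf h^{n/4}, \mathbf b]$, together with the invariance of $Z(\boldsymbol \Pi)$ under conjugation, condition~\ref{enu: norm in prod of dihedral - cent P} becomes equivalent to $\mathbf a \in Z_2(\boldsymbol \Pi)$, i.e.\ $\mathbf a$ lies in the second term of the upper central series of $\boldsymbol \Pi$. The lemma therefore reduces to producing $\mathbf a \in \mathbf A \cap Z_2(\boldsymbol \Pi)$ such that $\mathbf h^{n/4}\mathbf a^{-1}$ centralizes $\mathbf A$. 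Note also that conjugation in $\boldsymbol \Pi$ preserves the signature in every dihedral factor, so $\mathbf h$ normalizes every subgroup of $\mathbf A$ containing $\mathbf A_+ := \mathbf A \cap \boldsymbol \Pi_+$, which enables an inductive argument on $r$.

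When $r = 0$, $\mathbf A$ is contained in $\boldsymbol \Pi_+$ and conjugation by $\mathbf h^{n/4}$ is trivial on $\mathbf A$ (the dihedral factors are rotations commuting with rotations, and the $\mathbf E$-component of $\mathbf h^{n/4}$ is trivial), so $\mathbf a = 1$ works. For the inductive step, fix a lift $\mathbf a_r \in \mathbf A$ of a reflection generator of the $\cyclic[2]$-vector space $\mathbf A/\mathbf A_+$, apply the induction hypothesis, with $2^{r+2}\mu \mid n$ (implied by the assumption), to the rank $r-1$ subgroup $\mathbf A' = \mathbf A_+ \cdot \langle \mathbf a_1, \dots, \mathbf a_{r-1}\rangle$, obtaining $\mathbf a' \in \mathbf A' \cap Z_2(\boldsymbol \Pi)$ for which $\mathbf h^{n/4}(\mathbf a')^{-1}$ centralizes $\mathbf A'$. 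The commutator $u_r := [\mathbf h^{n/4}, \mathbf a_r^{-1}]$ lies in $\mathbf A_+ \cap Z(\boldsymbol \Pi)$ and has order at most $2$; the goal is then to write $\mathbf a = \mathbf a' w$ where $w \in \mathbf A_+ \cap Z_2(\boldsymbol \Pi)$ centralizes $\mathbf A'$ and satisfies $[w, \mathbf a_r] = u_r$. Choosing the components of $w$ amounts to solving equations of the form $\mathbf r_i^{2e_i} = \mathbf r_i^{j_i n/2}$ in $\cyclic[p_i]$ while imposing $e_i \in (p_i/4)\cyclic[p_i]$ (the $Z_2$-condition); this is exactly where one extra power of $2$ beyond the rank $r-1$ case is consumed.

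The main obstacle is this last step: the correction element $w$ must be produced inside $\mathbf A_+$ itself (not merely inside $\boldsymbol \Pi_+$) and must sit simultaneously in the second centre $Z_2(\boldsymbol \Pi)$. This is precisely what the refined hypothesis $2^{r+3}\mu \mid n$ encodes: each reflection direction in $\mathbf A$ costs one additional factor of $2$ in the ambient rotation subgroup, which is needed to carry out the successive square-root extractions without leaving either $\mathbf A_+$ or $Z_2(\boldsymbol \Pi)$.
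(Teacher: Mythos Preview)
Your reduction of condition~\ref{enu: norm in prod of dihedral - cent P} to the membership $\mathbf a \in Z_2(\boldsymbol\Pi)$ is clean and correct, and the base case $r=0$ is fine. The gap is in the inductive step, which you sketch but do not carry out. You correctly identify the obstacle yourself: the correction element $w$ must lie in $\mathbf A_+$, not merely in $\boldsymbol\Pi_+$. Solving the equations $\mathbf r_i^{2e_i}=\mathbf r_i^{j_in/2}$ component by component only produces an element of $\boldsymbol\Pi_+$; nothing in your argument forces the resulting $w$ to lie in the potentially much smaller subgroup $\mathbf A_+$. The closing sentence---that the hypothesis $2^{r+3}\mu\mid n$ ``encodes'' this via successive square-root extractions---is an assertion, not a proof: you would need to exhibit $w$ as an explicit element of $\mathbf A$, and it is not clear how the extra factor of $2$ alone achieves this. (There is also a secondary loose end: you require $w$ to centralise $\mathbf A'$, which is strictly stronger than $w\in Z_2(\boldsymbol\Pi)$, and you do not address this.)

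The paper avoids the induction entirely by writing down a closed formula
\[
\mathbf a=\prod_{(\epsilon_1,\dots,\epsilon_r)\in\{0,1\}^r}\bigl[\mathbf h^{n/2^{r+2}},\,\mathbf s_1^{\epsilon_1}\cdots\mathbf s_r^{\epsilon_r}\bigr],
\]
where $\mathbf s_1,\dots,\mathbf s_r$ are lifts of a basis of $\mathbf A/\mathbf A_+$. The point is that each commutator $[\mathbf h^{n/2^{r+2}},\mathbf s]$ lies in $\mathbf A$ automatically, because $\mathbf h$ normalises $\mathbf A$ and $\mathbf s\in\mathbf A$; this is exactly the mechanism your argument is missing. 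One then checks both conditions factor by factor: in $\mathbf E$ everything vanishes since $\mu\mid n/2^{r+2}$; in a dihedral factor where $\mathbf A$ maps into the rotation group, $\mathbf a$ is trivial there and $\mathbf h^{n/4}$ is a rotation; and in a dihedral factor where $\mathbf A$ contains a reflection, exactly half (namely $2^{r-1}$) of the $2^r$ commutators contribute $\mathbf h^{n/2^{r+1}}$ while the rest are trivial, so the product equals $\mathbf h^{n/4}$ in that factor and $\mathbf h^{n/4}\mathbf a^{-1}$ is trivial there. This direct computation uses the full strength of $2^{r+3}\mu\mid n$ in one stroke rather than peeling off one power of $2$ per inductive step.
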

\begin{proof}
	By assumption, there exist $\mathbf s_1, \dots, \mathbf s_r \in \mathbf A$ such that $\mathbf A$ is generated by $\mathbf s_1, \dots , \mathbf s_r$ and $\mathbf A\cap \mathbf \Pi_+$.
	We let 
	\begin{equation*}
		\mathbf a = \prod_{(\epsilon_1, \dots, \epsilon_r)\in \{0, 1\}^r} \left[\mathbf r, \mathbf s_1^{\epsilon_1}\dots \mathbf s_r^{\epsilon_r} \right],
		\quad \text{where} \quad \mathbf r = \mathbf h^{2^{-(r+2)}n}.
	\end{equation*}
	Since $\mathbf h$ normalizes $\mathbf A$, the element $\mathbf a$ belongs to $\mathbf A$.
	It is sufficient to check that in each factor of $\mathbf \Pi$ the image of $\mathbf h^{n/4}\mathbf a^{-1}$ satisfies the announced properties.
	Since the exponent of $\mathbf E$ divides $2^{-(r+2)}n$, the elements $\mathbf h^{n/4}$ and $\mathbf a$ are trivial in $\mathbf E$.
	Thus so is $\mathbf h^{n/4}\mathbf a^{-1}$.
	Hence \ref{enu: norm in prod of dihedral - cent A} and \ref{enu: norm in prod of dihedral - cent P} hold in $\mathbf E$.

	We now focus on the dihedral factors.
	Let $i \in \intvald 1k$.
	Assume first that the image of $\mathbf A$ is $\dihedral[p_i]$ is contained in the rotation group $\cyclic[p_i]$.
	Note that $2$ divides $2^{-(r+2)n}$.
	Hence (the image of) $\mathbf r$ lies in the rotation group of $\dihedral[p_i]$. 
	Thus $\mathbf a$ is trivial in $\dihedral[p_i]$, while $\mathbf h^{n/4}$ belongs to $\cyclic[p_i]$.
	Consequently (the image of) $\mathbf h^{n/4}\mathbf a^{-1}$ centralizes $\cyclic[p_i]$ and thus (the image of) $\mathbf A$.
	Moreover for every $\mathbf b \in \mathbf \Pi$, (the image of) $[\mathbf h^{n/4}\mathbf a^{-1},\mathbf b]$ which coincides with (the image of) $[\mathbf h^{n/4},\mathbf  b]$, centralizes $\dihedral[p_i]$.
	Assume now that the image of $\mathbf A$ in $\dihedral[p_i]$ contains a reflection.
	By construction every element of $\mathbf A\cap \mathbf \Pi_+$ is mapped to the rotation subgroup $\cyclic[p_i]$.
	Without loss of generality we can assume that $\mathbf s_1$ is mapped to a reflection of $\dihedral[p_i]$.
	Let $\Omega$ be the subset of all tuples $(\epsilon_1, \dots, \epsilon_r) \in \{0,1\}^r$ such that $\mathbf s_1^{\epsilon_1}\dots\mathbf s_r^{\epsilon_r}$ is mapped to a reflection in $\dihedral[p_i]$.
	As previously the image of $\mathbf r$ in $\dihedral[p_i]$ is a rotation.
	Hence, seen in $\dihedral[p_i]$, we have
	\begin{equation*}
		\left[\mathbf r, \mathbf s_1^{\epsilon_1}\dots \mathbf s_r^{\epsilon_r} \right] = 
		\left\{
			\begin{split}
				\mathbf r^2 & \quad\text{if}\ (\epsilon_1, \dots, \epsilon_r) \in \Omega \\
				1 & \quad \text{otherwise}
			\end{split}
		\right.
	\end{equation*}	
	Consequently we get in $\dihedral[p_i]$
	\begin{equation*}
		\mathbf a = \mathbf r^{2\card\Omega} = \mathbf h^{2^{-(r+1)}\card \Omega n}.
	\end{equation*}
	Observe that the cardinality of $\Omega$ is $2^{r-1}$.
	Indeed the map sending $(\epsilon_1, \dots, \epsilon_r)$ to $(\epsilon_1 + 1, \dots, \epsilon_r)$ induces a bijection from $\Omega$ onto $\{0,1\}^r \setminus \Omega$.
	It follows that $\mathbf h^{n/4}$ and $\mathbf a$ coincide in $\dihedral[p_i]$.
	Thus \ref{enu: norm in prod of dihedral - cent A} and \ref{enu: norm in prod of dihedral - cent P} hold in $\dihedral[p_i]$.
\end{proof}

%%%%%%%%%%%%%%%%%%%%%%%%%%%%%%%%%%%%%%%%%%%%%%%%%%%%%%%%%%%%%%%%%%%%%%%%%%%%%%%%%%%%
%%%%%%%%%%%%%%%%%%%%%%%%%%%%%%%%%%%%%%%%%%%%%%%%%%%%%%%%%%%%%%%%%%%%%%%%%%%%%%%%%%%%
%
\section{Small cancellation theory}
%
%%%%%%%%%%%%%%%%%%%%%%%%%%%%%%%%%%%%%%%%%%%%%%%%%%%%%%%%%%%%%%%%%%%%%%%%%%%%%%%%%%%%
%%%%%%%%%%%%%%%%%%%%%%%%%%%%%%%%%%%%%%%%%%%%%%%%%%%%%%%%%%%%%%%%%%%%%%%%%%%%%%%%%%%%
\label{sec: sc}

Let us recall the main strategy to study the free Burnside group $\burn rn$.
Starting from the free group $\free r$, we are going to build a sequence of non-elementary hyperbolic groups
\begin{equation*}
	\free r = G_0 \onto G_1 \onto G_2 \onto \dots \onto G_k \onto G_{k+1} \onto \dots 
\end{equation*}
whose directly limit is exactly $\burn rn$.
The group $G_{k+1}$ is obtained from $G_k$ by adjoining relations of the form $h^n = 1$ where $h$ runs over a subset of \og small \fg\ loxodromic elements of $G_k$.
The main difficulty is to make sure that $G_{k+1}$ remains a non-elementary hyperbolic group although the exponent $n$ has been fixed in advance.
In this article, we achieve this by using a \emph{geometric} approach of small cancellation theory \emph{à la} Delzant-Gromov \cite{Delzant:2008tu}.

In this section we focus on a \emph{single} step $G_k \onto G_{k+1}$.
We present first an overview small cancellation theory and develop later the required additional material.
For proving infiniteness of Burnside groups we only need to consider relations of the form $h^n = 1$.
Nevertheless we start our study in a slightly more general setting as the intermediate results are of independent interest.

%%%%%%%%%%%%%%%%%%%%%%%%%%%%%%%%%%%%%%%%%%%%%%%%%%%%%%%%%%%%%%%%%%%%%%%%%%%%%%%%%%%%
%
\subsection{General setting}
%
%%%%%%%%%%%%%%%%%%%%%%%%%%%%%%%%%%%%%%%%%%%%%%%%%%%%%%%%%%%%%%%%%%%%%%%%%%%%%%%%%%%%
\label{sec: sc - general setting}

Let $X$ be a $\delta$-hyperbolic length space and $G$ a group acting gently by isometries on $X$.
Let $\mathcal Q$ be a collection of pairs $(H,Y)$ where $H$ is a subgroup of $G$ and $Y$ an $H$-invariant strongly quasi-convex subset of $X$.
We assume that $\mathcal Q$ is invariant under the action of $G$ defined by $g \cdot (H,Y) = (gHg^{-1},gY)$, for every $(H,Y) \in \mathcal Q$ and every $g \in G$.
We denote by $K$ the (normal) subgroup of $G$ generated by all $H$ where $(H,Y)$ runs over $\mathcal Q$.
The goal is to study the quotient $\bar G = G/K$.
To that end, we define two parameters $\Delta(\mathcal Q,X)$ and $\inj[X]{\mathcal Q}$ which play the role of the lengths of the largest piece and the smallest relation respectively.
\begin{eqnarray*}
	\Delta(\mathcal Q, X) &  = & \sup \set{\diam\left(Y_1^{+5\delta}\cap Y_2^{+ 5\delta}\right)}{(H_1,Y_1) \neq (H_2, Y_2) \in \mathcal Q}, \\
	\inj[X]{\mathcal Q} & = & \inf \set{ \norm h}{h \in H, (H,Y) \in \mathcal Q}.
\end{eqnarray*}

\begin{rema*}
	As explained above, we will later focus on a particular set of relations.
	More precisely the collection $\mathcal Q$ will be of the form 
	\begin{equation*}
		\mathcal Q = \set{\left(Y_h, \group{h^n}\right)}{h \in S}
	\end{equation*}
	where $n$ is a large integer and $S$ a subset of \og small\fg\ loxodromic elements of $G$, which is invariant under conjugation.
	Assuming that $\Delta(\mathcal Q, X)$ is finite will automatically imply that $\group{h^n}$ is normal in $\stab{Y_h}$, for every $h \in S$.
\end{rema*}

We now fix once for all a number $\rho \in \R_+^*$.
Its value will be made precise later (see \autoref{res: small cancellation}).
It should be thought of as a very large distance.

\paragraph{Cones.}
Let $(H,Y) \in \mathcal Q$. 
The \emph{cone of radius $\rho$ over $Y$}, denoted by $Z_\rho(Y)$ or simply $Z(Y)$, is the quotient of $Y\times [0,\rho]$ by the equivalence relation that identifies all the points of the form $(y,0)$.
The equivalence class of $(y,0)$, denoted by $v$, is called the \emph{apex} or \emph{cone point} of $Z(Y)$. 
By abuse of notation, we still write $(y,r)$ for the equivalence class of $(y,r)$.
The map $\iota \colon Y \rightarrow Z(Y)$ that sends $y$ to $(y,\rho)$ provides a natural embedding from $Y$ to $Z(Y)$.
The \emph{radial projection} $p : Z(Y) \setminus\{v\} \rightarrow Y$ is the map sending $(y,r)$ to $y$.
We denote by $\distV[Y]$ the length metric on $Y$ induced by the restriction of $\distV$ to $Y$.
This cone $Z(Y)$ can be endowed with a metric as described below.
	
\begin{prop}{\rm \cite[Chapter I.5, Proposition 5.9]{Bridson:1999ky}} \quad
\label{res: def distance cone}
	The cone $Z(Y)$ is endowed with a metric characterized in the following way. 
	Let $x=(y,r)$ and $x'=(y',r')$ be two points of $Z(Y)$ then
	\begin{equation}
	\label{eqn: def metric cone}
		\cosh \dist[Z(Y)] x{x'} = \cosh r \cosh r' - \sinh r\sinh r' \cos \theta(y,y'),
	\end{equation}
	where $\theta(y,y')$ is the \emph{angle at the apex} defined by 
	\begin{equation*}
		\theta(y,y') = \min \left\{ \pi , \frac{\dist[Y]y{y'}}{\sinh \rho}\right\}.
	\end{equation*}
\end{prop}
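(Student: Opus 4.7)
The plan is to verify directly that formula (\ref{eqn: def metric cone}) defines a metric on $Z(Y)$, following the standard hyperbolic cone construction of Berestovskii (as exposed in the cited section of \cite{Bridson:1999ky}). The formula is manifestly symmetric in $x$ and $x'$, and since $\theta(y, y') \in [0, \pi]$ by construction, one has $\cos \theta(y,y') \leq 1$, so the right hand side of (\ref{eqn: def metric cone}) is bounded below by $\cosh r \cosh r' - \sinh r \sinh r' = \cosh(r-r') \geq 1$. Hence $\dist[Z(Y)] x{x'} \geq 0$, with equality forcing $r = r'$ and either $r = 0$ (so both points equal the apex $v$) or $\theta(y, y') = 0$ (so $y = y'$). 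The formula passes to the quotient $Z(Y)$ because its dependence on $y, y'$ goes only through the product $\sinh r\,\sinh r'\,\cos\theta(y,y')$, which vanishes whenever $r$ or $r'$ does; in particular $\dist[Z(Y)]{(y,0)}{(y',0)} = 0$, consistent with identifying all points at radius $0$.

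The substantive content is the triangle inequality. First I would check that the capped quantity $\theta$ is a pseudo-metric on $Y$ with values in $[0, \pi]$: the triangle inequality for $\theta$ follows from the one for $\distV[Y]/\sinh \rho$, because truncation at $\pi$ only weakens the right hand side, and if $\theta(y_1, y_2) + \theta(y_2, y_3) \geq \pi$ the inequality $\theta(y_1, y_3) \leq \pi$ holds automatically. Given three points $x_i = (y_i, r_i)$ in $Z(Y)$, I would then compare with a configuration in the hyperbolic plane $\mathbf{H}^2$: choose a base point $\tilde v$ and points $\tilde x_1, \tilde x_2, \tilde x_3$ with $d_{\mathbf{H}^2}(\tilde v, \tilde x_i) = r_i$, such that $\tilde x_1$ and $\tilde x_3$ lie on opposite sides of the geodesic through $\tilde v$ and $\tilde x_2$, with angles $\theta(y_1, y_2)$ and $\theta(y_2, y_3)$ at $\tilde v$. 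The hyperbolic law of cosines then gives $d_{\mathbf{H}^2}(\tilde x_i, \tilde x_j) = \dist[Z(Y)]{x_i}{x_j}$ for the adjacent pairs $(i,j) = (1,2)$ and $(2,3)$. Since $\angle_{\tilde v}(\tilde x_1, \tilde x_3) = \theta(y_1, y_2) + \theta(y_2, y_3) \geq \theta(y_1, y_3)$ and the law of cosines is monotone decreasing in the angle variable, one obtains $\dist[Z(Y)]{x_1}{x_3} \leq d_{\mathbf{H}^2}(\tilde x_1, \tilde x_3)$, and the triangle inequality in $\mathbf{H}^2$ concludes.

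The main obstacle is the degenerate regime where $\theta(y_1, y_2) + \theta(y_2, y_3) > \pi$, so that the prescribed angles at $\tilde v$ cannot be realized simultaneously inside $\mathbf{H}^2$. In that case I would fall back on the worst-case bound $\dist[Z(Y)]{x_1}{x_3} \leq r_1 + r_3$ obtained by taking $\theta = \pi$ in (\ref{eqn: def metric cone}), and observe that the broken path through the apex provides $\dist[Z(Y)]{x_1}{v} + \dist[Z(Y)]{v}{x_3} = r_1 + r_3$, each summand being in turn dominated by the corresponding adjacent cone distance $\dist[Z(Y)]{x_i}{x_2}$ (a one-dimensional hyperbolic law of cosines calculation). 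Combining the two regimes establishes the triangle inequality and, together with the earlier checks of symmetry and non-degeneracy, shows that $\dist[Z(Y)]{\cdot}{\cdot}$ is indeed a metric characterized by (\ref{eqn: def metric cone}).
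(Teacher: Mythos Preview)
The paper does not prove this proposition; it is quoted from \cite[Chapter~I.5, Proposition~5.9]{Bridson:1999ky}. Your outline follows that argument in spirit, and the non-degenerate regime $\theta(y_1,y_2)+\theta(y_2,y_3)\leq\pi$ is handled correctly.

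There is a genuine gap in your degenerate case $\theta(y_1,y_2)+\theta(y_2,y_3)>\pi$. You assert that each summand $r_i$ is dominated by the adjacent cone distance $\dist[Z(Y)]{x_i}{x_2}$, but this is false: take $r_1=2$, $r_2=1$, $\theta(y_1,y_2)=0.1$ and $\theta(y_2,y_3)$ close to $\pi$ so that the sum exceeds $\pi$. The law of cosines gives $\cosh\dist[Z(Y)]{x_1}{x_2}=\cosh 2\cosh 1-\sinh 2\sinh 1\cos(0.1)\approx 1.56$, hence $\dist[Z(Y)]{x_1}{x_2}\approx 1.02<2=r_1$. The summed inequality $r_1+r_3\leq\dist[Z(Y)]{x_1}{x_2}+\dist[Z(Y)]{x_2}{x_3}$ you need \emph{is} true in this regime, but not via a termwise comparison.

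A correct fix: since each angle is at most $\pi$ while their sum exceeds $\pi$, pick $\theta'_{12}\leq\theta(y_1,y_2)$ and $\theta'_{23}\leq\theta(y_2,y_3)$ with $\theta'_{12}+\theta'_{23}=\pi$ (for instance $\theta'_{12}=\pi-\theta(y_2,y_3)$ and $\theta'_{23}=\theta(y_2,y_3)$). Place $\tilde x_1,\tilde v,\tilde x_3$ collinearly in $\mathbf{H}^2$ with $\tilde v$ between them, and $\tilde x_2$ at distance $r_2$ from $\tilde v$ making angle $\theta'_{12}$ with the ray to $\tilde x_1$. Then $d_{\mathbf{H}^2}(\tilde x_1,\tilde x_3)=r_1+r_3$, while monotonicity of the law of cosines in the angle gives $d_{\mathbf{H}^2}(\tilde x_i,\tilde x_2)\leq\dist[Z(Y)]{x_i}{x_2}$ for $i=1,3$. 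The triangle inequality in $\mathbf{H}^2$ now yields $r_1+r_3\leq\dist[Z(Y)]{x_1}{x_2}+\dist[Z(Y)]{x_2}{x_3}$; combined with your bound $\dist[Z(Y)]{x_1}{x_3}\leq r_1+r_3$, this closes the gap.
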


The distance between two points $x=(y,r)$ and $x'=(y',r')$ of $Z(Y)$ has the following geometric interpretation.
Consider a geodesic triangle in the hyperbolic plane $\H_2$ such that the lengths of two sides are respectively $r$ and $r'$ and the angle between them is $\angle y{y'}$.
According to the law of cosines, $\dist x{x'}$ is exactly the length of the third side of the triangle (see \autoref{fig: distance cone}).
\begin{figure}[htbp]
	\centering
	\includegraphics[width=0.9\textwidth]{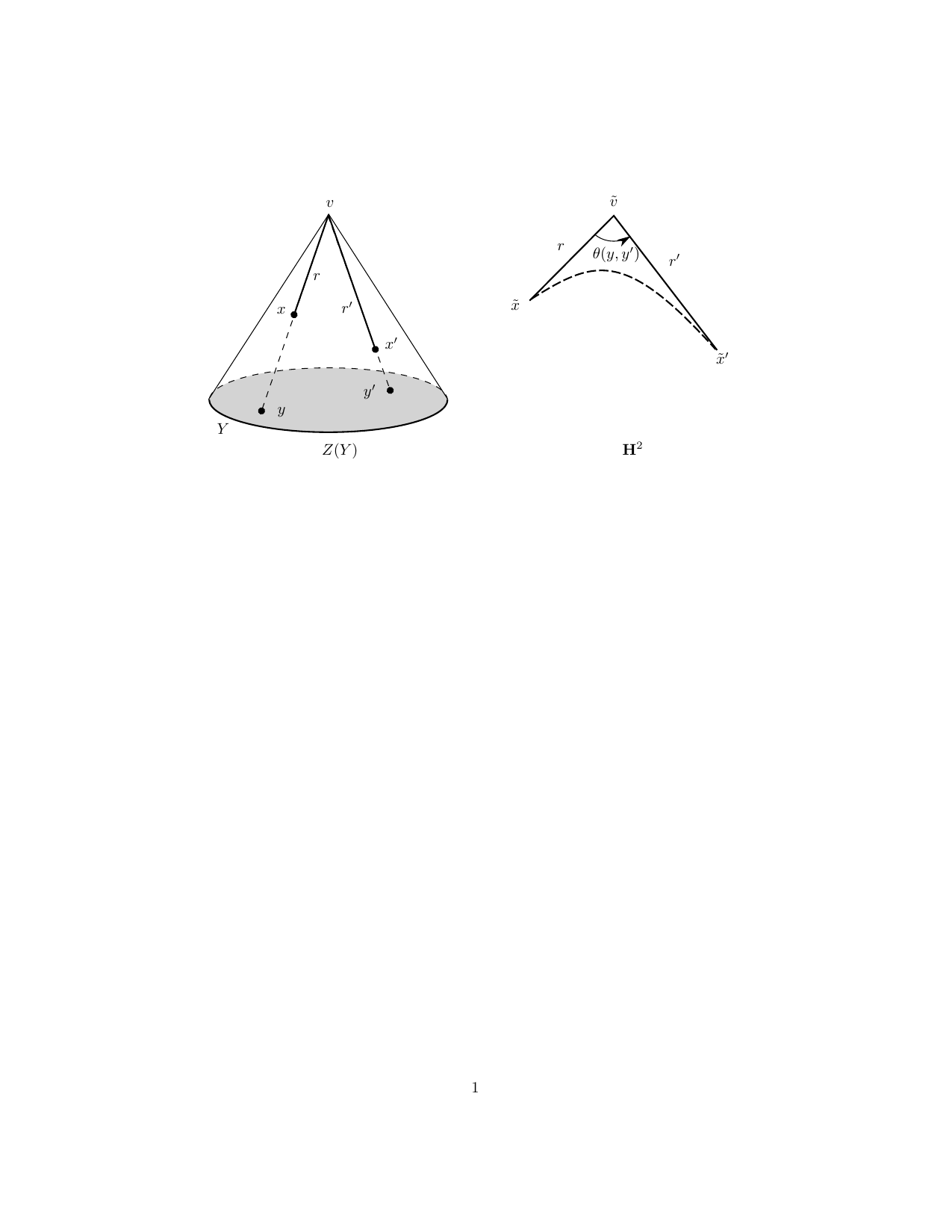}
	\caption{Geometric interpretation of the distance in the cone.}
	\label{fig: distance cone}
\end{figure}

\begin{exam}
	If $Y$ is a circle whose perimeter is $2\pi \sinh \rho$ and endowed with the length metric, then $Z(Y)$ is the closed hyperbolic disc of radius $\rho$.
	If $Y$ is the real line, then $Z(Y) \setminus\{v\}$ is the universal cover of the punctured hyperbolic disc of radius $\rho$.
\end{exam}

In order to compare the metric of $Y$ and $Z(Y)$, we use the map $\mu \colon \R_+ \to \R_+$ characterized as follows
\begin{equation*}
	\cosh \mu(t) = \cosh^2\rho - \sinh^2\rho \cos \left( \min \left\{ \pi, \frac t{\sinh \rho}\right\} \right), \quad \forall t \in \R_+,
\end{equation*}
so that for every $y,y' \in Y$ we have
\begin{equation*}
	\dist[Z(Y)]{\iota(y)}{\iota(y')} = \mu ( \dist[Y]y{y'}).
\end{equation*}
The next proposition summarizes the properties of $\mu$.

\begin{prop}
\label{res: map mu}
	The map $\mu$ is continuous, concave, and non-decreasing.
	In addition, for all $t \in \intval 0{\pi \sinh \rho}$, we have $t \leq \pi \sinh (\mu(t)/2)$.
\end{prop}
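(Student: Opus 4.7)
The plan is to first simplify the defining formula by passing to half-angles, and then deduce all four statements from the resulting identity. Applying $\cos(t/\sinh\rho) = 1 - 2\sin^2(t/(2\sinh\rho))$ to the definition yields
\begin{equation*}
	\cosh\mu(t) = 1 + 2 \sinh^2\rho \, \sin^2\!\left(\frac{t}{2\sinh\rho}\right), \quad t \in [0, \pi\sinh\rho],
\end{equation*}
and combining this with $\cosh\mu - 1 = 2\sinh^2(\mu/2)$ gives the cleaner identity
\begin{equation}
\label{eqn: mu key identity}
	\sinh\!\left(\mu(t)/2\right) = \sinh\rho\, \sin\!\left(\frac{t}{2\sinh\rho}\right), \quad t \in [0, \pi\sinh\rho].
\end{equation}
On $[\pi\sinh\rho, +\infty)$ the value of $\mu$ is the constant $2\rho$, which agrees with \eqref{eqn: mu key identity} at the endpoint $t = \pi\sinh\rho$.

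Continuity and monotonicity are then immediate: since $\sinh$ is a homeomorphism of $\R_+$ onto itself and $\sin$ is non-decreasing on $[0, \pi/2]$, the identity \eqref{eqn: mu key identity} shows that $\mu$ is continuous and non-decreasing on $[0, \pi\sinh\rho]$; the constant piece on $[\pi\sinh\rho, +\infty)$ glues continuously at the endpoint. The final inequality is also a direct consequence of \eqref{eqn: mu key identity}: writing $u = t/(2\sinh\rho) \in [0, \pi/2]$, the required bound $t \leq \pi \sinh(\mu(t)/2)$ becomes $2u \leq \pi \sin u$, which is the classical Jordan inequality $\sin u \geq (2/\pi) u$ on $[0, \pi/2]$ (a straightforward consequence of the concavity of $\sin$ on that interval).

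The main obstacle is concavity. Differentiating \eqref{eqn: mu key identity} once gives
\begin{equation*}
	\mu'(t)\, \cosh\!\left(\mu(t)/2\right) = \cos\!\left(\frac{t}{2\sinh\rho}\right),
\end{equation*}
and differentiating a second time yields
\begin{equation*}
	\mu''(t)\, \cosh\!\left(\mu(t)/2\right) + \frac{\mu'(t)^2}{2}\, \sinh\!\left(\mu(t)/2\right) = -\frac{1}{2\sinh\rho}\, \sin\!\left(\frac{t}{2\sinh\rho}\right).
\end{equation*}
On $(0, \pi\sinh\rho)$ the right-hand side is non-positive while the second term on the left is non-negative and the coefficient of $\mu''$ is positive, so $\mu''(t) \leq 0$. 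On $(\pi\sinh\rho, +\infty)$ the map $\mu$ is constant, hence $\mu'' = 0$. At the junction point, the left-derivative equals $\cos(\pi/2)/\cosh\rho = 0$ and matches the right-derivative, so $\mu'$ is non-increasing on $\R_+$ and $\mu$ is concave throughout, completing the argument.
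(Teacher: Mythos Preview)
Your proof is correct. The paper states this proposition without proof, treating the result as elementary; your derivation of the half-angle identity $\sinh(\mu(t)/2) = \sinh\rho\,\sin(t/(2\sinh\rho))$ and the subsequent arguments (Jordan's inequality for the bound, a direct second-derivative computation for concavity, and the matching of one-sided derivatives at the junction) supply exactly the missing details in a clean and natural way.
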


We complete this part with a useful tool to compare two cones.

\begin{lemm}
\label{res: qi between cones}
	Let $f \colon Y_1 \to Y_2$ a $(1,\ell)$ quasi-isometric embedding between two metric spaces.
	The map $Z(Y_1) \to Z(Y_2)$ sending $(y,r)$ to $(f(y),r)$ is again a $(1, \ell)$-quasi-isometric embedding.
\end{lemm}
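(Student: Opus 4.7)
The plan is to reduce the claim to a $1$-Lipschitz estimate for the cone distance as a function of the underlying distance in $Y$. Since the map $F \colon Z(Y_1) \to Z(Y_2)$, $(y,r) \mapsto (f(y),r)$, preserves the radial coordinate, \autoref{res: def distance cone} shows that $\dist[Z(Y_1)]{(y,r)}{(y',r')}$ and $\dist[Z(Y_2)]{F(y,r)}{F(y',r')}$ are obtained by plugging the \emph{same} radii $r,r' \in [0,\rho]$ and the respective angles $\theta_i = \min\{\pi,s_i/\sinh\rho\}$ into a single law of cosines, where $s_1 = \dist[Y_1]y{y'}$ and $s_2 = \dist[Y_2]{f(y)}{f(y')}$. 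Since $f$ is a $(1,\ell)$-quasi-isometric embedding between $(Y_1,\distV[Y_1])$ and $(Y_2,\distV[Y_2])$, we have $\abs{s_1 - s_2} \leq \ell$. Hence it suffices to prove that for fixed $r,r' \in [0,\rho]$, the function $\phi \colon \R_+ \to \R_+$ defined implicitly by
\begin{equation*}
    \cosh \phi(s) = \cosh r \cosh r' - \sinh r \sinh r' \cos\min\{\pi,s/\sinh\rho\}
\end{equation*}
is $1$-Lipschitz.

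First I would dispose of the saturated regime $s \geq \pi \sinh \rho$: there $\cos\theta = -1$, so $\cosh\phi(s) = \cosh(r+r')$ and $\phi$ is constantly equal to $r+r'$. For $s \in (0, \pi\sinh\rho)$, setting $\theta = s/\sinh\rho$, the chain rule yields
\begin{equation*}
    \phi'(s) = \frac{\sinh r \sinh r' \sin\theta}{\sinh\rho \cdot \sinh\phi(s)}.
\end{equation*}
At this point I would interpret $\phi(s)$ as the length of the third side of a geodesic triangle in $\H_2$ with two sides of lengths $r,r'$ and included angle $\theta$. The hyperbolic law of sines then gives $\sinh r \sin\theta = \sinh\phi(s)\sin\beta$, where $\beta$ is the angle opposite the side of length $r$. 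Substituting,
\begin{equation*}
    \phi'(s) = \frac{\sinh r' \sin\beta}{\sinh\rho} \leq 1,
\end{equation*}
using $r' \leq \rho$ and $\sin\beta \leq 1$. Continuity of $\phi$ at $s = \pi\sinh\rho$ (where both regimes give $r+r'$) glues the two cases, so $\phi$ is $1$-Lipschitz on the whole half-line.

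Combining these, $\abs{\dist[Z(Y_2)]{F(x)}{F(x')} - \dist[Z(Y_1)]x{x'}} = \abs{\phi(s_2) - \phi(s_1)} \leq \abs{s_2 - s_1} \leq \ell$, which is precisely the $(1,\ell)$-quasi-isometric embedding condition for $F$. The main subtlety is the derivative bound: it relies crucially on the constraint $r,r' \leq \rho$ baked into the definition of a cone of radius $\rho$, and the hyperbolic law of sines is the exact ingredient that turns this constraint into the desired Lipschitz constant. The simultaneous use of both the upper and lower bounds $\snorm{s_1 - s_2} \leq \ell$ in the quasi-isometry hypothesis then gives the two-sided estimate.
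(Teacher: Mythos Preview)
Your proof is correct and rests on the same geometric interpretation the paper invokes: realizing $(y,r)$ and $(y',r')$ as vertices of a hyperbolic triangle in $\H_2$ with the apex, so that the cone distance is the third side $\phi(s)$. The paper leaves it at that one line, while you make the $1$-Lipschitz bound for $\phi$ explicit via differentiation and the hyperbolic law of sines.

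A marginally more geometric route to the same Lipschitz bound, perhaps closer in spirit to the paper's one-liner, avoids calculus entirely: changing $s$ by $\Delta s$ changes the angle at the apex by at most $\Delta s/\sinh\rho$, which moves the point at radius $r'$ along the hyperbolic circle of that radius through an arc of length at most $(\Delta s/\sinh\rho)\sinh r' \leq \Delta s$; the triangle inequality in $\H_2$ then gives $\abs{\phi(s+\Delta s)-\phi(s)} \leq \Delta s$ directly. Your approach has the virtue of being fully explicit, while this variant makes clearer why the constraint $r' \leq \rho$ is exactly what is needed.
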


\begin{proof}
	The result is a direct consequence of the geometric interpretation of the metric on the cones.
\end{proof}

\paragraph{The cone-off space $\dot X$.}
The \textit{cone-off of radius $\rho$ over $X$ relative to $\mathcal Q$} denoted by $\dot X_\rho(\mathcal Q)$ (or simply $\dot X$) is obtained by attaching for every $(H,Y) \in \mathcal Q$, the cone $Z(Y)$ on $X$ along $Y$ according to $\iota$.
The subset of $X$ consisting of all apices of the cones is denoted by $\mathcal V$.
We endow $\dot X$ with the largest pseudo-metric $\distV[\dot X]$ for which all the maps $X \to \dot X$ and $Z(Y) \to \dot X$ -- where $(H,Y)$ runs over $\mathcal Q$ -- are $1$-Lipschitz.
It turns out that this pseudo-distance is a length metric on $\dot X$ \cite[Proposition 5.10]{Coulon:2014fr}.
The next lemmas detail the relationship between the metrics of $X$ and $\dot X$.

\begin{lemm}[{\cite[Lemma 5.8]{Coulon:2014fr}}]
\label{res: loose comparison metric X and dot X}
	For every $x,x' \in X$, we have
	\begin{equation*}
		\mu\left(\dist[X] x{x'}\right) \leq \dist[\dot X] x{x'} \leq \dist[X] x{x'}.
	\end{equation*}
\end{lemm}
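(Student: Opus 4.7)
The upper bound is immediate from the definition of $d_{\dot X}$, since the inclusion $X \hookrightarrow \dot X$ is one of the maps required to be $1$-Lipschitz, giving $d_{\dot X}(x,x') \leq d_X(x,x')$.

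For the lower bound, my plan is to exploit that $\dot X$ is a length space and bound from below the length of an arbitrary rectifiable path $\gamma$ from $x$ to $x'$. The key observation is that $\dot X \setminus X$ decomposes as the disjoint union of the open cones $Z(Y) \setminus Y$, so each connected component of $\gamma^{-1}(\dot X \setminus X)$ is an open interval whose image lies in a single cone $Z(Y)$ and whose endpoints lie on the rim $Y \subset X$. After a standard approximation argument reducing to finitely many excursions (collapsing sufficiently short ones into the neighboring $X$-pieces, using rectifiability of $\gamma$), one obtains subdivision points $x = p_0, p_1, \dotsc, p_n = x'$ in $X$ such that each subpath of $\gamma$ lies either entirely in $X$ or entirely in some cone $Z(Y_i)$ with $p_i, p_{i+1} \in Y_i$.

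Let $L_i$ denote the length of the $i$-th subpath. If it lies in $X$, then $L_i \geq d_X(p_i, p_{i+1})$, and since a chord in a hyperbolic sector is no longer than the corresponding arc one has $\mu(t) \leq t$ for every $t \geq 0$ (equivalently, by concavity together with $\mu(0)=0$ and $\mu'(0)=1$). If instead the subpath lies in $Z(Y_i)$ with $p_i, p_{i+1} \in Y_i$, then the explicit cone formula in \autoref{res: def distance cone} together with monotonicity of $\mu$ (\autoref{res: map mu}) and $d_X \leq d_{Y_i}$ on $Y_i$ gives
\[
L_i \;\geq\; d_{Z(Y_i)}(p_i, p_{i+1}) \;=\; \mu\bigl(d_{Y_i}(p_i, p_{i+1})\bigr) \;\geq\; \mu\bigl(d_X(p_i, p_{i+1})\bigr).
\]
In either case $L_i \geq \mu\bigl(d_X(p_i, p_{i+1})\bigr)$.

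To conclude, sub-additivity of $\mu$ (an immediate consequence of concavity with $\mu(0)=0$ in \autoref{res: map mu}), combined with the triangle inequality in $X$ and the monotonicity of $\mu$, yields
\[
L(\gamma) \;\geq\; \sum_i \mu\bigl(d_X(p_i, p_{i+1})\bigr) \;\geq\; \mu\!\left(\sum_i d_X(p_i, p_{i+1})\right) \;\geq\; \mu\bigl(d_X(x,x')\bigr),
\]
and taking the infimum over all rectifiable paths $\gamma$ gives the desired bound. The main technicality I expect is justifying the reduction to finitely many cone excursions; once that is in place, everything else is a routine geometric comparison between the length metric on $\dot X$ and the modulus $\mu$.
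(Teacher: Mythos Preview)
The paper does not prove this lemma here; it is quoted verbatim from \cite[Lemma~5.8]{Coulon:2014fr}. Your argument is correct and is essentially the standard one: decompose into $X$-segments and cone-excursions, bound each piece below by $\mu(d_X(p_i,p_{i+1}))$ using $\mu(t)\le t$ and the cone formula, then apply subadditivity and monotonicity of $\mu$.

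One remark on the technicality you flag. You can sidestep the ``finitely many excursions'' reduction entirely by working directly with the definition of $d_{\dot X}$ as the largest pseudo-metric making $X\hookrightarrow\dot X$ and each $Z(Y)\hookrightarrow\dot X$ $1$-Lipschitz. That metric is computed as an infimum over \emph{finite} chains $x=x_0,\dots,x_n=x'$ where each consecutive pair lies in $X$ or in a common cone, and the contribution is the corresponding $d_X$ or $d_{Z(Y)}$ distance. After refining so that transition points lie in the relevant $Y$, your per-piece estimate and the subadditivity step go through verbatim, with no limiting argument needed. This is how the cited reference handles it, and it makes the proof a couple of lines shorter.
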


\begin{lemm}[{\cite[Lemma 5.7]{Coulon:2014fr}}]
\label{res: metric on  dot X and Zi coincide}
	Let $(H,Y)\in \mathcal Q$.
	Let $x \in Z(Y)$.
	Let $d(x,Y)$ be the distance between $x$ and $\iota(Y)$ computed with $\distV[Z(Y)]$.
	For all $x' \in \dot X$, if $\dist[\dot X] x{x'} < d(x,Y)$ then $x'$ belongs to $Z(Y)$.
	Moreover $\dist[\dot X] x{x'} = \dist[Z(Y)] x{x'}$.
\end{lemm}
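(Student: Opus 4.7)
The plan is to rely on the explicit description of $\distV[\dot X]$ as a chain pseudo-metric. Since $\distV[\dot X]$ is the largest pseudo-metric making every inclusion $X \hookrightarrow \dot X$ and $Z(Y') \hookrightarrow \dot X$ (for $(H', Y') \in \mathcal Q$) $1$-Lipschitz, the distance from $y_1$ to $y_2$ equals the infimum of $\sum_{i=0}^{m-1} \dist[P_i]{z_i}{z_{i+1}}$ over all chains $z_0 = y_1, z_1, \ldots, z_m = y_2$ such that each consecutive pair $(z_i, z_{i+1})$ lies in a common piece $P_i \in \{X\} \cup \{Z(Y') : (H', Y') \in \mathcal Q\}$. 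The key geometric fact is that $Z(Y)$ is attached to the rest of $\dot X$ exactly along $\iota(Y)$: the apex of $Z(Y)$ is identified with no other point, and an equivalence class $(y, r)$ with $0 < r < \rho$ lies only in $Z(Y)$. Hence any point of $\dot X$ belonging simultaneously to $Z(Y)$ and to another piece must lie in $\iota(Y)$.

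For the first claim, fix $\epsilon > 0$ with $\dist[\dot X]{x}{x'} + \epsilon < d(x, Y)$ and select a chain $z_0 = x, z_1, \ldots, z_m = x'$ realising $\dist[\dot X]{x}{x'}$ up to $\epsilon$. Suppose, for contradiction, $x' \notin Z(Y)$, and let $k$ be the smallest index for which $z_k \notin Z(Y) \setminus \iota(Y)$. For every $i < k$, the point $z_i$ lies in $Z(Y) \setminus \iota(Y)$, which by the structural observation is contained in no other piece; hence $P_i = Z(Y)$. The triangle inequality inside the cone yields
\begin{equation*}
  \sum_{i=0}^{k-1} \dist[Z(Y)]{z_i}{z_{i+1}} \geq \dist[Z(Y)]{x}{z_k} \geq d(x, Y),
\end{equation*}
where the last inequality uses $z_k \in \iota(Y)$ (if instead $z_k \notin Z(Y)$, then the pair $(z_{k-1}, z_k)$ sits in a piece different from $Z(Y)$, forcing $z_{k-1} \in \iota(Y)$, and the same bound applies after shifting one index). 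This contradicts the upper bound on the total chain length, so $x' \in Z(Y)$.

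For the equality $\dist[\dot X]{x}{x'} = \dist[Z(Y)]{x}{x'}$, one direction is immediate from the $1$-Lipschitz inclusion $Z(Y) \hookrightarrow \dot X$. For the reverse, apply the same reasoning to every prefix of an $\epsilon$-near-optimal chain: each prefix has length strictly less than $d(x, Y)$, so no intermediate $z_i$ can reach $\iota(Y)$ without saturating that budget. Thus every $z_i$ stays in $Z(Y) \setminus \iota(Y)$, each $P_i$ is forced to equal $Z(Y)$, and the triangle inequality in the cone gives $\sum \dist[Z(Y)]{z_i}{z_{i+1}} \geq \dist[Z(Y)]{x}{x'}$; letting $\epsilon \to 0$ concludes. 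The main subtlety is ruling out chains that pivot through $\iota(Y)$ to exploit potentially shorter $X$-distances along $Y$: the argument handles this uniformly because simply reaching $\iota(Y)$ from $x$ already consumes the entire allowed length budget $d(x, Y)$.
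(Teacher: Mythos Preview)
The paper does not give its own proof of this lemma; it is quoted verbatim from \cite[Lemma~5.7]{Coulon:2014fr} and used as a black box. So there is nothing in the present paper to compare against. That said, your argument is the standard one and is essentially correct: the chain description of the glued pseudo-metric is exactly how $\distV[\dot X]$ is characterised, and the key structural fact --- that $Z(Y)\setminus\iota(Y)$ meets no other piece --- forces every near-optimal chain starting at $x$ to remain in $Z(Y)$ until it has accumulated length at least $d(x,Y)$.

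One minor wrinkle: your parenthetical handling of the case $z_k\notin Z(Y)$ is slightly off. By minimality of $k$ you have $z_{k-1}\in Z(Y)\setminus\iota(Y)$, and since that set lies in no piece other than $Z(Y)$, the piece $P_{k-1}$ containing the pair $(z_{k-1},z_k)$ must be $Z(Y)$ itself, whence $z_k\in Z(Y)$ after all. In other words the case $z_k\notin Z(Y)$ is simply impossible, and there is no need to ``shift one index'' (indeed, concluding $z_{k-1}\in\iota(Y)$ would contradict the minimality of $k$). With this cosmetic fix the proof is clean.
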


Let $v$ be the apex of $Z(Y)$.
It follows from the lemma that, as a set, the ball $B(v,\rho)$ (for the metric of $\dot X$) is nothing but $Z(Y)\setminus \iota(Y)$.
Moreover the metrics $\distV[\dot X]$ and $\distV[Z(Y)]$ coincide on $B(v, \rho/3)$.

\paragraph{The quotient space $\bar X$.}
The action of $G$ on $X$ naturally extends to an action by isometries on $\dot X$ as follows.
Let $(H,Y) \in \mathcal Q$.
For every $g \in G$, for every $x = (y,r)$ in $Z(Y)$, we define $gx$ to be the point of $Z(gY)$ given by $gx = (gy,r)$.
The space $\bar X$ is the quotient $\bar X = \dot X/K$.
The metric on $\dot X$ induces a pseudo-metric on $\bar X$.
We write $\zeta \colon \dot X \rightarrow \bar X$ for the canonical projection from $\dot X$ to $\bar X$.
The quotient $\bar G = G/K$ naturally acts by isometries on $\bar X$.
We denote by $\bar {\mathcal V}$ the image of $\mathcal V$ in $\bar X$.
For every $x \in \dot X$, we usually write $\bar x$ for its image in $\bar X$.

\paragraph{Small cancellation theorem.}
The next statement is a combination of Proposition~6.4, Proposition~6.7, Corollary~ 3.12 and Proposition~3.15 in \cite{Coulon:2014fr}.
See also \cite[Th\'eor\`emes~5.2.5 et 5.5.2]{Delzant:2008tu}.

\begin{theo}
\label{res: small cancellation}
	There exist $\delta_0, \delta_1, \Delta_0, \rho_0 \in \R_+^*$, which do not depend on $X$, $G$ or $\mathcal Q$, with the following property.
	Assume that $\rho \geq \rho_0$.
	If $\delta \leq \delta_0$, $\Delta(\mathcal Q,X) \leq \Delta_0$ and $\inj[X]{\mathcal Q} \geq 10\pi \sinh \rho$, then the following holds
	\begin{enumerate}
		\item \label{enu: small cancellation - hyp cone-off}
		The cone-off space $\dot X$ is $\dot \delta$-hyperbolic with $\dot \delta \leq \delta_1$.
		\item \label{enu: small cancellation - hyp}
		The quotient space $\bar X$ is $\bar \delta$-hyperbolic with $\bar \delta \leq \delta_1$.
		\item \label{enu: small cancellation - local embedding}
		Let $(H,Y) \in \mathcal Q$.
		Let $\bar v$ be the image in $\bar X$ of the apex $v$ of $Z(Y)$.
		The subgroup $\stab{\bar v} \subset \bar G$ is isomorphic to the quotient $\stab Y/H$.
		Moreover the projection $\zeta \colon \dot X \to \bar X$ induces an isometry from $B(v, \rho/2)/H$ onto $B(\bar v, \rho/2)$.
		\item \label{enu: small cancellation - local isom}
		For every $r \in (0, \rho/20]$, for every $x \in \dot X$, if $d(x, \mathcal V) \geq 2r$, then the projection $\zeta \colon \dot X \to \bar X$ induces an isometry from $B(x,r)$ onto $B(\bar x, r)$.
		\item \label{enu: small cancellation - translation kernel}
		For every $x \in \dot X$ for every $g \in K \setminus\{1\}$, we have $\dist[\dot X]{gx}x \geq \min \{2r, \rho/5\}$, where $r = d(v, \mathcal V)$.
		In particular, $K$ acts freely on $\dot X \setminus \mathcal V$.
		Moreover, the projection $\zeta \colon \dot X \to \bar X$ induces a covering map $\dot X \setminus \mathcal V \to \bar X \setminus \bar{\mathcal V}$.
	\end{enumerate}
\end{theo}

\begin{rema*}
	Note that the constants $\delta_0$ and $\Delta_0$ (\resp $\rho_0$) can be chosen arbitrarily small (\resp large).
	From now on, we will always assume that $\rho_0 > 10^{20} \delta_1$ whereas $\delta_0, \Delta_0 < 10^{-10}\delta_1$.
	These estimates are absolutely not optimal.
	We chose them very generously to ensure that all the inequalities which we might need later will be satisfied.
	What really matters is their orders of magnitude recalled below.
	\begin{equation*}
		\max\left\{\delta_0, \Delta_0\right\} \ll \delta_1  \ll \rho \ll \pi \sinh \rho.
	\end{equation*}
	An other important point to remember is the following.
	The constants $\delta_0$, $\Delta_0$ and $\pi \sinh \rho$ are used to describe the geometry of $X$ whereas $\delta_1$ and $\rho$ refers to the one of $\dot X$ or $\bar X$.
	From now on and until the end of \autoref{sec: sc} we assume that $X$, $G$ and $\mathcal Q$ are as in \autoref{res: small cancellation}.
	In particular, $\dot X$ and $\bar X$ are respectively $\dot\delta$- and $\bar \delta$-hyperbolic.
	Up to increasing one constant or the other, we can actually assume that $\dot \delta = \bar \delta$.
	Nevertheless we still keep two distinct notations, to remember which space we are working in.
\end{rema*}

\begin{nota*}
	In this section we work with three metric spaces namely $X$, its cone-off $\dot X$ and the quotient $\bar X$.
	Since the map $X \into \dot X$ is an embedding we use the same letter $x$ to designate a point of $X$ and its image in $\dot X$.
	We write $\bar x$ for its image in $\bar X$.
	Unless stated otherwise, we keep the notation $\distV$ (without mentioning the space) for the distances in $X$ or $\bar X$.
	The metric on $\dot X$ will be denoted by $\distV[\dot X]$.
\end{nota*}

%%%%%%%%%%%%%%%%%%%%%%%%%%%%%%%%%%%%%%%%%%%%%%%%%%%%%%%%%%%%%%%%%%%%%%%%%%%%%%%%%%%%
%
\subsection{A few additional facts regarding the cone-off space}
%
%%%%%%%%%%%%%%%%%%%%%%%%%%%%%%%%%%%%%%%%%%%%%%%%%%%%%%%%%%%%%%%%%%%%%%%%%%%%%%%%%%%%
\label{sec: sc - cone-off}

\paragraph{Radial projection.}
The \emph{radial projection} $p \colon \dot X \setminus \mathcal V \to X$ is defined as follows.
Its restriction to $X$ is the identity.
Given any $(H,Y) \in \mathcal Q$, the restriction of $p$ to $Z(Y) \setminus\{v\}$, where $v$ stands for the apex of $Z(Y)$, coincides with the radial projection defined in the previous paragraph.
This map is $G$-equivariant.
Observe that $\dist[\dot X] x{p(x)} \leq \rho$, for every $x \in \dot X\setminus \mathcal V$.

\begin{prop}
\label{res: radial proj qi - prelim}
	Let $x,x' \in X$ such that  $\gro x{x'}v > 0$, for every $v \in \mathcal V$ (here the Gromov product is computed in $\dot X$).
	Then
	\begin{equation*}
		\dist[\dot X] x{x'} \leq \dist[X]x{x'} \leq \frac{\pi \sinh \rho}{2\rho} \dist[\dot X]x{x'}.
	\end{equation*}
\end{prop}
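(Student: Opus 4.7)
The left inequality $\dist[\dot X]{x}{x'}\le \dist[X]{x}{x'}$ is immediate from the construction of $\distV[\dot X]$: it is by definition the largest pseudo-metric for which the inclusion $X\hookrightarrow \dot X$ is $1$-Lipschitz. So the work is all in the second inequality, and the plan is to compare lengths of paths.

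Given $\epsilon>0$, I would fix a rectifiable path $\gamma\colon[0,T]\to \dot X$ from $x$ to $x'$ whose length is at most $\dist[\dot X]{x}{x'}+\epsilon$. The hypothesis $\gro x{x'}v>0$ in $\dot X$ for every apex $v\in\mathcal V$ is what guarantees that, for small enough $\epsilon$, one may arrange that $\gamma$ avoids every apex (any near-geodesic that came $r$-close to $v$ would have length at least $\dist[\dot X]xv+\dist[\dot X]v{x'}-2r = \dist[\dot X]x{x'}+2\gro{x}{x'}v-2r$, so taking $r$ smaller than $\gro x{x'}v$ forces a detour). One can then decompose $[0,T]$ into a finite sequence of subintervals on which $\gamma$ either lies entirely in $X$ or lies entirely in the open interior $Z(Y)\setminus Y$ of a single cone, with its endpoints on~$Y$.

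The plan is then to produce a path $\gamma'$ in $X$ by leaving $\gamma$ unchanged on the $X$-pieces and replacing each cone-piece by a rectifiable path in $Y$ of length close to $\dist[Y]{y_a}{y_b}$, where $y_a,y_b\in Y$ are the entry and exit points of the cone-piece. The comparison of lengths reduces to a single ingredient: by \autoref{res: map mu}, the map $\mu$ is continuous, concave, with $\mu(0)=0$ and $\mu(\pi \sinh\rho)=2\rho$, so
\begin{equation*}
	\mu(t)\ \geq\ \frac{2\rho}{\pi\sinh\rho}\, t,\qquad \forall t\in[0,\pi\sinh\rho].
\end{equation*}
Hence, as long as $\dist[Y]{y_a}{y_b}\leq \pi\sinh\rho$ on each cone-piece, the length of the replacement piece satisfies
\begin{equation*}
	\dist[Y]{y_a}{y_b}\ \leq\ \frac{\pi\sinh\rho}{2\rho}\,\mu\bigl(\dist[Y]{y_a}{y_b}\bigr)\ =\ \frac{\pi\sinh\rho}{2\rho}\,\dist[Z(Y)]{y_a}{y_b}\ \leq\ \frac{\pi\sinh\rho}{2\rho}\,\mathrm{length}_{\dot X}\bigl(\gamma\restriction \text{cone-piece}\bigr),
\end{equation*}
and the pieces in $X$ need no modification (since $\mu(t)\sim t$ near $0$ implies $\ell_X(\sigma)=\ell_{\dot X}(\sigma)$ for every rectifiable subpath $\sigma$ of $\gamma$ lying in $X$). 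Summing over all pieces yields $\dist[X]{x}{x'}\le \mathrm{length}(\gamma')\le \frac{\pi\sinh\rho}{2\rho}(\dist[\dot X]{x}{x'}+\epsilon)$, and letting $\epsilon\to 0$ concludes.

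The step I expect to be the main obstacle is excluding the regime $\dist[Y]{y_a}{y_b}>\pi\sinh\rho$ on some cone-piece. In that regime $\dist[Z(Y)]{y_a}{y_b}=2\rho$, realized by a path through the apex $v$, so the cone-piece of the near-geodesic $\gamma$ could cost only $\approx 2\rho$ while forcing the corresponding detour in $Y$ to be arbitrarily long. Ruling this out quantitatively is exactly what the hypothesis $\gro x{x'}v>0$ is for: if such a cone-piece occurred, then the subpath of $\gamma$ from $x$ to $x'$ passing near $v$ would force $\gro{x}{x'}{v}$ to be essentially $0$ (using hyperbolicity of $\dot X$ to propagate the Gromov-product estimate from $\gro{y_a}{y_b}{v}\approx 0$ up to the endpoints $x,x'$), contradicting the assumption. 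A clean way to implement this should be to choose $\epsilon$ smaller than $\min_v \gro{x}{x'}{v}$ among the finitely many apices any path of length $<\dist[\dot X]{x}{x'}+\epsilon$ can meaningfully approach.
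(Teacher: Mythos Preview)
Your strategy is the paper's: take a $(1,\eta)$-quasi-geodesic in $\dot X$, use the hypothesis $\gro{x}{x'}{v}>0$ to control its passage through each cone, and invoke concavity of $\mu$. The paper's execution is tidier and in fact dissolves what you call the ``main obstacle''. Rather than splitting into $X$-pieces and cone-pieces, building a replacement path $\gamma'$, and comparing $\dist[Y]$ with $\dist[Z(Y)]$, the paper just picks a partition $a=t_0<\cdots<t_m=b$ with $\gamma(t_i)\in X$ and $\dist[\dot X]{\gamma(t_i)}{\gamma(t_{i+1})}<2\rho$ --- your ``avoid the apex'' triangle-inequality computation is exactly what guarantees such a partition exists for small $\eta$, since it bounds the parameter-length of any arc of $\gamma$ lying inside a single $Z(Y)$ by $2\rho-2\gro{x}{x'}{v}+\eta$. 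Then \autoref{res: loose comparison metric X and dot X} gives $\mu(\dist[X]{\gamma(t_i)}{\gamma(t_{i+1})})\le\dist[\dot X]{\gamma(t_i)}{\gamma(t_{i+1})}<2\rho$, hence $\dist[X]{\gamma(t_i)}{\gamma(t_{i+1})}<\pi\sinh\rho$ automatically, and summing via the triangle inequality in $X$ finishes. Working with $\dist[X]$ and $\dist[\dot X]$ throughout also avoids two rough spots in your write-up: the inequality $\dist[Z(Y)]{y_a}{y_b}\le \mathrm{length}_{\dot X}(\gamma|_{\text{cone-piece}})$ points the wrong way a priori (the embedding $Z(Y)\hookrightarrow\dot X$ is only $1$-Lipschitz, so $\ell_{\dot X}\le \ell_{Z(Y)}$), and no hyperbolicity of $\dot X$ is needed to rule out the long-detour case --- just the same triangle-inequality bound you already used for avoiding apices.
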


\begin{proof}
	In this proof all the Gromov products are computed in $\dot X$.
	The first inequality directly follows from the fact that the embedding $X \to \dot X$ is $1$-Lipschitz.
	Let us focus on the second inequality.
	Let $\eta > 0$ and $\gamma \colon \intval ab \to \dot X$ be a $(1, \eta)$-quasi-geodesic from $x$ to $x'$.
	According to our assumption, up to decreasing $\eta$ we can assume that for every $(H,Y) \in \mathcal Q$, the diameter of $\gamma \cap Z(Y)$ is less than $2\rho$.	
	Consequently there exists a partition $t_0 = a \leq t_1 \leq \dots \leq t_m = b$ of $\intval ab$ such that
	\begin{enumerate}
		\item $\gamma(t_i)$ belongs to $X$ for every $i \in \intvald 0m$;
		\item $\dist[\dot X]{\gamma(t_{i+1})}{\gamma(t_i)} < 2\rho$, for every $i \in \intvald 0{m-1}$.
	\end{enumerate}
	\autoref{res: loose comparison metric X and dot X} combined with the concavity of the map $\mu$ tells us that 
	\begin{align*}
		\frac{2\rho}{\pi \sinh \rho}\dist x{x'}
		\leq \frac{2\rho}{\pi \sinh \rho} \sum_{i = 0}^{m-1} \dist{\gamma(t_{i+1})}{\gamma(t_i)}
		& \leq \sum_{i = 0}^{m-1} \mu\left(\dist{\gamma(t_{i+1})}{\gamma(t_i)}\fantomB\right) \\
		& \leq \sum_{i = 0}^{m-1} \dist[\dot X]{\gamma(t_{i+1})}{\gamma(t_i)} \\
		& \leq \dist[\dot X] x{x'} + \eta.
	\end{align*}
	This inequality holds for every sufficiently small $\eta >0$, hence the result.
\end{proof}

\begin{coro}
\label{res: radial proj qi}
	Let $Z$ be a subset of $\dot X$ such that $\gro z{z'}v > 2\dot \delta$ for every $z,z' \in Z$ and $v \in \mathcal V$ (here the Gromov product is computed in $\dot X$).
	Then the radial projection $p \colon \dot X \setminus \mathcal V \to X$ restricted to $Z$ is a quasi-isometric embedding.
\end{coro}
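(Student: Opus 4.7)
My plan is to reduce the corollary to \autoref{res: radial proj qi - prelim} by verifying that its Gromov-product hypothesis persists for the radially projected pair $x = p(z)$, $x' = p(z')$. The essential content is therefore to show that the condition $\gro{z}{z'}{v} > 2\dot\delta$ at every apex $v$ survives (with a smaller gap of $0$) after applying $p$. Once this is done, the quasi-isometric embedding property follows by combining the proposition with trivial triangle inequalities, since $p$ displaces every point by at most $\rho$ in $\distV[\dot X]$.

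The key step is the estimate $\gro{p(z)}{z}{v} > 2\dot\delta$ in $\dot X$ for every $z \in Z$ and every $v \in \mathcal V$. I would distinguish two cases. If $z$ lies in $X$, then $p(z) = z$ and the Gromov product degenerates to $\dist[\dot X]{z}{v}$, which is at least $\rho$ since every apex sits at distance $\rho$ from $X$ in $\dot X$. Otherwise $z$ belongs to a unique open cone $Z(Y)\setminus\{v_Y\}$; setting $r = \dist[\dot X]{v_Y}{z}$, \autoref{res: metric on  dot X and Zi coincide} together with the explicit formula of \autoref{res: def distance cone} yield $\dist[\dot X]{p(z)}{z} \leq \rho - r$ and $\dist[\dot X]{v_Y}{p(z)} = \rho$. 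When $v = v_Y$, plugging these into the definition gives directly $\gro{p(z)}{z}{v_Y} \geq r$; since $r \geq \gro{z}{z'}{v_Y} > 2\dot\delta$ we are done. When $v = v'$ is a different apex, the point $p(z)$ lies in $X$ so $\dist[\dot X]{p(z)}{v'} \geq \rho$, and the hypothesis forces $\dist[\dot X]{z}{v'} \geq \gro{z}{z'}{v'} > 2\dot\delta$; assembling these three facts produces $\gro{p(z)}{z}{v'} \geq \tfrac{1}{2}(r + \dist[\dot X]{z}{v'}) > 2\dot\delta$.

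With the estimate above (and its symmetric version for $z'$), two applications of the four-point condition in $\dot X$ give
\begin{equation*}
	\gro{p(z)}{p(z')}{v} \geq \min\left\{\gro{p(z)}{z}{v},\ \gro{z}{z'}{v},\ \gro{z'}{p(z')}{v}\right\} - 2\dot\delta > 0
\end{equation*}
for every $v \in \mathcal V$. Hence \autoref{res: radial proj qi - prelim} applies to $p(z),p(z') \in X$, yielding
\begin{equation*}
	\dist[\dot X]{p(z)}{p(z')} \leq \dist[X]{p(z)}{p(z')} \leq \frac{\pi\sinh\rho}{2\rho}\,\dist[\dot X]{p(z)}{p(z')}.
\end{equation*}
Combined with the triangle inequality $\abs{\dist[\dot X]{p(z)}{p(z')} - \dist[\dot X]{z}{z'}} \leq 2\rho$ (since $\dist[\dot X]{z}{p(z)}, \dist[\dot X]{z'}{p(z')} \leq \rho$), this translates into a two-sided comparison between $\dist[X]{p(z)}{p(z')}$ and $\dist[\dot X]{z}{z'}$, i.e.\ a $(\kappa, \ell)$-quasi-isometric embedding with $\kappa = \pi\sinh\rho/(2\rho)$ and some additive constant $\ell$ depending only on $\rho$.

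The main obstacle lies in the case analysis of the second paragraph: the non-obvious point is that the hypothesis $\gro{z}{z'}{v_Y} > 2\dot\delta$ at the apex of $z$'s own cone automatically forces $z$ to be at cone-distance strictly larger than $2\dot\delta$ from $v_Y$, which is precisely the amount needed to counterbalance the (potentially large) radial move from $z$ to $p(z)$. Everything else is routine bookkeeping using the cone metric and the four-point inequality.
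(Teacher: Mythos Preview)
Your proposal is correct and takes essentially the same approach as the paper: verify via the four-point inequality that $\gro{p(z)}{p(z')}{v} > 0$ for every apex $v$, then invoke \autoref{res: radial proj qi - prelim} together with the triangle inequality (using $\dist[\dot X]{z}{p(z)} \leq \rho$). The only cosmetic difference is that you compute $\gro{p(z)}{z}{v}$ directly by a case analysis on whether $v$ is the apex of $z$'s cone, whereas the paper obtains the same bound by contradiction (assuming $\gro{p(z)}{z}{v} \leq 2\dot\delta$ and deducing that $\dist[\dot X]{z}{v} \leq 2\dot\delta$, contrary to the hypothesis applied with $z' = z$).
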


\begin{proof}
	In this proof all the Gromov products are computed in $\dot X$.
	Let $z,z' \in Z$.
	Let $y,y' \in X$ be the radial projections of $z$ and $z'$ respectively.
	It follows from the triangle inequality that $\dist[\dot X] z{z'}$ and $\dist[\dot X] y{y'}$ differ by at most $2\rho$.
	In view of \autoref{res: radial proj qi - prelim} it is sufficient to prove that $\gro y{y'}v > 0$ for every $v \in \mathcal V$.
	The four point inequality (\ref{eqn : hyp four points - 1}) applied in $\dot X$ gives
	\begin{equation*}
		\gro y{y'}v \geq \min \left\{ \fantomB\gro yzv, \gro z{z'}v , \gro {z'}{y'}v\right\} - 2\dot\delta.
	\end{equation*}
	Assume that $\gro yzv \leq 2 \dot \delta$.
	Then $z$ necessarily belongs to the cone $Z(Y)$ for some $(H,Y) \in \mathcal Q$.
	Indeed otherwise $z= y$ is a point of $X$, and thus $\gro yzv \geq 2\rho$.
	It follows the from the definition of the radial projection and \autoref{res: metric on  dot X and Zi coincide} that $z$ lies on a geodesic between $y$ and the apex of $Z(Y)$.
	As the distance between two apices is at least $2\rho$, the point $v$ is necessarily the apex of $Z(Y)$.
	Hence 
	\begin{equation*}
		\gro zzv = \dist[\dot X] vz = \gro yzv
	\end{equation*}
	is bounded above by $2\dot \delta$, which contradicts our assumption.
	We prove in the same way that $\gro {y'}{z'}v > 2\dot \delta$.
	On the other hand, according to our assumption we have $\gro z{z'}v > 2\dot \delta$.
	Thus $\gro y{y'}v > 0$.
\end{proof}

\paragraph{Parabolic subgroups.}

\begin{lemm}
\label{res: parabolic cone-off}
	Let $P$ be a subgroup of $G$.
	If $P$ is parabolic for its action on $\dot X$, then so is its action on $X$.
\end{lemm}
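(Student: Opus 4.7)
My plan is to verify that $P$ acts on $X$ with unbounded orbits and contains no loxodromic element; combined with the classification of isometric actions on a Gromov hyperbolic space (any loxodromic or non-elementary action contains a loxodromic element), this will force $P$ to be parabolic on $X$.

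Unboundedness of $P$'s orbits on $X$ follows at once from \autoref{res: loose comparison metric X and dot X}: the inclusion $X \hookrightarrow \dot X$ is $1$-Lipschitz, so a bounded $X$-orbit would give a bounded $\dot X$-orbit, contradicting the parabolicity of $P$ on $\dot X$. In particular, $P$ is not elliptic on $X$.

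To rule out loxodromic elements of $P$ on $X$, I would argue by contradiction. Suppose $g \in P$ is loxodromic on $X$ with fixed points $g^\pm \in \partial X$, and split into two cases. In the case where $\{g^+, g^-\}$ is \emph{not} the endpoint pair of any $(H, Y) \in \mathcal Q$, I would show that an axis $\gamma$ of $g$ in $X$, viewed in $\dot X$, stays at Gromov-product distance $>2\dot\delta$ from every apex, using the small cancellation bound $\Delta(\mathcal Q, X) \leq \Delta_0$ to control the overlap of $\gamma$ with each individual $Y \in \mathcal Q$. The radial projection would then be a quasi-isometric embedding on $\gamma$ by \autoref{res: radial proj qi}, forcing $g$ to be loxodromic on $\dot X$ too, contradicting the parabolicity of $P$ on $\dot X$. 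In the case where $\{g^+, g^-\}$ coincides with the endpoint pair of some $(H, Y) \in \mathcal Q$, the element $g$ lies in $\stab Y$ and fixes the apex $v$ of $Z(Y)$; hence $g$ is elliptic on $\dot X$ with $\snorm[\dot X] g = 0$. Since $g$ also fixes the unique parabolic boundary point $\xi \in \partial \dot X$ of $P$, applying \autoref{res: isom fixing xi moving geo} in $\dot X$ along a quasi-geodesic ray $\alpha$ from $v$ to $\xi$ (with, say, $\ell = 11\dot\delta$) gives a bound of the form $\dist[\dot X]{g\alpha(t)}{\alpha(t)} \leq 114\dot\delta$ for all $t$ bounded below by an absolute multiple of $\dot\delta$. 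Replacing $g$ by a large power $g^N$ with $N \snorm[X] g \gg \pi \sinh \rho$ (still in $P$, still loxodromic on $X$ along $Y$, and still fixing both $v$ and $\xi$), the same bound applies to $g^N$. On the other hand, for $t = \rho/10$ the point $\alpha(t)$ lies inside $Z(Y)$, and combining \autoref{res: metric on  dot X and Zi coincide} with the cone metric formula \autoref{res: def distance cone} computes $\dist[\dot X]{g^N \alpha(t)}{\alpha(t)}$ to be of order $\rho/5$. Since $\rho \gg \dot\delta$, this contradicts the previous bound, and the contradiction completes Step 2.

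The main obstacle is the first case above: one must show that a loxodromic element of $G$ whose axis is not aligned with any cylinder in $\mathcal Q$ cannot ``hide'' inside cone shortcuts of $\dot X$. Making this rigorous requires converting the small cancellation bound on $\Delta(\mathcal Q, X)$ into a uniform Gromov-product estimate along the entire axis, which must be handled with care because the axis may enter (without stabilizing) many different cones in a $g$-equivariant fashion.
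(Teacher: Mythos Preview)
Your unboundedness step is correct and matches the paper. Your Case~2 is also correct, if heavier than needed. The genuine gap is Case~1, and it is worse than you indicate: the parameter $\Delta(\mathcal Q, X)$ bounds overlaps between \emph{pairs of cylinders} in $\mathcal Q$, not between an arbitrary axis $A_g$ and a single $Y$. Using $g$-equivariance one can compare $A_g \cap Y^{+5\delta}$ with its translate $A_g \cap (gY)^{+5\delta}$ and extract a bound of order $\norm[X] g + \Delta_0 + O(\delta)$ on the overlap, but this depends on $g$ and may well exceed $\pi\sinh\rho$; so the uniform Gromov-product estimate required by \autoref{res: radial proj qi} is simply not available from $\Delta_0$. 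Showing that a loxodromic element of $X$ lying in no $\stab Y$ remains loxodromic on $\dot X$ is true in this setting, but it is not a consequence of the small cancellation parameters alone and would need a separate argument.

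The paper bypasses the whole case split with a two-line trick. Fix an $L_0\dot\delta$-local $(1,11\dot\delta)$-quasi-geodesic ray $\gamma\colon\R_+\to\dot X$ ending at the unique point $\xi\in\partial\dot X$ fixed by $P$. For any $g\in P$, \autoref{res: isom fixing xi moving geo} (applied in $\dot X$) gives $t_0$ with $\dist[\dot X]{g\gamma(t)}{\gamma(t)}\leq 114\dot\delta$ for all $t\geq t_0$. Since each cone has diameter at most $2\rho$ and $\dot X\setminus X$ is a disjoint union of open cones, the infinite ray must meet $X$ at some $t\geq t_0$; at such a point \autoref{res: loose comparison metric X and dot X} and \autoref{res: map mu} yield $\dist[X]{g\gamma(t)}{\gamma(t)}\leq\pi\sinh(57\dot\delta)$, hence $\norm[X] g\leq\pi\sinh(57\dot\delta)$. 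This uniform bound applies to every element of $P$, in particular to all powers of a fixed $g$, forcing $\snorm[X] g=0$. No axis-versus-cylinder analysis is needed at all.
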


\begin{proof}
	Since the embedding $X \to \dot X$ is $1$-Lipschitz, $P$ cannot be elliptic for its action on $X$.
	Hence it suffices to prove that $P$ does not contain any loxodromic element (for its action on $X$).
	We denote by $\xi$ the unique point of $\Lambda(P) \subset \partial \dot X$.
	Let $L > 100\dot\delta$ and $\gamma : \R_+ \rightarrow \dot X$ be an $L$-local $(1,11\dot\delta)$-quasi-geodesic ray whose endpoint at infinity is $\xi$.
	Let $g \in P$. 
	By \autoref{res: isom fixing xi moving geo}, there is $t_0 \in \R_+$ such that for every $t \geq t_0$, we have $\dist[\dot X]{g\gamma(t)}{\gamma(t)} < 2\rho$.
	Since $\gamma$ is infinite, there exists $t \geq t_0$ such that $\gamma(t)$ belongs to $X$.
	It follows then from \autoref{res: loose comparison metric X and dot X} that 
	\begin{equation*}
		\mu \left( \dist{g\gamma(t)}{\gamma(t)}\right) \leq \dist[\dot X]{g \gamma(t)}{\gamma(t)} < 2\rho.
	\end{equation*}
	Thus $\dist{g\gamma(t)}{\gamma(t)} \leq \pi \sinh\rho$ (\autoref{res: map mu}).
	Consequently $\norm g \leq  \pi \sinh\rho$, for every $g \in P$.
	In particular, $P$ does not contain any loxodromic element for its action on $X$.
\end{proof}

%%%%%%%%%%%%%%%%%%%%%%%%%%%%%%%%%%%%%%%%%%%%%%%%%%%%%%%%%%%%%%%%%%%%%%%%%%%%%%%%%%%%
%
\subsection{Apex stabilizer in the quotient space.}
%
%%%%%%%%%%%%%%%%%%%%%%%%%%%%%%%%%%%%%%%%%%%%%%%%%%%%%%%%%%%%%%%%%%%%%%%%%%%%%%%%%%%%
\label{sec: sc - apex stab}

As we mentioned in the introduction the quotient space $\bar M = \bar X / \bar G$ can be seen as an orbifold, whose fundamental group is $\bar G$ \cite{Delzant:2008tu}.
Although this is not the point of view we adopted here, it is a great source of inspiration. 
According to \autoref{res: small cancellation}~\ref{enu: small cancellation - local embedding}, for every $(H,Y) \in \mathcal Q$, the quotient $\stab Y/H$ embeds in $\bar G$, which basically means that $\bar M$ is developable, so that its universal cover is $\bar X$.
This orbifold $\bar M$ also comes with an analog of Margulis' thin/thick decomposition for hyperbolic manifolds.
The thin part corresponds to the neighborhood of the cone points (or more precisely their images in $\bar M$).
In particular, if $\bar x$ is point in a ball $B(\bar v, r)$ centered at a cone point $\bar v \in \bar{\mathcal V}$ and $\bar S$ a subset of $\bar G$ moving $\bar x$ by at most $\rho - 2r$,
then the triangular inequality tells us that every element in $\bar S$ fixes $\bar v$, hence $\bar S$ generates an elliptic subgroup of $\bar G$.

In this section we study the structure of $\bar X$ around the apices.
In particular, we prove that the isotropy group of such a point locally acts as a dihedral group on a hyperbolic disc.
To that end we make the following assumption (we refer to  \autoref{sec: hyp - gp action} for the definitions).
\begin{assu}[Cyclic relations]
\label{ass: even exponent}
	For every $(H,Y) \in \mathcal Q$, the group $H$ is loxodromic and $Y$ is its cylinder.
\end{assu}

Let $(H,Y) \in \mathcal Q$.
According to our small cancellation assumption, any non-trivial element in $H$ has a very large translation length.
Thus $H$ is necessarily a cyclic group generated by a loxodromic element.

\paragraph{Local classification of isometries.}
Let $(H,Y) \in \mathcal Q$.
Note that $\stab Y$ is the maximal loxodromic subgroup containing $H$.
We write $\operatorname{Stab}^+(Y)$ for the subgroup of $\stab Y$ fixing pointwise $\partial Y$.
Its index in $\stab Y$ is at most $2$.
Since the action of $G$ on $X$ is gentle, the set $F$ of all elliptic elements of $\operatorname{Stab}^+(Y)$ is a normal subgroup of $\stab Y$.
Moreover $\operatorname{Stab}^+(Y)/F$ is isomorphic to $\Z$ while $\stab Y/F$ embeds in $\dihedral$.
In other words we have a short exact sequence
\begin{equation*}
	1 \to F \to \stab Y \xrightarrow q \mathbf L \to 1,
\end{equation*}
where $\mathbf L$ is either $\Z$ or $\dihedral$.

Since $H$ is generated by a loxodromic element, its image in $\mathbf L$ is $n \cyclic$ for some $n \in \N\setminus\{0\}$.
We write $\mathbf L_n$ for $\mathbf L/n\cyclic$, i.e. $\mathbf L_n = \dihedral[n]$ if $\mathbf L = \dihedral$ and $\mathbf L_n = \cyclic[n]$, if $\mathbf L = \cyclic$.
Let $v$ be the apex of $Z(Y)$ and $\bar v$ its image in $\bar X$.
Recall that, according to the small cancellation theorem (\autoref{res: small cancellation}) the subgroup $\stab{\bar v}$ is isomorphic to $\stab Y/H$.
After taking the quotient by $H$ we get the following commutative diagram
\begin{equation*}
	\begin{tikzpicture}
		\matrix (m) [matrix of math nodes, row sep=2em, column sep=2.5em, text height=1.5ex, text depth=0.25ex] 
		{ 
			1	& F & \stab Y & \mathbf L & 1	\\
			1	& \bar F & \stab{\bar v} & \mathbf L_n & 1	\\
		}; 
		\draw[>=stealth, ->] (m-1-1) -- (m-1-2);
		\draw[>=stealth, ->] (m-1-2) -- (m-1-3);
		\draw[>=stealth, ->] (m-1-3) -- (m-1-4);
		\draw[>=stealth, ->] (m-1-4) -- (m-1-5);
		
		\draw[>=stealth, ->] (m-2-1) -- (m-2-2);
		\draw[>=stealth, ->] (m-2-2) -- (m-2-3);
		\draw[>=stealth, ->] (m-2-3) -- (m-2-4);
		\draw[>=stealth, ->] (m-2-4) -- (m-2-5);
		
		\path[->] (m-1-2) edge node[above,sloped,inner sep=0.5pt]{$\sim$} (m-2-2);

		\draw[>=stealth, ->] (m-1-3) -- (m-2-3) node[pos=0.5, right]{$\pi$};
		\draw[>=stealth, ->] (m-1-4) -- (m-2-4);
	\end{tikzpicture} 
\end{equation*}
where the horizontal lines are short exact sequences.
Note that $\stab{\bar v} \to \mathbf L_n$ is a well-defined map.
Indeed if $(H',Y')$ is another pair of $\mathcal Q$ such that $\bar v$ is this image of the apex $v'$ of $Z(Y')$, then there exists an element $u \in K$ such that $(H',Y') = (uHu^{-1},uY)$.
Thus the maps $q \colon \stab Y \to \mathbf L$ and $q' \colon \stab{Y'} \to \mathbf L$ differ at the source by the conjugation by $u$.

By analogy with singularities, the integer $n$ is called the \emph{order} of the cone point $\bar v$.
As we explained before $\mathbf L_n$ can be either $\cyclic[n]$ or $\dihedral[n]$.
In any case it embeds in $\dihedral[n]$.
We call the map $q_{\bar v} \colon \stab{\bar v} \to \dihedral[n]$ obtained in this way the \emph{geometric realization of $\stab{\bar v}$}.
Although it is not made explicit in the notation, we allow for the moment the order to be different from one apex to the other.
Recall that the elements of $\dihedral[n]$ are called \emph{rotations} or \emph{reflections} according to their action on the regular $n$-gon (see \autoref{sec: invariants - mixed}).
This allows us to define similar notions for the elements of $\stab{\bar v}$.
More precisely, we say that an element $\bar g \in \stab{\bar v}$ is a \emph{rotation} (\resp a \emph{reflection}, \emph{locally trivial}) \emph{at $\bar v$} if its image under $q_{\bar v}$ is a rotation (\resp a reflection, trivial).
A rotation at $\bar v$ is \emph{strict} if it does not belong to $\bar F$.
A \emph{central half-turn at $\bar v$} is a strict rotation at $\bar v$ which is an involution and centralizes $\stab{\bar v}$ (note that the existence of such a half-turn forces $n$ to be even).
Given a reflection $\mathbf x \in \mathbf L_n$, the pre-image under $q_{\bar v}$ of $\group{\mathbf x}$ is called a \emph{reflection group at $\bar v$}.

\begin{rema*}
	Being a reflection at $\bar v$ is a \emph{local property}.
	Given two distinct apices $\bar v, \bar v' \in \bar{\mathcal V}$, an element $\bar g \in \bar G$ can be simultaneously a reflection at $\bar v$ and locally trivial at $\bar v'$.
	For instance, consider the hyperbolic group 
	\begin{equation*}
		G = \left< a,b,c \mid a^2, b^2, [b,c] \right> = \cyclic[2]\ast\left(\cyclic[2]\times \cyclic \right)
	\end{equation*}
	where the left factor $\cyclic[2]$ is generated by $a$, whereas the right factor $\cyclic[2]\times \cyclic$ is generated by $b$ and $c$.
	We consider the action of $G$ on its Bass-Serre tree and blow up every vertex associated to (a conjugate of) $\cyclic[2]\times \cyclic$ to a line (on which $\cyclic[2]$ acts trivially).
	The resulting space $X$ is a tree on which $G$ acts properly co-compactly by isometries.
	Fix now a large integer $n$ and define 
	\begin{equation*}
		\bar G = G / \normal{(ab)^n, c^n}
	\end{equation*}
	One checks easily that $\bar G$ is a small cancellation quotient of $G$.
	Let $\bar v,\bar v' \in \bar X$ be the apices of the cones attached to the relations $(ab)^n$ and $c^n$ respectively.
	One observes that the image $\bar b$ of $b$ in $\bar G$ is a reflection at $\bar v$ but locally trivial at $\bar v'$.
	This subtlety is a source of difficulty when studying the strong $\nu$-invariant $\nu_{\rm{stg}}(\bar G, \bar X)$.
\end{rema*}

From now on, we make the following assumption.

\begin{assu}[Central half-turn]
\label{ass: even exponent}
	For every apex $\bar v \in \bar{\mathcal V}$, if the image of the geometric realization map $q_{\bar v} \colon \stab{\bar v} \to \dihedral[n]$ has even torsion, then $\stab{\bar v}$ contains central half-turn at $\bar v$.
\end{assu}

\begin{rema*}
	Let us explain quickly how such an assumption can be satisfied.
	Later, when building the approximation sequence of $\burn rn$, we will see that every loxodromic subgroup of $G$ can be assumed to embed in a product of the form $\dihedral \times \dihedral[n] \times \dots \times \dihedral[n]$.
	In particular, if $g \in \stab Y$ is a primitive element of $E$, then $g^{n/2}$ is almost central: it commutes with every element in $E^+$ and anti-commutes with the ones of $E\setminus E^+$ (i.e. $ug^{n/2}u^{-1} = g^{-n/2}$, for every $u \in E \setminus E^+$).
	Consequenlty, if $H$ is the subgroup generated by $h = g^n$, then the image of $g^{n/2}$ in $\stab Y/H$ is a central half-turn.
\end{rema*}

\paragraph{Geometric realization.}
As suggested by the above terminology, the projection $q_{\bar v} \colon \stab {\bar v} \to \dihedral[n]$ captures how $\stab{\bar v}$ acts geometrically on the ball $B(\bar v, \rho)$.
To make this idea more precise, we are going to build a quasi-isometry between $B(\bar v, \rho)$ and a comparison hyperbolic cone $\mathcal D$ (endowed with the obvious action of $\dihedral[n])$ which is almost $q_{\bar v}$-equivariant.

We first define a morphism $\mathbf L\to \isom\R$.
Let $\xi$ be one of the endpoints at infinity of $Y$.
Let $h_0$ be a primitive element of $\stab Y$ whose attractive point is $\xi$.
\begin{itemize}
	\item If $\mathbf L = \cyclic$, then we map the positive generator $\mathbf t$ of $\mathbf L$ to the translation by $\snorm{h_0}$.
	\item If $\mathbf L$ is the dihedral group $\dihedral = \left< \mathbf  x, \mathbf  y\mid \mathbf x^2,\mathbf y^2 \right>$, then we map $\mathbf x$ to the symmetry at $0$ and $\mathbf  y$ to the symmetry at $\snorm{h_0}/2$.
	In particular, $\mathbf t = \mathbf x \mathbf y$ is mapped to the translation by $\snorm{h_0}$.
\end{itemize}
Note that the resulting morphism $\mathbf L \to \isom \R$ does not depend on the choice of $h_0$ (any two primitive elements in $\stab Y$ have the same stable translation length).
We write $\mathcal C$ for the quotient of $\R$ by the image of $n\cyclic$ in $\isom\R$.
It is a circle whose perimeter is $\ell = n \snorm{h_0}$.
We denote by $\mathcal D$ the cone of radius $\rho$ over $\mathcal C$ and write $o$ for its apex.
The action of $\mathbf L$ on $\R$ induces an action by isometries of $\mathbf L_n$ on $\mathcal D$ which fixes $o$.
Observe that the space $\mathcal D$ is a hyperbolic cone (i.e. with constant sectional curvature equal to $-1$ everywhere except maybe at the apex) whose total angle at the apex $o$ is
\begin{equation*}
	\Omega = \frac{n \snorm{h_0}}{\sinh \rho}.
\end{equation*}
It follows from the small cancellation assumption that $\Omega > 10\pi$.
Said differently $\mathcal D$ can be decomposed into $n$ copies of a sector of the hyperbolic disc of radius $\rho$ whose angle is $\snorm{h_0}/\sinh\rho$, so that $\dihedral[n]$ is the group of isometries of $\mathcal D$ preserving this decomposition (see \autoref{fig: comparison cone}).
\begin{figure}[htbp]
	\centering
	\includegraphics[page=3, width=0.5\textwidth]{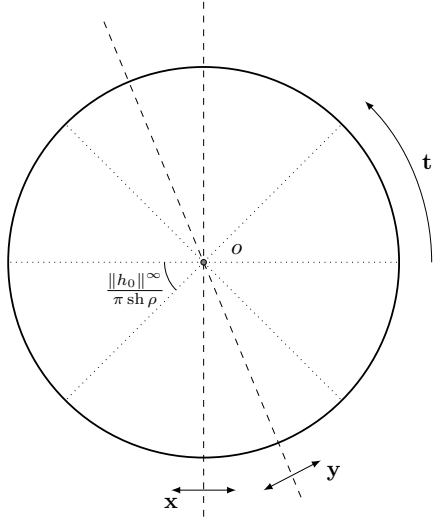}
	\caption{The comparison $\mathcal D$ cone for $n=8$.}
	\label{fig: comparison cone}
\end{figure}

Let us now compare the hyperbolic cone $\mathcal D$ to the ball $B(\bar v, \rho)$.
Let $c_\xi$ be a Busemann cocycle at $\xi$.
Recall that $H$ is cyclic.
This allows us to build an $H$-invariant cocycle $c \colon X \times X \to \R$ which is at bounded distance from $c_\xi$. 
Indeed as $H$ is amenable there exists an $H$-invariant mean $M \colon \ell^\infty(H) \to \R$.
For every $x,y \in X$, we write $f_{x,y} \colon H \to \R$ for the map sending $h$ to $hc_\xi(x,y)$ and define $c(x,y)$ as the mean of $f_{x,y}$.
One checks that $c$ is an $H$-invariant cocycle.
Recall that $H$ fixes $\xi$, hence $hc_\xi$ and $c_\xi$ differ by at most $6\delta$, for every $h \in H$.
Consequently $c$ and $c_\xi$ differ by at most $6\delta$ as well.
In particular, \autoref{res: translation length vs cocycle} yields $\abs{c(hx,x)} = \snorm h$, for every $x \in X$ and $h \in H$.

We now fix an arbitrary base point $y_0 \in Y$.
If $\mathbf L$ is the infinite dihedral group, we choose $y_0$ in $\fix{A,15\delta}$ where $A \subset \stab Y$ is the pre-image of $\group{\mathbf x}$ by $q$.
Such a point always exists by \autoref{res: fix set elliptic}.
Recall that $Y$ is contained in the $27\delta$-neighborhood of any $L$-local $(1, \delta$)-quasi-geodesic joining the endpoints of $Y$, with $L > 12\delta$.
It follows from \autoref{res: computing cocycle from qg} that the map $\phi \colon Y\to \R$ sending $y$ to $c(y_0,y)$ is an $H$-equivariant $(1, 150\delta)$-quasi-isometric embedding.
Moreover, this application is almost $q$-equivariant, in the sense that for every $y \in Y$, for every $g \in \stab Y$, we have 
\begin{equation*}
	\dist{\phi(gy)}{q(g)\phi(y)} \leq 200\delta.
\end{equation*}
Consequently $\phi$ induces a map $\bar \phi \colon Y/H \to \mathcal C$, such that for every $\bar g \in \stab Y/H$, for every $\bar y \in Y/H$, 
\begin{equation*}
	\dist{\bar \phi(\bar g \bar y)}{q_{\bar v}(\bar g)\bar \phi(\bar y)} \leq 200\delta.
\end{equation*}
By \autoref{res: qi between cones}, $\bar\phi$ induces a $(1, 150\delta)$-quasi-isometric embedding $Z(Y/H) \to \mathcal D$, that we again still denote $\bar\phi$, so that for every $\bar g \in \stab Y/H$, for every $\bar x \in Z(Y/H)$, we have 
\begin{equation}
\label{eqn: cone comparison - almost equiv}
	\dist[\mathcal D]{\bar\phi(\bar g \bar x)}{q_{\bar v}(\bar g)\bar\phi(\bar x)} \leq 200\delta.
\end{equation}
Roughly speaking, this means that $\stab Y/H$ acts on $Z(Y/H)$ as $\mathbf L_n$ does on $\mathcal D$.

Note that $Z(Y/H)$ -- which is actually isometric to $Z(Y)/H$ -- is endowed here with the metric defined by (\ref{eqn: def metric cone}).
Although, as a set of points, $Z(Y)/H$ can be identified with the closed ball of $\bar X$ of radius $\rho$ centered at $\bar v$ (\autoref{res: small cancellation}), the distance we considered so far is not the exactly the one coming from $\bar X$.
Nevertheless the embedding $Z(Y) \to \dot X$ is $1$-Lipschitz.
It follows that the map $\bar\phi \colon B(\bar v, \rho) \to \mathcal D$ induced by $\bar\phi \colon Z(Y/H) \to \mathcal D$ is such that for every $\bar x, \bar x' \in B(\bar v, \rho)$ we have
\begin{equation}
\label{eqn: cone comparison - almost Lipschitz}
	\dist[\bar X]{\bar x}{\bar x'}\leq \dist[\mathcal D]{\bar\phi(\bar x)}{\bar\phi(\bar x')}  + 150\delta.
\end{equation}
As we observed previously, the metrics of $Z(Y)$ and $\dot X$ coincide on $B(v, \rho/3)$.
It follows that the metric on $Z(Y/H)$ and $\bar X$ coincide on $B(\bar v, \rho/3)$.
Hence the map $\bar\phi \colon B(\bar v, \rho) \to \mathcal D$ is a $(1, 150\delta)$-quasi-isometric embedding when restricted to $B(\bar v, \rho/3)$.

\begin{prop}
\label{res: local action of apex stab}
	Let $\bar v \in \bar{\mathcal V}$.
	\begin{enumerate}
		\item \label{enu: local action of apex stab - triv}
		If $\bar g \in \stab{\bar v}$ is \emph{locally trivial at $\bar v$}, then $B(\bar v, \rho)$ is contained in $\fix{\bar g, \bar \delta}$.
		\item \label{enu: local action of apex stab - reflection}
		If $\bar A$ is a \emph{reflection group at $\bar v$}, then there exists a point $\bar x \in \fix{\bar A,\bar \delta}$ with $\dist{\bar v}{\bar x} = \rho$ such that for every $\bar z \in \fix{\bar A,\bar \delta} \cap B(\bar v, \rho/3)$ we have 
		\begin{equation*}
			\min\left\{ \gro{\bar x}{\bar v}{\bar z}, \gro{\bar g \bar x}{\bar v}{\bar z} \right\} \leq \bar \delta,
		\end{equation*}
		where $\bar g$ is a central half-turn at $\bar v$.
		\item \label{enu: local action of apex stab - rot}
		If $\bar g \in \stab{\bar v}$ is a \emph{strict rotation at $\bar v$}, then there exists $k \in \Z$, such that $\fix{\bar g^k, \bar \delta}$ is non-empty and contained in the $\bar \delta$-neighborhood of $\bar v$.
		In particular, $\bar v$ is the unique vertex fixed by $\bar g$.
	\end{enumerate}
\end{prop}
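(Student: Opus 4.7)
The common strategy for all three parts is to transfer the question from the action of $\stab{\bar v}$ on $B(\bar v, \rho) \subset \bar X$ to the explicit action of $\mathbf L_n \leq \dihedral[n]$ on the comparison cone $\mathcal D$, using the map $\bar\phi \colon B(\bar v, \rho) \to \mathcal D$ constructed just above the proposition. Recall its three key properties: it is a $(1, 150\delta)$-quasi-isometric embedding on $B(\bar v, \rho/3)$; it satisfies the one-sided bound $\dist[\bar X]{\bar x}{\bar x'} \leq \dist[\mathcal D]{\bar\phi(\bar x)}{\bar\phi(\bar x')} + 150\delta$ on all of $B(\bar v, \rho)$; and it is almost $q_{\bar v}$-equivariant with constant $200\delta$. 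By the convention following \autoref{res: small cancellation}, $\delta \leq \delta_0 \ll \delta_1 \asymp \bar\delta$, so every $O(\delta)$ error that appears during the argument is negligible compared to $\bar\delta$.

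Part \ref{enu: local action of apex stab - triv} is almost immediate: if $q_{\bar v}(\bar g) = 1$, then for every $\bar x \in B(\bar v, \rho)$ the almost-equivariance yields $\dist[\mathcal D]{\bar\phi(\bar g \bar x)}{\bar\phi(\bar x)} \leq 200\delta$, and the one-sided comparison gives $\dist[\bar X]{\bar g\bar x}{\bar x} \leq 350\delta \leq \bar\delta$.

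For Part \ref{enu: local action of apex stab - reflection}, decompose $\bar A = \bar F \sqcup \bar F \bar a$ for any $\bar a \in \bar A$ with $q_{\bar v}(\bar a) = \mathbf x$. By the previous part every element of $\bar F$ almost fixes $B(\bar v, \rho)$, so $\mov{\bar A, \bar\delta} \cap B(\bar v, \rho)$ essentially agrees with $\mov{\bar a, 2\bar\delta}$ up to bookkeeping. In the model $\mathcal D$, the reflection $\mathbf x$ lifts in $\isom\R$ to a reflection with a unique fixed point, which projects to two antipodal fixed points on the circle $\R/n\cyclic$; these produce a bi-infinite geodesic $\gamma_o$ through $o$ in $\mathcal D$, whose endpoints $p, p'$ at distance $\rho$ from $o$ are interchanged by $\mathbf r^{n/2} = q_{\bar v}(\bar g)$. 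Choose $\bar x \in B(\bar v, \rho)$ with $\dist{\bar v}{\bar x} = \rho$ and $\bar\phi(\bar x)$ within $200\delta$ of $p$; then $\bar\phi(\bar g \bar x)$ lies within $400\delta$ of $p'$. For any $\bar z \in \mov{\bar a, 2\bar\delta} \cap B(\bar v, \rho/3)$, the image $\bar\phi(\bar z)$ is $O(\bar\delta)$-close to the fixed set of $\mathbf x$ in $\mathcal D$, hence to one of the two rays $[o, p]$ or $[o, p']$; running the quasi-isometric property of $\bar\phi$ backwards on $B(\bar v, \rho/3)$ yields the claimed dichotomy on the Gromov products.

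For Part \ref{enu: local action of apex stab - rot}, write $q_{\bar v}(\bar g) = \mathbf r^j$ with $0 < j < n$ and let $m \geq 2$ be its order in $\mathbf L_n$. The plan is to select $k$ so that $\mathbf r^{jk}$ rotates the cone $\mathcal D$ by an angle maximal within the cyclic subgroup $\group{\mathbf r^j}$; since the total angle of $\mathcal D$ satisfies $\Omega > 10\pi$ and $m \geq 2$, this angle exceeds $\pi$, at which point the angular term in the cone distance formula is capped. Consequently $\mathbf r^{jk}$ moves every point of $\mathcal D$ at distance $r > 0$ from $o$ by at least $2r$. Combined with the comparison via $\bar\phi$, this forces $\mov{\bar g^k, \bar\delta} \cap B(\bar v, \rho/3)$ to be contained in the $\bar\delta$-neighborhood of $\bar v$. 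For $\bar y \in \mov{\bar g^k, \bar\delta}$ outside $B(\bar v, \rho/3)$, take a quasi-geodesic from $\bar v$ to $\bar y$ and let $\bar y'$ be a point on it at distance roughly $\rho/3$ from $\bar v$; by the quasi-convexity of displacement (\autoref{res: quasi-convexity distance isometry}) applied with base points $\bar v$ and $\bar y$ one has $\bar y' \in \mov{\bar g^k, 7\bar\delta} \cap B(\bar v, \rho/3)$, and the previous case forces $\bar y'$ into a ball of radius $O(\bar\delta) \ll \rho/3$, a contradiction. The uniqueness of the fixed apex then follows because distinct apices are separated by distance $\geq 2\rho \gg \bar\delta$. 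The main delicate point is choosing $k$ cleanly in the presence of the angle-capping and keeping all the accumulated $O(\delta)$ errors strictly below $\bar\delta$.
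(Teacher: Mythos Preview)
Your proposal is correct and follows essentially the same approach as the paper: transfer each question to the comparison cone $\mathcal D$ via $\bar\phi$, exploit the explicit geometry of $\mathbf L_n$ acting on $\mathcal D$, and pull the conclusion back using the almost-equivariance and the one-sided Lipschitz bound. The differences are cosmetic. In Part~\ref{enu: local action of apex stab - rot} the paper picks $k$ so that the rotation angle of $q_{\bar v}(\bar g)^k$ lies in $[\Omega/4,3\Omega/4]$ rather than maximizing it, and it passes from $B(\bar v,\rho/3)$ to all of $\bar X$ by invoking the $10\bar\delta$-quasi-convexity of $\mov{\bar g^k,\bar\delta}$ (\autoref{res: fix qc}) instead of your direct use of \autoref{res: quasi-convexity distance isometry}; both routes are equivalent. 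In Part~\ref{enu: local action of apex stab - reflection} the paper produces the point $\bar x$ concretely as the image $\bar y_0$ of the base point $y_0 \in Y \cap \mov{A,11\delta}$ chosen when building $\bar\phi$, which immediately gives $\bar x \in \mov{\bar A,\bar\delta}$ and $\dist{\bar v}{\bar x}=\rho$; your abstract choice works too once you check these two facts. One small slip: the fixed set of $\mathbf x$ in $\mathcal D$ is a geodesic \emph{segment} of length $2\rho$, not a bi-infinite geodesic.
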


\begin{rema}
	Roughly speaking Point~\ref{enu: local action of apex stab - reflection} is saying that any point of $B(\bar v, \rho/3)$ that is fixed by $A$ lies on the geodesic $\geo{\bar x}{\bar g \bar x}$ -- which goes through $\bar v$ by Point~\ref{enu: local action of apex stab - rot}.
	Nevertheless, in our setting, $\bar X$ does not need to be geodesic. 
	Thus a rigorous statement is the one formulated above.
\end{rema}

\begin{rema}
\label{rem: elliptic subgroup with strict rotation}
	It follows from Point~\ref{enu: local action of apex stab - rot} that if $\bar E$ is an elliptic subgroup of $\bar G$ containing a strict rotation at $\bar v$, then $\fix{\bar E, 10 \bar \delta}$ is contained in $B(\bar v, 14\bar \delta)$.
	In particular, $\bar E$ is a subgroup of $\stab{\bar v}$.
\end{rema}

\begin{proof}
	We use the comparison map $\bar\phi \colon B(\bar v, \rho) \to \mathcal D$ defined during the previous discussion.
	Assume first that $\bar g$ is locally trivial at $\bar v$, i.e. $q_{\bar v}(\bar g) = 1$.
	In other words $q_{\bar v}(\bar g)$ acts trivially on $\mathcal D$.
	Combining (\ref{eqn: cone comparison - almost equiv}) and (\ref{eqn: cone comparison - almost Lipschitz}) we get $\dist[\bar X]{\bar g\bar x}{\bar x} \leq 350\delta$, for every $\bar x \in B(\bar v, \rho)$.
	Hence $B(\bar v, \rho)$ is contained in $\fix{\bar g, \bar \delta}$, which completes the proof of \ref{enu: local action of apex stab - triv}.
	
	Assume now that $\bar g$ is a strict rotation at $\bar v$.
	For simplicity, we let $\mathbf r = q_{\bar v}(\bar g)$.
	Since $\mathbf r$ is a non trivial rotation, one checks easily that there exists $k \in \Z$ such that $\mathbf r^k$ acts on $\mathcal D$ as a rotation centered at $o$ whose angle belongs to $[\Omega/4, 3\Omega/4]$
	We noticed before that thanks to the small cancellation assumption $\Omega > 10\pi$.
	In particular, for every $\bar x \in \mathcal D$, the angle at $o$ between $\bar x$ and $\mathbf r^k\bar x$ is larger than $\pi$.
	Consequently 
	\begin{equation*}
		\dist[\mathcal D]{\mathbf r^k\bar x}{\bar x} = 2 \dist[\mathcal D] {\bar x}o.
	\end{equation*}
	Recall that $\phi$ induces an almost $q_{\bar v}$-equivariant $(1,150\delta)$-quasi-isometric embedding from $B(\bar v, \rho/3)$ into $\mathcal D$.
	Hence for every $\bar x \in B(\bar v, \rho/3)$, 
	\begin{equation*}
		\dist[\bar X]{\bar g^k \bar x}{\bar x} \geq 2 \dist[\bar X]{\bar x}{\bar v} - 750\delta.
	\end{equation*}
	In particular, $\fix{\bar g^k, \bar \delta} \cap B(\bar v, \rho/3)$ is contained in $B(\bar v, \bar \delta)$.
	Since $\fix{\bar g^k, \bar \delta}$ is $10\bar \delta$-quasi-convex (\autoref{res: fix qc}) the set $\fix{\bar g^k, \bar \delta}$ is entirely contained in $B(\bar v, \bar \delta)$, which completes the proof of \ref{enu: local action of apex stab - rot}.

	We are left to prove Point~\ref{enu: local action of apex stab - reflection}.
	Let $\bar A$ be a reflection group at $\bar v$.
	Without loss of generality we can assume that $q_{\bar v}(A) = \group{\mathbf x}$.
	It follows from \autoref{ass: even exponent} that $n$ is even and $\stab{\bar v}$ contains a central-half turn $\bar g$.
	We write $\mathbf r$ for its image in $\mathbf L_n$.
	Recall that $y_0$ is a base point in $Y \cap \fix{A, 10\delta}$ chosen to define the map $\bar \phi$.
	Let $\bar y_0$ its image in $\bar X$.
	It follows from the construction that the set of fixed point of $\mathbf x$ is exactly the geodesic of $\mathcal D$ between $\bar\phi(\bar y_0)$ and $\mathbf r\bar\phi(\bar y_0)$.
	Note that this geodesic passes through $o$ as the angle $\Omega$ at the apex of $\mathcal D$ is larger that $2\pi$.
	Consequently for every $d \geq 0$, for every $\bar x \in B(\bar v, \rho/3)$, such that $\dist{\mathbf x \bar\phi(\bar x)}{\bar\phi(\bar x)} \leq d$, we have either $\gro o{\bar\phi(\bar y_0)}{\bar \phi (\bar x)} \leq d/2$ or $\gro o{\mathbf r\bar\phi(\bar y_0)}{\bar \phi (\bar x)} \leq d/2$.
	We carry again this observation in $\bar X$ using the map $\bar\phi \colon B(\bar v, \rho) \to \mathcal D$ to get the conclusion of \ref{enu: local action of apex stab - reflection}.
\end{proof}

\begin{voca*}
	In view of the previous statement, we can say that an element $\bar g \in \bar G$ is a \emph{strict rotation} if there is an apex $\bar v$ such that $\bar g$ is a strict rotation at $\bar v$.
	Indeed in such a case, $\bar g$ cannot be locally trivial or a reflection at any other vertex.
	Note that being a strict rotation is invariant under conjugation.
\end{voca*}

%%%%%%%%%%%%%%%%%%%%%%%%%%%%%%%%%%%%%%%%%%%%%%%%%%%%%%%%%%%%%%%%%%%%%%%%%%%%%%%%%%%%
%
\subsection{Lifting properties}
%
%%%%%%%%%%%%%%%%%%%%%%%%%%%%%%%%%%%%%%%%%%%%%%%%%%%%%%%%%%%%%%%%%%%%%%%%%%%%%%%%%%%%

In \autoref{res: small cancellation}~\ref{enu: small cancellation - local isom} we mention a very important fact: small cancellation does not affect the small scale geometry of the space.
More precisely the projection $\zeta \colon \dot X \to \bar X$ is an isometry when restricted on small ball lying sufficiently far away from apices.
This is a key ingredient to lift several figures from $\bar X$ to $\dot X$.
We complete this picture with other properties of the map $\zeta \colon \dot X \to \bar X$.
Exceptionally, in this section all the distances are measured either in $\dot X$ or $\bar X$.

The first step is to explain how one can lift isometrically in $\dot X$ a quasi-convex subset $\bar Z \subset \bar X$ as well as its (partial) stabilizer, provided it stays far away from the apex set $\bar {\mathcal V}$.

\begin{lemm}
\label{res: isom prelim}
	Let $x,y \in \dot X$ such that $\gro xyv > 12\dot \delta$, for every $v \in \mathcal V$.
	Then $\dist[\dot X]xy = \dist{\bar x}{\bar y}$.
\end{lemm}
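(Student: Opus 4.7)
Since $\zeta$ is $1$-Lipschitz, $\dist[\bar X]{\bar x}{\bar y} \leq \dist[\dot X]xy$ is immediate; the plan is to establish the reverse inequality by lifting an almost-minimizing path from $\bar x$ to $\bar y$ back to $\dot X$ via the covering $\zeta \colon \dot X \setminus \mathcal V \to \bar X \setminus \bar{\mathcal V}$ of \autoref{res: small cancellation}~\ref{enu: small cancellation - translation kernel}.

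First I would fix $\epsilon > 0$ small and choose a global $(1,\epsilon)$-quasi-geodesic $\gamma_0$ in $\dot X$ from $x$ to $y$, which exists because $\dot X$ is a length space. A direct triangle-inequality computation using that the length of $\gamma_0$ is at most $\dist[\dot X]xy + \epsilon$ yields
\[
	\dist[\dot X]zv \geq \gro xyv - \epsilon/2 > 3L_0\dot\delta - \epsilon/2
\]
for every $z$ on $\gamma_0$ and every $v \in \mathcal V$. Since $\mathcal V$ is $K$-invariant and this bound is uniform over $K$-orbits of apices, the same lower bound transfers to $\dist[\bar X]{\bar z}{\bar v}$ for every $\bar v \in \bar{\mathcal V}$. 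By \autoref{res: small cancellation}~\ref{enu: small cancellation - local isom}, $\zeta$ then restricts to an isometry on each ball $B(z, L_0\dot\delta)$ with $z \in \gamma_0$, so the projection $\bar\gamma_0 := \zeta \circ \gamma_0$ has the same arclength as $\gamma_0$, is an $L_0\bar\delta$-local $(1,\epsilon)$-quasi-geodesic in $\bar X$, and stays at distance greater than $2L_0\dot\delta$ from $\bar{\mathcal V}$.

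Next I would take a global $(1,\epsilon)$-quasi-geodesic $\bar\gamma$ in $\bar X$ from $\bar x$ to $\bar y$ of length at most $\dist[\bar X]{\bar x}{\bar y} + \epsilon$. The Hausdorff-distance addendum to \autoref{res: stability qg} ensures that $\bar\gamma$ lies within Hausdorff distance $\eta := 2\epsilon + 5\bar\delta$ of $\bar\gamma_0$, which is much smaller than $L_0\dot\delta$ since $L_0 \geq 1000$ and $\bar\delta \leq \dot\delta$. In particular $\bar\gamma$ lives entirely in $\bar X \setminus \bar{\mathcal V}$ and by the covering property lifts uniquely starting from $x$ to a path $\gamma$ in $\dot X$ ending at some preimage $y' = ky$ of $\bar y$, with the length of $\gamma$ equal to the length of $\bar\gamma$.

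The main obstacle is to show that $k = 1$. For each $t$ in the common parameter interval, I would pick $\phi(t)$ with $\bar\gamma(t) \in B(\bar\gamma_0(\phi(t)), \eta)$; since $\zeta$ is an isometry from $B(\gamma_0(\phi(t)), L_0\dot\delta)$ onto its image and $\eta < L_0\dot\delta$, that ball contains a unique preimage of $\bar\gamma(t)$, and the resulting continuous map starts at $x$ and must coincide with the covering lift $\gamma$ by uniqueness of path lifting. In particular $\dist[\dot X]{y'}y \leq \eta$. On the other hand, because $d(y,\mathcal V) > 3L_0\dot\delta$, the free-action estimate in \autoref{res: small cancellation}~\ref{enu: small cancellation - translation kernel} gives $\dist[\dot X]{ky}y \geq \min\{6L_0\dot\delta, \rho/5\} = 6L_0\dot\delta$ (as $\rho \gg L_0\dot\delta$) for every $k \in K \setminus \{1\}$, which is incompatible with the previous bound once $\epsilon$ is small. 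Hence $y' = y$, so $\dist[\dot X]xy \leq \dist[\bar X]{\bar x}{\bar y} + \epsilon$; letting $\epsilon \to 0$ closes the argument.
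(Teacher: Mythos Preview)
Your argument is correct in outline and genuinely different from the paper's. The paper does not lift $\bar\gamma$; instead it first proves, via your path $\gamma_0$ and stability, that $\gro{\bar x}{\bar y}{\bar v}$ is large for every $\bar v\in\bar{\mathcal V}$, then picks any preimage $y'$ of $\bar y$ with $\dist[\dot X]{x}{y'}\leq\dist[\bar X]{\bar x}{\bar y}+2\eta$, uses the four-point inequality to deduce $\gro{y}{y'}{v}$ is large for every $v\in\mathcal V$, and finally observes that a $(1,\eta)$-quasi-geodesic from $y$ to $y'$ therefore projects to an $L_0\bar\delta$-local $(1,\eta)$-quasi-geodesic \emph{loop} at $\bar y$, forcing $\dist[\dot X]{y}{y'}\leq 2\eta$. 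Your covering-lift approach is more topological and perhaps more intuitive, while the paper's Gromov-product argument avoids discussing path lifting and continuity of local sections altogether.

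Two small imprecisions worth flagging. First, the continuity of the map $t\mapsto\psi(t)$ you build from local sections is asserted rather than proved; it does follow, since for $t$ near $t_0$ you may keep $\phi(t)=\phi(t_0)$ and use that $\zeta$ restricted to $B(\gamma_0(\phi(t_0)),L_0\dot\delta)$ is an isometry onto its image, but well-definedness across different choices of $\phi(t)$ already requires the free-action estimate you later invoke. Second, the conclusion $\dist[\dot X]{y'}{y}\leq\eta$ is not quite what your construction gives: you only get $y'$ within $\eta$ of $\gamma_0(\phi(b_1))$, and then need that $|\phi(b_1)-b_0|$ is small (which follows since $\bar\gamma_0$ is a global $(2,\epsilon)$-quasi-geodesic), yielding a bound like $3\eta+2\epsilon$. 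This is harmless for the final free-action contradiction. In fact you can shortcut: choosing $\phi(b_1)=b_0$ shows directly that $y'$ is the unique preimage of $\bar y$ in $B(y,L_0\dot\delta)$, namely $y$ itself.
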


\begin{proof}
	In this proof all the distances are measure in $\dot X$ or $\bar X$.
	We first claim that $\gro{\bar x}{\bar y}{\bar v} \geq 10\bar \delta$, for every $\bar v \in \bar{\mathcal V}$.
	To that end we fix $\eta >0$ and a $(1, \eta)$-quasi-geodesic $\gamma_1 \colon \intval{a_1}{b_1} \to \dot X$ joining $x$ to $y$.
	Note that $\gamma$ stays far for $\mathcal V$.
	Indeed, $d(v,\gamma_1) \geq \gro xyv -\eta/2$, for every $v \in \mathcal V$.
	It follows from \autoref{res: small cancellation}~\ref{enu: small cancellation - local isom} that if $\eta$ is sufficiently small, then the image $\bar \gamma_1 \colon \intval{a_1}{b_1} \to \bar X$ of $\gamma_1$ in $\bar X$ is an $L$-local $(1, \eta)$-quasi-geodesic joining $\bar x$ to $\bar y$ for some $L > 4\eta + 8\bar \delta$.
	Let $v \in \mathcal V$.
	Applying the stability of quasi-geodesics to $\bar \gamma_1$ (\autoref{res: stability qg}) we get
	\begin{equation*}
		12\dot \delta - \eta/2
		\leq \inf_{g\in K} d(gv, \gamma_1)
		\leq d(\bar v, \bar \gamma_1)
		\leq \gro{\bar x}{\bar y}{\bar v} + \eta/2 + 2\bar \delta.
	\end{equation*}
	This inequality holds for every sufficiently small $\eta$, which completes the proof of our first claim.

	Let $\eta \in (0, \dot \delta)$ and $y' \in \dot X$ be a pre-image of $\bar y$ such that $\dist x{y'} \leq \dist{\bar x}{\bar y} + \eta$.
	In particular, $\gro x{y'}v \geq \gro{\bar x}{\bar y}{\bar v} - \eta/2$, for every $v \in \mathcal V$.
	We are going to prove that $y$ and $y'$ are very close, provided $\eta$ is small enough.
	Let $\gamma_2 \colon \intval{a_2}{b_2} \to \dot X$ be a $(1,\eta)$-quasi-geodesic joining $y$ to $y'$.
	Let $v \in \mathcal V$.
	Applying the four point inequality (\ref{eqn : hyp four points - 1}) in $\dot X$, we observe that 
	\begin{equation*}
		\gro y{y'}v 
		\geq \min \left\{ \gro xyv , \gro x{y'}v \right\} - \dot \delta
		\geq \min \left\{ \gro xyv, \gro{\bar x}{\bar y}{\bar v} \right\} - \dot \delta - \eta/2
		> 8\dot \delta.
	\end{equation*}
	Reasoning as previously we see that for a sufficiently small value of $\eta$, the image $\bar \gamma_2 \colon \intval{a_2}{b_2} \to \bar X$ of $\gamma_2$ in $\bar X$ is an $L$-local $(1, \eta)$-quasi-geodesic from $\bar y$ to $\bar y'$, where $L > 4\eta + 8\dot\delta$ does not depend on $\eta$.
	By \autoref{res: stability qg}, it is also a (global) $(\kappa, \eta)$-quasi-geodesic joining $\bar y$ to itself, where $\kappa$ can be chosen independentely of $\eta$.
	Therefore $\dist y{y'} \leq \dist {b_2}{a_2} \leq \kappa\eta$ as we announced.
	Applying the triangle inequality we get  $\dist xy \leq \dist x{y'} + \kappa \eta \leq \dist {\bar x}{\bar y} + (\kappa + 1)\eta$.
	This holds for every sufficiently small $\eta>0$, hence $\dist xy \leq \dist{\bar x}{\bar y}$.
	The converse inequality follows from the fact that $\zeta \colon \dot X \to \bar X$ is $1$-Lipschitz.
\end{proof}

\begin{lemm}
\label{res: zeta isom on qc far from apices}
	Let $Z$ be a subset of $\dot X$ such that $\gro z{z'}v > 13\dot \delta$, for every $z,z' \in Z$ and every $v \in \mathcal V$.
	The map $\zeta \colon \dot X \to \bar X$ induces an isometry from $Z$ onto its image $\bar Z$.
	In addition, the following holds.
	\begin{enumerate}
		\item \label{enu: zeta isom on qc far from apices - single isom}
		Let $\bar g \in G$ and $z_1, z_2 \in Z$ such that $\bar g \bar z_1 = \bar z_2$.
		Then there exists a unique pre-image $g \in G$ of $\bar g$ such that $gz_1 = z_2$.
		Moreover for every $z,z' \in Z$, if $\bar g \bar z = \bar z'$, then $gz = z'$.
		\item \label{enu: zeta isom on qc far from apices - group}
		The projection $\pi \colon G \to \bar G$ induces an isomorphism from $\stab Z$ onto $\stab {\bar Z}$.
	\end{enumerate}
\end{lemm}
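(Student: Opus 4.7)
The isometry claim is direct from \autoref{res: isom prelim}: the hypothesis $\gro z{z'}v > 4L_0\dot\delta > 3L_0\dot\delta$ holds for every pair $z, z' \in Z$ and every $v \in \mathcal V$, so $\dist[\dot X] z{z'} = \dist{\bar z}{\bar z'}$, and in particular $\zeta|_Z$ is injective. Taking $z = z'$ in the hypothesis also yields $\dist zv > 4L_0\dot\delta$ for every $z \in Z$ and $v \in \mathcal V$, so $Z \cap \mathcal V = \emptyset$.

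For the existence and uniqueness in \ref{enu: zeta isom on qc far from apices - single isom}, the plan is to pick any lift $g_0 \in G$ of $\bar g$; since $g_0 z_1$ and $z_2$ both project to $\bar z_2$ they differ by an element $k \in K$, and then $g = k^{-1}g_0$ is a lift satisfying $gz_1 = z_2$. Uniqueness follows from \autoref{res: small cancellation}~\ref{enu: small cancellation - translation kernel}: two such lifts differ by an element of $K$ fixing the non-apex point $z_1$, and $K$ acts freely on $\dot X \setminus \mathcal V$.

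The substantive step is the \emph{moreover} clause. Given $z, z' \in Z$ with $\bar g \bar z = \bar z'$, my plan is to verify the hypothesis of \autoref{res: isom prelim} for the pair $(gz, z')$, which would yield $\dist[\dot X]{gz}{z'} = \dist{\bar{gz}}{\bar z'} = 0$ (the second equality being trivial since $\zeta(gz) = \bar g \bar z = \bar z'$), hence $gz = z'$. The crucial observation is that $g \in G$ permutes $\mathcal V$ because $\mathcal Q$ is $G$-invariant, so $g^{-1}v \in \mathcal V$ whenever $v \in \mathcal V$. Fixing such a $v$, the four point inequality gives
\begin{equation*}
    \gro{gz}{z'}v \geq \min\left\{\gro{gz}{z_2}v,\; \gro{z_2}{z'}v\right\} - \dot\delta.
\end{equation*}
The second term exceeds $4L_0\dot\delta$ directly by the hypothesis on $Z$; the first equals $\gro{gz}{gz_1}v = \gro z{z_1}{g^{-1}v}$ by isometry of $g$, and again exceeds $4L_0\dot\delta$ since $g^{-1}v$ is an apex and $z, z_1 \in Z$. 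Combining (and using $L_0 \geq 1000$) yields $\gro{gz}{z'}v > 3L_0\dot\delta$, as required. I expect this short Gromov-product computation, together with the $G$-equivariance of the apex set, to be the main (and only non-formal) obstacle.

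Part~\ref{enu: zeta isom on qc far from apices - group} will then follow formally from \ref{enu: zeta isom on qc far from apices - single isom}. For surjectivity, given $\bar g \in \stab{\bar Z}$, fix $z_1 \in Z$ and let $z_2 \in Z$ be the unique preimage of $\bar g \bar z_1$; the lift $g$ provided above satisfies $gZ \subseteq Z$ by the moreover clause, and applying the same argument to $\bar g^{-1}$ forces $gZ = Z$. For injectivity, any $g \in \stab Z \cap \ker \pi = \stab Z \cap K$ sends $z_1$ to a point of $Z$ projecting to $\bar z_1$, which by injectivity of $\zeta|_Z$ must be $z_1$ itself, and then $g = 1$ by the free action of $K$ on $\dot X \setminus \mathcal V$.
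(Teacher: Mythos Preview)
Your proof is correct and follows essentially the same approach as the paper's: the isometry claim via \autoref{res: isom prelim}, existence/uniqueness of the lift via the free action of $K$ on $\dot X \setminus \mathcal V$, and the \emph{moreover} clause via the same four-point inequality computation using $gz_1 = z_2$ and $g^{-1}v \in \mathcal V$. Your treatment of part~\ref{enu: zeta isom on qc far from apices - group} is in fact more explicit than the paper's, which simply asserts that it follows from~\ref{enu: zeta isom on qc far from apices - single isom}.
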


\begin{rema*}
	The statement applies in particular if $Z$ is $\alpha$-quasi-convex and satisfies $d(v,Z) > \alpha + 13\dot \delta$, for every $v \in \mathcal V$.
	This slightly weaker version will be more flexible for later use, though. 
\end{rema*}

\begin{proof}
	In this proof all the distances are measure in $\dot X$ or $\bar X$.
	By \autoref{res: isom prelim}, the projection $\zeta \colon \dot X \to \bar X$ induces an isometry from $Z$ onto $\bar Z$.
	Let $\bar g \in G$ and $z_1, z_2 \in Z$ such that $\bar g \bar z_1 = \bar z_2$.
	By the very definition of $\bar X$, there exists a pre-image $g \in G$ of $\bar g$, such that $gz_1 = z_2$.
	Uniqueness follows from the fact that $K$ acts freely on $\dot X \setminus \mathcal V$ -- see \autoref{res: small cancellation}~\ref{enu: small cancellation - translation kernel}.
	We now prove that $g$ satisfies the announced property.
	Let $z,z' \in Z$ such that $\bar g \bar z = \bar z'$.
	Let $v \in \mathcal V$.
	Applying the four point inequality (\ref{eqn : hyp four points - 1}) in $\dot X$ we have
	\begin{equation*}
		\gro {gz}{z'}v 
		\geq \min\left\{ \gro {gz}{gz_1}v, \gro{z_2}{z'}v \right\} - \dot \delta
		\geq \min\left\{ \gro z{z_1}{g^{-1}v}, \gro{z_2}{z'}v \right\} - \dot \delta
	\end{equation*}
	Note that $z_1$, $z_2$, $z$ and $z'$ belongs to $Z$.
	Hence, if follows from our assumption that $\gro {gz}{z'}v > 12\dot\delta$, for every $v \in \mathcal V$.
	By \autoref{res: isom prelim}, we get that $\dist{gz}{z'} = \dist{\bar g\bar z}{\bar z'} = 0$.
	This completes the proof of \ref{enu: zeta isom on qc far from apices - single isom}.
	Point~\ref{enu: zeta isom on qc far from apices - group} follows directly from \ref{enu: zeta isom on qc far from apices - single isom}.
\end{proof}

The next two statements are a variation on \cite[Lemme~5.10.1]{Delzant:2008tu}.

\begin{lemm}
\label{res: lifting quasi-convex}
	Let $\bar Z$ be a subset of $\bar X$ such that $\gro{\bar z}{\bar z'}{\bar v} > 13\bar \delta$, for every $\bar z, \bar z' \in \bar Z$ and every $\bar v \in \bar{\mathcal V}$.
	Let $\bar z_0$ be a point of $\bar Z$ and $z_0 \in \dot X$ a pre-image of $\bar z_0$
	Then there exists a unique subset $Z$ of $\dot X$ containing $z_0$ such that the projection $\zeta \colon \dot X \to \bar X$, induces an isometry from $Z$ onto $\bar Z$.
	In particular, $\gro z{z'}v \geq \gro{\bar z}{\bar z'}{\bar v}$ for every $z,z' \in Z$ and every $v \in \mathcal V$.
\end{lemm}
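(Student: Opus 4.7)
The plan is to construct $Z$ by lifting each point of $\bar Z$ through a quasi-geodesic issued from $\bar z_0$, and then to invoke \autoref{res: zeta isom on qc far from apices} to obtain the isometry property, the Gromov product condition, and uniqueness. Given $\bar z \in \bar Z$, I would fix any $L_0\bar\delta$-local $(1,\bar\delta)$-quasi-geodesic $\bar\gamma$ in $\bar X$ from $\bar z_0$ to $\bar z$. The stability of quasi-geodesics (\autoref{res: stability qg}~\ref{enu:stability qg - qc}) combined with the hypothesis $\gro{\bar z_0}{\bar z}{\bar v} > 4L_0\bar\delta$ for every $\bar v \in \bar{\mathcal V}$ forces $\bar\gamma$ to stay at distance greater than $3L_0\bar\delta$ from every apex. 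The local isometry \autoref{res: small cancellation}~\ref{enu: small cancellation - local isom} then permits me to lift $\bar\gamma$ ball-by-ball to a path $\gamma$ in $\dot X$ starting at $z_0$; I define $z$ to be its endpoint. To see that $z$ does not depend on the choice of $\bar\gamma$, two such quasi-geodesics are at bounded Hausdorff distance in $\bar X$ and together fill a region uniformly away from $\bar{\mathcal V}$, so subdividing that region by small balls on which $\zeta$ is an isometry lets one transport one lift onto the other, producing the same endpoint.

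For $z,z' \in Z$, I would concatenate the lifted paths to produce a path in $\dot X$ from $z$ to $z'$ and, after extracting a quasi-geodesic, project it back to an $L_0\bar\delta$-local $(1,\bar\delta)$-quasi-geodesic from $\bar z$ to $\bar z'$. The latter stays far from $\bar{\mathcal V}$ by the hypothesis on $\bar Z$, and since distinct apices of $\dot X$ are at distance at least $2\rho$ from one another, the lifted path stays far from every $v \in \mathcal V$ as well, yielding $\gro z{z'}v > 4L_0\dot\delta$. Once this is in place, \autoref{res: zeta isom on qc far from apices} directly gives that $\zeta$ induces an isometry from $Z$ onto $\bar Z$.

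For uniqueness, if $Z'$ contains $z_0$ and maps isometrically onto $\bar Z$ via $\zeta$, then for each $\bar z \in \bar Z$ the corresponding $z' \in Z'$ must satisfy $\dist[\dot X]{z_0}{z'} = \dist{\bar z_0}{\bar z}$, and a quasi-geodesic from $z_0$ to $z'$ in $\dot X$ exhibits $z'$ as reached from $z_0$ by an apex-avoiding path; by the covering property \autoref{res: small cancellation}~\ref{enu: small cancellation - translation kernel} this forces $z' = z$. The delicate step of the plan is the well-definedness of the lift in the first paragraph: turning the Hausdorff fellow-travelling of two quasi-geodesics in $\bar X$ into a lift-transporting procedure in $\dot X$ that never crosses an apex is the technical heart of the argument, while the rest reduces mechanically to the small cancellation theorem and to \autoref{res: zeta isom on qc far from apices}.
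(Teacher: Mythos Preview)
Your path-lifting strategy is genuinely different from the paper's and considerably more involved. The paper does not lift any paths: it defines $Z$ purely metrically as the set of all $z \in \dot X$ with $\zeta(z) \in \bar Z$ and $\dist[\dot X]{z}{z_0} = \dist{\zeta(z)}{\bar z_0}$. Surjectivity of $\zeta|_Z$ onto $\bar Z$ is one application of \autoref{res: isom prelim}: pick any pre-image $z$ of $\bar z$ with $\dist[\dot X]{z}{z_0} \leq \dist{\bar z}{\bar z_0} + \bar\delta$, observe that $\gro{z}{z_0}{v} \geq \gro{\bar z}{\bar z_0}{\bar v} - \bar\delta > 3L_0\dot\delta$, and conclude equality of distances. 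The Gromov-product condition for an arbitrary pair $z,z' \in Z$ is then a single four-point inequality in $\dot X$, routed through $z_0$: since $\gro{z}{z_0}{v} \geq \gro{\bar z}{\bar z_0}{\bar v}$ by the defining equality, one gets $\gro{z}{z'}{v} \geq \min\{\gro{z}{z_0}{v},\gro{z'}{z_0}{v}\} - \dot\delta > 3L_0\dot\delta$. A second call to \autoref{res: isom prelim} makes $\zeta|_Z$ an isometry (after which the Gromov products equal those in $\bar X$ and exceed $4L_0\dot\delta$), and uniqueness is immediate from the very definition of $Z$.

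Your argument, by contrast, has a gap precisely at the step you label mechanical, namely establishing $\gro{z}{z'}{v} > 4L_0\dot\delta$ for arbitrary $z,z' \in Z$. You take a quasi-geodesic from $z$ to $z'$ in $\dot X$ and project it, but the projection of a $(1,\epsilon)$-quasi-geodesic under the $1$-Lipschitz map $\zeta$ need not be even a local quasi-geodesic in $\bar X$: the lower distance bound does not survive, so the hypothesis on $\bar Z$ gives you nothing about the projected path. To make the projection locally isometric you would need the $\dot X$-quasi-geodesic to already avoid $\mathcal V$, which is exactly what you are trying to show. The natural repair---lift a quasi-geodesic from $\bar z$ to $\bar z'$ starting at $z$ and prove it terminates at $z'$---is the same homotopy question (now for the triangle $\bar z_0,\bar z,\bar z'$) that you flagged as the technical heart for well-definedness but did not resolve. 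The paper's four-point trick through $z_0$ bypasses all of this, at the cost of nothing.
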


\begin{rema*}
	Note that \autoref{res: zeta isom on qc far from apices} applies to the lifted set $Z$.
	Hence, we can lift any isometry $\bar g \in \bar G$ which (partially) preserves $\bar Z$ to an isometry $g \in G$ with the same properties.
	\autoref{res: lifting quasi-convex} holds in particular if $\bar Z$ is $\alpha$-quasi-convex and satisfies $d(\bar v, \bar Z) > \alpha + 13\bar \delta$, for every $\bar v \in \bar{\mathcal V}$, in which case one can prove that $Z$ is quasi-convex as well.
	Nevertheless we will not use this fact here.
\end{rema*}

\begin{proof}
	In this proof all the distances are measure in $\dot X$ or $\bar X$.
	We define $Z$ as the set of points $z \in \dot X$ being the pre-image of a point $\bar z \in \bar Z$ and such that  $\dist z{z_0} = \dist{\bar z}{\bar z_0}$.
	We claim that every $\bar z \in \bar Z$ has a pre-image in $Z$.
	Let $\bar z \in \bar Z$.
	We fix a pre-image $z \in \dot X$ of $\bar z$ such that $\dist {z_0}z \leq \dist{\bar z_0}{\bar z} + 2\bar \delta$.
	In particular for every $v \in \mathcal V$, we have $\gro z{z_0}v \geq \gro{\bar z}{\bar z_0}{\bar v} - \bar \delta$, hence $\gro z{z_0}v > 12\dot \delta$.
	It follows from \autoref{res: isom prelim} that $\dist {z_0}z = \dist{\bar z_0}{\bar z}$.
	In other words $z$ belongs to $Z$, which completes the proof of our claim.
	Hence the projection $\zeta \colon \dot X \to \bar X$ maps $Z$ onto $\bar Z$.

	We now prove that $\gro z{z'}v > 12\dot \delta$, for every $z,z' \in Z$ and every $v \in \mathcal V$.
	It follows from the very definition of $Z$ that $\gro z{z_0}v \geq \gro{\bar z}{\bar z_0}{\bar v}$ and $\gro{z'}{z_0}v \geq \gro{\bar z'}{\bar z_0}{\bar v}$.
	Combining the four point inequality (\ref{eqn : hyp four points - 1}) with our assumption on $\bar Z$, we get
	\begin{equation*}
		\gro z{z'}v 
		\geq \min \left\{ \gro z{z_0}v , \gro{z'}{z_0}v \right\} - \dot \delta
		\geq \min \left\{ \gro{\bar z}{\bar z_0}{\bar v} , \gro{\bar z'}{\bar z_0}{\bar v} \right\} - \bar \delta
		> 12\dot  \delta.
	\end{equation*}
	It now follows from \autoref{res: isom prelim} that the projection $\zeta \colon \dot X \to \bar X$ induces an isometry from $Z$ onto its image, i.e. $\bar Z$.
	This proves the existence of the set $Z$.
	The uniqueness directly follows from the definition of $Z$.
	Since $Z \to \bar Z$ is an isometry, $\gro z{z'}v \geq \gro{\bar z}{\bar z'}{\bar v}$, for every $z,z' \in Z$, for every $v \in \mathcal V$.
\end{proof}

The previous statements explain how to lift in $\dot X$ a quasi-convex subset $\bar Z \subset \bar X$, as well as its (partial) stabilizer, as soon as it stays away from the apex set $\bar {\mathcal V}$.
In particular, it applies to any $(1, \eta)$-quasi-geodesic path of $\bar X$ that avoids the cone points.
We focus now on a more delicate operation which consists in lifting paths of $\bar X$ (and their almost stabilizers) going through one or several apices.
The next statement follows \cite[Proposition~5.13]{Coulon:2016if}.

\begin{prop}
\label{res: lifting crossing apices - w/o reflection}
	Let $x$ and $y$ be two points of $X$.
	Let $\gamma \colon \intval ab \to \dot X$ be a path from $x$ to $y$ whose image $\bar \gamma \colon \intval ab \to \bar X$ is a $(1, \bar \delta)$-quasi-geodesic.
	Let $S$ be a subset of $G$ and $\bar S$ its image in $\bar G$.
	We assume that $\dist[\dot X]{gx}x \leq \rho/100$ and $\dist{\bar g\bar y}{\bar y} \leq \rho/100$, for every $g \in S$.
	In addition we suppose that for every apex $\bar v \in \bar{\mathcal V}$ satisfying $\gro{\bar x}{\bar y}{\bar v} \leq \rho/4$, the set $\bar S$ lies in the local kernel at $\bar v$.
	Then $\dist[\dot X]{gy}y = \dist{\bar g \bar y}{\bar y}$ for every $g \in S$.
\end{prop}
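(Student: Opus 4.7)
The plan is to use the path $\gamma$ as a scaffold and establish, by induction along a suitable subdivision of $\intval ab$, the stronger identity
\[ \dist[\dot X]{g\gamma(t)}{\gamma(t)} = \dist{\bar g\bar\gamma(t)}{\bar\gamma(t)} \]
for every $t$, so that the case $t=b$ gives the proposition. Since $\zeta$ is $1$-Lipschitz the left-hand side always dominates the right-hand side, so only an upper bound on the $\dot X$-displacement needs to be produced, via explicit lifts. At $t=a$ the identity follows from the hypothesis $\dist[\dot X]{gx}x\leq \rho/100$ combined with the Lipschitz inequality.

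Step one is to locate the apices close to $\bar\gamma$. Because $\bar\gamma$ is an $L_0\bar\delta$-local $(1,\bar\delta)$-quasi-geodesic, the stability of quasi-geodesics (\autoref{res: stability qg}) shows that an apex $\bar v\in\bar{\mathcal V}$ lies within distance $\rho/4$ of $\bar\gamma$ only when $\gro{\bar x}{\bar y}{\bar v}\leq \rho/4+O(\bar\delta)$; at every such $\bar v$ the hypothesis places $\bar S$ in the local kernel, so by \autoref{res: local action of apex stab}~\ref{enu: local action of apex stab - triv} the whole ball $B(\bar v,\rho)$ is contained in $\mov{\bar g,\bar\delta}$. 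Together with $\dist{\bar g\bar y}{\bar y}\leq\rho/100$ and the fact that $\bar g$ is an isometry, this shows that $\bar g$ moves every point of $\bar\gamma$ by at most $\rho/100+O(\bar\delta)$. I would then subdivide $\intval ab = [t_0,t_1]\cup\dots\cup[t_{m-1},t_m]$ so that each sub-path $\gamma|_{[t_i,t_{i+1}]}$ is alternately of one of two types: a \emph{far} segment, staying at distance at least $\rho/5$ from every apex, or a \emph{near} segment, contained in a single ball $B(v_i,\rho/2)\subset\dot X$ where $v_i$ is a pre-image of some apex $\bar v_i$ encountered by $\bar\gamma$. Such a subdivision exists because distinct apices of $\dot X$ are at distance at least $2\rho$ from one another.

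Step two propagates the equality from $t_i$ to $t_{i+1}$. On a far segment, \autoref{res: small cancellation}~\ref{enu: small cancellation - local isom} yields that $\zeta$ is an isometry on each ball of radius $\rho/10$ contained in the segment, so a short $(1,\bar\delta)$-quasi-geodesic in $\bar X$ joining $\bar\gamma(t_i)$ to $\bar g\bar\gamma(t_i)$ lifts (via \autoref{res: lifting quasi-convex}) to a path in $\dot X$ of the same length; \autoref{res: small cancellation}~\ref{enu: small cancellation - translation kernel}, together with the uniqueness statement in \autoref{res: zeta isom on qc far from apices}~\ref{enu: zeta isom on qc far from apices - single isom}, identifies this lift with the given $g$, so the identity survives across the segment. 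On a near segment, both endpoints of $\gamma|_{[t_i,t_{i+1}]}$ lie in $B(v_i,\rho/2)$, which by \autoref{res: small cancellation}~\ref{enu: small cancellation - local embedding} projects isometrically onto $B(\bar v_i,\rho/2)$ after quotienting by the cone stabilizer $H_i$; the local-kernel hypothesis places $\bar g$ in an element of $\stab{\bar v_i}$ whose unique lift to an isometry of $B(v_i,\rho/2)$ moves every point by at most $\bar\delta$, and uniqueness again forces this lift to coincide with $g$.

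The main obstacle is precisely the matching in the last step. A priori, any lift in the coset $g H_i$ projects to $\bar g$ on $B(\bar v_i, \rho/2)$, but a non-trivial element of $H_i$ has translation length of order $\pi\sinh\rho$ on $\dot X$, which would instantly destroy the displacement bound and hence the inductive equality. The local-kernel hypothesis is exactly what guarantees that, as $\gamma$ crosses each near apex, no non-trivial element of $H_i$ gets absorbed into $g$, and the inductive invariant at $t_{i+1}$ follows from the one at $t_i$. Iterating through all segments of the subdivision yields the claim at $t=b$.
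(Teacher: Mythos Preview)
Your approach is essentially the paper's: subdivide $\gamma$ into ``far'' pieces (at distance $\geq\rho/5$ from $\mathcal V$) and ``near'' pieces (inside a single cone), then propagate the displacement identity by induction --- using \autoref{res: zeta isom on qc far from apices} on the far pieces and the local-kernel hypothesis plus uniqueness of small-displacement lifts on the near pieces. The paper carries the induction only at the specific exit points $a_j$ rather than at every $t$, and on a far piece it lifts the whole $\rho/50$-neighbourhood $Z_j$ of $\gamma|_{[a_j,b_j]}$ at once (so that \autoref{res: zeta isom on qc far from apices}\ref{enu: zeta isom on qc far from apices - single isom} transports the identity from $a_j$ to $b_j$ in a single stroke), but this is the same mechanism you describe. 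Two small points to tighten: the base case at $t=a$ really uses the local isometry \autoref{res: small cancellation}\ref{enu: small cancellation - local isom} (not just the Lipschitz inequality), and the bound $\dist{\bar g\bar\gamma(t)}{\bar\gamma(t)}\leq\rho/100+O(\bar\delta)$ comes from \autoref{res: quasi-convexity distance isometry} applied to the endpoints, not from the local-kernel condition.
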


\begin{proof}
	Since $\bar \gamma$ is a $(1,\bar \delta)$-quasi-geodesic and the projection $\zeta \colon \dot X \to \bar X$ is $1$-Lipschitz, the path $\dot \gamma$ is a $(1, \dot \delta)$-quasi-geodesic.
	Let $v_1, \dots, v_m$ be the apices of $\mathcal V$ which are $\rho/5$-close to $\gamma$.
	For every $j \in \intvald 1m$, we denote by $\gamma(c_j)$ a projection of $v_j$ on $\gamma$.
	By reordering the apices we can always assume that $c_1 \leq c_2 \leq \dots \leq c_m$. 
	For simplicity of notation we put $c_0 = a$ and $c_{m+1} = b$.
	Let $j \in \intvald 1m$.
	Since $ \gamma$ is a $(1,\dot \delta)$-quasi-geodesic,  we can find $b_{j-1} \in (c_{j-1},c_j]$ and  $a_j \in [c_j,c_{j+1})$ with the following properties.
	\begin{enumerate}
		\item $\dist[\dot X]{v_j}{\gamma (b_{j-1})} = 9\rho/10$ and $\dist[\dot X]{v_j}{\gamma (a_j)} = 9\rho/10$,
		\item $\gamma \cap B(v_j, 4\rho/5)$ is contained in the image of  $\gamma$ restricted to $(b_{j-1},a_j)$
	\end{enumerate}
	In addition, we let $a_0 = a$, $b_m= a_{m+1} = b$ (see \autoref{fig: lift}).
	\begin{figure}[htbp]
	\centering
		\includegraphics[width=0.9\textwidth]{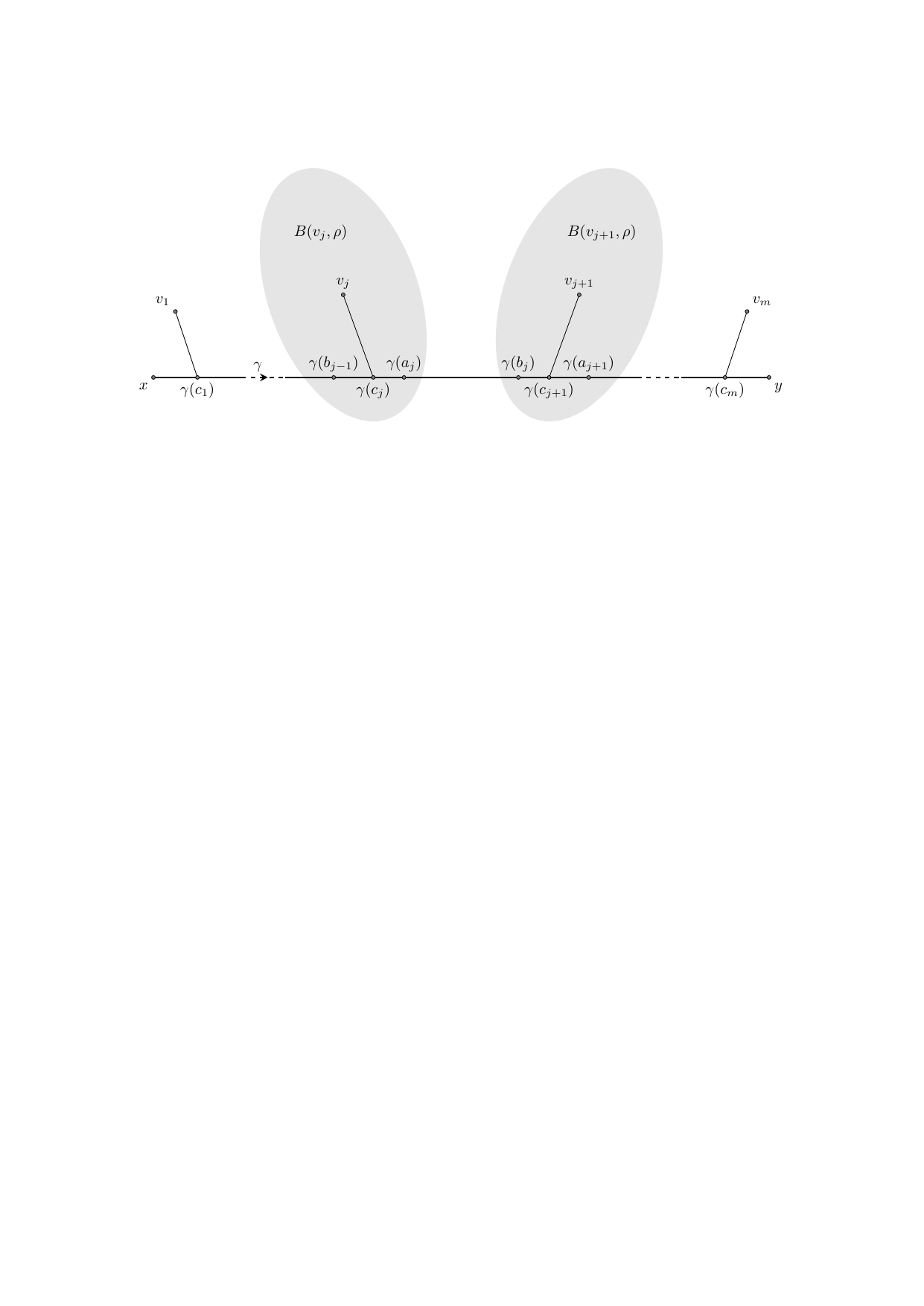}
	\caption{The cones intersecting $\gamma$.}
	\label{fig: lift}
	\end{figure}
	We claim that for every $j \in \intvald 0{m+1}$, for every $g \in S$,  we have
	\begin{equation*}
		\dist{\bar g\bar \gamma(a_j)}{\bar \gamma(a_j)} = \dist{g\gamma(a_j)}{\gamma(a_j)}.
	\end{equation*}
	The proof is by induction on $j$.
	If $j=0$ then $\gamma(a_j) = x$.
	The claim follows from the fact that the map $\zeta : \dot X \rightarrow \bar X$ induces an isometry from $B(x, \rho/20)$ onto $B(\bar x, \rho/20)$ -- see \autoref{res: small cancellation}~\ref{enu: small cancellation - local isom}.
	Assume now that our claim is true for some $j \in \intvald 0m$. 
	Since $\gamma$ is a local quasi-geodesic, $a_j \leq b_j$.
	We denote by $\gamma_j$ the restriction of $\gamma$ to $\intval{a_j}{b_j}$ and by $\bar \gamma_j$ its image in $\bar X$.
	In addition we write $Z_j$ (\resp $\bar Z_j$) for the $\rho/50$-neighborhood of $\gamma_j$ (\resp $\bar \gamma_j$).
	It follows from the construction that $\gamma_j$ and thus $\bar \gamma_j$ stay away from any cone point.
	Consequently $\zeta \colon \dot X \to \bar X$ is an isometry when restricted to the $Z_j$ (\autoref{res: zeta isom on qc far from apices}).
	Moreover, it induces an isometry from $Z_j$ onto $\bar Z_j$ (\autoref{res: lifting quasi-convex}).

	Let $g \in S$.
	By assumption its image $\bar g$ in $\bar G$ moves $\bar x$ and $\bar y$ by at most $\rho/100$.
	Recall that we required $\bar \gamma$ to be $(1, \bar \delta)$-quasi-geodesic.
	It follows from \autoref{res: quasi-convexity distance isometry} that $\bar g$ moves $\bar \gamma(a_j)$ and $\bar \gamma(b_j)$ by at most $\rho/50$.
	In particular, $\bar g \bar \gamma(a_j)$ and $\bar g \bar \gamma(b_j)$ belongs to $\bar Z_j$.
	Nevertheless, according to our induction assumption, $g\gamma(a_j)$ is the (unique) lift of $\bar g \bar \gamma(a_j)$ that belongs to $Z_j$.
	Applying \autoref{res: zeta isom on qc far from apices}~\ref{enu: zeta isom on qc far from apices - single isom} we observe that $g\gamma(b_j)$ is the unique pre-image in $Z_j$ of $\bar g \bar \gamma(b_j)$.
	In particular, 
	\begin{equation*}
		\dist[\dot X]{g\gamma(b_j)}{\gamma(b_j)} = \dist{\bar g\bar \gamma(b_j)}{\bar \gamma(b_j)} \leq \rho/50.
	\end{equation*}
	If $j = m$, then $a_{m+1} = b_m$, thus the claim holds for $j+1$.
	Otherwise, $\dist{v_{j+1}}{\gamma(b_j)}=9\rho/10$, thus $g$ necessarily belongs to $\stab {v_{j+1}}$.
	Moreover
	\begin{equation*}
		\gro {\bar x}{\bar y}{\bar v_{j+1}} 
		\leq d(\bar v_{j+1}, \bar \gamma) + 4\bar \delta
		\leq d(v_{j+1}, \gamma) + 4\dot \delta
		\leq \rho/4,
	\end{equation*}
	see for instance by \cite[Lemma~2.4]{Coulon:2014fr}.
	Since $g$ moves the point $\gamma(b_j) \in B(v_{j+1}, \rho)$ by at most $\rho/50$, it is the (unique) elliptic preimage of $\bar g$ (\autoref{res: small cancellation}~\ref{enu: small cancellation - translation kernel}).
	Therefore it moves all the points of $B(v_{j+1}, \rho)$ by a distance at most $\dot \delta$, see \autoref{res: normal elliptic fixing cyl}.
	In particular, $\dist[\dot X]{g\gamma(a_{j+1})}{\gamma(a_{j+1})} \leq \dot \delta$.
	However, the map $\zeta : \dot X \rightarrow \bar X$ induces an isometry from the ball $B(\gamma(a_{j+1}), \rho/20)$ onto its image, hence $\dist[\dot X]{g\gamma(a_{j+1})}{\gamma(a_{j+1})} = \dist{\bar g \bar \gamma(a_{j+1})}{\bar \gamma(a_{j+1})}$.
	This proves our claim for $j+1$.
	The statement of the lemma follows from our claim for $j = m+1$.
\end{proof}

In the previous statement, we assumed that any isometry $\bar g \in \bar S$ which hardly move the endpoints of $\bar \gamma$, is actually in the local kernel of every vertex $\bar v$ lying close to $\bar \gamma$.
We now explore the situation where some element $\bar g$ might be a reflection at $\bar v$.

\begin{prop}
\label{res: lifting crossing apices - w/ reflection}
	Let $x$ and $y$ be two points of $X$.
	Let $S$ be a subset of $G$ and $\bar S$ its image in $\bar G$.
	We assume that $\dist {gx}x \leq \rho/100$ and $\dist{\bar g\bar y}{\bar y} \leq \rho/100$, for every $g \in S$.
	In addition we suppose that for every apex $\bar v \in \bar{\mathcal V}$, the set $\bar S \cap \stab{\bar v}$ is contained in a reflection group at $\bar v$.
	Then there exists an element $u \in G$ with the following properties.
	\begin{enumerate}
		\item $\bar u$ commutes with every element in $\bar S$;
		\item $\dist[\dot X]{guy}{uy} = \dist{\bar g \bar y}{\bar y}$ for every $g \in S$;
		\item either $\bar u$ is trivial, or $\bar S$ lies in a reflection group at some apex $\bar v \in \bar{\mathcal V}$.
	\end{enumerate}
\end{prop}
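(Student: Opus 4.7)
\emph{Proof plan.} The strategy will be to adapt the tracking argument of \autoref{res: lifting crossing apices - w/o reflection}. I would fix an $L_0\bar\delta$-local $(1,\bar\delta)$-quasi-geodesic $\bar\gamma$ from $\bar x$ to $\bar y$, lift it to $\gamma$ in $\dot X$ starting at $x$, and let $\bar v_1,\dots,\bar v_m$ be the apices lying within distance $\rho/5$ of $\bar\gamma$. By hypothesis, for each $j$ the subgroup $\bar S \cap \stab{\bar v_j}$ is contained in some reflection group $\bar A_j$ at $\bar v_j$.

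If at every $\bar v_j$ each element of $\bar S\cap\stab{\bar v_j}$ happens to lie in the local kernel at $\bar v_j$, then the hypotheses of \autoref{res: lifting crossing apices - w/o reflection} are essentially in place (the tracking argument of that proposition automatically forces any $\bar g \in \bar S$ satisfying the present hypotheses to stabilize each nearby $\bar v_j$), and one takes $u=1$. Otherwise some $\bar g_0\in\bar S$ is a genuine reflection at an apex $\bar v=\bar v_{j_0}$, and the key structural claim is that $\bar S$ is then entirely contained in a single reflection group $\bar A$ at $\bar v$. The $2\rho$-separation between distinct apices in $\dot X$ furnished by \autoref{res: small cancellation}~\ref{enu: small cancellation - translation kernel}, combined with the hypotheses $\dist{\bar g\bar x}{\bar x},\dist{\bar g\bar y}{\bar y}\leq \rho/100$ and the fact that $\bar\gamma$ passes near $\bar v$, forces every $\bar g\in\bar S$ to fix $\bar v$; the reflection-group hypothesis then places $\bar S$ inside $\bar A_{j_0}=\bar A$.

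Once this structural claim is established, \autoref{ass: even exponent} supplies a central half-turn $\bar g^{\ast}$ at $\bar v$ that commutes with $\bar A$, and hence with $\bar S$. I would then pick $u\in G$ with $\bar u\in\{1,\bar g^{\ast}\}$ so that $\bar u\bar y$ sits on the half-geodesic of \autoref{res: local action of apex stab}~\ref{enu: local action of apex stab - reflection} opposite to the side on which $\bar x$ lies. For this choice the lifted path $x\to uy$ in $\dot X$ traverses the cone over $\bar v$ along the axis that $\bar A$ fixes pointwise (up to $\bar\delta$), so every $g\in S$ lifting an element of $\bar A$ moves $uy$ by at most $\bar\delta$, and the identity $\dist[\dot X]{guy}{uy}=\dist{\bar g\bar y}{\bar y}$ drops out of the inductive verification already used for \autoref{res: lifting crossing apices - w/o reflection}. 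The hard step is the structural claim that the whole of $\bar S$ collapses into a single reflection group at a single apex; the choice between $u=1$ and $u$ being a lift of $\bar g^{\ast}$, dictated by which side of $\bar v$ the point $\bar y$ sits on, is comparatively straightforward once the geometry around $\bar v$ has been unpacked via \autoref{res: local action of apex stab}.
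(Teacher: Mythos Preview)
Your structural claim---that as soon as some $\bar g_0\in\bar S$ is a genuine reflection at an apex $\bar v$, all of $\bar S$ lies in a single reflection group at that $\bar v$---is correct and matches the paper. The gap is in what you do afterwards. You restrict $\bar u$ to $\{1,\bar g^{\ast}\}$, a \emph{single} central half-turn at the \emph{single} apex $\bar v$, and argue that the lifted path then crosses the cone over $\bar v$ along the fixed axis of $\bar A$. But this only fixes the crossing at $\bar v$; it says nothing about the other apices near the (new) quasi-geodesic from $\bar x$ to $\bar u\bar y$. Being a reflection is a purely local notion: an element of $\bar S$ that is a reflection at $\bar v$ may simultaneously be a reflection (rather than almost trivial) at a different apex $\bar v'$ lying near the new path. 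At such a $\bar v'$ the inductive tracking argument of \autoref{res: lifting crossing apices - w/o reflection} again fails, and your chosen $u$ does not repair it.

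The paper resolves this by abandoning any explicit choice of $u$ and instead minimising $\dist{\bar u\bar y}{\bar x}$ over the whole set $\mathcal U$ of elements whose image commutes with $\bar S$. If the minimiser $\bar u_0$ still leaves an apex $\bar v'$ near the path from $\bar x$ to $\bar u_0\bar y$ where $\bar S$ is not in the local kernel, one composes $\bar u_0$ with the central half-turn at $\bar v'$ (which lies in $\mathcal U$ because it centralises $\stab{\bar v'}\supset\bar S$) and shows via the geometry of \autoref{res: local action of apex stab}~\ref{enu: local action of apex stab - reflection} that this \emph{strictly decreases} the distance by at least $2\rho/15$, contradicting minimality. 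Thus the optimal $u_0$ may be a product of half-turns at several different apices, and the termination comes from the uniform decrement, not from there being only one bad apex. Your proposal is missing this minimisation/iteration mechanism.
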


\begin{rema}
\label{rem: assumption lifting crossing apices - w/ reflection}
	Note that if $\bar S \cap \stab{\bar v}$ is not contained in a reflection group at $\bar v$, then there is a strict rotation at $\bar v$ which is the product of at most two elements of $\bar S$.
	Indeed given any reflection $\mathbf x$ of $\dihedral[n]$, the geometric realization $q_{\bar v} \colon \stab{\bar v} \to \dihedral[n]$ cannot map $\bar S \cap \stab{\bar v}$ into $\group{\mathbf x}$.
	Consequently the image of $\bar S \cap \stab{\bar v}$ either contains a non-trivial rotation or two distinct reflections, whence the claim.
	This observation will be useful later to check that the assumptions of the proposition are fulfilled. 	
\end{rema}

\begin{proof}
	Assume first that for every apex $\bar v \in \bar{\mathcal V}$ satisfying $\gro{\bar x}{\bar y}{\bar v} \leq \rho/4$, the set $\bar S$ lies in the local kernel at $\bar v$.
	We fix $\epsilon > 0$ and  $u \in K$ such that $\dist[\dot X]x{uy} \leq \dist{\bar x}{\bar y} + \epsilon$.
	In addition we take a $(1, \epsilon)$-quasi-geodesic $\gamma \colon \intval ab \to \dot X$ from $x$ to $uy$.
	It follows from our choice of $u$ that the image $\bar \gamma \colon \intval ab \to \bar X$ of $\gamma$ is a $(1, 2\epsilon)$-quasi-geodesic from $\bar x$ to $\bar y$.
	Hence if $\epsilon$ is sufficiently small, \autoref{res: lifting crossing apices - w/o reflection} applies, which completes the proof.

	Assume now that there exists a vertex $\bar v \in \bar{\mathcal V}$ satisfying $\gro{\bar x}{\bar y}{\bar v} \leq \rho/4$ such that the set $\bar S$ does not lie in the local kernel at $\bar v$.
	Any element $\bar g \in \bar S$ moves $\bar x$ and $\bar y$ by at most $\rho /100$.
	Hence $\bar g$ moves $\bar v$ by at most $\rho$ (\autoref{res: quasi-convexity distance isometry}).
	It follows that $\bar S$ is contained in $\stab {\bar v}$.
	According to our assumption $\bar S$ is contained in a reflection group at $\bar v$.

	We now denote by $\mathcal U$ the set of all elements $u \in G$ whose image $\bar u$ in $\bar G$ commutes with $\bar S$.
	This set is non-empty as it contains the identity.
	We chose $u_0 \in \mathcal U$ such that $\dist{\bar u_0\bar y}{\bar x} \leq \dist{\bar u\bar y}{\bar x} + \bar \delta$, for every $u \in \mathcal U$.
	We are going to prove that for every $\bar v \in \bar {\mathcal V}$, if $\gro{\bar x}{\bar u_0\bar y}{\bar v} \leq \rho/4$, then $\bar S$ lies in the local kernel at $\bar v$.
	Consider indeed an apex $\bar v \in \bar {\mathcal V}$ such that $\gro{\bar x}{\bar u_0\bar y}{\bar v} \leq \rho/4$.
	As $\bar u_0$ commutes with $\bar S$, the distance $\dist{\bar g\bar u_0\bar y}{\bar u_0\bar y} = \dist{\bar g\bar y}{\bar y}$ is bounded above by $\rho/100$ for every $\bar g \in \bar S$.
	We prove as above that $\bar S$ is contained in a reflection group at $\bar v$.
	Suppose now that contrary to our claim $\bar S$ is not in the local kernel at $\bar v$.
	We fix $\bar s \in \bar S$ a reflection at $\bar v$.
	According to \autoref{res: local action of apex stab}~\ref{enu: local action of apex stab - reflection}, there exists a point $\bar z_0 \in \bar X$ with $\dist{\bar v}{\bar z_0} = \rho$ such that for every $\bar z \in \fix{\bar S,\bar \delta} \cap B(\bar v, \rho/3)$ we have 
	\begin{equation*}
		\min\left\{ \gro{\bar z_0}{\bar v}{\bar z}, \gro{\bar h \bar z_0}{\bar v}{\bar z} \right\} \leq \bar \delta,
	\end{equation*}
	where $\bar h$ is a central half-turn at $\bar v$.
	Observe that $\bar u_1 = \bar h\bar u_0$ belongs to $\mathcal U$.
	Indeed, as $\bar h$ centralizes $\stab{\bar v}$, it commutes with $\bar S$.
	We now claim that $\dist{\bar u_1\bar y}{\bar x} < \dist{\bar u_0\bar y}{\bar x}  - 2\rho/15$.
	Let $\bar \gamma \colon \intval ab \to \bar X$ be a $(1, \bar \delta)$-quasi-geodesic from $\bar x$ to $\bar u_0 \bar y$.
	Let $\bar \gamma(t)$ be the projection of $\bar v$ onto $\bar \gamma$.
	By assumption $\gro{\bar x}{\bar u_0\bar y}{\bar v} \leq \rho/4$, hence $\dist{\bar \gamma(t)}{\bar v} \leq \rho/4 +4\bar \delta$, see for instance \cite[Lemma~2.4]{Coulon:2014fr}.
	Let
	\begin{align*}
		s_- & = \sup\set{s \in \intval at}{\dist{\bar \gamma(s)}{\bar v} \geq \rho/3}, \\
		s_+ & = \inf\set{s \in \intval tb}{\dist{\bar \gamma(s)}{\bar v} \geq \rho/3},
	\end{align*}
	so that $\bar y_- = \bar \gamma(s_-)$ and $\bar y_+ = \bar \gamma(s_+)$ are at distance exactly $\rho/3$ from $\bar v$ and 
	\begin{equation*}
		\dist{\bar y_-}{\bar y_+} \geq \rho/6 - 8\bar \delta.
	\end{equation*}
	Since any element of $\bar S$ moves the endpoint of $\bar \gamma$ by at most $\rho/100$, the path $\bar \gamma$ restricted to $\intval{s_-}{s_+}$ is contained in $\fix{S, \rho/50}$ (\autoref{res: quasi-convexity distance isometry}) hence in the $(\rho/100+7\bar \delta)$-neighborhood of $\fix{\bar S, \bar \delta}$ (\autoref{res: fix qc}).
	Hence for every $s \in (s_-,s_+)$ we have 
	\begin{equation*}
		\min\left\{ \gro{\bar z_0}{\bar v}{\bar \gamma(s)}, \gro{\bar h \bar z_0}{\bar v}{\bar \gamma(s)} \right\} \leq \rho/100 + 8\bar \delta.
	\end{equation*}
	See \autoref{fig: half-turn}.
	\begin{figure}[htbp]
    	\centering
    	\includegraphics[page=4, width=\textwidth]{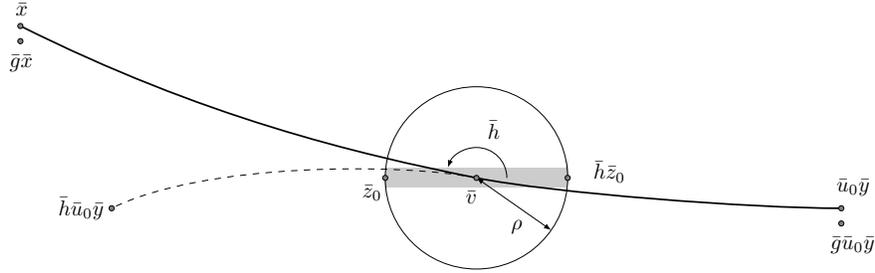}
    	\caption{
			The path $\bar \gamma$ going through the ball $B(\bar v, \rho)$. 
			The grey area corresponds to $\fix{\bar S, \bar \delta}\cap B(\bar v, \rho) $.
		}
    	\label{fig: half-turn}
    \end{figure}
	By continuity it also applies to $s_-$ and $s_+$.
	Recall that $\dist{\bar y_-}{\bar y_+} \geq \rho/10$.
	Up to permuting $\bar z_0$ and $\bar h \bar z_0$, it forces
	\begin{equation*}
		\gro{\bar z_0}{\bar v}{\bar y_-} \leq \rho/100 + 8\bar \delta,
		\quad \text{and} \quad
		\gro{\bar z_0}{\bar v}{\bar h\bar y_+} = \gro{\bar h \bar z_0}{\bar v}{\bar y_+} \leq \rho/100 + 8\bar \delta.
	\end{equation*}
	Hence $\dist{\bar y_-}{\bar h \bar y_+} \leq \rho/50 + 18 \bar \delta$ \cite[Lemma~2.2~(ii)]{Coulon:2014fr}.
	On the other hand, since $\bar y_-$ and $\bar y_+$ lie on a $(1, \bar \delta)$-quasi-geodesic between $\bar x$ and $\bar u_0\bar y$ we have
	\begin{align*}
		\dist{\bar x}{\bar u_0\bar y} 
		&\geq \dist{\bar x}{\bar y_-} + \dist{\bar y_-}{\bar y_+} + \dist{\bar y_+}{\bar u_0\bar y} - \bar \delta \\
		&\geq \dist{\bar x}{\bar y_-} + \dist{\bar y_+}{\bar u_0\bar y} + \rho/6 - 7 \bar \delta
	\end{align*}
	Combined with the triangle inequality, it yields
	\begin{equation*}
		\dist{\bar x}{\bar u_1\bar y}
		\leq \dist{\bar x}{\bar y_-} + \dist{\bar y_-}{\bar hy_+} + \dist{\bar y_+}{\bar u_0\bar y}
		\leq \dist{\bar x}{\bar u_0\bar y} - 2\rho/15.
	\end{equation*}
	which completes the proof our claim and contradicts the minimality of $u_0$.
	Consequently for every $\bar v \in \bar {\mathcal V}$, if $\gro{\bar x}{\bar u_0\bar y}{\bar v} \leq \rho/4$, then $\bar S$ lies the local kernel at $\bar v$.
	The conclusion now follows from the discussion at the beginning of the proof.
\end{proof}

%%%%%%%%%%%%%%%%%%%%%%%%%%%%%%%%%%%%%%%%%%%%%%%%%%%%%%%%%%%%%%%%%%%%%%%%%%%%%%%%%%%%
%
\subsection{The action of $\bar G$ on $\bar X$}
%
%%%%%%%%%%%%%%%%%%%%%%%%%%%%%%%%%%%%%%%%%%%%%%%%%%%%%%%%%%%%%%%%%%%%%%%%%%%%%%%%%%%%

We now study the general properties of the action of $\bar G$ on $\bar X$.

\begin{prop}
\label{res: sc - gentle action}
	The action of $\bar G$ on $\bar X$ is gentle.
\end{prop}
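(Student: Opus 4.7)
The plan is to identify $\bar F \subset \bar H$, the set of elliptic elements, as a normal subgroup with infinite cyclic quotient; since any extension of $\Z$ splits, this will yield the gentle action. First, I would fix a Busemann cocycle $\bar c$ at the common fixed point $\bar \xi_+ \in \partial \bar X$ of $\bar H$. Since every $\bar g \in \bar H$ fixes $\bar \xi_+$, the cocycle $\bar g^{-1}\bar c$ is again a Busemann cocycle at $\bar \xi_+$, and so differs from $\bar c$ by at most $6\bar\delta$. The cocycle identity then yields that $\psi(\bar g) := -\bar c(\bar g \bar x,\bar x)$ is additive on $\bar H$ up to a bounded error, i.e.\ a quasi-morphism to $\R$. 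By \autoref{res: translation length vs cocycle}, $|\psi(\bar g)|$ is within $6\bar\delta$ of $\snorm{\bar g}$, with a sign prescribed by the orientation. In particular, elliptic elements satisfy $|\psi(\bar g)|\leq 6\bar\delta$, while loxodromic ones satisfy $|\psi(\bar g)|\geq \snorm{\bar g}-6\bar\delta$.

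The critical step, which I expect to be the main obstacle, is to exhibit a uniform gap $\iota = \inf\{\snorm{\bar g} : \bar g\in\bar H\text{ loxodromic}\}$ that is significantly larger than $\bar\delta$. I would dichotomize according to whether a cylinder of a given loxodromic $\bar g \in \bar H$ stays at definite distance from the apex set $\bar{\mathcal V}$ or not. In the first case, \autoref{res: lifting quasi-convex} and \autoref{res: zeta isom on qc far from apices} allow lifting $\bar g$ isometrically to $\dot X$ and then, via the radial projection of \autoref{res: radial proj qi}, to a loxodromic isometry of $X$; the gentle structure of $G$ acting on $X$ then provides the desired bound. In the second case the cylinder enters a ball $B(\bar v,\rho)$ around some apex $\bar v$, and \autoref{res: local action of apex stab} together with \autoref{ass: even exponent} force the local action to be modeled on the action of $\dihedral[n]$ on the comparison cone of radius $\rho$, producing a bound of order $\pi\sinh\rho$.

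Once the gap $\iota$ is in hand, the quasi-morphism $\psi$ has effectively discrete image. Picking $\bar h_0 \in \bar H$ loxodromic with $|\psi(\bar h_0)|$ near-minimal, every value $\psi(\bar g)$ must lie within the quasi-morphism error of an integer multiple of $\psi(\bar h_0)$, and a standard rounding argument promotes $\psi$ to a surjective homomorphism $\tilde\psi \colon \bar H \to \Z$ whose kernel coincides with $\bar F$. This simultaneously proves that $\bar F$ is a subgroup, normal in $\bar H$, and that $\bar H/\bar F \simeq \Z$. Choosing any $\bar h_0 \in \tilde\psi^{-1}(1)$ (which is automatically loxodromic) yields a section, and hence $\bar H = \sdp{\bar F}{\langle \bar h_0\rangle}$ as required by \autoref{def: gentle action}.
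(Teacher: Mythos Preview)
Your quasi-morphism strategy hinges on a uniform spectral gap $\iota \gg \bar\delta$ for the loxodromic elements of $\bar H$, but in your first case (cylinder of $\bar H$ away from $\bar{\mathcal V}$) no such gap is available. Gentleness of the $G$-action on $X$ is a purely structural statement --- certain subgroups split as $\sdp F\Z$ --- and carries no lower bound whatsoever on translation lengths. Nothing in the hypotheses of \autoref{res: small cancellation} bounds $\inj[X]G$ from below, so after passing through the cone-off and quotient the loxodromic elements of $\bar H$ may well have stable translation length far smaller than the defect of $\psi$, and the rounding to a homomorphism collapses. The paper handles this case without any gap: once $\bar H$ and its cylinder $\bar Z$ are lifted to $H$ and $Z$ in $\dot X$ via \autoref{res: lifting quasi-convex} and \autoref{res: zeta isom on qc far from apices}, the radial projection of \autoref{res: radial proj qi} yields an equivariant quasi-isometry between $p(Z)\subset X$ and $\bar Z$, which transports the splitting $H = \sdp F\Z$ (coming from gentleness of $G$) directly to $\bar H = \sdp{\bar F}\Z$.

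In your second case (cylinder close to some apex $\bar v$) there \emph{is} a gap of order $\rho$, but your justification is off: \autoref{res: local action of apex stab} and \autoref{ass: even exponent} concern elements of $\stab{\bar v}$, all of which are elliptic, whereas a loxodromic $\bar g \in \bar H$ cannot fix $\bar v$. The actual reason is that $\bar g$ must send $\bar v$ to a distinct apex, and distinct apices in $\bar{\mathcal V}$ lie at distance at least $\rho$ in $\bar X$; since $\bar v$ is within $5L_0\bar\delta$ of the cylinder this forces $\norm{\bar g}\geq \rho$ up to an error of order $\bar\delta$. With this corrected your route could be completed in this case, though the paper again proceeds more directly: it observes that $\bar F = \bar H \cap \stab{\bar v}$ is a normal subgroup, picks a loxodromic $\bar g_0$ of near-minimal translation length, and uses $\norm{\bar g_0}\geq\rho$ to show that every element of $\bar H$ agrees with a power of $\bar g_0$ modulo $\bar F$, whence $\bar H/\bar F$ is infinite cyclic.
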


\begin{rema*}
	Recall that the action of $\bar G$ on $\bar X$ is gentle if every loxodromic subgroup $\bar E$ preserving the orientation splits as $\bar E = \sdp {\bar F} \cyclic$, where $\bar F$ is the set of all elliptic elements of $\bar E$.
\end{rema*}

\begin{proof}
	Let $\bar E$ be a loxodromic subgroup of $\bar G$, preserving the orientation and $\bar Z$ its cylinder (see \autoref{sec: hyp - gp action} for the definition).
	Assume first that there exists an apex $\bar v \in \bar {\mathcal V}$ such that $d(\bar v, \bar Z) \leq 20\bar \delta$.
	Let $\bar F$ be the \emph{set} of all elliptic elements of $\bar E$.
	Since $\bar E$ preserves the orientation, $\bar Z$ is contained in $\fix{\bar F, 100\bar \delta}$ (\autoref{res: normal elliptic fixing cyl}).
	It follows that $\dist{\bar u\bar v}{\bar v} < 2\rho$, for every $\bar u \in \bar F$.
	Consequently $\bar F=\bar E \cap \stab{\bar v}$ and is therefore a (normal) \emph{subgroup} of $\bar E$.
	We now prove that $\bar E/ \bar F$ is cyclic.
	To that end, we fix a loxodromic element $\bar g_0 \in \bar E$ such that $\norm{\bar g_0} \leq \norm{\bar g} +\bar \delta$, for every loxodromic element $\bar g \in \bar E$.
	Since $\bar g_0$ is a loxodromic element of $\bar E$, it sends $\bar v$ to a distinct apex.
	It follows that $\norm {\bar g_0} \geq \rho$.
	Let $\bar g \in \bar E$.
	The element $\bar g_0$ acts on $\bar Z$ by translation of length approximately $\norm{\bar g_0}$ (\autoref{res: cyl in mov}).
	Hence there exists $k \in \Z$ such that $\norm{\bar g_0^k\bar g} \leq \norm{\bar g_0}/2 + \rho/10 < \norm{\bar g_0} - \rho/10$.
	It follows from our choice of $\bar g_0$, that $\bar g_0^k\bar g$ is elliptic and thus belongs to $\bar F$.
	Hence $\bar E/\bar F$ is a cyclic group generated by the image of $\bar g_0$.

	Assume now that $d(\bar v, \bar Z) > 20\bar \delta$ for every apex $\bar v \in \bar {\mathcal V}$.
	Since $\bar Z$ is $2\bar \delta$-quasi-convex, there exists a subset $Z$ of $\dot X$ such that the projection $\zeta \colon \dot X \to \bar X$ induces an isometry from $Z$ onto $\bar Z$ (\autoref{res: lifting quasi-convex}).
	It follows then from \autoref{res: zeta isom on qc far from apices} that there exists a subgroup $E$ of $G$ such that $\pi \colon G \to \bar G$ induces an isomorphism from $E$ onto $\bar E$.
	By construction $\gro z{z'}v > 13\dot \delta$, for every $z,z' \in Z$, for every apex $v \in \mathcal V$ (the Gromov product is computed in $\dot X$ here).
	Consequently the radial projection $p \colon \dot X \setminus \mathcal V \to X$ induces an $E$-equivariant quasi-isometry from $Z$ onto $p(Z)$ (\autoref{res: radial proj qi}).
	This produces a $\pi_E$-equivariant quasi-isometry from $p(Z)$ to $\bar Z$ where $\pi_E$ stands for the map $\pi$ restricted to $E$.
	Since $\bar E$ is loxodromic and preserves the orientation, the same holds for $E$.
	Moreover if $F$ (\resp $\bar F$) stands for the set of elliptic elements of $E$ (\resp $\bar E$), then $\pi$ sends $F$  onto $\bar F$.
	As the action of $G$ is gentle, $E$ splits as $E = \sdp F \cyclic$.
	Hence $\bar E$ splits as well as $E = \sdp {\bar F} \cyclic$.
\end{proof}

\begin{lemm}
\label{res: two apices}
	The set $\bar{\mathcal V}$ contains at least two apices.
\end{lemm}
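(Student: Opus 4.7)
The plan is to argue by contradiction: suppose that $\bar{\mathcal V}$ has at most one element. The case $\bar{\mathcal V} = \emptyset$ corresponds to $\mathcal Q = \emptyset$, where the whole cone-off construction is trivial; we may therefore assume $\bar{\mathcal V} = \{\bar v\}$, where $\bar v$ is the image of an apex $v \in \mathcal V$ coming from a pair $(H,Y) \in \mathcal Q$. Since $\bar G$ permutes $\bar{\mathcal V}$, it must fix $\bar v$, and by \autoref{res: small cancellation}~\ref{enu: small cancellation - local embedding} we get $\bar G = \stab_{\bar G}(\bar v) \simeq \stab_G(Y)/H$.

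The same conclusion admits a more concrete reformulation via the $G$-invariance of $\mathcal Q$: for every $g \in G$, the pair $(gHg^{-1}, gY)$ belongs to $\mathcal Q$, so its apex $gv$ lies in $\mathcal V$ and by assumption projects to $\bar v$. This forces $gv \in K\cdot v$, hence $g \in K\cdot \stab_G(Y)$, so that $G = K\cdot \stab_G(Y)$. Consequently $\bar G$ is a quotient of the loxodromic (hence virtually cyclic) subgroup $\stab_G(Y)$, and is itself virtually cyclic.

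To close the argument one invokes that in the standing setup $G$ is non-elementary for its action on $X$ (this is the case for all the groups $G_k$ in the approximation sequence of $\burn rn$, and more generally in the applications made later in the paper). Then $G$ contains two loxodromic elements $h_1,h_2$ whose cylinders are distinct from $Y$ and whose fixed point pairs on $\partial X$ are pairwise disjoint. I would then apply the ping-pong criterion \autoref{res: non elementary subgroup sufficient condition} to the images $\bar h_1, \bar h_2 \in \bar G$, using \autoref{res: lifting crossing apices - w/o reflection} and \autoref{res: lifting crossing apices - w/ reflection} to lift segments in $\bar X$ to $\dot X$ and control the displacements $\dist[\bar X]{\bar h_i^{\pm 1}\bar x}{\bar x}$ and the Gromov products at a well-chosen base point $\bar x$ far from $\bar{\mathcal V}$ (using \autoref{res: small cancellation}~\ref{enu: small cancellation - local isom}, the radial projection \autoref{res: radial proj qi}, and the small cancellation bounds $\Delta(\mathcal Q,X) \leq \Delta_0$ and $\inj[X]{\mathcal Q} \geq 10\pi\sinh\rho$). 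The resulting non-elementarity of $\group{\bar h_1,\bar h_2} \leq \bar G$ contradicts the virtually cyclic conclusion reached above.

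The main obstacle in this outline is the last step: transferring non-elementarity from $G\curvearrowright X$ to $\bar G \curvearrowright \bar X$. The delicate point is that lifts in $\dot X$ of geodesics in $\bar X$ may cross several cones, so one has to pick a base point $\bar x$ far from $\bar{\mathcal V}$ and verify that the relevant Gromov products needed for \autoref{res: non elementary subgroup sufficient condition} are preserved after quotienting by $K$; the lifting apparatus of \autoref{sec: sc} was set up precisely to carry out such a verification.
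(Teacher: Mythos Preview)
The paper's proof is far more direct than your proposal and avoids the detour through non-elementarity of $\bar G$ entirely. It works purely in $\dot X$: pick an apex $v_1 \in \mathcal V$ and a second apex $v_2$ that is (almost) nearest to $v_1$. On a $(1,\dot\delta)$-quasi-geodesic from $v_1$ to $v_2$, let $x_1, x_2$ be the points at parameter $\rho/4$ from each end. Since distinct apices are at least $2\rho$ apart, $\dist[\dot X]{x_1}{x_2} \geq 3\rho/2$. The minimality of $v_2$ forces $\gro{x_1}{x_2}{v} > 3L_0\dot\delta$ for every $v \in \mathcal V$ (any apex too close to this segment would be a cone point distinct from $v_1$ and strictly closer to $v_1$ than $v_2$). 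Then \autoref{res: isom prelim} gives $\dist{\bar x_1}{\bar x_2} = \dist[\dot X]{x_1}{x_2}$, and the triangle inequality yields $\dist{\bar v_1}{\bar v_2} \geq \rho$.

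Your route has a genuine gap in the final step. You reduce to showing that $\bar G$ is non-elementary (or at least non-elliptic) for its action on $\bar X$, but this is precisely \autoref{res: action quotient non elem}, whose proof \emph{uses} the present lemma; so the argument is circular unless you supply an independent proof. Your sketched ping-pong does not do this: choosing $h_1,h_2\in G$ loxodromic on $X$ with cylinders disjoint from $Y$ gives no a priori control on $\bar h_i$ in $\bar X$ (the map $X\to\bar X$ factors through the cone-off and contracts distances, so translation lengths and Gromov products in $X$ do not transfer), and \autoref{res: lifting crossing apices - w/o reflection} and \autoref{res: lifting crossing apices - w/ reflection} go in the opposite direction---they lift data from $\bar X$ to $\dot X$ under hypotheses you have not verified. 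Also, under the standing ``gentle'' hypothesis $\stab_G(Y)$ is $F$-by-$\mathbf L$ with $F$ elliptic and $\mathbf L\in\{\Z,\dihedral\}$, not virtually cyclic in general; the correct observation is only that $\stab{\bar v}$ is elliptic on $\bar X$. The paper's nearest-apex argument sidesteps all of this in a few lines.
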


\begin{proof}
	In this proof all the distances are measured in $\dot X$ or $\bar X$.
	Let $v_1$ be an apex of $\mathcal V$ and $\bar v_1$ its image in $\bar X$.
	We denote by $v_2$ another apex such that $\dist{v_1}{v_2} \leq \dist{v_1}v + \dot \delta$, for every $v \in \mathcal V\setminus\{v_1\}$.
	We are going to prove that the image $\bar v_2$ of $v_2$ in $\bar X$ is distinct from $\bar v_1$.
	Let $\gamma \colon \intval {a_1}{a_2} \to \dot X$ be a $(1, \dot \delta)$-quasi-geodesic from $v_1$ to $v_2$.
	Let $b_1 = a_1 + \rho/4$ and $b_2 = a_2 - \rho/4$.
	For simplicity we write $x_1 = \gamma(b_1)$ and $x_2 = \gamma(b_2)$.
	Note that $\dist{x_1}{x_2} \geq 3\rho/2$.
	Moreover, $\gro{x_1}{x_2}v >12\dot \delta$, for every $v \in \mathcal V$.
	Indeed otherwise $v$ would be a cone point distinct from $v_1$ but much closer to $v_1$ than $v_2$.
	According to \autoref{res: isom prelim},	we have $\dist{\bar x_1}{\bar x_2} = \dist{x_1}{x_2}$, hence $\dist{\bar x_1}{\bar x_2} \geq 3\rho/2$.
	Combined with the triangle inequality we obtain $\dist{\bar v_1}{\bar v_2} \geq \rho$.
\end{proof}

\begin{prop}
\label{res: action quotient non elem}
	Assume that $\bar{\mathcal V}$ contains two distinct apices whose order is at least $3$.
	Then the action of $\bar G$ on $\bar X$ is non-elementary.
\end{prop}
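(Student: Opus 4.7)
The plan is to produce, using the assumption that every apex has order at least $3$, two strict rotations at two distinct apices, and then to verify the ping-pong criterion \autoref{res: non elementary subgroup sufficient condition} for a suitable conjugate pair of them. First I would invoke \autoref{res: two apices} to fix two distinct apices $\bar v_1 \neq \bar v_2$ in $\bar{\mathcal V}$; a look at that proof gives $\dist{\bar v_1}{\bar v_2} \geq \rho$. Since each $\bar v_i$ has order $n_i \geq 3$, the rotation subgroup of $\mathbf L_{n_i}$ is cyclic of order $n_i$ and $\stab{\bar v_i}$ contains a strict rotation. Following the proof of \autoref{res: local action of apex stab}~\ref{enu: local action of apex stab - rot}, I would pick $\bar h_i \in \stab{\bar v_i}$ whose image under the geometric realization $q_{\bar v_i}$ acts on the comparison cone $\mathcal D_i$ as a rotation of arc-angle in $[\pi, \Omega_i - \pi]$; this is possible because $\Omega_i > 10\pi$ and $n_i \geq 3$. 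The hyperbolic law of cosines in $\mathcal D_i$, transported to $\bar X$ via the comparison map $\bar\phi_i \colon B(\bar v_i, \rho/3) \to \mathcal D_i$ of \autoref{sec: sc - apex stab} and the distortion inequality (\ref{eqn: cone comparison - almost Lipschitz}), yields a universal constant $C$ such that $\dist[\bar X]{\bar h_i \bar x}{\bar x} \geq 2\dist{\bar v_i}{\bar x} - C\bar\delta$ and $\gro{\bar h_i \bar x}{\bar h_i^{-1}\bar x}{\bar x} \leq \dist{\bar v_i}{\bar x} + C\bar\delta$ for every $\bar x \in B(\bar v_i, \rho/3)$.

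Next I would verify the ping-pong hypothesis. Since $\dist{\bar v_1}{\bar v_2} \geq \rho$, no single base point lies in both $B(\bar v_i, \rho/3)$, so I would replace $\bar h_2$ by its conjugate $\bar h_2' := \bar h_1 \bar h_2 \bar h_1^{-1}$, a strict rotation at the apex $\bar h_1 \bar v_2$. Using the $\bar h_1$-equivariance $\bar h_2' \bar x = \bar h_1 \bar h_2 (\bar h_1^{-1}\bar x)$, estimating $\bar h_2'$ at $\bar x$ reduces to estimating $\bar h_2$ at $\bar h_1^{-1}\bar x$. Choosing $\bar x$ on an $L_0\bar\delta$-local $(1, \bar\delta)$-quasi-geodesic from $\bar v_1$ to $\bar v_2$ at distance $\rho/4$ from $\bar v_1$ (and, if necessary, replacing $\bar h_2'$ by a further conjugate so that $\bar h_1^{-1}\bar x$ lands deep inside $B(\bar v_2, \rho/3)$), the four point inequality combined with the antipodal picture in $\mathcal D_1$ gives $\gro{\bar h_1^{\pm 1}\bar x}{(\bar h_2')^{\pm 1}\bar x}{\bar x} \leq C'\bar\delta$ (because $\bar x$ is essentially the midpoint on which the two rotations ping-pong), while $\dist{\bar h_i \bar x}{\bar x}$ is of order $\rho \gg \bar\delta$. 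The hypotheses of \autoref{res: non elementary subgroup sufficient condition} then hold with $A$ a universal multiple of $\bar\delta$, so $\group{\bar h_1, \bar h_2'}$, and hence the action of $\bar G$ on $\bar X$, is non-elementary.

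The main obstacle will be ensuring that $\bar h_1^{-1}\bar x$ truly lies inside $B(\bar v_2, \rho/3)$: the cone comparison $\bar\phi_1$ controls $\bar h_1$ only on $B(\bar v_1, \rho/3)$, and $\bar v_2$ lies far outside this ball, so the effect of $\bar h_1$ near $\bar v_2$ is not immediate from the cone picture. I plan to bypass this by lifting the configuration to the cone-off $\dot X$ through \autoref{res: lifting crossing apices - w/o reflection}: in $\dot X$ the quasi-geodesic from $v_1$ to $v_2$ is transparent, the lift of $\bar h_1$ acts on a neighbourhood of $v_1$ by the explicit cone metric (\ref{eqn: def metric cone}), and the precise location of $\bar h_1^{-1}\bar x$ can be read off there and transported back to $\bar X$ via (\ref{eqn: cone comparison - almost Lipschitz}).
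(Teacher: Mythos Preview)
Your approach has a genuine gap. You restrict all your estimates on $\bar h_i$ to the ball $B(\bar v_i,\rho/3)$ because the comparison map $\bar\phi_i$ is only a quasi-isometry there, and since $\dist{\bar v_1}{\bar v_2}\geq\rho$, no single base point lies in both balls. Conjugation does not repair this: if $\bar x$ is at distance $\rho/4$ from $\bar v_1$, then $\bar h_1^{-1}\bar x$ is still at distance $\rho/4$ from $\bar v_1$ (as $\bar h_1$ fixes $\bar v_1$), hence at distance at least $3\rho/4$ from $\bar v_2$; it will never land in $B(\bar v_2,\rho/3)$, and no ``further conjugate'' of $\bar h_2$ changes this, since any apex in the $\bar G$-orbit of $\bar v_2$ is at distance at least $2\rho$ from $\bar v_1$ and therefore at least $3\rho/4$ from $\bar h_1^{-1}\bar x$. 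Your proposed fix via \autoref{res: lifting crossing apices - w/o reflection} is not applicable either: that proposition assumes the elements of $\bar S$ lie in the \emph{local kernel} at each apex encountered, whereas your $\bar h_1$ is a strict rotation at $\bar v_1$.

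The paper avoids this entirely by observing that the containment $\mov{\{\bar g_i,\bar g_i^2\},\bar\delta}\subset B(\bar v_i,\bar\delta)$ (obtained, as you note, for a suitable power when $n_i\geq 3$) has \emph{global} consequences via \autoref{res: fix qc}: for \emph{any} $\bar x$ one gets $\dist{\bar g_i\bar x}{\bar x}\geq 2\dist{\bar x}{\bar v_i}-15\bar\delta$ and $\dist{\bar g_i^2\bar x}{\bar x}\geq 2\dist{\bar x}{\bar v_i}-15\bar\delta$, hence $2\gro{\bar g_i\bar x}{\bar g_i^{-1}\bar x}{\bar x}=2\dist{\bar g_i\bar x}{\bar x}-\dist{\bar g_i^2\bar x}{\bar x}\leq \dist{\bar g_i\bar x}{\bar x}+30\bar\delta$. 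One can therefore take $\bar x$ \emph{between} $\bar v_1$ and $\bar v_2$ with $\gro{\bar v_1}{\bar v_2}{\bar x}\leq\bar\delta$ and $\dist{\bar x}{\bar v_i}\geq\rho$, apply the four point inequality, and verify \autoref{res: non elementary subgroup sufficient condition} directly for $\bar g_1,\bar g_2$ with no conjugation and no lifting.
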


\begin{proof}
	Let $\bar v_1$ and $\bar v_2$ be those apices.
	We fix a point $\bar x\in\bar X$ such that $\gro{\bar v_1}{\bar v_2}{\bar x} \leq \bar \delta$ whereas $\dist{\bar x}{\bar v_1} \geq \rho$ and $\dist{\bar x}{\bar v_2} \geq \rho$.
	Reasoning as in \autoref{res: local action of apex stab}~\ref{enu: local action of apex stab - rot} we see that $\stab{\bar v_i}$ contains a rotation $\bar g_i$ at $\bar v_i$ such that $\fix{\bar S_i, \bar \delta}$ is contained in $B(\bar v_i,\bar \delta)$, where $\bar S_i = \{\bar g_i, \bar g_i^2\}$, see \autoref{fig: non-elem}.
	\begin{figure}[htbp]
    	\centering
    	\includegraphics[page=5, width=0.8\textwidth]{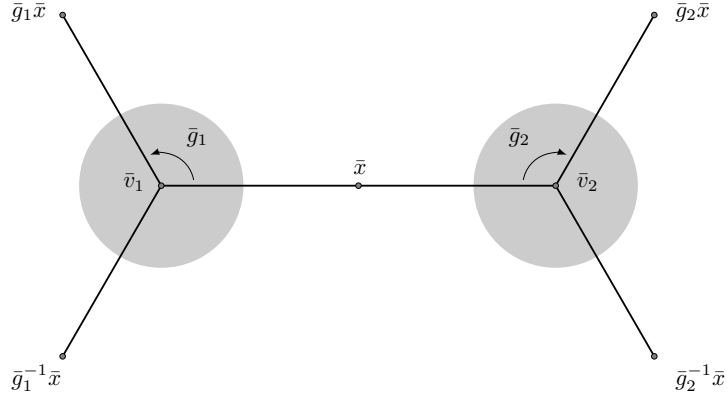}
    	\caption{
			The element $\bar g_1$ and $\bar g_2$ acting on $\bar x$.
			The shaded discs represent $B(\bar v_1, \rho)$ and $B(\bar v_2,\rho)$ respectively.
		}
    	\label{fig: non-elem}
    \end{figure}
	Applying \autoref{res: fix qc}, we get
	\begin{equation*}
		2\gro{\bar g_i\bar x}{\bar g_i^{-1}\bar x}{\bar x} \leq \dist{\bar g_i\bar x}{\bar x} + 30\bar \delta
		\quad \text{while} \quad
		 \dist{\bar g_i\bar x}{\bar x} \geq 2 \dist{\bar x}{\bar v_i} - 15\bar\delta \geq 2\rho - 15\bar \delta
	\end{equation*}
	Combined with the four point inequality (\ref{eqn : hyp four points - 1}) it yields $\gro{\bar g_1^{\pm 1}\bar x}{\bar g_2^{\pm 1}\bar x}{\bar x} \leq 3\bar \delta$.
	It follows from \autoref{res: non elementary subgroup sufficient condition} that $\bar g_1$ and $\bar g_2$ generate a non-elementary subgroup.
	Hence $\bar G$ is non-elementary.
\end{proof}

%%%%%%%%%%%%%%%%%%%%%%%%%%%%%%%%%%%%%%%%%%%%%%%%%%%%%%%%%%%%%%%%%%%%%%%%%%%%%%%%%%%%
%
\subsection{Structure of elementary subgroups}
%
%%%%%%%%%%%%%%%%%%%%%%%%%%%%%%%%%%%%%%%%%%%%%%%%%%%%%%%%%%%%%%%%%%%%%%%%%%%%%%%%%%%%
\label{sec: sc - elem subgroups}

An important step to study further the action of $\bar G$ on $\bar X$ (and its invariants) is to understand the algebraic structure of its elementary subgroups.
As the map $X \to \bar X$ is $1$-Lipschitz, the projection $\pi \colon G \onto \bar G$ maps every elementary subgroup of $G$ to an elementary subgroup of $\bar G$.
More precisely it sends every elliptic (\resp parabolic, loxodromic) to an elliptic (\resp elliptic or parabolic, elementary) subgroup of $\bar G$.
However the nature of these subgroups (i.e. whether they are elliptic, parabolic, or loxodromic) may change.
Indeed given any element $g \in G$, there always exists $h \in K$ such that $gh$ is a loxodromic element of $G$. 
Thus $\group{gh}$ is loxodromic whereas its image $\group{\bar g}$ in $\bar G$ can be anything.
New elementary subgroups may also appear in $\bar G$.
This motivates the following definition.

\begin{defi}
\label{def: liftable subgroup}
	Let $\bar E$ be an elliptic (\resp parabolic, loxodromic) subgroup of $\bar G$.
	We say that $\bar E$ \emph{can be lifted} if there exists an elliptic (\resp parabolic, loxodromic) subgroup $E$ of $G$ (for its action on $X$) such that the quotient map $\pi \colon G \onto \bar G$ induces an isomorphism from $E$ onto $\bar E$.
	In this situation $E$ is a \emph{lift} of $\bar E$.
\end{defi}

Note that in this definition we ask $E$ and $\bar E$ to have the same nature.
The idea is that the subgroups of $\bar G$ that can be lifted are as \og easy \fg\ as the elementary subgroups of $G$.
Complicated algebraic structures necessarily come for the \og new \fg\ elementary subgroups.
In the next paragraphs we discuss whether an elementary subgroup of $\bar G$ can be lifted.
If not we use the geometry of small cancellation theory to describe its properties.

\paragraph{Comparing lifts.}
We start by proving that the lift of an elliptic subgroup of $\bar G$ is essentially unique.
More precisely, if $F_1$ and $F_2$ are two lifts of the same elliptic subgroup $\bar F\subset \bar G$, then $F_1$ and $F_2$ are conjugated.
This result is a particular case of a more general statement (see \autoref{res: uniqueness elliptic lift}) that allows to consider simultaneously elliptic and parabolic subgroups.

\begin{lemm}
\label{res: isom on non-loxo elem}
	Let $S$ be a subset of $G$ such that $\fix{S, \rho/10}$ is non-empty.
	Then the projection $\pi \colon G \onto \bar G$ is one-to-one when restricted to $S$.
	In particular, if $E$ is an elliptic or a parabolic subgroup, the projection $\pi$ induces an isomorphism from $E$ onto its image.
\end{lemm}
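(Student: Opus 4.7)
The plan is to exploit the kernel separation estimate of \autoref{res: small cancellation}~\ref{enu: small cancellation - translation kernel}: every nontrivial $u \in K$ displaces every point $x \in \dot X$ by at least $\min\{2d(x,\mathcal V), \rho/5\}$. The first observation to record is that any $x \in X$, viewed inside $\dot X$, is at distance at least $\rho$ from every apex in $\mathcal V$. This follows from \autoref{res: metric on  dot X and Zi coincide} applied at each apex $v$: in the cone $Z(Y)$ one has $d(v, Y) = \rho$, so a path from $v$ to a point of $X$ must leave the open ball of radius $\rho$ around $v$. Consequently any nontrivial $u \in K$ satisfies $d_{\dot X}(ux, x) \geq \rho/5$ for every $x \in X$, and the same lower bound then holds in $X$ itself since $X \hookrightarrow \dot X$ is $1$-Lipschitz.

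Granted this, the first assertion is almost immediate. Let $g_1, g_2 \in S$ with $\pi(g_1) = \pi(g_2)$, and put $u = g_2^{-1} g_1 \in K$. For any $x \in \mov{S,\rho/10}$, the triangle inequality gives
\begin{equation*}
	d_X(ux, x) \;=\; d_X(g_1 x, g_2 x) \;\leq\; d_X(g_1 x, x) + d_X(x, g_2 x) \;<\; \rho/5,
\end{equation*}
and hence $d_{\dot X}(ux,x) < \rho/5$. Combined with the bound of the previous paragraph, this forces $u = 1$, i.e.\ $g_1 = g_2$.

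For the corollary, given $g_1, g_2 \in E$ with $\pi(g_1) = \pi(g_2)$ and $u = g_2^{-1} g_1$, the plan is to apply the first assertion to the two-element set $S = \{1, u\}$. Since $\mov{1, \rho/10} = X$, it suffices to check that $\mov{u, \rho/10}$ is nonempty. Because $E$ is elementary but not loxodromic, $u$ is itself elliptic or parabolic, so $\snorm u = 0$; by the comparison (\ref{eqn: regular vs stable length}), $\norm u \leq 16\delta$, which is much smaller than $\rho/10$ thanks to the hierarchy $\delta \ll \rho$ built into the small cancellation constants. Hence some point of $X$ lies in $\mov{u, \rho/10}$, and the first part yields $u = 1$.

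The only delicate point in the whole argument is the preliminary estimate $d(x, \mathcal V) \geq \rho$ for $x \in X$ inside $\dot X$: this is really where the cone-off geometry enters, via the small cancellation assumption $\inj[X]{\mathcal Q} \geq 10\pi \sinh \rho$ and the rigidity of cones expressed by \autoref{res: metric on  dot X and Zi coincide}. Once this separation is in hand, both statements reduce to direct applications of the kernel separation bound.
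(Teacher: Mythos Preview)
Your proof is correct and follows the same approach as the paper: both invoke the kernel separation estimate of \autoref{res: small cancellation}~\ref{enu: small cancellation - translation kernel} for the first assertion, and then reduce the corollary to the first part via the observation that every element of an elliptic or parabolic subgroup has translation length at most $16\delta \ll \rho/10$ (using~(\ref{eqn: regular vs stable length})). One small inaccuracy in your closing commentary: the bound $d_{\dot X}(x,\mathcal V) \geq \rho$ for $x \in X$ is a direct consequence of the cone construction and \autoref{res: metric on  dot X and Zi coincide}, and does not itself rely on the small cancellation hypothesis $\inj[X]{\mathcal Q} \geq 10\pi\sinh\rho$; that hypothesis enters only through the kernel separation bound you cite.
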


\begin{proof}
	The first part of the statement is a consequence of \autoref{res: small cancellation}~\ref{enu: small cancellation - translation kernel}.
	Since $E$ is elliptic or parabolic, $\fix{\{1, g\},\rho/10}$ is non-empty for every $g \in E$ -- see for instance (\ref{eqn: regular vs stable length}).
	Hence the result.
\end{proof}

\begin{prop}
\label{res: lifting partial auto}
	Let $E$ be an elliptic or a parabolic subgroup of $G$ (for its action on $X$).
	Let $S_1$ be a subset of $E$ such that $\fix{S_1,\rho/100}$ is non-empty and $\bar S_1$ its image in $\bar G$.
	Let $\bar h \in \bar G$.
	Let $S_2$ be a pre-image in $G$ of $\bar h \bar S_1 \bar h^{-1}$ such that $\fix{S_2,\rho/100}$ is non-empty.
	Then there exists $h_0 \in G$ with the following properties
	\begin{enumerate}
		\item \label{res: lifting partial auto - lifting}
		For every $g \in S_1$, the element $h_0gh_0^{-1}$ is the (unique) pre-image of $\bar h \bar g \bar h^{-1}$ in $S_2$.
		\item \label{res: lifting partial auto - loxo}
		If $\bar h$ is loxodromic, then either $h_0$ is loxodromic, or $\bar S_1$ is contained in a reflection group at some vertex $\bar v \in \bar{\mathcal V}$.
	\end{enumerate}
\end{prop}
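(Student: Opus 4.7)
The plan is to apply \autoref{res: lifting crossing apices - w/ reflection} and to conclude through the uniqueness afforded by \autoref{res: isom on non-loxo elem}. First, I pick base points $x \in \mov{S_1, \rho/100}$ and $y \in \mov{S_2, \rho/100}$ and choose a lift $h_1 \in G$ of $\bar h$, arranging $h_1$ to be loxodromic on $X$ whenever $\bar h$ is loxodromic on $\bar X$: an elliptic lift would yield bounded $\bar X$-orbits by the $1$-Lipschitzness of $X \to \bar X$, and the parabolic case is excluded by the accumulation argument behind \autoref{res: parabolic cone-off}. Setting $S := h_1 S_1 h_1^{-1}$ and $\bar S := \bar h \bar S_1 \bar h^{-1}$, one has $h_1 x \in \mov{S, \rho/100}$ and $\bar y \in \mov{\bar S, \rho/100}$.

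Next I apply \autoref{res: lifting crossing apices - w/ reflection} with base points $h_1 x$, $y$ and set $S$. The reflection-group hypothesis at apices might fail at some apex $\bar v$, in which case \autoref{rem: assumption lifting crossing apices - w/ reflection} produces a strict rotation at $\bar v$ built from at most two elements of $\bar S$; conjugating by $\bar h^{-1}$ and combining \autoref{res: local action of apex stab}~\ref{enu: local action of apex stab - rot} with the fact that elements of $\bar S_1$ hardly move $\bar x$ localizes $\bar x$ near $\bar h^{-1}\bar v$ and forces $\bar S_1$ into a reflection group at $\bar h^{-1}\bar v$, yielding the second alternative of Point~2 (Point~1 is then handled directly inside $\stab{\bar v}$). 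Otherwise, the proposition provides $u \in G$ with $\bar u \in C_{\bar G}(\bar S)$, with $\dist[\dot X]{guy}{uy} = \dist{\bar g \bar y}{\bar y}$ for every $g \in S$, and with either $\bar u = 1$ or $\bar S$ contained in a reflection group at some apex. Setting $h_0 := u^{-1} h_1$, for every $g \in S_1$ the element
\begin{equation*}
	h_0 g h_0^{-1} = u^{-1} (h_1 g h_1^{-1}) u
\end{equation*}
lies in $\mov{y, \rho/100}$ and projects to $\bar h \bar g \bar h^{-1}$; by \autoref{res: isom on non-loxo elem}, pre-images of $\bar h \bar g \bar h^{-1}$ inside $\mov{y, \rho/10}$ are unique, so $h_0 g h_0^{-1}$ must coincide with the pre-image supplied by $S_2$, proving Point~1.

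For Point~2, the alternative ``$\bar S$ in a reflection group at $\bar v$'' pushes to $\bar S_1$ in a reflection group at $\bar h^{-1}\bar v$ by conjugation, while in the alternative $\bar u = 1$ one has $u \in K$ and $h_0 \in K h_1$; the loxodromic choice of $h_1$, together with the control on $\dist[\dot X]{uy}{h_1 x}$ implicit in the proof of \autoref{res: lifting crossing apices - w/ reflection}, should force $h_0$ to remain loxodromic in $G$. The main obstacle is precisely this last verification: ensuring that the specific lift $h_0 = u^{-1} h_1$ is loxodromic for the action of $G$ on $X$ and not merely in $\bar G$. This rests on transferring the bound on $uy$ from $\dot X$ back to $X$ via \autoref{res: loose comparison metric X and dot X} and comparing with the stable translation length of $h_1$; the subtlety when $u$ picks up apex contributions is defused by the free action of $K$ on $\dot X \setminus \mathcal V$ recalled in \autoref{res: small cancellation}~\ref{enu: small cancellation - translation kernel}.
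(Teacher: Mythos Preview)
Your overall strategy is the same as the paper's: apply \autoref{res: lifting crossing apices - w/ reflection} and conclude via the uniqueness in \autoref{res: isom on non-loxo elem}. The conjugate setup you use (base points $h_1x$, $y$ and set $S = h_1S_1h_1^{-1}$) is equivalent to the paper's (base points $x_1$, $h^{-1}x_2$ and set $S_1$); the difference is cosmetic. However, there are two genuine problems.

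\textbf{The reflection-group hypothesis.} Your case analysis for when the hypothesis of \autoref{res: lifting crossing apices - w/ reflection} fails is incoherent. If $\bar S\cap\stab{\bar v}$ is \emph{not} contained in a reflection group, then after conjugating by $\bar h^{-1}$ neither is $\bar S_1\cap\stab{\bar h^{-1}\bar v}$; you cannot then conclude that ``$\bar S_1$ is forced into a reflection group at $\bar h^{-1}\bar v$''. The claim that ``Point~1 is then handled directly inside $\stab{\bar v}$'' is also unsubstantiated: you have produced no $h_0$ in this branch. The point you are missing is that this case never occurs, and the proof of that is exactly where the hypothesis ``$S_1\subset E$ with $E$ elliptic or parabolic'' enters. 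If the hypothesis failed, \autoref{rem: assumption lifting crossing apices - w/ reflection} would give a strict rotation $\bar g$ equal to a product of at most two elements of $\bar S_1$; but $S_1\subset E$ and $E$ is a \emph{group}, so the corresponding product $g$ lies in $E$, hence is elliptic or parabolic for the action on $X$. Then $\norm[X]{g^k}\leq 16\delta$, giving a point of $\zeta(X)$ in $\mov{\bar g^k,\bar\delta}$, which contradicts \autoref{res: local action of apex stab}~\ref{enu: local action of apex stab - rot}. Your sketch (``localizes $\bar x$ near $\bar h^{-1}\bar v$'') tries to reach a similar conclusion using only that $\bar g$ moves $\bar x$ by at most $\rho/50$, but you have no control on how $\bar g^k$ moves $\bar x$, so this does not work.

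\textbf{Loxodromicity of $h_0$.} You have misidentified the difficulty. In the alternative $\bar u=1$ one has $u\in K$, hence $\bar h_0=\bar h_1=\bar h$. Any pre-image of a loxodromic element of $\bar G$ is loxodromic in $G$, because the composite $X\hookrightarrow\dot X\xrightarrow{\zeta}\bar X$ is $1$-Lipschitz, so $\snorm[X]{h_0}\geq\snorm[\bar X]{\bar h}>0$. (This is exactly the argument you already gave to pick $h_1$ loxodromic.) There is nothing to transfer via \autoref{res: loose comparison metric X and dot X}, and the ``apex contributions'' you worry about are irrelevant once you notice $\bar h_0=\bar h$.
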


\begin{rema*}
	Observe that $h_0$ is not necessarily the pre-image of $\bar h$.
\end{rema*}

\begin{proof}
	Let $h \in G$ be an arbitrary pre-image of $\bar h$.
	We fix two points $x_1, x_2 \in X$ lying respectively in $\fix{S_1, \rho/100}$ and $\fix{S_2, \rho/100}$.
	Note that both $\bar x_1$ and $\bar h^{-1} \bar x_2$ belongs to $\fix{\bar S_1, \rho/100}$.
	We claim that $\bar S_1 \cap \stab{\bar v}$ is contained in a reflection group at $\bar v$, for every $\bar v \in \bar{\mathcal V}$.
	Assume on the contrary that it is not the case.
	There exists $g \in E$ whose image $\bar g$ is a strict rotation (\autoref{rem: assumption lifting crossing apices - w/ reflection}).
	According to \autoref{res: local action of apex stab}~\ref{enu: local action of apex stab - rot} there exists $k \in \N$ such that $\fix{\bar g^k,\bar \delta}$ is contained in $B(\bar v, \bar \delta)$.
	On the other hand, since $g$ belongs to $E$, the element $g^k$ is elliptic or parabolic (as an isometry of $X$).
	Hence there exists $x \in X$ such that $\dist{g^kx}x \leq 10 \delta$ -- see for instance (\ref{eqn: regular vs stable length}) -- and thus $\dist{\bar g^k\bar x}{\bar x} \leq \bar \delta$.
	This contradicts the previous point and completes the proof of our claim.
	It follows from \autoref{res: lifting crossing apices - w/ reflection} applied with $x = x_1$ and $y = h^{-1}x_2$ that there exists $u \in G$, such that $\bar u$ centralizes $\bar S_1$ and 	
	\begin{equation}
	\label{eqn: lifting partial auto}
		\dist[\dot X]{guh^{-1}x_2}{uh^{-1}x_2} = \dist{\bar g\bar h^{-1}\bar x_2}{\bar h^{-1}\bar x_2}, \quad \forall g \in S_1.
	\end{equation}
	Moreover either $\bar u$ is trivial or $\bar S_1$ lies in a reflection group.
	We let $h_0 = hu^{-1}$.
	Let $g \in S_1$ and $g'$ the (unique) pre-image of $\bar h \bar g \bar h^{-1}$ in $S_2$.
	It follows from (\ref{eqn: lifting partial auto}) that $h_0gh_0^{-1}$ and $g'$ are two pre-images of $\bar h \bar g\bar h^{-1}$ that move $x_2$ by at most $\rho/100$.
	Thus $h_0gh_0^{-1}= g'$, which proves \ref{res: lifting partial auto - lifting}.
	Assume now that $\bar h$ is loxodromic.
	If $\bar u$ is trivial, then $h_0$ is a pre-image of $\bar h$, hence a loxodromic element (recall that $\zeta \colon X \to \bar X$ is $1$-Lipschitz).
	On the contrary if $\bar u$ is not trivial, then $\bar S_1$ is contained in a reflection group.
\end{proof}

\begin{coro}
\label{res: uniqueness elliptic lift}
	Let $F_1$ and $F_2$ be two subgroups of $G$.
	We assume that $F_1$ is elliptic and $F_2$ generated by a set $S_2$ such that $\fix{S_2,\rho/100}$ is non-empty.
	Let $\bar F_1$ and $\bar F_2$ be their respective images in $\bar G$.
	If $\bar F_1 = \bar F_2$, then there exists $u \in G$ whose image in $\bar G$ centralizes $\bar F_1$ and such that $F_2 = uF_1u^{-1}$.
\end{coro}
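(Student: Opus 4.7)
The plan is to apply \autoref{res: lifting partial auto} with $\bar h = 1 \in \bar G$. The key preliminary observation is that since $F_1$ is elliptic, \autoref{res: fix set elliptic} gives that $\mov{F_1, 11\delta}$ is non-empty, so by \autoref{res: isom on non-loxo elem} the projection $\pi \colon G \to \bar G$ restricts to an isomorphism $F_1 \to \bar F_1$.

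First, I would use this isomorphism to define a subset $S_1 \subseteq F_1$ as the pre-image of $\bar S_2 = \pi(S_2) \subseteq \bar F_2 = \bar F_1$ under $\pi|_{F_1}$. Since $S_2$ generates $F_2$, its image $\bar S_2$ generates $\bar F_2 = \bar F_1$; and since $\pi|_{F_1}$ is an isomorphism, $S_1$ generates $F_1$. Moreover, $\mov{S_1, \rho/100}$ is non-empty because $S_1 \subseteq F_1$ and $\mov{F_1, 11\delta}$ is non-empty.

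Next, I would invoke \autoref{res: lifting partial auto} with $E = F_1$, this $S_1$, $\bar h = 1$, and the given $S_2$ as the second set. With $\bar h = 1$, one has $\bar h \bar S_1 \bar h^{-1} = \bar S_1 = \bar S_2 = \pi(S_2)$, so $S_2$ is indeed a pre-image in $G$ as required, and $\mov{S_2, \rho/100} \neq \emptyset$ is part of the hypothesis. The proposition then produces $h_0 \in G$ such that for every $g' \in S_1$, the element $h_0 g' h_0^{-1}$ equals the unique pre-image in $S_2$ of $\bar g'$, namely the element $g \in S_2$ with $\pi(g) = \pi(g')$. In particular $h_0 S_1 h_0^{-1} = S_2$ as sets.

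Finally, taking the subgroups generated, $h_0 F_1 h_0^{-1} = \langle h_0 S_1 h_0^{-1}\rangle = \langle S_2\rangle = F_2$. Projecting the identity $h_0 g' h_0^{-1} = g$ (with $\pi(g) = \pi(g') = \bar g'$) to $\bar G$ yields $\bar h_0 \bar g' \bar h_0^{-1} = \bar g'$ for every $g' \in S_1$, so $\bar h_0$ commutes with each element of $\bar S_1$. Since $\bar S_1$ generates $\bar F_1$, $\bar h_0$ centralizes $\bar F_1$, and $u = h_0$ meets all requirements. I do not anticipate a serious obstacle beyond carefully bookkeeping the $\pi$-correspondence between $S_1$ and $S_2$, which is precisely what \autoref{res: isom on non-loxo elem} provides.
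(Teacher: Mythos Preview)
Your proof is correct and follows essentially the same approach as the paper: define $S_1\subset F_1$ as the preimage of $\pi(S_2)$ under the isomorphism $\pi|_{F_1}$, then apply \autoref{res: lifting partial auto} with $\bar h=1$ to produce the conjugator. The only cosmetic difference is that you first observe $S_1$ generates $F_1$ and deduce $h_0F_1h_0^{-1}=F_2$ directly, whereas the paper instead notes $u^{-1}F_2u\subset F_1$ and then uses injectivity of $\pi|_{F_1}$ to upgrade the inclusion to an equality; both are equivalent ways of closing the argument.
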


\begin{rema*}
	Note that the assumption on $F_2$ is automatically satisfied if $F_2$ is elliptic.
	In particular, if $F_1$ and $F_2$ are two elliptic subgroups of $G$ whose images in $\bar G$ coincide, then they are conjugate.
\end{rema*}

\begin{proof}
	Let $\bar S$ be the image of $S_2$ in $\bar F_1 = \bar F_2$ and $S_1$ the pre-image of $\bar S$ in $F_1$.
	Note that $\fix{S_1, \rho/100}$ is non-empty (\autoref{res: fix set elliptic}).
	According to \autoref{res: lifting partial auto} applied with $S_1$ and $S_2$, there exits $u \in G$ such that for every $s \in S_1$, the element $usu^{-1}$ is the pre-image of $\bar s$ in $S_2$.
	In particular, $\bar u$ commutes with $\bar S$, hence $\bar F_1$.
	Moreover since $S_2$ generates $F_2$, the group $u^{-1}F_2u$ is contained in $F_1$.
	Nevertheless the projection $\pi \colon G \to \bar G$ is one-to-one when restricted to $F_1$ (\autoref{res: isom on non-loxo elem}).
	Thus $F_2 = uF_1u^{-1}$.
\end{proof}

\begin{coro}
\label{res: lifting conj elliptic}
	Let $F_1$ and $F_2$ be two subgroups of $G$.
	We assume that $F_1$ is elliptic and $F_2$ generated by a set $S_2$ such that $\fix{S_2,\rho/100}$ is non-empty.
	Let $\bar F_1$ and $\bar F_2$ be their respective images in $\bar G$.
	If $\bar F_1$ and $\bar F_2$, are conjugated in $\bar G$, then so are $F_1$ and $F_2$ in $G$.
\end{coro}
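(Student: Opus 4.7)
The plan is to combine the previous two results: Proposition~\ref{res: lifting partial auto} will let us transport $F_1$ by a well-chosen element $h_0 \in G$ so that its conjugate already has image $\bar F_2$, and then the uniqueness statement Corollary~\ref{res: uniqueness elliptic lift} will finish the job.

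First, I would set up the data for Proposition~\ref{res: lifting partial auto}. Since $\bar F_1$ and $\bar F_2$ are conjugated, fix $\bar h \in \bar G$ with $\bar h \bar F_1 \bar h^{-1} = \bar F_2$. By hypothesis, the set $S_2$ (of generators of $F_2$) has $\mov{S_2, \rho/100}$ non-empty, so its image $\bar S_2$ generates $\bar F_2$. The set $\bar h^{-1} \bar S_2 \bar h$ lies in $\bar F_1$; since $F_1$ is elliptic, Lemma~\ref{res: isom on non-loxo elem} guarantees that $\pi$ induces an isomorphism from $F_1$ onto $\bar F_1$, so we can lift $\bar h^{-1} \bar S_2 \bar h$ to a unique subset $S_1 \subseteq F_1$. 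Moreover $\mov{S_1, 11\delta}$ is non-empty by Lemma~\ref{res: fix set elliptic}, so in particular $\mov{S_1, \rho/100}$ is non-empty. By construction $S_2$ is a pre-image of $\bar h \bar S_1 \bar h^{-1}$ satisfying the required condition.

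Apply Proposition~\ref{res: lifting partial auto} to this configuration: there exists $h_0 \in G$ such that for every $g \in S_1$, the element $h_0 g h_0^{-1}$ is the pre-image of $\bar h \bar g \bar h^{-1}$ lying in $S_2$. In particular $\bar h_0 \bar g \bar h_0^{-1} = \bar h \bar g \bar h^{-1}$ for every $g \in S_1$, so $\bar h_0 \bar F_1 \bar h_0^{-1} = \bar h \bar F_1 \bar h^{-1} = \bar F_2$. Now consider the subgroup $F_1' = h_0 F_1 h_0^{-1}$ of $G$. It is elliptic (as a conjugate of the elliptic subgroup $F_1$), and its image in $\bar G$ coincides with $\bar F_2$, which also equals the image of $F_2$.

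Finally, apply Corollary~\ref{res: uniqueness elliptic lift} with $F_1'$ (elliptic) in the role of the first group and $F_2$ (generated by $S_2$ with $\mov{S_2, \rho/100} \neq \emptyset$) in the role of the second: since $\bar F_1' = \bar F_2$, there exists $u \in G$ such that $F_2 = u F_1' u^{-1} = (uh_0)\, F_1\, (uh_0)^{-1}$. Hence $F_1$ and $F_2$ are conjugated in $G$, as required. No substantive obstacle is expected; the only care needed is to produce the set $S_1$ with the movability property so that Proposition~\ref{res: lifting partial auto} applies, which is immediate from the ellipticity of $F_1$.
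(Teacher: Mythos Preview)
Your argument is correct, but it takes a detour that the paper avoids. The paper states this as an immediate corollary of Corollary~\ref{res: uniqueness elliptic lift} with no proof given, and indeed the shortest route is the following: pick \emph{any} pre-image $h \in G$ of $\bar h$; since $\pi$ is a group homomorphism, $hF_1h^{-1}$ is an elliptic subgroup of $G$ whose image in $\bar G$ is $\bar h\,\bar F_1\,\bar h^{-1} = \bar F_2$; now Corollary~\ref{res: uniqueness elliptic lift} applied to the pair $(hF_1h^{-1}, F_2)$ gives an element $u \in G$ with $F_2 = u(hF_1h^{-1})u^{-1}$, and we are done.

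Your invocation of Proposition~\ref{res: lifting partial auto} to manufacture a special element $h_0$ is therefore unnecessary: the whole point of that proposition is to lift a conjugation \emph{element by element} inside a controlled set, which matters when one needs to track where individual generators land (as in the proof of Corollary~\ref{res: uniqueness elliptic lift} itself). Here one only needs the conjugate \emph{subgroup} to have the right image, and that comes for free from any lift of $\bar h$. Your approach is not wrong, just longer than needed.
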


\paragraph{Lifting elliptic subgroups.}
We now characterize the elliptic subgroups of $\bar G$ that can be lifted and explore the structure of the one that cannot be lifted.
The statement generalizes \cite[Lemme~5.10.2]{Delzant:2008tu}.
\begin{prop}[Lifting elliptic subgroups]
\label{res: dichotomy elliptic}	
	An elliptic subgroup $\bar F$ of $\bar G$ cannot be lifted if and only if it contains a strict rotation.
	In this case, $\bar F$ fixes an apex $\bar v \in \bar{\mathcal V}$. 
	Moreover, $\fix{\bar F,\bar \delta}$  is contained in $B( \bar v, \bar \delta)$.
	In particular, $\bar v$ is the only apex fixed by $\bar F$.
\end{prop}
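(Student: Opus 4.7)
The plan is to prove the characterization in two directions; the geometric assertions about $\mov{\bar F, \bar\delta}$ and the uniqueness of the fixed apex will then follow from \autoref{res: local action of apex stab}~\ref{enu: local action of apex stab - rot} and \autoref{rem: elliptic subgroup with strict rotation}. Indeed, applied to a single strict rotation $\bar g \in \bar F$ at $\bar v$, the former yields $\mov{\bar F, \bar\delta} \subset \mov{\bar g^k, \bar\delta} \subset B(\bar v, \bar\delta)$ for an appropriate power $k$, and the uniqueness of $\bar v$ follows because distinct apices of $\bar{\mathcal V}$ are separated by at least $\rho$ in $\bar X$.

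I would first handle the direction \emph{strict rotation implies no lift}. Suppose $\bar F$ contains a strict rotation at some apex $\bar v \in \bar{\mathcal V}$ and assume for contradiction that there is an elliptic subgroup $F$ of $G$ lifting $\bar F$. \autoref{res: fix set elliptic} produces a point $x \in X$ with $x \in \mov{F, 11\delta}$. The $1$-Lipschitz composition $X \hookrightarrow \dot X \to \bar X$ sends $x$ to a point $\bar x \in \mov{\bar F, 11\bar\delta}$, so $\dist[\bar X]{\bar x}{\bar v} \leq 14\bar\delta$ by \autoref{rem: elliptic subgroup with strict rotation}. However, \autoref{res: metric on  dot X and Zi coincide} shows that the open ball $B(v, \rho) \subset \dot X$ around any apex $v \in \mathcal V$ is disjoint from $X$, hence $\dist[\bar X]{\bar x}{\bar v} \geq \rho \gg \bar\delta$, a contradiction.

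For the converse, assume $\bar F$ contains no strict rotation. The key preliminary observation is that for every apex $\bar v \in \bar{\mathcal V}$, the intersection $\bar F \cap \stab{\bar v}$ projects under $q_{\bar v}$ into a subgroup of $\dihedral[n]$ containing no non-trivial rotation, hence into some $\{1, \mathbf x\}$ with $\mathbf x$ a reflection; equivalently $\bar F \cap \stab{\bar v}$ is contained in a reflection group at $\bar v$. Choose $\bar x \in \mov{\bar F, 11\bar\delta}$, non-empty by \autoref{res: fix set elliptic}, and split into two subcases. If $\bar F$ is not contained in any apex stabilizer, a triangle inequality using the $\geq \rho$ separation of apices of $\bar{\mathcal V}$ forces $\bar x$ to lie at distance at least $\rho/4$ from every apex; \autoref{res: small cancellation}~\ref{enu: small cancellation - local isom} then provides an isometric ball $B(x, \rho/20) \to B(\bar x, \rho/20)$ around any pre-image $x \in X$ of $\bar x$. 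Each $\bar g \in \bar F$ has a unique lift $g \in G$ whose image $gx$ lies in this ball; uniqueness of small-moving preimages from \autoref{res: small cancellation}~\ref{enu: small cancellation - translation kernel} makes these lifts into a subgroup $F$, which is elliptic because it almost fixes $x$, and $\pi|_F$ is injective by \autoref{res: isom on non-loxo elem}.

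If instead $\bar F \subset \stab{\bar v}$ for some apex $\bar v$, I use the extension $1 \to F^* \to \stab Y \to \mathbf L \to 1$, where $(H, Y) \in \mathcal Q$ realizes $\bar v$ and $F^*$ is the maximal elliptic normal subgroup of $\stab Y$. The pre-image $\tilde F := \pi^{-1}(\bar F) \cap \stab Y$ maps onto $\bar F$ with kernel $H$, and because $\bar F$ lies in a reflection group at $\bar v$, the image of $\tilde F$ in $\mathbf L$ is contained in the pre-image of a reflection subgroup of $\mathbf L$, hence generated over $F^*$ by at most one reflection element $r \in \stab Y$ (which is omitted when $\bar F \subset \bar F_v$). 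Setting $F := \langle \tilde F \cap F^*, r \rangle$, ellipticity is ensured because $r$ maps to a reflection in $\mathbf L = \dihedral$, making $\langle F^*, r \rangle$ a finite extension of the elliptic normal subgroup $F^*$ and therefore elliptic in $G$; injectivity of $\pi|_F$ follows once more from \autoref{res: isom on non-loxo elem}. The main obstacle, as I see it, is this last subcase: one has to verify that the chosen $r$ is simultaneously a pre-image of the target reflection $\bar r \in \bar F$ (correct modulo $H$) and elliptic in $G$, which requires carefully aligning the commutative square relating $\stab Y \to \mathbf L$ to $\stab{\bar v} \to \mathbf L_n$.
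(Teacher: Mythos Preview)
Your subcase (a) has a real gap: you presuppose a pre-image $x \in X$ of $\bar x$, but nothing guarantees this. A point of $\bar X$ at distance $\geq \rho/4$ from every apex can still have all its pre-images inside some cone $Z(Y)$ at radial coordinate in $[\rho/4,\rho)$, hence outside $X$. Your lifted subgroup $F$ then almost fixes a point of $\dot X$, which only yields ellipticity for the action on $\dot X$, not on $X$; the map $X\hookrightarrow\dot X$ is $1$-Lipschitz in the wrong direction for this purpose. The repair is to push the orbit through the radial projection $p$ and invoke \autoref{res: radial proj qi}, which is exactly what the paper does.

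The paper's organisation is also different and bypasses your subcase (b) entirely. Rather than splitting on whether $\bar F\subset\stab{\bar v}$, it splits on whether $\mov{\bar F,11\bar\delta}$ contains a point at distance $\geq\rho/3$ from $\bar{\mathcal V}$. If so, it lifts the $\bar F$-orbit of that point via \autoref{res: lifting quasi-convex} and \autoref{res: zeta isom on qc far from apices}, then uses the radial projection to land in $X$. If not, $\mov{\bar F,11\bar\delta}$ is trapped in some $B(\bar v,\rho/3)$ and $\bar F$ cannot lift; the key point is that $\bar F$ must then contain a strict rotation, because the reflection-group alternative is excluded by \autoref{res: local action of apex stab}~\ref{enu: local action of apex stab - reflection}, which furnishes a point of $\mov{\bar F,\bar\delta}$ at distance $\rho$ from $\bar v$. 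Your subcase (b) is essentially a pre-emptive proof of \autoref{res: lifting elliptic in apex group}; it can be made to work (the issue you flag is handled by adjusting any pre-image of $\bar r$ in $\stab Y$ by an element of $H$ so that its image in $\mathbf L$ is an honest reflection, whence $r^2\in F^*$ and $\langle F^*,r\rangle$ is a degree-two extension of an elliptic group), but the paper's geometric shortcut makes the algebra unnecessary here.
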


\begin{proof}
	Recall that $\fix{\bar F,10 \bar \delta}$ is a non-empty $8\bar\delta$-quasi-convex subset of $\bar X$ (Lemmas~\ref{res: fix set elliptic} and \ref{res: fix qc}).
	Assume first that there exists a point $\bar x \in \fix{\bar F,\bar 10\delta}$ such that $d(\bar x, \bar {\mathcal V}) \geq \rho/3$.
	By \autoref{res: local action of apex stab}~\ref{enu: local action of apex stab - rot}, $\bar F$ does not contain a strict rotation.
	We are going to prove that $\bar F$ can be lifted.
	We write $\bar Z$ for the $\bar F$-orbit of $\bar x$.
	It is $\bar F$-invariant and its diameter is at most $10 \bar \delta$.
	It follows that $\gro{\bar z}{\bar z'}{\bar v} \geq \rho/4$, for every $\bar z,\bar z' \in \bar Z$, for every $\bar v \in \bar {\mathcal V}$.
	According to Lemmas~\ref{res: lifting quasi-convex} and \ref{res: zeta isom on qc far from apices}, there exist a subgroup $F$ of $G$ and an $F$-invariant subset $Z$ of $\dot X$ with the following properties: 
	the map $\zeta \colon \dot X \to \bar X$ induces an isometry from $Z$ onto $\bar Z$; 
	the projection $\pi \colon G \to \bar G$ induces an isomorphism from $F$ onto $\bar F$;
	moreover $\gro z{z'}v > 13\dot \delta$ for every $z,z' \in Z$, for every $v \in \mathcal V$ (the Gromov product is computed in $\dot X$ here).
	In particular, $Z$ is bounded.
	It follows from \autoref{res: radial proj qi} that the radial projection $p(Z)$ is a bounded $F$-invariant subset of $X$.
	Hence $F$ is an elliptic subgroup of $G$ (for its action on $X$), lifting $\bar F$.
	
	Assume now that $d(\bar x, \bar {\mathcal V}) < \rho/3$ for every $\bar x \in \fix{\bar F, 10 \bar \delta}$.
	Since the set $\fix{\bar F, 10 \bar \delta}$ is $8\bar \delta$-quasi-convex, there exists $\bar v \in \bar {\mathcal V}$ such that $\fix{\bar F, 10 \bar \delta}$ is contained in $B(\bar v, \rho/3)$.
	In particular, $\bar F$ fixes $\bar v$.
	In addition, $\bar F$ cannot be lifted.
	Indeed if $F$ was a lift of $\bar F$, then the image in $\bar X$ of $\fix{F, 10\delta} \subset X$ would be contained in $\fix{\bar F, 10 \bar \delta}\setminus B(\bar v, \rho/3)$.
	We now claim that $\bar F$ contains a strict rotation $\bar g$ at $\bar v$.
	If it was not the case, then $\bar F$ would be contained in a reflection group at $\bar v$.
	Thus, there would exist a point $\bar x' \in \fix{\bar F, 10\bar \delta}$ such that $\dist {\bar x}{\bar v} > \rho/2$, see \autoref{res: local action of apex stab}~\ref{enu: local action of apex stab - reflection},  which contradicts the previous observation.
	It follows then from \autoref{res: local action of apex stab}~\ref{enu: local action of apex stab - rot} that $\fix{\bar F, \bar \delta}$  is contained in $B(\bar v,\bar \delta)$.
\end{proof}

\begin{coro}
\label{res: lifting elliptic in apex group}
	Let $(H,Y) \in \mathcal Q$.
	Let $\bar v$ be the image in $\bar X$ of the apex $v$ of $Z(Y)$.
	Let $\bar C$ be a subgroup of $\stab{\bar v}$.
	If $\bar  C$ can be lifted, then it admits a lift which is contained in $\stab Y$.
\end{coro}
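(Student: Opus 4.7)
By \autoref{res: dichotomy elliptic}, the liftability of $\bar C$ is equivalent to the absence of strict rotations in $\bar C$; its image $q_{\bar v}(\bar C) \subset \dihedral[n]$ is therefore either trivial or generated by a single reflection. The backbone of the construction is an elementary remark: since $H \setminus \{1\}$ consists of loxodromic elements whereas $F$ is elliptic, one has $F \cap H = 1$, and hence the restriction of the quotient map $\stab Y \onto \stab{\bar v}$ to $F$ is an isomorphism onto $\bar F$.

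First suppose $q_{\bar v}(\bar C) = 1$, i.e.\ $\bar C \subset \bar F$. Let $D$ be the preimage of $\bar C$ under the isomorphism $F \cong \bar F$. Then $D \subset F \subset \stab Y$ is elliptic and $\pi$ restricts to an isomorphism $D \to \bar C$, providing the desired lift. Now suppose $q_{\bar v}(\bar C) = \group{\mathbf x}$ for some reflection $\mathbf x$: the subgroup $\bar C_0 = \bar C \cap \bar F$ has index two in $\bar C$, and the previous construction applied to $\bar C_0$ provides a lift $D_0 \subset F \subset \stab Y$ of $\bar C_0$. Pick any $\bar g \in \bar C \setminus \bar C_0$; since the image of $\bar g$ in $\mathbf L_n$ is a reflection, any lift of $\bar g$ in $\stab Y$ lies in $\stab Y \setminus \operatorname{Stab}^+(Y)$, hence is elliptic (an element of $\stab Y$ swapping the two endpoints of $Y$ cannot be loxodromic). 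Fix such a lift $g'$ and set $D = \group{D_0, g'}$.

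Three key facts identify $D$ as the required lift. First, $g'$ being elliptic, so is $g'^2 \in \operatorname{Stab}^+(Y)$; therefore $g'^2 \in F$, and since $\pi(g'^2) = \bar g^2 \in \bar C_0$, we get $g'^2 \in D_0$. Second, $g'$ normalizes $F$ (which is normal in $\stab Y$), and $g' D_0 g'^{-1}$ is a lift in $F$ of $\bar g \bar C_0 \bar g^{-1} = \bar C_0$; uniqueness of such a lift forces $g' D_0 g'^{-1} = D_0$. Combining these two observations, $D = D_0 \cup D_0 g'$. Since $F \cap H = 1$ and $g' \notin \operatorname{Stab}^+(Y) \supset H$, a direct check gives $D \cap H = 1$, so $\pi$ induces an isomorphism $D \to \bar C$. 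Finally, to see that $D$ is elliptic, apply \autoref{res: fix set elliptic} to the elliptic group $\group{g'}$ acting on the non-empty, $10\delta$-quasi-convex, $g'$-invariant set $\mov{D_0, 11\delta}$: one obtains a point $x$ moved by a bounded distance by both $D_0$ and $g'$, hence by every element of $D = D_0 \cup D_0 g'$. The main obstacle lies in the second case: one must ensure that the naive amalgam $\group{D_0, g'}$ has exactly the algebraic structure of $\bar C$, and this is where the gentle structure of the action, the uniqueness of the lift of $\bar C_0$ inside $F$, and the ellipticity of all elements of $\stab Y \setminus \operatorname{Stab}^+(Y)$ play a crucial role.
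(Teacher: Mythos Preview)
Your proof is correct and is essentially the explicit version of what the paper compresses into two sentences: the paper observes that $q_{\bar v}(\bar C)$ contains no non-trivial rotation and then says ``the result follows from diagram chasing'' in the square relating $\stab Y \to \mathbf L$ and $\stab{\bar v} \to \mathbf L_n$; your two cases and the construction of $D = D_0 \cup D_0 g'$ are precisely that diagram chase spelled out. One minor simplification: your final appeal to \autoref{res: fix set elliptic} to show $D$ is elliptic is unnecessary, since $D_0$ is elliptic and $[D:D_0]=2$, so $D$ has bounded orbits automatically.
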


\begin{proof}
	Let $F$ be the maximal elliptic normal subgroup of $\stab Y$ and $\bar F$ its image in $\bar G$.
	Recall that we have the following commutative diagram
	\begin{equation*}
		\begin{tikzpicture}
			\matrix (m) [matrix of math nodes, row sep=2em, column sep=2.5em, text height=1.5ex, text depth=0.25ex] 
			{ 
				1	& F & \stab Y & \mathbf L & 1	\\
				1	& \bar F & \stab{\bar v} & \mathbf L_n & 1	\\
			}; 
			\draw[>=stealth, ->] (m-1-1) -- (m-1-2);
			\draw[>=stealth, ->] (m-1-2) -- (m-1-3);
			\draw[>=stealth, ->] (m-1-3) -- (m-1-4);
			\draw[>=stealth, ->] (m-1-4) -- (m-1-5);
			
			\draw[>=stealth, ->] (m-2-1) -- (m-2-2);
			\draw[>=stealth, ->] (m-2-2) -- (m-2-3);
			\draw[>=stealth, ->] (m-2-3) -- (m-2-4) node[pos=0.5, above]{$q_{\bar v}$};
			\draw[>=stealth, ->] (m-2-4) -- (m-2-5);
			
			\path[->] (m-1-2) edge node[above,sloped,inner sep=0.5pt]{$\sim$} (m-2-2);

			\draw[>=stealth, ->] (m-1-3) -- (m-2-3) node[pos=0.5, right]{$\pi$};
			\draw[>=stealth, ->] (m-1-4) -- (m-2-4);
		\end{tikzpicture} 
	\end{equation*} 
	where $(\mathbf L, \mathbf L_n)$ is either $(\cyclic, \cyclic[n])$ or $(\dihedral, \dihedral[n])$.
	Since $\bar C$ can be lifted, its image under $q_{\bar v}$ does not contain a non-trivial rotation (\autoref{res: dichotomy elliptic}).
	The result follows from diagram chasing.
\end{proof}

We continue with a study of dihedral germs.
Let $F$ be an elliptic subgroup of $G$ and $\bar F$ its image in $\bar G$.
Observe that if $F$ is a dihedral germ, then the same does not necessarily hold for $\bar F$.
Indeed it may happen that the only loxodromic elements that are normalizing (a finite index subgroup of) $F$ became elliptic in $\bar G$.
Nevertheless the converse statement holds.
This is the aim of the next lemmas.

\begin{lemm}
\label{res: dihedral germ in relation group}
	Let $(H,Y) \in \mathcal Q$.
	Every elliptic subgroup of $\stab Y$ is a dihedral germ.
\end{lemm}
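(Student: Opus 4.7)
The plan relies on the structural decomposition of $\stab Y$ afforded by the gentle action of $G$. From the beginning of \autoref{sec: sc - apex stab}, one has a short exact sequence
\[
1 \to F \to \stab Y \xrightarrow{q} \mathbf L \to 1,
\]
where $F$ is the maximal normal elliptic subgroup of $\stab Y$ (the set of elliptic elements of $\stab Y^+$) and $\mathbf L$ embeds in the infinite dihedral group $\dihedral$. I would proceed in three stages.

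First, I would show that $q(C)$ is either trivial or isomorphic to $\cyclic[2]$. Since every loxodromic element of $\stab Y$ projects to an element of infinite order in $\mathbf L$, and every element of $C$ is elliptic, $q(C)$ must consist solely of torsion elements of $\dihedral = \langle \mathbf x, \mathbf y \mid \mathbf x^2, \mathbf y^2\rangle$. All torsion elements of $\dihedral$ are involutions (conjugates of $\mathbf x$ or $\mathbf y$), and the product of two \emph{distinct} such involutions has infinite order. Consequently the only torsion subgroups of $\dihedral$ are the trivial group and $\cyclic[2]$, so $\abs{q(C)} \in \{1,2\}$.

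Second, I would set $C_0 := C \cap F = \ker(q|_C)$. By the first stage, $[C:C_0] = \abs{q(C)} \in \{1,2\}$, which is indeed a power of $2$, and $C_0$ is elliptic as a subgroup of the elliptic subgroup $C$.

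Third, I need to exhibit a loxodromic element of $G$ normalizing $C_0$. Since $\stab Y$ is loxodromic I may pick a loxodromic $h \in \stab Y$; because $F \triangleleft \stab Y$, conjugation by $h$ restricts to an automorphism of $F$, so every positive power $h^k$ (again loxodromic) preserves $F$ setwise. The aim is to find $k$ such that $h^k$ preserves the specific subgroup $C_0 \subseteq F$, in which case $h^k$ is the desired loxodromic normalizer and $C$ is a dihedral germ. This last step is the main obstacle of the proof: the short-exact-sequence analysis by itself only controls how $h$ acts on $F$, and one must use the finer structure of elliptic subgroups (for instance a finiteness property of the $\langle h\rangle$-orbit of $C_0$ among subgroups of $F$) to locate such a power $h^k$.
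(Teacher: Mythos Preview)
Your first two stages are correct and in fact recover exactly the paper's setup: since every element of $C$ is elliptic, $C \cap E^+$ lies in $F$, so your $C_0 = C \cap F$ coincides with the paper's $C_0 = C \cap E^+$, and $[C:C_0] \in \{1,2\}$.

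The gap is in stage three. The finiteness argument you sketch is not available here: $F$ need not be finite, and there is no reason for the $\langle h\rangle$-orbit of $C_0$ among subgroups of $F$ to be finite. The missing idea is to exploit the specific subgroup $H$ rather than an arbitrary loxodromic element of $\stab Y$. The small cancellation hypothesis (finiteness of $\Delta(\mathcal Q,X)$) forces $H$ to be \emph{normal} in $\stab Y$. Pick a loxodromic $h \in H$ and $c \in C_0 \subset E^+$. Normality of $H$ gives $chc^{-1} = h^k$ for some $k \in \Z$. Now project this identity through $E^+ \to E^+/F \cong \Z$: since $c$ is elliptic it maps to $0$, so the identity reads $\bar h = k\bar h$ with $\bar h \neq 0$, whence $k=1$. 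Thus $h$ does not merely normalize $C_0$, it \emph{centralizes} it, and $C$ is a dihedral germ. No passage to a power of $h$ is needed.
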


\begin{proof}
	For simplicity we let $E = \stab Y$.
	Let $E^+$ be the subgroup of $E$ fixing pointwise $\partial Y$ and $F$ the maximal elliptic subgroup of $E^+$.
	Let $C$ be an elliptic subgroup of $E$.
	The intersection $C_0 = C \cap E^+$ is a subgroup of $C$ with index at most $2$.
	Let $h$ be a (loxodromic) element in $H$.
	Let $c \in C_0$.
	It follows from the small cancellation assumption that $H$ is a normal subgroup of $E$.
	In particular, $chc^{-1} = h^k$ for some $k \in \Z$.
	Recall that $E^+/F$ is isomorphic to $\Z$.
	Pushing the previous identity in $\cyclic$, we get that $k = 1$.
	In other words $h$ commutes with $C_0$.
	Hence $C$ is a dihedral germ.
\end{proof}

\begin{lemm}
\label{res: reflection group provides germs}
	Let $C$ be an elliptic subgroup of $G$ (for its action on $X$).
	Let $\bar v \in \bar{\mathcal V}$.
	If the image of $C$ in $\bar G$ is contained in a reflection group at $\bar v$, then $C$ is a dihedral germ.
\end{lemm}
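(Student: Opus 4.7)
The plan is to exploit the isomorphism $\pi|_C \colon C \to \bar C$ provided by Lemma~\ref{res: isom on non-loxo elem} (valid because $C$ is elliptic) to transfer the question about $C$ to a question about $\bar C$, and then reduce to the situation already handled by Lemma~\ref{res: dihedral germ in relation group}, which produces the required loxodromic normalizer inside $\stab Y$.

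First I would peel off a subgroup of index at most $2$ using the structure of the reflection group. Let $(H,Y) \in \mathcal Q$ be such that $\bar v$ is the image of the apex of $Z(Y)$, and let $\bar F = \ker q_{\bar v}$ denote the almost trivial subgroup at $\bar v$. The reflection group $\bar A$ containing $\bar C$ satisfies $\bar F \subseteq \bar A$ and $\bar A/\bar F \hookrightarrow \cyclic[2]$, so $\bar C_0 := \bar C \cap \bar F$ has index at most $2$ in $\bar C$. Write $C_0 \subseteq C$ for its pre-image, so that $[C:C_0] \leq 2$. By Proposition~\ref{res: local action of apex stab}\ref{enu: local action of apex stab - triv} the subgroup $\bar C_0$ is almost trivial at $\bar v$ and in particular contains no strict rotation, so Proposition~\ref{res: dichotomy elliptic} ensures that $\bar C_0$ can be lifted. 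Corollary~\ref{res: lifting elliptic in apex group} then produces a lift $C_0' \subseteq \stab Y$.

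Next I would compare the two elliptic lifts $C_0$ and $C_0'$ of $\bar C_0$: Corollary~\ref{res: uniqueness elliptic lift} yields an element $u \in G$ with $C_0 = u C_0' u^{-1}$. The argument of Lemma~\ref{res: dihedral germ in relation group} applied inside $\stab Y$ shows that $C_0'' := C_0' \cap \operatorname{Stab}^+(Y)$ has index at most $2$ in $C_0'$ and is centralized by every loxodromic element $h \in H$. Setting $C_1 := u C_0'' u^{-1}$ gives a subgroup of $C$ with $[C:C_1]$ a power of $2$ (namely at most $4$), normalized by the loxodromic element $u h u^{-1}$. This is exactly the definition of a dihedral germ.

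The main conceptual point, and the only step that is not purely formal, is the passage from an abstract lift $C_0' \subseteq \stab Y$ (supplied by Corollary~\ref{res: lifting elliptic in apex group}) to a statement about $C$ itself: a priori $C_0$ and $C_0'$ are only isomorphic in $\bar G$, not in $G$. The uniqueness-up-to-conjugation of elliptic lifts (Corollary~\ref{res: uniqueness elliptic lift}) is the key tool that bridges this gap and allows the loxodromic normalizer coming from $\stab Y$ to be transported to one normalizing a finite-index subgroup of $C$.
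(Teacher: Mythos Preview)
Your proof is correct and follows the same strategy as the paper: produce a lift inside $\stab Y$ via Corollary~\ref{res: lifting elliptic in apex group}, transport by conjugation using Corollary~\ref{res: uniqueness elliptic lift}, and conclude with Lemma~\ref{res: dihedral germ in relation group}. The paper's version is one step shorter because the detour through $\bar C_0 = \bar C \cap \bar F$ is unnecessary: since $C$ itself is an elliptic lift of $\bar C$, the group $\bar C$ can already be lifted, so Corollary~\ref{res: lifting elliptic in apex group} applies directly to $\bar C$, yielding $C' \subseteq \stab Y$ conjugate to $C$; then Lemma~\ref{res: dihedral germ in relation group} says $C'$ is a dihedral germ, and conjugation-invariance finishes the argument without any hand-built index-$2$ passages.
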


\begin{proof}
	Let $\bar C$ be the image of $C$ in $\bar G$.
	Let $(H,Y) \in \mathcal Q$ such that the apex $v$ of the cone $Z(Y)$ is a pre-image of $\bar v$.
	There exists an elliptic subgroup $C'$ of $\stab Y$ such that the projection $\pi \colon G \to \bar G$ maps $C'$ onto $\bar C$ (\autoref{res: lifting elliptic in apex group}).
	In other words $C$ and $C'$ are two lifts of $\bar C$, hence they are conjugated (\autoref{res: uniqueness elliptic lift}).
	Being a dihedral germ is invariant under conjugacy.
	Thus the conclusion follows from \autoref{res: dihedral germ in relation group}.
\end{proof}

\begin{lemm}[Lifting dihedral germs]
\label{res: lifting dihedral germ}
	Let $C$ be an elliptic subgroup of $G$ (for its action on $X$) and $\bar C$ its image in $\bar G$.
	If $\bar C$ is a dihedral germ, then so is $C$.
\end{lemm}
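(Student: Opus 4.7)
The plan is to exploit \autoref{res: lifting partial auto} in order to lift the normalizing loxodromic element of $\bar C$. Since $C$ is elliptic, \autoref{res: isom on non-loxo elem} ensures that $\pi \colon G \to \bar G$ is injective on $C$, so the subgroup lattice of $C$ matches that of $\bar C$. By hypothesis there exists $\bar C_0 \leq \bar C$ normalized by some loxodromic $\bar h \in \bar G$ with $[\bar C : \bar C_0]$ a power of $2$; let $C_0 \leq C$ be the corresponding subgroup. Then $C_0$ is elliptic, $[C : C_0]$ is a power of $2$, and it suffices to exhibit a loxodromic element of $G$ normalizing $C_0$ (or more generally to show that $C_0$ itself is a dihedral germ, since powers of $2$ compose).

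The central step is to apply \autoref{res: lifting partial auto} with $E = C_0$ and $S_1 = S_2 = C_0$. Since $\bar h$ normalizes $\bar C_0$, the set $C_0$ is a legitimate pre-image of $\bar h \bar S_1 \bar h^{-1} = \bar C_0$, and both fix-sets $\mov{S_i, \rho/100}$ are non-empty by \autoref{res: fix set elliptic} applied to the elliptic group $C_0$, together with $\rho \gg \delta$. The proposition then yields an element $h_0 \in G$ such that $h_0 s h_0^{-1}$ is, for every $s \in C_0$, the unique pre-image in $C_0$ of $\bar h \bar s \bar h^{-1}$. In particular $h_0 C_0 h_0^{-1} \subseteq C_0$; since $\pi$ is injective on $C_0$ and $\pi(h_0 C_0 h_0^{-1}) = \bar h \bar C_0 \bar h^{-1} = \pi(C_0)$, we conclude that $h_0 C_0 h_0^{-1} = C_0$.

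The dichotomy in \autoref{res: lifting partial auto} now splits the argument into two cases. If $h_0$ is loxodromic, it directly witnesses $C$ as a dihedral germ via the subgroup $C_0$. Otherwise $\bar C_0$ lies in a reflection group at some apex $\bar v \in \bar{\mathcal V}$, and \autoref{res: reflection group provides germs} implies that $C_0$ is already a dihedral germ: there is some $C_1 \leq C_0$ of $2$-power index in $C_0$ which is normalized by a loxodromic element of $G$. Since $[C : C_0]$ is a power of $2$, so is $[C : C_1]$, and $C_1$ witnesses $C$ as a dihedral germ.

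The main subtlety I anticipate is not the injectivity of $\pi|_{C_0}$ (which is immediate) but the careful handling of the reflection-group alternative: without \autoref{res: reflection group provides germs}, one could only produce a partial conjugation of $C_0$ rather than an actual loxodromic element of $G$ normalizing it, and the argument would stall. It is precisely the compatibility between the geometric reflection picture in $\bar X$ and the algebraic structure of $\stab Y$ for $(H,Y) \in \mathcal Q$ that rescues this case.
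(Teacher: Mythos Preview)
Your proof is correct and follows essentially the same route as the paper's: apply \autoref{res: lifting partial auto} with $S_1 = S_2 = C_0$ to produce $h_0$ normalizing $C_0$, then split on the dichotomy (either $h_0$ is loxodromic, or $\bar C_0$ lies in a reflection group and \autoref{res: reflection group provides germs} finishes). Your write-up is in fact slightly more careful than the paper's in justifying why $\mov{C_0,\rho/100}$ is non-empty and why $h_0 C_0 h_0^{-1}$ is all of $C_0$ rather than merely a subgroup.
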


\begin{proof}
	By assumption there exists a subgroup $\bar C_0$ of $\bar C$ which is normalized by a loxodromic element, say $\bar h$, and such that $[\bar C: \bar C_0] = 2^k$ for some $k \in \N$.
	We write $C_0$ for the pre-image of $\bar C_0$ in $C$.
	Note that $[C:C_0] = 2^k$.
	It follows from \autoref{res: lifting partial auto} applied with $S_1 = S_2 = C_0$ that there exists $h_0 \in G$ normalizing $C_0$.
	Moreover either $h_0$ is loxodromic or $\bar C_0$ is contained in a reflection group at some apex $\bar v \in \bar{\mathcal V}$.
	If $h_0$ is loxodromic, then $C$ is automatically a dihedral germ.
	Assume now that $\bar C_0$ is contained in a reflection group at $\bar v$.
	If follows from \autoref{res: reflection group provides germs} that $C_0$ is a dihedral germ.
	Hence it contains a subgroup $C_1$ which is normalized by a loxodromic element of $G$ and such that $[C_0:C_1]=2^m$ for some $m \in \N$.
	Thus $[C:C_1]= 2^{k+m}$ and $C$ is a dihedral germ.
\end{proof}	

The next lemma is formally not needed.
However it illustrates the role played by dihedral germs.
As we observed earlier, every elliptic subgroup $F$ of $G$ yields an elliptic subgroup $\bar F$ of $\bar G$.
However it could happen that $\bar F$ is strictly contained in an elliptic subgroup, which does not already come from a subgroup of $G$ containing $F$.
In this case $F$ is necessarily a dihedral germ.
As suggested by the name, dihedral germs are exactly the elliptic subgroups of $G$ which can eventually \og grow \fg\ when passing to the quotient $\bar G$.

\begin{lemm}
\label{res: only dihedral germ can grow}
	Let $C$ be an elliptic subgroup of $G$ (for its action on $X$) and $\bar C$ its image in $\bar G$.
	Assume that there exists an elliptic subgroup $\bar A$ containing $\bar C$ which cannot be lifted.
	Then $C$ is a dihedral germ.
\end{lemm}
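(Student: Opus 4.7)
The plan is to verify that $\bar C$ itself can be lifted (even though $\bar A$ cannot), locate a second, more convenient lift living inside the stabilizer of the relation cone, and then transport the dihedral germ structure back to $C$ by conjugation. The work essentially chains together four previous results.

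First I would note that $C$ is elliptic in $G$, so by \autoref{res: isom on non-loxo elem} the projection $\pi \colon G \onto \bar G$ restricts to an isomorphism $C \to \bar C$; in particular $\bar C$ is itself an elliptic subgroup of $\bar G$ that can be lifted (with lift $C$). Next, since $\bar A$ is elliptic but cannot be lifted, \autoref{res: dichotomy elliptic} produces a unique apex $\bar v \in \bar{\mathcal V}$ such that $\mov{\bar A, \bar \delta} \subset B(\bar v, \bar \delta)$; in particular $\bar A \subset \stab{\bar v}$, and therefore $\bar C \subset \stab{\bar v}$ as well.

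Now I would apply \autoref{res: lifting elliptic in apex group} to $\bar C$: since $\bar C$ can be lifted and sits inside the apex stabilizer $\stab{\bar v}$, it admits a lift $C'$ which is contained in $\stab Y$, where $(H,Y) \in \mathcal Q$ is the pair corresponding to the apex $\bar v$. By \autoref{res: dihedral germ in relation group}, every elliptic subgroup of $\stab Y$ is a dihedral germ of $G$, so $C'$ is a dihedral germ.

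Finally, $C$ and $C'$ are two lifts of the same elliptic subgroup $\bar C$, so \autoref{res: uniqueness elliptic lift} (or \autoref{res: lifting conj elliptic}) yields some $u \in G$ with $C' = u C u^{-1}$. Being a dihedral germ is invariant under conjugation (it is defined purely in terms of an elliptic subgroup and a loxodromic normalizer of a $2$-power-index subgroup, both conditions preserved by conjugation), hence $C$ is itself a dihedral germ, as desired. There is no serious obstacle here: the only subtle point is that the hypothesis concerns $\bar A$, not $\bar C$, but once one extracts the apex $\bar v$ from the non-liftability of $\bar A$, everything reduces to the already-established lifting dictionary for subgroups of $\stab{\bar v}$.
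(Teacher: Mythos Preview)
Your proof is correct and follows essentially the same route as the paper: locate the apex $\bar v$ via \autoref{res: dichotomy elliptic}, produce a lift $C' \subset \stab Y$ via \autoref{res: lifting elliptic in apex group}, observe that $C'$ is a dihedral germ by \autoref{res: dihedral germ in relation group}, and conjugate back using \autoref{res: uniqueness elliptic lift}. The only difference is cosmetic: the paper packages your last three steps into a single citation of \autoref{res: reflection group provides germs} (whose proof is exactly what you wrote out), after first remarking that $\bar C$, containing no strict rotation, lies in a reflection group at $\bar v$.
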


\begin{proof}
	This is just a reformulation of \autoref{res: reflection group provides germs}.
	Indeed according to \autoref{res: dichotomy elliptic}, there exists $\bar v \in \bar{\mathcal V}$ such that $\bar A$ is contained in $\stab{\bar v}$.
	Since $\bar C$ can be lifted, it does not contain a strict rotation at $\bar v$ (\autoref{res: dichotomy elliptic}) and thus lies in a reflection group at $\bar v$.
\end{proof}

We complete our discussion on elliptic subgroups with some preparatory work for the study of loxodromic subgroups.
If such a group $\bar E$ does not preserve the orientation, it can be decomposed as $\bar E = \bar A \ast_{\bar C} \bar B$, where $\bar C$ has index $2$ in both $\bar A$ and $\bar B$.
As the cylinder of $\bar E$ is contained in $\fix{\bar C, 100 \bar \delta}$ (\autoref{res: normal elliptic fixing cyl}), $\bar C$ can also be lifted.
We describe in this context what is the structure of $\bar A$ or $\bar B$.

\begin{lemm}
\label{res: lifting subgroup of index 2}
	Let $\bar A$ be an elliptic subgroup of $\bar G$.
	Assume that $\bar A$ contains a subgroup $\bar C$ of index $2$ that can be lifted.
	Let $\bar a \in \bar A \setminus \bar C$.
	Then there exists $\bar u \in \bar G$ such that 
	\begin{enumerate}
		\item $\group{\bar C, \bar u}$ is an elliptic subgroup that can be lifted;
		\item $\bar a^{-1} \bar u$ centralizes $\bar C$; and
		\item $\bar a^2 = \bar u^2$.
	\end{enumerate}
\end{lemm}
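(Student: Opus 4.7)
The natural dichotomy is on whether $\bar A$ itself is liftable. If it is, take $\bar u = \bar a$: conditions (2) and (3) are trivially satisfied, and $\langle\bar C,\bar u\rangle = \bar A$ fulfills (1) by hypothesis.

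Assume from now on that $\bar A$ cannot be lifted. By \autoref{res: dichotomy elliptic}, $\bar A$ fixes a unique apex $\bar v \in \bar{\mathcal V}$ and contains a strict rotation. Since $\bar A \subseteq \stab{\bar v}$, \autoref{res: local action of apex stab}~\ref{enu: local action of apex stab - rot} forces every strict rotation inside $\bar A$ to be a strict rotation at $\bar v$, so the image $q_{\bar v}(\bar A) \subseteq \dihedral[n]$ contains a non-trivial rotation. On the other hand, $\bar C$ being liftable means $q_{\bar v}(\bar C)$ contains no non-trivial rotation, hence $q_{\bar v}(\bar C) \subseteq \{1,\mathbf x\}$ for some reflection $\mathbf x$ (taking $\mathbf x = 1$ if needed). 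The bound $[q_{\bar v}(\bar A):q_{\bar v}(\bar C)] \leq 2$ then gives $|q_{\bar v}(\bar A)| \leq 4$, which forces the non-trivial rotation in $q_{\bar v}(\bar A)$ to have order $2$, that is, to equal the central half-turn $\mathbf z \in \dihedral[n]$. In particular the image of $q_{\bar v}$ has even torsion, so \autoref{ass: even exponent} supplies a central half-turn $\bar h$ at $\bar v$ in $\stab{\bar v}$, with $q_{\bar v}(\bar h) = \mathbf z$.

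I would then set $\bar u = \bar a \bar h$. Condition (2) is immediate since $\bar a^{-1}\bar u = \bar h$ centralizes all of $\stab{\bar v}$, in particular $\bar C$. Condition (3) follows from $\bar u^2 = \bar a \bar h \bar a \bar h = \bar a^2 \bar h^2 = \bar a^2$, using that $\bar h$ is an involution commuting with $\bar a$. For condition (1), the fact that $\bar u$ normalizes $\bar C$ and satisfies $\bar u^2 \in \bar C$ yields $\langle \bar C, \bar u\rangle = \bar C \cup \bar C \bar u$, a degree-two extension of the elliptic subgroup $\bar C$, hence elliptic. Liftability via \autoref{res: dichotomy elliptic} reduces to ruling out strict rotations in $\langle \bar C, \bar u\rangle$: centrality of $\mathbf z$ gives
\begin{equation*}
    q_{\bar v}(\bar u) \;=\; q_{\bar v}(\bar a)\,\mathbf z \;\in\; \mathbf z\, q_{\bar v}(\bar C)\, \mathbf z \;=\; q_{\bar v}(\bar C) \;\subseteq\; \{1,\mathbf x\},
\end{equation*}
so $q_{\bar v}\bigl(\langle \bar C, \bar u\rangle\bigr) \subseteq \{1,\mathbf x\}$ has no non-trivial rotation; since $\langle \bar C, \bar u\rangle \subseteq \stab{\bar v}$, a further application of \autoref{res: local action of apex stab}~\ref{enu: local action of apex stab - rot} precludes strict rotations at any apex other than $\bar v$.

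The main subtlety is the coset-counting step identifying the non-trivial rotation in $q_{\bar v}(\bar A)\setminus q_{\bar v}(\bar C)$ as the central half-turn $\mathbf z$; this is what legitimately invokes \autoref{ass: even exponent} and supplies the central $\bar h$ that makes conditions (2), (3) and the liftability verification formal.
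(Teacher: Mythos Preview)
Your proof is correct and follows essentially the same route as the paper: split on whether $\bar A$ is liftable, and in the non-liftable case use the central half-turn $\bar h$ at the apex $\bar v$ to set $\bar u=\bar a\bar h$, then verify that $q_{\bar v}(\langle\bar C,\bar u\rangle)\subseteq\{1,\mathbf x\}$ so that $\langle\bar C,\bar u\rangle$ lies in a reflection group at $\bar v$ and is therefore liftable. The only cosmetic point is your displayed line $q_{\bar v}(\bar a)\,\mathbf z\in\mathbf z\,q_{\bar v}(\bar C)\,\mathbf z$: the implicit step is that $q_{\bar v}(\bar a)\notin q_{\bar v}(\bar C)$ (otherwise $q_{\bar v}(\bar A)=q_{\bar v}(\bar C)$ would contain no non-trivial rotation), hence $q_{\bar v}(\bar a)\in\mathbf z\,q_{\bar v}(\bar C)$; stating this explicitly would make the chain of inclusions self-contained.
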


\begin{rema*}
	Observe that if $\bar u$ is trivial, then $\bar A$ is isomorphic to $\bar C \times \group{\bar a} = \bar C \times \cyclic[2]$.
	In general the map $\bar a \mapsto \bar u$ extends to an (abstract) epimorphism from $\bar A$ onto $\group{\bar C,\bar u}$.
\end{rema*}

\begin{proof}
	If $\bar A$ can be lifted, then the statement obviously holds.
	Assume now that $\bar A$ cannot be lifted.
	There exists $\bar v \in \bar {\mathcal V}$ such that $\bar A$ is contained in $\stab {\bar v}$ (\autoref{res: dichotomy elliptic}).
	Let $q_{\bar v} \colon \stab{\bar v} \to \dihedral[n]$ be corresponding the geometric realization. 
	Since $\bar C$ can be lifted, $q_{\bar v}(\bar C)$ is either trivial or equal to $\group {\mathbf x}$ where $\mathbf x$ is a reflection of $\dihedral[n]$.
	Let $\mathbf t$ be a generator of the rotation group $\cyclic[n] \subset \dihedral[n]$.
	Recall that $\bar C$ has index $2$ in $\bar A$ and $\bar A$ cannot be lifted in $G$.
	It follows that $n$ is even and 
	\begin{equation*}
		q_{\bar v}(\bar A) = \group{q_{\bar v}(\bar C), \mathbf t^{n/2}}.
	\end{equation*}
	In particular, $q_{\bar v}(\bar a)\mathbf t^{n/2}$ belongs to $q_{\bar v}(\bar C)$.
	According to \autoref{ass: even exponent}, $\stab{\bar v}$ contains a central half-turn at $\bar v$ that we denote by $\bar g$.
	Note that $q_{\bar v}$ maps $\bar g$ to $\mathbf t^{n/2}$.
	We let $\bar u = \bar a\bar g$.
	We observe that $\bar a^{-1} \bar u = \bar g$ centralizes $\bar C$ and $\bar a^2 = \bar u^2$.
	By construction $\group{\bar C, \bar u}$ and $\bar C$ have the same image under $q_{\bar v}$.
	Consequently $\group{\bar C, \bar u}$ is contained in a reflection group at $\bar v$, hence can be lifted, which completes the proof.
\end{proof}

Another crucial ingredient to describe loxodromic subgroups of $\bar G$, is to understand the normalizer of elliptic subgroups that can be lifted, see for instance \autoref{res: lifting loxodromic sbgp}.
This is the purpose of the next proposition.

\begin{prop}[Lifting normalizer]
\label{res: lifting automorphism}
	Let $F$ be an elliptic subgroup of $G$ and $\bar F$ its image in $\bar G$.
	For every $\bar h \in \Norm{\bar F}$ there exists $h_0 \in \Norm F$ such that $\bar h^{-1} \bar h_0$ centralizes $\bar F$.
	If in addition $\bar h^2$ belongs to $\bar F$, then one can choose $h_0$ such that $h_0^2 \in F$ and $\bar h^2 = \bar h_0^2$.
\end{prop}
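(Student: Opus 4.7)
The plan is to treat the two statements separately. For the first, I would apply \autoref{res: lifting partial auto} with $S_1 = S_2 = F$: the hypothesis $\bar h \bar F \bar h^{-1} = \bar F$ guarantees that $F$ is a valid pre-image of $\bar h \bar F \bar h^{-1}$, and $\mov{F,\rho/100}$ is non-empty because $F$ is elliptic (\autoref{res: fix set elliptic}). This produces $h_0 \in G$ satisfying $h_0 F h_0^{-1} \subseteq F$ together with $\bar h_0 \bar f \bar h_0^{-1} = \bar h \bar f \bar h^{-1}$ for every $\bar f \in \bar F$, which is exactly the statement that $\bar h^{-1}\bar h_0$ centralizes $\bar F$. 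To upgrade the inclusion to an equality, I would note that the induced endomorphism of $F$ corresponds, via the isomorphism $\pi \colon F \to \bar F$ of \autoref{res: isom on non-loxo elem}, to conjugation by $\bar h$ on $\bar F$, which is an automorphism; hence $h_0 \in \Norm F$.

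For the second statement, the case $\bar h \in \bar F$ is immediate by taking $h_0 = \pi|_F^{-1}(\bar h)$. Assume then $\bar h \notin \bar F$. The group $\bar H = \group{\bar F, \bar h} = \bar F \cup \bar F \bar h$ acts on $\bar X$ with bounded orbits (as a finite union of orbits of the elliptic group $\bar F$), hence is elliptic, and it contains $\bar F$ as a normal subgroup of index $2$. I would then feed $\bar A = \bar H$, $\bar C = \bar F$ and $\bar a = \bar h$ into \autoref{res: lifting subgroup of index 2} to extract $\bar u \in \bar G$ such that $\bar H' := \group{\bar F, \bar u}$ is an elliptic subgroup that lifts, $\bar h^{-1}\bar u$ centralizes $\bar F$, and $\bar u^2 = \bar h^2$. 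Lifting $\bar H'$ to an elliptic subgroup $H' \leq G$ provides a lift $F' \leq H'$ of $\bar F$ (normal of index $2$) and a lift $h' \in H'$ of $\bar u$, so automatically $h'^2 \in F'$.

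Since $F$ and $F'$ are both lifts of the elliptic $\bar F$, \autoref{res: uniqueness elliptic lift} supplies $g \in G$ with $\bar g$ centralizing $\bar F$ and $F' = gFg^{-1}$. My candidate is $h_0 := g^{-1}h'g$: normality of $F'$ in $H'$ gives $h_0 F h_0^{-1} = F$, while $h'^2 \in F'$ gives $h_0^2 \in F$; the identities $\bar u^2 = \bar h^2 \in \bar F$ and $\bar g \in \Cent{\bar F}$ give $\bar h_0^2 = \bar g^{-1}\bar h^2 \bar g = \bar h^2$; and a parallel short chain exploiting $\bar h^{-1}\bar u \in \Cent{\bar F}$ together with $\bar h \bar F \bar h^{-1} = \bar F$ yields $\bar h_0 \bar f \bar h_0^{-1} = \bar h \bar f \bar h^{-1}$ for every $\bar f \in \bar F$. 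The only genuinely delicate point is this correction by $g$: the direct lift $h'$ normalizes the wrong copy $F'$ of $\bar F$ rather than $F$, and one must check that conjugating by $g$ repairs this without destroying the quadratic identity or the centralizer relation. The mechanism that makes this succeed is that each of the elements $\bar h$, $\bar u$, $\bar h^2$ either already lies in $\bar F$ or differs from an element of $\bar F$ by something centralizing $\bar F$, so conjugation by $\bar g$ is invisible in every relevant computation.
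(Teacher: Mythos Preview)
Your proof is correct and follows essentially the same route as the paper: first apply \autoref{res: lifting partial auto} with $S_1=S_2=F$ for the basic lifting, then for the quadratic refinement pass through \autoref{res: lifting subgroup of index 2} to replace $\bar h$ by an element $\bar u$ (the paper's $\bar h_1$) with $\group{\bar F,\bar u}$ liftable, lift this group, and conjugate the lift of $\bar u$ back to the normalizer of $F$ using \autoref{res: uniqueness elliptic lift}. Your treatment is in fact slightly more careful than the paper's in two places: you explicitly justify why $h_0Fh_0^{-1}\subseteq F$ is an equality (via the isomorphism $\pi\vert_F$), and you separate out the trivial case $\bar h\in\bar F$.
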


\begin{rema}
\label{rem: lifting automorphism}
	The result can be reformulated in the following way.
	Let $N(F)$ and $C(F)$ be the respective normalizer and centralizer of $F$ in $G$.
	We define $N(\bar F)$ and $C(\bar F)$ in the same way.
	The projection $\pi \colon G \onto \bar G$ does not necessarily map $N(F)$ onto $N(\bar F)$.
	Nevertheless it induces an epimorphism from $N(F)/C(F)$ onto $N(\bar F)/C(\bar F)$.
	Actually \autoref{res: isom on non-loxo elem} implies that this map is an isomorphism, but we will not need this fact here.
\end{rema}

\begin{proof}
	Applying \autoref{res: lifting partial auto} with $S_1 = S_2 = F$, we see that there exists $h_0 \in G$ such that for every $g \in G$, the element $h_0gh_0^{-1}$ is the pre-image of $\bar h \bar g \bar h^{-1}$ in $F$.
	In particular, $h_0$ normalizes $F$ and $\bar h^{-1}\bar h_0$ centralizes $\bar F$.
	Let us now focus on the second part of the statement.
	We assume that $\bar h \in \bar G$ normalizes $\bar F$ and $\bar h^2 \in \bar F$.
	In particular, $\bar F' = \group{\bar F, \bar h}$ is an elliptic subgroup of $\bar G$.
	Observe that there exists $\bar h_1 \in \bar G$ such that $\bar h ^{-1} \bar h_1$ centralizes $\bar F$, $\bar h^2 = \bar h_1^2$, and $\group{\bar F, \bar h_1}$ is an elliptic subgroup that can be lifted.
	Indeed, if $\group{\bar F, \bar h}$ can be lifted, it suffices to take $\bar h_1 = \bar h$, otherwise the conclusion follows from \autoref{res: lifting subgroup of index 2}.
	Let $F'_1$ be a lift of $\bar F'_1 = \group{\bar F, \bar h_1}$ and $h_1$ the pre-image of $\bar h_1$ in $F'_1$.
	There exists $u \in G$ such that $\bar u$ centralizes $\bar F$ and $uFu^{-1}$ is the pre-image of $\bar F$ in $F'_1$ (\autoref{res: uniqueness elliptic lift}).
	We choose $h_0 = u^{-1}h_1u$.
	By construction $h_1$ normalizes $uFu^{-1}$ and $h_1^2$ belongs to $uFu^{-1}$.
	Consequently $h_0$ normalizes $F$ and $h_0^2$ belongs to $F$.
	Since $\bar u$ centralizes $\bar F$, it commutes with $\bar h_1^2 = \bar h^2$, thus $\bar h^2 = \bar h_0^2$.
	Moreover $\bar h_1^{-1} \bar h_0$ centralizes $\bar F$, hence so does $\bar h^{-1}\bar h_0$.
\end{proof}

\paragraph{Lifting parabolic subgroups.}
The first result is not needed for the rest of the study. 
However, we believe that it may help the reader by clarifying the structure of parabolic subgroups in $G$.
As we mentioned earlier, since $\zeta \colon X \to \bar X$ is $1$-Lipschitz, the image by the projection $\pi \colon G \onto\bar G$ of a parabolic subgroup of $G$ is either elliptic or parabolic.
The next statement tells us that the former case does not happen.

\begin{lemm}
\label{res: image parabolic subgroup}
	Let $E$ be an elementary subgroup of $G$ and $\bar E$ its image in $\bar G$.
	Assume that there exist $d \in \R_+$ and a subset $S$ generating $E$ such that $\fix{S,d}$ is non-empty.
	If $E$ is parabolic (for its action on $X$) then $\bar E$ is parabolic (for its action on $\bar X$).
\end{lemm}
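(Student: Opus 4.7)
The argument is by contradiction, showing that if $\bar E$ fails to be parabolic, then $E$ cannot be parabolic either. I would first observe that $\bar E$ contains no loxodromic element: any $\bar g \in \bar E$ is the image of some $g \in E$, and since $E$ is parabolic on $X$, each such $g$ is elliptic or parabolic, so $\snorm[X]{g} = 0$. Because the natural projection $X \hookrightarrow \dot X \to \bar X$ is $1$-Lipschitz, this gives $\snorm[\bar X]{\bar g} \leq \snorm[X]{g} = 0$, ruling out loxodromic $\bar g$. A subgroup of isometries of a hyperbolic space whose elements are all non-loxodromic is itself elementary (elliptic or parabolic); hence it suffices to rule out the possibility that $\bar E$ is elliptic.

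Assume then, for contradiction, that $\bar E$ is elliptic on $\bar X$. I would apply the dichotomy of \autoref{res: dichotomy elliptic}: either $\bar E$ admits an elliptic lift $E' \leq G$, or $\bar E$ contains a strict rotation, in which case $\mov{\bar E, \bar\delta}$ is contained in $B(\bar v,\bar\delta)$ for a unique apex $\bar v \in \bar{\mathcal V}$ arising from a pair $(H,Y) \in \mathcal Q$, and $\bar E \subseteq \stab{\bar v}$. In the first case, uniqueness of elliptic lifts (\autoref{res: uniqueness elliptic lift}, or equivalently \autoref{res: lifting conj elliptic}) applied to $F_1 = E'$ and $F_2 = E$ would exhibit $E$ as conjugate to a subgroup of $E'$; since $E'$ is elliptic on $X$, so is $E$, contradicting the hypothesis. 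In the second case, $\stab{\bar v} \cong \stab{Y}/H$, so the pre-image of $\bar E$ in $\stab Y$ furnishes a subgroup $E^* \leq \stab Y$ with $\pi(E^*) = \bar E$ (using \autoref{res: lifting elliptic in apex group}). Since $\stab Y$ is loxodromic on $X$ and the action is gentle, $E^*$ is either elliptic or loxodromic on $X$; a second application of the lifting-uniqueness argument identifies $E$, up to conjugation in $G$, with $E^*$, again contradicting that $E$ is parabolic on $X$.

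The main obstacle is that \autoref{res: uniqueness elliptic lift} and its corollaries require the generating set $S$ to satisfy $\mov{S,\rho/100} \neq \emptyset$, whereas we only assume $\mov{S,d} \neq \emptyset$ for some a priori arbitrary $d \in \R_+$. To overcome this, I would exploit the elliptic assumption on $\bar E$: pick $\bar y \in \mov{\bar E, 11\bar\delta}$, and if $\bar y$ lies outside the $\rho/3$-neighborhood of $\bar{\mathcal V}$ (the case where it lies close to an apex $\bar v$ is handled as in \autoref{res: dichotomy elliptic} and falls under the second case above), lift $\bar y$ isometrically to $y \in \dot X$ via \autoref{res: small cancellation}~\ref{enu: small cancellation - local isom}. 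The small cancellation theorem, together with \autoref{res: small cancellation}~\ref{enu: small cancellation - translation kernel}, ensures that each $\bar s \in \bar S$ admits a unique pre-image $s' \in G$ with $\dist[\dot X]{s'y}{y} \leq 11\bar\delta$, and that the assignment $\bar s \mapsto s'$ extends to a homomorphism from $\bar E$ into $G$ (by discreteness of the $K$-action on $\dot X \setminus \mathcal V$). The image is an elliptic subgroup $E'' \leq G$ lifting $\bar E$; substituting $S'' \subseteq E''$ for $S$ in the uniqueness argument then provides the desired $\mov{S'',\rho/100} \neq \emptyset$ and yields the contradiction.
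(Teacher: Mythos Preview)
Your overall strategy---show $\bar E$ has no loxodromic elements, then rule out ellipticity via the dichotomy of \autoref{res: dichotomy elliptic}---matches the paper's. However, your resolution of the main obstacle (obtaining $\mov{S,\rho/100}\neq\emptyset$) has a genuine gap, and your Case~2 does not go through as written.

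\medskip
\noindent\textbf{The obstacle.} You construct an elliptic lift $E''$ of $\bar E$ with a generating set $S''$ satisfying $\mov{S'',\rho/100}\neq\emptyset$, and propose to ``substitute $S''$ for $S$ in the uniqueness argument.'' But \autoref{res: uniqueness elliptic lift} must be applied with $F_2=E$, not $F_2=E''$: the goal is to conjugate $E$ itself to an elliptic group, and replacing $S$ by $S''$ changes which group plays the role of $F_2$. You therefore still need $\mov{S,\rho/100}\neq\emptyset$ for the original generating set $S$ of $E$, and your construction does not supply this. The paper resolves the obstacle directly from the parabolicity of $E$: since $E$ fixes a unique point $\xi\in\partial X$ and the hypothesis $\mov{S,d}\neq\emptyset$ provides a uniform starting point, \autoref{res: isom fixing xi moving geo} (with $\snorm g=0$ for every $g\in S$) shows that along a ray to $\xi$ all elements of $S$ eventually move points by at most $114\delta<\rho/100$. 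Hence $\mov{S,\rho/100}\neq\emptyset$ with no further work.

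\medskip
\noindent\textbf{Case~2.} You invoke \autoref{res: lifting elliptic in apex group} to produce $E^*\leq\stab Y$ with $\pi(E^*)=\bar E$, but that lemma applies only to subgroups of $\stab{\bar v}$ that \emph{can} be lifted---precisely the opposite of Case~2. If instead you take the full pre-image of $\bar E$ in $\stab Y$, it contains $H$ and is therefore loxodromic; but then neither $E$ (parabolic) nor $E^*$ (loxodromic) is elliptic, so \autoref{res: uniqueness elliptic lift} and \autoref{res: lifting conj elliptic} are inapplicable, and moreover $\pi$ is injective on $E$ but not on $E^*$, so they cannot be conjugate. The paper's Case~2 avoids all of this: if $\bar g\in\bar E$ is a strict rotation, then by \autoref{res: local action of apex stab} some power $\bar g^k$ satisfies $\mov{\bar g^k,\bar\delta}\subset B(\bar v,\bar\delta)$; but $g^k\in E$ is non-loxodromic on $X$, so $\norm[X]{g^k}\leq 16\delta$ and some $x\in X$ satisfies $\dist{g^kx}{x}\leq 17\delta$, whence $\bar x\in\mov{\bar g^k,\bar\delta}\setminus B(\bar v,\bar\delta)$, a contradiction.
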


\begin{rema*}
	Note that the assumption automatically holds if $E$ is finitely generated.
\end{rema*}

\begin{proof}
	Since $E$ is parabolic, it has a unique fixed point in $\partial X$.
	Thus according to \autoref{res: isom fixing xi moving geo} we can assume that $\fix{S, \rho/100}$ is non-empty.
	We just need to prove that $\bar E$ cannot be elliptic.
	Assume on the contrary that it is.
	We distinguish two cases. 
	Suppose first that $\bar E$ can be lifted and let $E'$ be a lift of $\bar E$ (recall that by definition $E'$ is elliptic).
	Applying \autoref{res: uniqueness elliptic lift} with $S_2 = S$, we get that $E$ and $E'$ are conjugate, which contradicts the fact that $E$ and $E'$ have different nature.
	Suppose now that $\bar E$ cannot be lifted.
	In particular, there exists $g \in E$ whose image $\bar g$ in $\bar E$ is a strict rotation (\autoref{res: dichotomy elliptic}).
	By \autoref{res: local action of apex stab}~\ref{enu: local action of apex stab - rot}, there exists $k \in \Z$ such that $\fix{\bar g^k, \bar \delta}$ is contained in $B(\bar v, \bar \delta)$ for some apex $\bar v \in \bar{\mathcal V}$.
	On the other hand, $g^k$ cannot be loxodromic, hence $\norm[X]{g^k} \leq 10 \delta$.
	Thus there exists $x \in X$ such that $\dist{g^kx}x \leq 10\delta$, thus $\bar x$ belongs to $\fix{\bar g^k, \bar \delta}\setminus B(\bar v, \bar \delta)$, which yields another contradiction.
\end{proof}

\begin{prop}[Lifting parabolic subgroups]
\label{res: lifting parabolic subgroups}
	Let $\bar P$ be parabolic subgroup of $\bar G$.
	Assume that there exist $d \in \R_+$ and a subset $\bar S$ generating $\bar P$ such that $\fix{\bar S, d}$ is non-empty.
	Then $\bar P$ can be lifted.
\end{prop}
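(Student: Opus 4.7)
The plan is modeled on the proof of \autoref{res: dichotomy elliptic}: first rule out the local pathologies (strict rotations) that would force $\bar P$ to concentrate at a single apex, then lift generators using a base point far along a ray to the unique accumulation point $\bar\xi \in \partial\bar X$ of $\bar P$, and finally verify that the lifted subgroup is parabolic.

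I would first show that $\bar P$ contains no strict rotation. If some $\bar g \in \bar P$ were a strict rotation at an apex $\bar v$, then by \autoref{res: local action of apex stab}~\ref{enu: local action of apex stab - rot} there would exist $k \in \Z$ with $\mov{\bar g^k,\bar\delta} \subset B(\bar v,\bar\delta)$. Since $\bar g^k \in \bar P$ is elliptic (so $\snorm{\bar g^k} = 0$) yet fixes $\bar\xi$, \autoref{res: isom fixing xi moving geo} would uniformly bound the displacement of $\bar g^k$ along a local quasi-geodesic ray to $\bar\xi$, whereas \autoref{res: fix qc}~\ref{enu: fix qc - translation} forces that displacement to grow linearly with the distance to $B(\bar v,\bar\delta)$ --- a contradiction. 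In particular $\bar P\cap\stab{\bar v}$ (and \emph{a fortiori} $\bar S\cap\stab{\bar v}$) lies in a reflection group at $\bar v$ for every apex $\bar v$, which is the hypothesis needed to invoke \autoref{res: lifting crossing apices - w/ reflection}.

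Next I would produce the lifts. Using that each $\bar s \in \bar S$ fixes $\bar\xi$ and has $\snorm{\bar s} = 0$, \autoref{res: isom fixing xi moving geo} applied uniformly (possible since $\mov{\bar S,d} \neq \emptyset$) yields a point $\bar y$ far along a ray to $\bar\xi$ with $\dist{\bar s\bar y}{\bar y}\leq \rho/100$ for every $\bar s \in \bar S$, chosen far enough that it admits a preimage $y \in X$ with $d(y,\mathcal V) \geq \rho/5$. By \autoref{res: small cancellation}~\ref{enu: small cancellation - local isom}, $\zeta$ restricts to an isometry from $B(y,\rho/10)$ onto $B(\bar y,\rho/10)$; hence for each $\bar s$ the unique lift of $\bar s\bar y$ in $B(y,\rho/10)$ has the form $sy$ for some pre-image $s \in G$ of $\bar s$ (obtained by correcting any default pre-image by an element of $K$), and moreover $\dist[\dot X]{sy}y = \dist{\bar s\bar y}{\bar y}$. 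Let $S$ be this set of lifts and $P = \langle S \rangle$; by construction $\pi(P) = \bar P$.

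The hard part will be verifying that $P$ is parabolic on $X$. The strategy is to lift a local quasi-geodesic ray $\bar\gamma$ from $\bar y$ to $\bar\xi$ to a ray $\dot\gamma$ in $\dot X$ starting at $y$ and ending at some $\dot\xi \in \partial\dot X$ above $\bar\xi$: away from the apex set the projection $\zeta \colon \dot X\setminus\mathcal V \to \bar X\setminus\bar{\mathcal V}$ is a covering map (\autoref{res: small cancellation}~\ref{enu: small cancellation - translation kernel}), while at apex crossings \autoref{res: lifting crossing apices - w/ reflection} (whose reflection-group hypothesis was checked above) yields compatible lifts. The small displacement $\dist[\dot X]{sy}y \leq \rho/100$ of each generator then forces $s\dot\xi = \dot\xi$ (the lifts of $\bar\gamma$ from $y$ and from $sy$ merge past a short initial segment), so $P$ fixes $\dot\xi$. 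Since $\bar P$ is not elliptic and $X\to\bar X$ is $1$-Lipschitz, $P$ cannot be elliptic either, while the uniform displacement bound on $y$ together with the fixed point $\dot\xi$ prevents the loxodromic and non-elementary cases on $\dot X$; therefore $P$ is parabolic on $\dot X$, and hence on $X$ by \autoref{res: parabolic cone-off}. Injectivity of $\pi|_P$ finally follows from \autoref{res: isom on non-loxo elem}, giving the required parabolic lift of $\bar P$.
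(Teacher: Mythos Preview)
Your plan is broadly on the right track but overlooks a simplification that the paper exploits, and this omission leaves a gap in your final step.

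The key observation you miss: once every $\bar s\in\bar S$ moves $\bar\gamma(t)$ by at most $114\bar\delta$ for all $t\geq t_0$, the \emph{entire tail} $\bar\gamma|_{[t_0,\infty)}$ stays at distance $\geq\rho/2$ from $\bar{\mathcal V}$. Indeed, if some $\bar\gamma(t)$ were $\rho/2$-close to an apex $\bar v$, each $\bar s$ would fix $\bar v$, forcing $\bar P\subset\stab{\bar v}$ and contradicting parabolicity. Hence the ray never approaches an apex: the reflection hypothesis of \autoref{res: lifting crossing apices - w/ reflection} is never needed, and the whole $\rho/10$-neighbourhood $\bar Z$ of the tail lifts isometrically to a subset $Z\subset\dot X$ via \autoref{res: lifting quasi-convex}. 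Your first paragraph (ruling out strict rotations in $\bar P$) is correct but superfluous.

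The gap is in your last paragraph. You set $P=\langle S\rangle$ from lifted generators and then assert that ``the uniform displacement bound on $y$ together with the fixed point $\dot\xi$ prevents the loxodromic case''. A displacement bound on \emph{generators} at one point does not control $\snorm[\dot X]g$ for arbitrary $g\in P$. The paper sidesteps this by defining $P$ as the full preimage of $\bar P$ inside $\stab\xi$ (after checking $\bar P\subset\pi(\stab\xi)$ via \autoref{res: zeta isom on qc far from apices} on $Z$). For \emph{each} $g\in P$ one then argues: $\bar g\in\bar P$ fixes $\bar\xi$ with $\snorm{\bar g}=0$, so $\dist{\bar g\bar\gamma(t)}{\bar\gamma(t)}\leq 114\bar\delta$ for large $t$; since $\zeta|_Z$ is an isometry and $g\gamma(t)\in Z$, the same bound holds in $\dot X$, giving $\norm[\dot X]g\leq 114\bar\delta$. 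This rules out loxodromic elements in $P$ uniformly, after which \autoref{res: parabolic cone-off} and \autoref{res: isom on non-loxo elem} finish the argument as you indicate.
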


\begin{proof}
	Let $\bar \xi$ be the unique point of $\partial \bar X$ fixed by $\bar P$ and $\bar x$ be a point in the set $\fix{\bar S, d}$.
	Let $L > 100\bar \delta$ and  $\bar \gamma \colon \R_+ \to \bar X$ be an $L$-local $(1, 11\bar \delta)$-quasi-geodesic ray starting at $\bar x$ whose endpoint at infinity is $\xi$.
	According to \autoref{res: isom fixing xi moving geo}, there exists $t_0$ such that for every $t \geq t_0$, for every $g \in S$, we have $\dist{\bar g \bar \gamma(t)}{\bar \gamma(t)} \leq 42\bar \delta$.
	It follows that $d(\bar \gamma(t),\bar {\mathcal V}) \geq \rho/2$, for every $t \geq t_0$.
	Indeed otherwise, there would exist $\bar v \in \bar{\mathcal V}$ such that $\bar S$, and thus $\bar P$, is contained in $\stab{\bar v}$, which contradicts our assumption.

	Let $\bar Z$ be the $\rho/10$-neighborhood of $\bar \gamma$ restricted to $[t_0, \infty)$.
	It is a $2\bar \delta$-quasi-convex subset.
	According to our claim that $\gro{\bar z}{\bar z'}{\bar v} > \rho/10$ for every $\bar z,\bar z' \in \bar Z$ and $\bar v \in \bar{\mathcal V}$.
	It follows from \autoref{res: lifting quasi-convex} that there exist a subset $Z$ of $\dot X$ such that the map $\zeta \colon \dot X \to \bar X$ induces an isometry from $Z$ onto $\bar Z$.
	Moreover $\gro z{z'}v > \rho/10$ for every $z,z' \in Z$ and $v \in \mathcal V$ (the Gromov product are computed in $\dot X$ here).
	In particular, there exists an $L$-local $(1, 11\dot \delta)$-quasi-geodesic ray $\gamma \colon [t_0, \infty) \to \dot X$ contained in $Z$ such that $\pi \circ \gamma = \bar \gamma$ and $Z$ is its $\rho/10$ neighborhood.
	We write $\xi \in \partial \dot X$ for the endpoint  at infinity of $\gamma$.

	We now claim that $\bar P$ is contained in the image of $\stab\xi$ by the projection $\pi \colon G \to \bar G$.
	Let $\bar g \in \bar P$.
	As we observed, $\dist{\bar g\bar \gamma(t)}{\bar \gamma(t)} \leq 42\bar \delta$, for every $t \geq t_0$.
	It follows from \autoref{res: zeta isom on qc far from apices} applied to $Z$ that there exists a pre-image $g \in G$ of $\bar g$ such that for every $t \geq t_0$, we have $\dist[\dot X]{g\gamma(t)}{\gamma(t)} \leq 42\dot \delta$.
	In particular, $g$ fixes $\xi$, which completes the proof our first claim.
	
	We denote by $P$ the pre-image of $\bar P$ in $\stab \xi$.
	We now claim that $P$ is parabolic for its action on $X$.
	To that end, it suffices to show that $P$ is parabolic for its action on $\dot X$ (\autoref{res: parabolic cone-off}).
	We are going to prove that $P$ is not elliptic and does not contain a loxodromic element.
	As $\bar P$ is parabolic, it has unbounded orbits.
	The map $\zeta \colon \dot X \to \bar X$ being $1$-Lipschitz, the group $P$ has unbounded orbits, and thus cannot be elliptic (for its action on $\dot X$).
	Let $g \in P$ and $\bar g$ its image in $\bar P$.
	According to \autoref{res: isom fixing xi moving geo} there exist $t_1 \geq t_0$ and $\epsilon \in \{\pm 1\}$ such that for every $t \geq t_1$ we have $\dist[\dot X]{g\gamma(t)}{\gamma(t + \epsilon \snorm[\dot X]g)} \leq 42 \dot \delta$.	
	In particular, if $t$ is sufficiently large both $\gamma(t)$ and $g\gamma(t)$ belong to $Z$.
	Since $\zeta \colon \dot X \to \bar X$ is an isometry when restricted to $Z$, we get
	\begin{equation*}
		\norm[\dot X] g \leq \dist[\dot X]{g\gamma(t)}{\gamma(t)} \leq \dist{\bar g \bar \gamma(t)}{\bar \gamma(t)} \leq 42 \bar \delta.
	\end{equation*}
	This inequality holds for every $g \in P$.
	Therefore $P$ cannot contain a loxodromic element for its action on $\dot X$, which completes the proof of our second claim.
	As $P$ is a parabolic subgroup, the projection $\pi \colon G \to \bar G$ is one-to-one when restricted to $P$ (\autoref{res: isom on non-loxo elem}).
	Hence $P$ is a lift of $\bar P$.
\end{proof}

\paragraph{Lifting loxodromic subgroups.}
It $\bar G$ does not contain any element of order $2$, then one can prove that all its loxodromic subgroups can be lifted.
This is typically what is happening when studying Burnside groups of odd exponent.
As shown by the next example this is unfortunately no more the case in the presence of even torsion.

\begin{exam}
\label{exa: loxo cannot be lifted}
	Assume for instance that $\bar G$ is the group defined by 
	\begin{equation*}
		\bar G = \cyclic[n] \ast \cyclic[n]= \left< a,b \mid a^n = b^n = 1\right>.
	\end{equation*}
	If $n$ is a sufficiently large even integer, it is a small cancellation quotient of the free group $\free 2$ generated by $a$ and $b$.
	The subgroup $\group{a^{n/2},b^{n/2}} \equiv \dihedral$ is loxodromic but cannot be lifted.
\end{exam}

In general we show that every loxodromic subgroup of $\bar G$ is an (abstract) subdirect product of an elementary subgroup of $G$ and either $\cyclic$ or $\dihedral$.

\begin{prop}[Lifting loxodromic subgroups]
\label{res: lifting loxodromic sbgp}
	Let $\bar E$ be a loxodromic subgroup of $\bar G$ and $\bar F$ the maximal normal elliptic subgroup of $\bar E$.
	There exist a lift $F$ of $\bar F$, an elementary subgroup $E'$ of $G$ containing $F$, and an epimorphism $\theta \colon \bar E \onto E'$ with the following properties.
	\begin{enumerate}
		\item \label{enu: lifting loxodromic sbgp - dihedral pair}
		$(E',F)$ is a dihedral pair.
		\item \label{enu: lifting loxodromic sbgp - identity on F}
		The morphism $\pi \circ \theta$ is the identity when restricted to $\bar F$.
		\item \label{enu: lifting loxodromic sbgp - embedding}
		The map $\theta$ induces an embedding from $\bar E$ into $\bar E/\bar F \times E'$.
	\end{enumerate}
\end{prop}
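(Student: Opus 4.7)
My strategy is to first lift $\bar F$ to an elliptic subgroup $F$ of $G$, then use \autoref{res: lifting automorphism} together with the gentleness of the $\bar G$-action to assemble a homomorphism $\theta$ on $\bar E$ coset by coset over $\bar F$.

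First I observe that $\bar F$, being the maximal normal elliptic subgroup of the loxodromic group $\bar E$, lies inside the orientation-preserving subgroup $\bar E^+$. By \autoref{res: normal elliptic fixing cyl}, the cylinder $\bar Z$ of $\bar E$ is contained in $\mov{\bar F,88\bar\delta}$. If $\bar F$ contained a strict rotation $\bar g$ at some apex $\bar v$, then by \autoref{res: local action of apex stab}~\ref{enu: local action of apex stab - rot} a suitable power $\bar g^k \in \bar F$ would satisfy $\mov{\bar g^k,\bar\delta}\subseteq B(\bar v,\bar\delta)$; since $\bar Z\subseteq \mov{\bar g^k,88\bar\delta}$, \autoref{res: fix qc}~\ref{enu: fix qc - translation} would then confine $\bar Z$ to a bounded region, contradicting loxodromicity of $\bar E$. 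Hence $\bar F$ has no strict rotation, and \autoref{res: dichotomy elliptic} supplies an elliptic lift $F \subseteq G$ with $\pi|_F$ a bijection onto $\bar F$. Moreover, since $\bar F$ is normalized by a loxodromic element of $\bar E$, it is a dihedral germ in $\bar G$; \autoref{res: lifting dihedral germ} then shows that $F$ itself is a dihedral germ in $G$.

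Next, by gentleness (\autoref{res: sc - gentle action}), $\bar E/\bar F$ is either $\cyclic$ or $\dihedral$. I pick a loxodromic $\bar h\in\bar E^+$ generating $\bar E^+/\bar F\cong \cyclic$, and apply \autoref{res: lifting automorphism} to find $h_0 \in N_G(F)$ such that $\pi(h_0)$ and $\bar h$ agree modulo $C_{\bar G}(\bar F)$, so that they induce the same conjugation action on $F \cong \bar F$. Setting $\theta(\bar f\bar h^n)=fh_0^n$, with $f=(\pi|_F)^{-1}(\bar f)$, defines a well-defined homomorphism on $\bar E^+$: the relation $h_0^n f h_0^{-n}=(\pi|_F)^{-1}(\bar h^n\bar f\bar h^{-n})$ follows inductively from $\pi(h_0fh_0^{-1})=\bar h\bar f\bar h^{-1}$ and the injectivity of $\pi|_F$. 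If $\bar E\neq \bar E^+$, I pick $\bar a\in \bar E\setminus \bar E^+$, so that $\bar a^2\in\bar F$, and apply the second part of \autoref{res: lifting automorphism} to obtain $a_0\in N_G(F)$ with $a_0^2\in F$ and $\pi(a_0^2)=\bar a^2$, while $\pi(a_0)$ agrees with $\bar a$ modulo $C_{\bar G}(\bar F)$. Extending by $\theta(\bar a)=a_0$ produces a homomorphism on all of $\bar E$.

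Finally, letting $E'=\theta(\bar E)\leq G$, one has $F\trianglelefteq E'$ and $E'/F\cong \bar E/\bar F$ embedding in $\dihedral$, so $(E',F)$ is a dihedral pair by \autoref{def: dihedral pair}. The subgroup $E'$ is a finite extension of $F\cdot\langle h_0\rangle$, hence virtually cyclic-by-elliptic, and is therefore elementary. Property \ref{enu: lifting loxodromic sbgp - identity on F} is built into the construction, and \ref{enu: lifting loxodromic sbgp - embedding} follows from it: the kernel of $\bar h\mapsto (\bar h\bar F,\theta(\bar h))$ is contained in $\bar F$, on which $\theta$ is injective. The main obstacle will be the dihedral case: the relation $\bar a\bar h\bar a^{-1}=\bar h^{-1}\bar f_0$ in $\bar E$ must be reproduced by the lifts $a_0,h_0$, and \autoref{res: lifting automorphism} only controls these modulo $C_G(F)$. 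One must absorb the residual ambiguity by a judicious adjustment of one lift by an element of $C_G(F)$, using the injectivity of $\pi|_F$ to pin down the $F$-valued correction that makes the dihedral relation hold exactly in $G$.
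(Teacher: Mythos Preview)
Your treatment of the orientation-preserving case is essentially the paper's Case~1, and the lifting of $\bar F$ is argued just as in the paper. The genuine gap is exactly the one you flag at the end: in the dihedral case, with your choice of generators $\bar h,\bar a$, the relation $\bar a\bar h\bar a^{-1}=\bar h^{-1}\bar f_0$ must be reproduced in $G$ by $a_0h_0a_0^{-1}=h_0^{-1}f_0$, and you only know this modulo $C_G(F)$. Your proposed fix (adjust one lift by an element of $C_G(F)$) is not carried out, and there is no reason it should succeed: writing $g=a_0h_0a_0^{-1}h_0 f_0^{-1}\in C_G(F)$ and replacing $h_0$ by $h_0c$ with $c\in C_G(F)$ leads to an equation in $C_G(F)$ involving the (generally noncommuting) automorphisms induced by $a_0$ and $h_0$ on $C_G(F)$, for which you have no control.

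The paper avoids this entirely by choosing a different generating set in Case~2: instead of a translation and a reflection, it picks two reflections $\bar a_1,\bar a_2\in\bar E$ whose images generate $\dihedral$. Since $\dihedral\cong\cyclic[2]\ast\cyclic[2]$, one has $\bar E=\bar A\ast_{\bar F}\bar B$ with $\bar A=\langle\bar F,\bar a_1\rangle$ and $\bar B=\langle\bar F,\bar a_2\rangle$, and the \emph{only} relations over $\bar F$ are $\bar a_i^2\in\bar F$ together with the conjugation action of $\bar a_i$ on $\bar F$. Both are handled directly by the second clause of \autoref{res: lifting automorphism}, which produces $b_i\in N_G(F)$ with $b_i^2\in F$, $\bar b_i^2=\bar a_i^2$, and $\bar a_i^{-1}\bar b_i\in C_{\bar G}(\bar F)$. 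Sending $\bar a_i\mapsto b_i$ then gives a well-defined homomorphism on each factor, hence on the amalgam, with no mixed relation to verify. A minor side remark: you assert $E'/F\cong\bar E/\bar F$, but in general $\theta$ only induces an epimorphism $\bar E/\bar F\onto E'/F$ (the lift $h_0$ or $b_1b_2$ need not be loxodromic); the paper simply notes that a quotient of a dihedral group is still dihedral, which is all that is needed for $(E',F)$ to be a dihedral pair.
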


\begin{proof}
	Since the action of $\bar G$ on $\bar X$ is gentle (\autoref{res: sc - gentle action}), the group $\bar E$ fits in a short exact sequence
	\begin{equation*}
		1 \to \bar F \to \bar E \xrightarrow q \mathbf L \to 1,
	\end{equation*}
	where $\mathbf L$ is either $\cyclic$ or $\dihedral$.
	The cylinder $\bar Z$ of $\bar E$ is contained in $\fix{\bar F, 100\bar \delta}$ (\autoref{res: normal elliptic fixing cyl}).
	In particular, $\fix{\bar F, 100\bar \delta}\cap \zeta(X)$ is non-empty.
	It follows from \autoref{res: dichotomy elliptic} that $\bar F$ admits a lift in $G$ that we denote by $F$.
	The subgroup $\bar F$ is a dihedral germ, hence so is $F$ (\autoref{res: lifting dihedral germ}).
	We now claim that there exists an elementary subgroup $E'$ of $G$ containing $F$ as a normal subgroup such that the canonical section $\bar F \to F$ extends to an epimorphism $\theta \colon \bar E \onto E'$.
	To that end we distinguish two cases.
	
	\paragraph{Case 1.} 
	Assume that $\bar E$ preserves the orientation, i.e. $\mathbf L = \cyclic$.
	Then $\bar E$ splits as a semi-direct product $\bar E = \sdp{\bar F}\mathbf L$.
	Let $\bar h$ be a primitive element of $\bar E$ (i.e. an element whose image under $q$ generates $\mathbf L$).
	According to \autoref{res: lifting automorphism} there exists $h_0 \in \Norm F$ such that $\bar h^{-1}\bar h_0$ centralizes $\bar F$.
	Let $E'$ be the subgroup of $G$ generated by $F$ and $h_0$.
	The canonical section $\bar F \to F$ extends to an epimorphism $\theta \colon \bar E \to E'$ sending $\bar h$ to $h_0$.
		
	\paragraph{Case 2.} 
	Assume that $\bar E$ does not preserve the orientation, so that $\mathbf L=\dihedral$.
	Let $\mathbf x_1,\mathbf x_2 \in \dihedral$ be two reflections generating $\mathbf L$.
	Let $\bar a_1, \bar a_2 \in \bar E$ be pre-images of $\mathbf x_1$ and $\mathbf x_2$ respectively.
	By construction $\bar a_i$ normalizes $\bar F$ and $\bar a_i^2$ belongs to $\bar F$.
	According to \autoref{res: lifting automorphism} there exists $b_i \in \Norm F$ with the following properties: $\bar a_i^{-1} \bar b_i$ centralizes $\bar F$; $b_i^2$ belongs to $F$; and $\bar a_i^2 = \bar b_i^2$.
	Let $E'$ be the subgroup of $G$ generated by $F$, $a_1$, and $a_2$.
	The canonical isomorphism $\bar F \to F$ extends to an epimorphism $\theta \colon \bar E \to E'$ sending $\bar a_i$ to $b_i$. 
		
	In both cases we have build the map announced in the claim.
	As $\theta$ extends the canonical section $\bar F \to F$, the composition $\pi \circ \theta$ is the identity when restricted to $\bar F$.
	Moreover $\theta$ induces an epimorphism $\bar E/\bar F \onto E'/F$.
	Any quotient of a dihedral group is still a dihedral group.
	Hence $(E', F)$ is a dihedral pair.
	In particular, $E'$ is elementary.
	One checks that the map $\bar E \to \bar E/\bar F \times E'$ induced by $\theta$ is an embedding.
\end{proof}

In the remainder of this section we revisit the previous statement and explore further the structure certain loxodromic subgroups that cannot be lifted.
As suggested by \autoref{exa: loxo cannot be lifted} such a group $\bar E$ often does not preserves the orientation. 
In particular, it splits as $\bar E = \bar A \ast_{\bar C}\bar B$ where $\bar C$ is the maximal elliptic normal subgroup of $\bar E$ and has index $2$ in both $\bar A$ and $\bar B$.
If $\bar E$ could be lifted, then obviously so would $\bar A$ and $\bar B$.
Nevertheless the converse is false. 
This is the purpose of the first proposition.
The second one discusses the case where $\bar A$ or $\bar B$ cannot be lifted.

\begin{prop}
\label{res: lifting loxodromic sbgp w/ liftable elliptic}
	Let $A$ and $B$ be two elliptic subgroups of $G$.
	Denote by $\bar A$ and $\bar B$ their respective images in $\bar G$.
	Assume that their intersection $\bar C = \bar A \cap \bar B$ has index $2$ in both $\bar A$ and $\bar B$ so that $\bar E =\bar A \ast_{\bar C}\bar B$ is elementary.
	There exists $u \in G$ with the following properties.
	\begin{enumerate}
		\item The image $\bar u$ of $u$ in $\bar G$ centralizes $\bar C$.
		\item The subgroup $A \cap uBu^{-1}$ contains the pre-image of $\bar C$ in $A$, and $\group{A, uBu^{-1}}$ is elementary.
	\end{enumerate}
\end{prop}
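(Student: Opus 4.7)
The plan is to construct $u$ by appealing to the uniqueness of elliptic lifts applied to the natural pre-images of $\bar C$ inside $A$ and $B$, and then to show that $\group{A, uBu^{-1}}$ is elementary by recognizing it as a subgroup of dihedral shape.

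First I would write $C_A$ (\resp $C_B$) for the subgroup $A \cap \pi^{-1}(\bar C)$ (\resp $B \cap \pi^{-1}(\bar C)$). Since $A$ and $B$ are elliptic, \autoref{res: isom on non-loxo elem} tells us that $\pi$ restricts to isomorphisms $A \xrightarrow{\sim} \bar A$ and $B \xrightarrow{\sim} \bar B$, so $C_A$ and $C_B$ are two elliptic lifts of $\bar C$ in $G$. Applying \autoref{res: uniqueness elliptic lift} with $F_1 = C_A$ and $F_2 = C_B$ (the moving-set hypothesis on $F_2$ is automatic for an elliptic subgroup by \autoref{res: fix set elliptic}) produces $v \in G$ whose image $\bar v$ centralizes $\bar C$ and satisfies $v C_A v^{-1} = C_B$. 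Setting $u = v^{-1}$, the image $\bar u$ again centralizes $\bar C$, while $u C_B u^{-1} = C_A \subset uBu^{-1}$. Combined with $C_A \subset A$, this delivers the first two conclusions.

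It remains to prove that $E' = \group{A, uBu^{-1}}$ is elementary, and the natural route is through the notion of dihedral shape. Both $A$ and $uBu^{-1}$ contain $C_A$ as an index-$2$ normal subgroup, so $C_A$ is normal in $E'$, and $E'/C_A$ is generated by two involutions; thus $E'/C_A$ is a quotient of $\cyclic[2] \ast \cyclic[2] = \dihedral$, and in particular embeds in a dihedral group.

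The main point requiring care is showing that $C_A$ is a \emph{dihedral germ}. To see this, I would observe that $\bar E = \bar A \ast_{\bar C} \bar B$ contains an element of infinite order (any word $\bar a \bar b$ with $\bar a \in \bar A \setminus \bar C$ and $\bar b \in \bar B \setminus \bar C$ has infinite order in the amalgamated product), so the elementary group $\bar E$ is parabolic or loxodromic; in the loxodromic case of interest, $\bar C$ is the maximal normal elliptic subgroup of $\bar E$ and is normalized by any loxodromic element of $\bar E$, exhibiting $\bar C$ as a dihedral germ of $\bar G$. \autoref{res: lifting dihedral germ} then hands us that the lift $C_A$ is a dihedral germ of $G$. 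Consequently $(E', C_A)$ is a dihedral pair, $E'$ has dihedral shape, and hence is elementary.
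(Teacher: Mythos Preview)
Your construction of $u$ is the same as the paper's (modulo the cosmetic inversion $v = u^{-1}$): both apply \autoref{res: uniqueness elliptic lift} to the two elliptic lifts $C_A$, $C_B$ of $\bar C$ to obtain $u$ with $\bar u$ centralizing $\bar C$ and $uC_Bu^{-1} = C_A$.

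Where you diverge is in proving $E' = \group{A, uBu^{-1}}$ elementary. You already observe that $C_A$ is normal in $E'$ with $E'/C_A$ a quotient of $\dihedral$; the paper stops right there, since an extension of an elliptic subgroup by a virtually cyclic group is automatically elementary (this is exactly the remark following \autoref{def: dihedral pair}, and it uses only that $C_A$ is elliptic, not that it is a dihedral germ). Your detour through dihedral germs is therefore unnecessary, and it also carries a small gap: you dismiss the parabolic case for $\bar E$ with ``in the loxodromic case of interest'', but the proposition does not assume $\bar E$ loxodromic, and if $\bar E$ were parabolic your argument that $\bar C$ is normalized by a loxodromic element of $\bar E$ would fail. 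Dropping the dihedral-germ paragraph and concluding directly from ellipticity of $C_A$ both closes the gap and matches the paper's one-line finish.
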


\begin{proof}
	Recall that the quotient map $\pi \colon G \onto \bar G$ induces an isomorphism from $A$ and $B$ onto $\bar A$ and $\bar B$ respectively (\autoref{res: isom on non-loxo elem}).
	Let $C_A$ (\resp $C_B$) be the pre-image of $\bar C$ in $A$ (\resp $B$).
	According to \autoref{res: uniqueness elliptic lift}, there exits $u \in G$ whose image $\bar u$ in $\bar G$ centralizes $\bar C$ such that $C_A = uC_Bu^{-1}$.
	It follows that $C_A \subset A \cap uBu^{-1}$.
	Moreover this elliptic subgroup has index $2$ in both $A$ and $uBu^{-1}$, hence $E_u$ is elementary.
\end{proof}

Before moving to the case where $\bar A$ or $\bar B$ cannot be lifted, let us illustrate the previous statement with an example.

\begin{exam}
\label{exa: lifting loxodromic sbgp w/ liftable elliptic}
	Let $G$ be the group defined by 
	\begin{equation*}
		G 
		= \left< a_1,a_2, b, c \mid a_1^2, a_2^2, b^2, c^2, [a_1,c], [a_2,c] \right> 
		= \left(\dihedral \times \cyclic[2]\right) \ast_{\cyclic[2]} \dihedral.
	\end{equation*}
	acting on its Cayley graph $X$.
	In this description the elements $a_1$, $a_2$ and $c$ (\resp $b$ and $c$) generate the factor $\dihedral \times \cyclic[2]$ (\resp $\dihedral$).
	In particular, the amalgamated subgroup is $\cyclic[2] = \group c$.
	Set $s = a_1a_2$ and $r = bc$.
	If $n$ is a sufficiently large integer the group $\bar G = G/\normal{s^n, r^n}$ is a small cancellation quotient of $G$. 
	Assume in addition that $n$ is even.
	It follows that $\bar u = \bar r^{n/2}$ commutes with $\bar c$.
	Consequently the subgroup $\bar E$ generated by $\bar a_1$, $\bar u^{-1}\bar a_1\bar u$ and $\bar c$ is loxodromic and isomorphic to $\dihedral \times \cyclic[2]$.
	Note that $\bar E$ cannot be lifted in $G$.
	Indeed since $\cyclic[2] = \group c$ is malnormal in $\dihedral = \group{b,c}$ every loxodromic subgroup of $G$ is isomorphic to either $\cyclic$ or $\dihedral$.
	Observe that $\bar E$ also splits as $\bar E = \bar A \ast_{\bar C}{\bar B}$ where $\bar A = \group{\bar a_1,\bar c}$, $\bar B = \group{\bar u^{-1}\bar a_1\bar u, \bar c}$, and $\bar C = \group{\bar c}$.
	As $\bar u$ commutes with $\bar b$, the group $\bar B$ is actually $\bar B = \bar u^{-1} \bar A \bar u$.
	We are in a configuration where both $\bar A$ and $\bar B$ are elliptic subgroups which can be lifted.
	
	As described in \autoref{res: lifting loxodromic sbgp w/ liftable elliptic}, a partial conjugation by $\bar u$ maps $\bar E$ to a new elementary subgroup  $\bar E_u = \bar A$ which is not necessarily loxodromic.
	In this precise example, it turns out that $\bar E_u$ can be lifted in $G$.
	This is not always the case though. 
	Indeed we can run the same construction with $\bar E' = \group{\bar a_1, \bar u^{-1}\bar a_2\bar u, \bar c}$.
	It is a loxodromic subgroup of $\bar G$ isomorphic to $\dihedral \times \cyclic[2]$.
	On the other hand, it splits as $\bar E' = \bar A_{\bar C}\bar B'$ where $\bar B' = \group{\bar u^{-1}\bar a_2\bar u, \bar c}$.
	In this case $\bar E'_u = \group{\bar a_1, \bar a_2, \bar c}$ is an elliptic subgroup of $\bar G$, which is isomorphic to $\dihedral[n] \times \cyclic[2]$, and thus \emph{cannot} be lifted.
	Nevertheless, in both cases, $\bar E_u$ or $\bar E'_u$ is the image (by the natural quotient map) of an \emph{elementary} subgroup of $G$, which was not the case for $\bar E$ or $\bar E'$.
\end{exam}

\begin{prop}
\label{res: lifting loxodromic sbgp w/ unliftable elliptic}
	Let $\bar E$ be a loxodromic subgroup of $\bar G$ that splits as $\bar E = \bar A \ast_{\bar C}\bar B$, where $\bar C$ is the maximal elliptic normal subgroup of $\bar E$ and has index $2$ in both $\bar A$ and $\bar B$.
	Assume that there exists $\bar v \in \bar{\mathcal V}$ such that $\bar A$ contains a strict rotation at $\bar v$.
	Let $q_{\bar v} \colon \stab{\bar v} \to \dihedral[n]$ be the associated geometric realization map and $\mathbf r \in \dihedral[n]$ a generator of the rotation group.
	If $n$ is divisible by $4$, then one of the following holds
	\begin{enumerate}
		\item $q_{\bar v} (\bar C)$ is trivial and 
		\begin{equation*}
			q_{\bar v}\left(\bar A\right) = \group{\mathbf r^{n/2}},
		\end{equation*}
		\item $q_{\bar v} (\bar C)$ is a reflection group generated by say $\mathbf x \in \dihedral[n]$ and 
		\begin{equation*}
			q_{\bar v}\left(\bar A\right) = \group{\mathbf x, \mathbf r^{n/4}\mathbf x\mathbf r^{-n/4}}.
		\end{equation*}
	\end{enumerate}
	Suppose now that there exist a subgroup $\bar E_0\subset\bar A$ and an element $\bar h \in \bar G$ such that $\bar h\bar E_0 \bar h^{-1}$ is contained in $\bar A$ and $\bar A=\group{\bar E_0,\bar C}$.
	Then either $\bar E_0$ contains a strict rotation, in which case $\bar h$ fixes $\bar v$ or the first case above fails and $q_{\bar v}$ maps $\bar E_0$ onto $\group{\mathbf r^{n/4}\mathbf x\mathbf r^{-n/4}}$.
\end{prop}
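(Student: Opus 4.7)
The plan is to analyze the image $q_{\bar v}(\bar A)$ inside $\dihedral[n]$ by exploiting the fact that $\bar C$ has index two in $\bar A$ and that, unlike $\bar A$, the subgroup $\bar C$ admits a lift. First I would observe that since $\bar A$ contains a strict rotation at $\bar v$, \autoref{rem: elliptic subgroup with strict rotation} yields $\bar A \subset \stab{\bar v}$, so $q_{\bar v}$ is well defined on $\bar A$.

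Next I would verify that $\bar C$ contains no strict rotation, and is therefore liftable by \autoref{res: dichotomy elliptic}. Indeed, the cylinder $\bar Z$ of the loxodromic subgroup $\bar E$ is unbounded and, by \autoref{res: normal elliptic fixing cyl}, lies in $\mov{\bar C, 88\bar\delta}$. Applying \autoref{res: fix qc}, this set is contained in a bounded neighborhood of $\mov{\bar C, 11\bar\delta}$. If $\bar C$ contained a strict rotation, \autoref{rem: elliptic subgroup with strict rotation} would trap $\mov{\bar C, 11\bar\delta}$, hence $\bar Z$, into a ball around some apex, contradicting unboundedness. Consequently $q_{\bar v}(\bar C)$ contains no non-trivial rotation, so it is either trivial or equals $\group{\mathbf x}$ for some reflection $\mathbf x$. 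This splits the analysis into the two cases of the proposition.

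The rest is a computation inside $\dihedral[n]$. Since $[\bar A : \bar C] = 2$, the index $[q_{\bar v}(\bar A) : q_{\bar v}(\bar C)]$ divides two, and $q_{\bar v}(\bar A)$ must contain a non-trivial rotation because $\bar A$ contains a strict rotation. When $q_{\bar v}(\bar C) = 1$ this forces $q_{\bar v}(\bar A)$ to have order two and contain a rotation, hence equal $\group{\mathbf r^{n/2}}$. When $q_{\bar v}(\bar C) = \group{\mathbf x}$, the extension must be non-trivial (since $\group{\mathbf x}$ contains no rotation), so $q_{\bar v}(\bar A)$ has order four; the unique order-four subgroup of $\dihedral[n]$ containing $\mathbf x$ together with a non-trivial rotation is the Klein group $\{1, \mathbf x, \mathbf r^{n/2}, \mathbf r^{n/2}\mathbf x\}$. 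The identity $\mathbf r^{n/4}\mathbf x \mathbf r^{-n/4} = \mathbf r^{n/2}\mathbf x$, valid precisely because $4 \mid n$, then rewrites this subgroup in the announced form.

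For the last part, suppose first that $\bar E_0$ contains a strict rotation $\bar g$. Since $\bar E_0 \subset \bar A \subset \stab{\bar v}$, the element $\bar g$ is a strict rotation at $\bar v$; the conjugate $\bar h\bar g\bar h^{-1}$ lies in $\bar A \subset \stab{\bar v}$ and is also a strict rotation, but by \autoref{res: local action of apex stab}~\ref{enu: local action of apex stab - rot} its unique fixed apex is $\bar h\bar v$, forcing $\bar h\bar v = \bar v$. If instead $\bar E_0$ contains no strict rotation, then $q_{\bar v}(\bar E_0)$ has no non-trivial rotation. Using $q_{\bar v}(\bar A) = \group{q_{\bar v}(\bar E_0), q_{\bar v}(\bar C)}$, Case~1 is immediately excluded since $q_{\bar v}(\bar A) = \group{\mathbf r^{n/2}}$ would be forced to equal $q_{\bar v}(\bar E_0)$; and in Case~2 the only subgroup of the Klein group containing no non-trivial rotation whose product with $\group{\mathbf x}$ exhausts $q_{\bar v}(\bar A)$ is $\group{\mathbf r^{n/2}\mathbf x} = \group{\mathbf r^{n/4}\mathbf x\mathbf r^{-n/4}}$. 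The main obstacle here is the lifting argument for $\bar C$; once that is settled, the remaining steps reduce to elementary bookkeeping in $\dihedral[n]$.
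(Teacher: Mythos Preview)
Your proposal is correct and follows essentially the same route as the paper: you show $\bar C$ can be lifted via the cylinder inclusion $\bar Z \subset \mov{\bar C, 88\bar\delta}$ (the paper invokes \autoref{res: normal elliptic fixing cyl} and \autoref{res: dichotomy elliptic} directly, but your more explicit unboundedness argument is equivalent), then carry out the same index-two and Klein-four bookkeeping in $\dihedral[n]$, and for the second part use the uniqueness of the fixed apex of a strict rotation exactly as the paper does. The only cosmetic difference is that the paper phrases the final generating-set argument as ``$q_{\bar v}(\bar E_0)$ and $q_{\bar v}(\bar C)$ must be two \emph{distinct} reflection groups'', whereas you enumerate the rotation-free subgroups of the Klein group; these are the same observation.
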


\begin{rema*}
	Our assumption on $\bar A$ exactly means that $\bar A$ cannot be lifted (\autoref{res: dichotomy elliptic}).
	It follows from \autoref{rem: elliptic subgroup with strict rotation} that $\bar A$ is contained in $\stab{\bar v}$, hence the image under $q_{\bar v}$ of $\bar C$ or $\bar A$ is well defined.
	It is important to note that in the second part of the statement $\bar h$ is not necessarily an element of $\stab{\bar v}$.
	In particular, if $\bar h$ does not fix $\bar v$, the last conclusion tells us that $\bar C$ and $\bar E_0$ are two reflection groups at $\bar v$;
	geometrically we can think that one is the conjugate of the other by a quarter-turn at $\bar v$.
\end{rema*}

\begin{proof}
	The first part of the proof is essentially a variation on \autoref{res: lifting subgroup of index 2}.
	Since the cylinder of $\bar E$ is contained in $\fix{\bar C, 100\bar \delta}$ (\autoref{res: normal elliptic fixing cyl}) the subgroup $\bar C$ can be lifted and thus does not contain a strict rotation (\autoref{res: dichotomy elliptic}).
	In particular, $\bar C$ is either locally trivial at $\bar v$ or a reflection group at $\bar v$.
	Assume first that $q_{\bar v}$ maps $\bar C$ to the trivial group.
	By assumption $\bar A$ contains a strict rotation at $\bar v$ whereas $[\bar A: \bar C] = 2$, which forces
	\begin{equation*}
		q_{\bar v} (\bar A) = \group{\mathbf r^{n/2}}.
	\end{equation*}
	Assume now that $q_{\bar v}$ maps $\bar C$ to a reflection group generated by say $\mathbf x \in \dihedral[n]$.
	Reasoning as above we observe that
	\begin{equation*}
		q_{\bar v}(\bar A) = \group{\mathbf x, \mathbf r^{n/2}} = \group{\mathbf x, \mathbf  r^{n/4}\mathbf  x \mathbf r^{-n/4}}.
	\end{equation*}
	This completes the first part of the statement.
	
	Let us focus now on the second half of the proposition.
	Suppose first that $\bar E_0$ contains a strict rotation (which is necessarily at $\bar v$).
	Then $\bar h \bar E_0 \bar h^{-1}$ contains a strict rotation at $\bar h \bar v$, which as an element of $\bar A$ has to fix $\bar v$.
	Strict rotations having a single fixed vertex (\autoref{res: local action of apex stab}) it yields $\bar h \bar v = \bar v$.
	Suppose now that $\bar E_0$ does not contain a strict rotation.
	In particular, $q_{\bar v}$ maps both $\bar E_0$ and $\bar C$ to subgroups of $\dihedral[n]$ which are trivial or reflection groups.
	Since $\bar E_0$ and $\bar C$ generates $\bar A$, the only possible configuration for $q_{\bar v}(\bar A)$ to contain a non-trivial rotation is the one where $q_{\bar v}(\bar E_0)$ and $q_{\bar v}(\bar C)$ are two \emph{distinct} reflection groups.
	Consequently the first case above fails, and $q_{\bar v}(\bar E_0) = \group{\mathbf r^{n/4}\mathbf x\mathbf r^{-n/4}}$, see \autoref{fig: travail leminaire}
	\begin{figure}[htbp]
    	\centering
    	\includegraphics[page=8, width=0.4\textwidth]{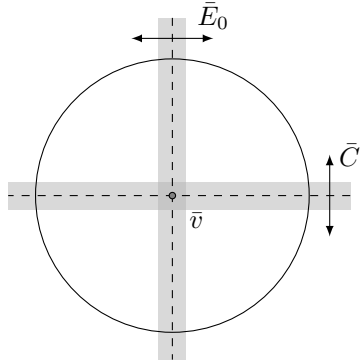}
    	\caption{
			The action of $\bar E$ on $B(\bar v, \rho)$.
			One assumes here that $\bar E_0$ does not contain a strict rotation.
			The shaded areas represent $\fix{\bar C, \bar \delta$} and $\fix{\bar E_0, \bar \delta$} respectively.
			}
    	\label{fig: travail leminaire}
    \end{figure}
\end{proof}

%%%%%%%%%%%%%%%%%%%%%%%%%%%%%%%%%%%%%%%%%%%%%%%%%%%%%%%%%%%%%%%%%%%%%%%%%%%%%%%%%%%%
%
\subsection{Invariants of $\bar G$ acting on $\bar X$}
%
%%%%%%%%%%%%%%%%%%%%%%%%%%%%%%%%%%%%%%%%%%%%%%%%%%%%%%%%%%%%%%%%%%%%%%%%%%%%%%%%%%%%
\label{sec: invariants quotient}

This section is devoted to the study of the numerical invariants associated to the action of $\bar G$ on $\bar X$, namely $\inj[\bar X]{\bar G}$, $A(\bar G, \bar X,d)$ and $\nu_{\rm stg}(\bar G, \bar X)$ (see \autoref{sec: invariants} for the definitions).
As we explained earlier, the first two are purely geometric, whereas the last one has a mixed nature and captures both geometric and algebraic features of $\bar G$.

%-----------------------------------------------------------------------------------
%
\subsubsection{Geometric invariants}
%
%-----------------------------------------------------------------------------------
\label{sec: invariants quotient - geom}

\paragraph{The injectivity radius.}

\begin{prop}[Compare with {\cite[Lemma~5.11.1]{Delzant:2008tu}}]
\label{res: injectivity radius}
	Let $N$ be a subgroup of $G$ containing $K$ and $\bar N$ its image in $\bar G$.
	We denote by $\ell$ the infimum over the stable translation length (in $X$) of loxodromic elements of $N$ which do not belong to $\stab Y$ for some $(H,Y) \in \mathcal Q$.	
	Then 
	\begin{equation*}
		\inj[\bar X]{\bar N} \geq \min\left\{ \kappa \ell, \bar \delta\right\},
	\end{equation*}
	where $\kappa = \bar \delta /\pi \sinh (10\bar \delta)$.
\end{prop}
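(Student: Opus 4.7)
I would argue by contradiction. Assume there exists a loxodromic element $\bar g \in \bar N$ with $\snorm[\bar X] {\bar g} < \min\left\{ \kappa \ell, \bar \delta\right\}$; the goal is to produce $g \in N$ that is loxodromic on $X$, does not lie in any $\stab Y$ with $(H, Y) \in \mathcal Q$, and satisfies $\snorm[X] g < \ell$, contradicting the definition of $\ell$.

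The first step is to locate a quasi-axis of $\bar g$ far from the apex set. Every apex stabilizer $\stab{\bar v}$ fixes $\bar v$ and is therefore elliptic, so the loxodromic $\bar g$ fixes no apex. Let $\bar \gamma$ be an $L_0 \bar \delta$-local $(1, \bar \delta)$-quasi-geodesic from $\bar g^-$ to $\bar g^+$, and set $d = d(\bar v, \bar \gamma)$ for some $\bar v \in \bar{\mathcal V}$. The $\bar g$-invariance of $\bar \gamma$ places $\bar g \bar v \neq \bar v$ at distance $d$ from $\bar \gamma$ as well, and combining the triangle inequality with the quasi-geodesic control of $\bar \gamma$ yields $d(\bar v, \bar g \bar v) \leq 2d + \snorm[\bar X] {\bar g} + O(\bar \delta)$. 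Since any two distinct apices of $\bar X$ lie at distance at least $2\rho$ apart (their preimages in $\dot X$ are separated by traversing two cones of radius $\rho$), this forces $d \geq \rho - O(\bar \delta)$; hence any portion of $\bar \gamma$ lying inside a cone $Z(Y)$ remains within depth at most $25 \bar \delta$ of its boundary $\iota(Y)$. Lemma~\ref{res: lifting quasi-convex} then lifts $\bar \gamma$ to a quasi-axis $\gamma \subset \dot X$ satisfying $\gro{z}{z'}v > 4L_0 \dot \delta$ for every apex $v$, and Lemma~\ref{res: zeta isom on qc far from apices} provides a unique lift $g \in G$ of $\bar g$ preserving $\gamma$; this $g$ lies in $N$ because $K \subset N$.

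By Corollary~\ref{res: radial proj qi}, the radial projection $p$ restricted to $\gamma$ is a $G$-equivariant quasi-isometric embedding into $X$, so $p(\gamma)$ is a $g$-invariant quasi-geodesic and $g$ is loxodromic in $X$. Furthermore, if $g$ belonged to $\stab Y$ for some $(H, Y) \in \mathcal Q$, its image $\bar g$ would lie in the elliptic subgroup $\stab{\bar v}$, contradicting the loxodromicity of $\bar g$ in $\bar X$; hence $g$ belongs to no such $\stab Y$.

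The main technical obstacle is the sharp distortion estimate $\snorm[X] g \leq \snorm[\bar X] {\bar g}/\kappa$. Combining the hyperbolic cosine formula of Proposition~\ref{res: def distance cone} with the depth bound derived above shows that along a segment of $\gamma$ inside a cone $Z(Y)$, the ratio between the $Y$-distance of the radial projections and the $\dot X$-distance of the original points is at most $\pi \sinh(25 \bar \delta)/\bar \delta = 1/\kappa$. Concretely, this comes down to comparing $d_Y(y, y') = \sinh \rho \cdot \theta(y, y')$ with the cone distance between points $(y, r)$ and $(y', r)$ at radius $r \geq \rho - 25 \bar \delta$, while bounding the swept angle $\theta$ via the quasi-geodesic property of $\gamma$. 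Combining this with $\snorm[\dot X] g = \snorm[\bar X] {\bar g}$ (the isometry statement in Lemma~\ref{res: zeta isom on qc far from apices}) yields $\snorm[X] g < \ell$, closing the contradiction.
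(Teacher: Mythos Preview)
Your overall strategy—lift a quasi-axis of $\bar g$ away from the apices, obtain a loxodromic lift $g \in N$ not in any $\stab Y$, and bound $\snorm[X] g$ via the distortion of the radial projection—is reasonable, and the depth estimate $d(\bar v,\bar\gamma)\geq \rho - O(\bar\delta)$ is essentially correct. But the final paragraph, where you claim the Lipschitz constant of the radial projection along $\gamma$ is at most $1/\kappa = \pi\sinh(25\bar\delta)/\bar\delta$, is not justified and appears to be wrong. A computation with the cone formula at radius $r\geq \rho-25\bar\delta$ gives a local ratio $\dist[Y]{y}{y'}/\dist[Z(Y)]{(y,r)}{(y',r)}$ of order $\sinh\rho/\sinh(\rho-25\bar\delta)\approx e^{25\bar\delta}$, not $\pi\sinh(25\bar\delta)/\bar\delta$; and the only ready-made linear bound, namely \autoref{res: radial proj qi - prelim}, gives the much worse constant $\pi\sinh\rho/(2\rho)$, which is far too large to conclude $\snorm[X] g < \ell$ from $\snorm[\bar X]{\bar g} < \kappa\ell$. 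Your sketch (``bounding the swept angle $\theta$ via the quasi-geodesic property'') does not pin down where the specific constant $1/\kappa$ comes from.

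The paper avoids this entirely by a different device: rather than controlling the distortion along the whole axis, it passes to a power $\bar g^m$, where $m$ is the largest integer with $m\min\{\kappa\ell,\bar\delta\}\leq 2\bar\delta$. This forces $\norm[\bar X]{\bar g^m}\leq 18\bar\delta$, so one only needs to lift the bounded set $\mov{\bar g^m,50\bar\delta}$ (which stays far from $\bar{\mathcal V}$ because $\bar g$ fixes no apex). One then applies the single-scale comparison $\mu(\dist[X]{g^mz}{z})\leq \dist[\dot X]{g^mz}{z}\leq 50\bar\delta$ and \autoref{res: map mu} to obtain $m\ell\leq m\snorm[X] g\leq \dist[X]{g^mz}{z}\leq \pi\sinh(25\bar\delta)=\kappa^{-1}\bar\delta$, contradicting the maximality of $m$. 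The constant $\kappa$ thus arises from evaluating the nonlinear map $\mu$ at the fixed scale $50\bar\delta$, not from a Lipschitz estimate along the axis; this is the point your argument misses.
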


\begin{rema*}
	By convention, if $\bar N$ does not contain any loxodromic element then $\inj[\bar X]{\bar N}$ is infinite, in which case the statement is void.
\end{rema*}

\begin{proof}
	Let $\bar g$ be a loxodromic element of $\bar N$.
	We need to show that $\snorm{\bar g} \geq \min\left\{ \kappa \ell, \bar \delta\right\}$.
	By (\ref{eqn: regular vs stable length}) we have $m\snorm{\bar g} \geq \norm{\bar g^m} - 8\bar \delta$, for every $m \in \N$.
	Therefore it suffices to find an integer $m$ such that 
	\begin{equation*}
		\norm{\bar g^m} \geq  m\min\left\{ \kappa \ell, \bar \delta\right\} +8\bar \delta.
	\end{equation*}
	We denote by $m$ the largest integer satisfying $m \min\left\{ \kappa \ell, \bar \delta\right\} \leq 2\bar \delta$.
	Assume that $\norm{\bar g^m}$ is smaller than $m\min\left\{ \kappa \ell, \bar \delta\right\} +8\bar \delta$.
	In particular, $\norm{\bar g^m} < 10\bar \delta$.
	Thus $\fix{\bar g^m,10\bar \delta}$ is non empty.
	Moreover $d(\bar x, \bar {\mathcal V}) \geq \rho - 5 \bar \delta$, for every $\bar x \in \fix{\bar g^m, 10 \bar \delta}$.
	Indeed if it was not the case, $\bar g^m$ would fix an apex $\bar v \in \bar{\mathcal V}$ which contradicts the fact that $\bar g$ is loxodromic.
	Hence $\bar Z = \fix{\bar g^m, 20 \bar \delta}$ contains a point in $\zeta(X)$.
	Note also that $\bar Z$ is a $8\bar \delta$-quasi-convex subset.
	Thus $\gro{\bar z}{\bar z'}{\bar v} > \rho/2$, for every $\bar z, \bar z' \in \bar Z$ and $\bar v \in \bar{\mathcal V}$.
	By \autoref{res: lifting quasi-convex}, there exists a subset $Z$ of $\dot X$ such that the map $\zeta : \dot X \rightarrow \bar X$ induces an isometry from $Z$ onto $\bar Z$ and the projection $\pi : G \onto \bar G$ induces an isomorphism from $\stab Z$ onto $\stab {\bar Z}$.
	Observe that $\bar g$ preserves $\bar Z$.
	We denote by $g$ the preimage of $\bar g$ in $\stab Z$.
	Since the kernel $K$ is contained in $N$, the element $g$ belongs to $N$.
	By construction $g$ is a loxodromic element which does not belong to any $\stab Y$ where $(H,Y) \in \mathcal Q$.
	Hence $\snorm g \geq \ell$.
	As we noticed before $\bar Z$ contains a point in $\bar z \in \zeta(X)$.
	Let $z \in X$ be its pre-image in $Z$.
	It follows from \autoref{res: loose comparison metric X and dot X}, that
	\begin{equation*}
		\mu\left(\dist[X]{g^mz}z\right) 
		\leq \dist[\dot X]{g^mz}z
		\leq \dist[\bar X]{\bar g^m \bar z}{\bar z}
		\leq 20 \bar \delta <2 \rho.
	\end{equation*}
	By \autoref{res: map mu},
	\begin{equation*}
		m\ell \leq m\snorm g\leq  \dist[X]{g^mz}z \leq \pi \sinh (10 \bar \delta) = \kappa^{-1}\bar\delta,
	\end{equation*}
	which contradicts the maximality of $m$.
\end{proof}

\paragraph{Acylindricity.}

\begin{prop}[Compare with {\cite[Lemma~5.9.5]{Delzant:2008tu}}]
\label{res: acyl quotient preparatory}
	For every $d_0 \in \intval0{\rho/10}$, we have
	\begin{equation*}
		A(\bar G, \bar X, d_0) \leq A(G, X, d_1) + 3d_0, 
		\quad\text{where}\quad 
		d_1 = \pi\sinh(2d_0).
	\end{equation*}
\end{prop}

\begin{proof}
	Let $\bar S$ be a subset of $\bar G$ generating a non-elementary subgroup.
	Let $d_0 \in \intval 0{\rho/10}$.
	The goal is to bound from above the diameter of $\bar Z_0 = \fix{\bar S, d_0}$.
	Without loss of generality we can assume that this set is non-empty.
	Observe that $d(\bar x, \bar {\mathcal V}) \geq \rho - d_0/2$, for every $\bar x \in \bar Z$.
	Indeed if it was not the case, every element of $\bar S$ would fix a common apex $\bar v \in \bar{\mathcal V}$, contradicting the fact that $\bar S$ generates a non-elementary subgroup.
	Let $\bar Z$ be the $d_0$-neighborhood of $\bar Z_0$.
	Since $\bar Z_0$ is $8 \bar \delta$-quasi-convex (\autoref{res: fix qc}), we have $\gro{\bar z}{\bar z'}{\bar v} > \rho/4$, for every $\bar z, \bar z' \in \bar Z$ and $\bar v \in \bar{\mathcal V}$.
	According to \autoref{res: lifting quasi-convex} there exists a subset $Z$ of $\dot X$ such that the projection $\zeta \colon \dot X \to \bar X$ induces an isometry from $Z$ onto $\bar Z$.
	Moreover $\gro z{z'}v > \rho/4$, for every $z,z' \in Z$ and $v \in \mathcal V$ (the Gromov product is computed here in $\dot X$).
	We denote by $Z_0$ the pre-image of $\bar Z_0$ in $Z$.
	Let $\bar g \in \bar S$.
	By construction $\bar g\bar z$ belongs to $\bar Z$ for every $\bar z \in \bar Z_0$.
	Consequently there exists a (unique) $g \in G$ such that for every $z\in Z_0$ we have $\dist[\dot X]{gz}z = \dist{\bar g \bar z}{\bar z}$ (\autoref{res: zeta isom on qc far from apices}).
	We denote by $S$ the set of all $g \in G$ obtained in this way.
	Note that $S$ does not generate an elementary subgroup, otherwise so would $\bar S$.
	Let $\bar z \in \bar Z_0$ and $z \in \dot X$ its pre-image in $Z_0$.
	Let $y = p(z)$ be the radial projection of $z$.
	Since $\bar Z$ lies in the $3d_0/2$-neighborhood of $\zeta(X)$ we have $\dist[\dot X] zy \leq 3d_0/2$.
	Combining the triangle inequality with \autoref{res: loose comparison metric X and dot X}, we get for every $g \in S$, 
	\begin{equation*}
		\mu\left( \dist{gy}y\right)
		\leq \dist[\dot X]{gy}y 
		\leq \dist[\dot X]{gz}z + 3d_0 
		\leq \dist{\bar g\bar z}{\bar z} + 3d_0
		< 4d_0 < 2\rho.
	\end{equation*}
	By \autoref{res: map mu}, we get that $y$ lies in $\fix{S,d_1} \subset X$.
	Assume that $\bar z'$ is another point in $\bar Z_0$.
	As previously we denote by $z' \in \dot X$ its pre-image in $Z_0$ and by $y' = p(z')$ the radial projection of $z'$.
	In particular, $y'$ also belongs to $\fix{S,d_1}\subset X$ and $\dist[\dot X]{z'}{y'} \leq 3d_0/2$.
	It follows from the triangle inequality that 
	\begin{equation*}
		\dist{\bar z}{\bar z'} 
		\leq \dist[\dot X]z{z'} 
		\leq \dist y{y'} + 3d_0 
		\leq \diam\left(\fix{S,d_1}\right) + 3d_0.
	\end{equation*}
	This inequality holds for every $\bar z, \bar z' \in \fix{\bar S, d_0}$.
	Hence
	\begin{equation*}
		\diam\left(\fix{\bar S,d_0}\right) \leq \diam\left(\fix{S,d_1}\right) +3d_0. \qedhere
	\end{equation*}
\end{proof}

%-----------------------------------------------------------------------------------
%
\subsubsection{Mixed invariants}
%
%-----------------------------------------------------------------------------------
\label{sec: invariants quotient - mixed}

In view of Propositions~\ref{res: acyl quotient preparatory} and \ref{res: local-to-global acyl}, the $\nu$-invariant of $G$ can be used to control the acylindricity invariant $A(\bar G, \bar X,d)$ of the quotient $\bar G$.
If we want to iterate the procedure we need to control as well the $\nu$-invariant of $\bar G$.
Let us start with an informal discussion to emphasizes the difficulties that may arise along the way.
For simplicity let us assume that $G$ (hence $\bar G$) does not contain any parabolic subgroup.
Indeed those subgroups will not be a source of trouble.

If $\bar G$ has no even torsion, one can prove using only \emph{geometrical} arguments the following dichotomy -- see for instance the proof of \cite[Proposition~5.28]{Coulon:2016if}.
Given any chain $\bar {\mathcal C} = (\bar g_0, \dots, \bar g_m)$ generating an elementary subgroup of $\bar G$ and $\bar h \in \bar G$ a loxodromic conjugating element of $\bar {\mathcal C}$,
\begin{enumerate}
	\item either $\group{\bar g_0, \bar h}$ is elementary,
	\item or the chain $\bar {\mathcal C}$ can be lifted to a chain $\mathcal C = (g_0, \dots, g_m) $ in $G$ which generates an elementary subgroup of $G$ and  where one of the conjugating elements $h$ of $\mathcal C$ is a pre-image of $\bar h$.
\end{enumerate}
In the latter case, $h$ is necessarily loxodromic. 
If in addition $m \geq \nu(G,X)$, we can conclude that $\group{g_0,h}$ and thus $\group{\bar g_0, \bar h}$ is elementary.
It follows then that $\nu(\bar G, \bar X) \leq \nu(G, X)$.

Unfortunately this strategy fails in the presence of even torsion.
In \autoref{res: lifting automorphism} we observed the following phenomenon.
Let $F$ be an elliptic subgroup of $G$ and $\bar F$ its image in $\bar G$.
If $\bar h$ is an element of $\bar G$ normalizing $\bar F$, then there exists an element $h_0 \in G$ normalizing $F$ whose action by conjugation on $F$ coincide with the one of $\bar h$ on $\bar F$.
However $h_0$ is not necessarily a pre-image of $\bar h$.
In particular, if $\bar h$ is loxodromic, there is no reason that $h_0$ should be loxodromic as well.
The same issue arises when lifting chain. 
If a chain in $\bar G$ admits a loxodromic conjugating element, there is no reason that its lift in $G$ (provided it exists) has a loxodromic conjugating element.
This motivates the definition of the strong $\nu$-invariant (see \autoref{def: nu inv stg}).
However this is not the only obstruction.
As illustrated by the next example, the above dichotomy may fail anyway.

\begin{exam}
\label{exa: lysenok example}
	Start with the hyperbolic group 
	\begin{equation*}
		G = \left< a,b,c \mid a^2, b^2, c^2, [a,c] = 1 \right> = \dihedral \ast_{\cyclic[2]} \left(\cyclic[2] \times \cyclic[2]\right),
	\end{equation*}
	acting on its Cayley graph $X$.
	In this description $a$ and $b$ (\resp $a$ and $c$) generate the factor $\dihedral$ (\resp $\cyclic[2] \times \cyclic[2]$).
	The amalgamated group is $\cyclic[2] = \group a$.
	Set $r = ab$.
	Let $n \in \N$ be large integer divisible by $4$, so that the group 
	\begin{equation*}
		\bar G = G / \normal{r^n}.
	\end{equation*}
	is a small cancellation quotient of $G$.
	It follows from the additional relations that $\bar a$ and $\bar b$ generate a copy of $\dihedral[n]$.
	In particular, $\bar r^{-n/4}\bar a\bar r^{n/4}$ is an involution which commutes with $\bar a$, so that the subgroup
	\begin{equation*}
		\bar E = \group{\bar a,\bar r^{-n/4}\bar a\bar r^{n/4},\bar c} = \cyclic[2] \times \dihedral
	\end{equation*}
	is loxodromic.
	We denote by $\bar t = (\bar r^{-n/4}\bar a\bar r^{n/4})\bar c$ the translation in the dihedral factor of $\bar E$.
	Finally set
	\begin{equation*}
		\bar g_0 = \bar r^{-n/4}\bar a\bar r^{n/4}, \quad 
		\bar g_1 = \bar a, \quad \text{and} \quad 
		\bar g_2 = \bar t\bar g_0\bar t^{-1}
	\end{equation*}
	Note that $\group{\bar g_0, \bar g_1, \bar g_2}$ is a subgroup of $\bar E$ which is also isomorphic to $\cyclic[2] \times \dihedral$ (see \autoref{fig: pathological example}).
	\begin{figure}[htbp]
    	\centering
    	\includegraphics[page=6, width=\textwidth]{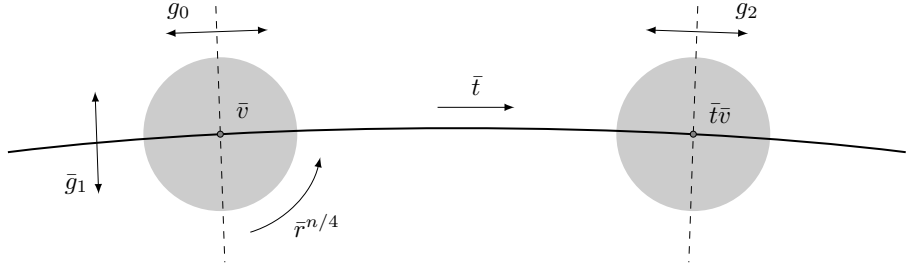}
    	\caption{
			The action of $\bar E$ and $\bar r^{n/4}$ on the space $\bar X$.
			The shaded discs respectively represent $B(\bar v, \rho)$ and $B(\bar t\bar v, \rho)$ where $\bar v$ is the cone point associated to the relation $\bar r^n = 1$.
		}
    	\label{fig: pathological example}
    \end{figure}
	As $\bar t$ commutes with $\bar g_1$, we observe that
	\begin{equation*}
		\bar g_1 = \bar h\bar g_0\bar h^{-1}
		\quad \text{and} \quad 
		\bar g_2 = \bar h\bar g_1\bar h^{-1}, 
		\quad \text{where} \quad
		\bar h = \bar t\bar r^{n/4}.
	\end{equation*}
	Hence $(\bar g_0, \bar g_1, \bar g_2)$ is a chain which generates an elementary subgroup of $\bar G$.
	The reader can check that $\group{\bar g_0,\bar h}$ is not an elementary subgroup of $\bar G$.
	It cannot either be lifted to a \emph{chain} which generates an elementary subgroup of $G$.
\end{exam}

In the previous example, the difficulty comes from the fact that the subgroup $\group{\bar g_0, \bar g_1, \bar g_2}$ generated by the chain is a loxodromic subgroup of $\bar G$ that cannot be lifted.
Note that $\group{\bar g_0, \bar g_1}$ is an elliptic subgroup that cannot be lifted either.
As described in \autoref{res: lifting loxodromic sbgp w/ unliftable elliptic}, $\bar g_1$ is obtained from $\bar g_0$ by conjugation by a quarter turn.

This discussion suggests that the conjugation by a quarter turn rotation plays an important role.
This is the place where \emph{algebra} enters the stage.
Take $\bar v \in \bar{\mathcal V}$.
Let $\bar F$ be the kernel of the geometric realization map $q_{\bar v} \onto \dihedral[n]$.
Assume as in our example that $n$ is divisible by $4$.
Let $\bar r$ be strict rotation at $\bar v$.
From a geometric point of view, conjugating $\bar F$ by $\bar r$ is not a trackable operation.
Indeed this will send an element $\bar g$ which is locally trivial at $\bar v$ to $\bar r \bar g \bar r^{-1}$ which is still locally trivial at $\bar v$.
Hence one cannot distinguish $\bar g$ and $\bar r \bar g \bar r^{-1}$ from their action on $B(\bar v, \rho)$.
This does not mean that $\bar g$ and $\bar r$ commutes though.
Nevertheless if we had a better understanding of the \emph{algebraic structure} of $\stab{\bar v}$, more precisely if we knew that $\stab{\bar v}$ is essentially a subproduct of dihedral groups, it could be possible to find a suitable quarter turn at $\bar v$ which truly commutes with a prescribed subset of $\bar F$ (see \autoref{res: norm in prod of dihedral}).

With additional algebraic hypotheses we are actually able to prove that if $m$ is sufficiently large, any chain $\bar {\mathcal C} = (\bar g_0, \dots, \bar g_m)$ will satisfy a \emph{variation} on the above dichotomy, which salvages the original strategy.

\paragraph{Additional assumption.}
We now begin a systematic study of the mixed invariants.
As we pointed out above, we require some additional hypothesis on the algebraic structure of elementary subgroups.

We fix once for all an even integer $n$ and write $n_2$ for the largest power of $2$ dividing $n$.
We also choose a model collection $\boldsymbol{\mathcal E}$, i.e. a family of (abstract) torsion groups and assume that its exponent $\mu = \mu(\boldsymbol{\mathcal E})$ divides $n$.
We suppose in addition that for every $\mathbf E \in \boldsymbol{\mathcal E}$, the exponent of $\mathbf E / Z(\mathbf E)$ divides $n/2$, where $Z(\mathbf E)$ stands for the center of $\mathbf E$.
From now on, we substitute \autoref{ass: even exponent} on vertex stabilizers for the following stronger hypotheses.
Recall that a dihedral pair $(E,C)$ has type $(\boldsymbol{\mathcal E}, n_2)$ if there exist $k \in \N$, and a morphism $\phi \colon E \to \mathbf E$, where $\mathbf E \in \boldsymbol{\mathcal E}$ such that $\phi$ extends to an embedding from $E$ into $E/C \times \dihedral[n_2]^k \times \mathbf E$ (see \autoref{def: elemen structure}).

\begin{assu}[Structure of elementary subgroups]
\label{ass: elem subgroup}
	Every dihedral pair of $G$ has type $(\boldsymbol{\mathcal E}, n_2)$.
\end{assu}

\begin{assu}[Relations]
\label{ass: relation}
	For every $(H,Y) \in \mathcal Q$, there exists a primitive element $g \in G$ such that $H = \group{g^n}$.
\end{assu}

Let us now mention a few consequences of these new assumptions.

\begin{lemm}
\label{res: dihedral germ in loxodromic subgroup}
	Every elliptic subgroup of a loxodromic subgroup of $G$ is a dihedral germ.
\end{lemm}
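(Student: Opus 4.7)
The plan is to exploit the structural control provided by \autoref{ass: elem subgroup}. Let $C$ be an elliptic subgroup of a loxodromic subgroup $E$ of $G$, and let $F$ be the maximal elliptic normal subgroup of $E$. By \autoref{res: loxo have dihedral shape} the pair $(E,F)$ is a dihedral pair, so \autoref{ass: elem subgroup} furnishes an embedding
\begin{equation*}
	\phi \colon E \into E/F \times \dihedral[n_2]^k \times \mathbf E
\end{equation*}
for some $k \in \N$ and $\mathbf E \in \boldsymbol{\mathcal E}$, whose first coordinate I read as the canonical projection $E \onto E/F$.

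Next I would identify the natural sub-germ. Since $F$ embeds in $\dihedral[n_2]^k \times \mathbf E$, it is torsion; hence any loxodromic element $h \in E$ has infinite image in $E/F$, which forces $E/F$ to embed in the infinite dihedral group $\dihedral$ rather than a finite one. As $\dihedral$ contains only torsion of order at most $2$, the image of the elliptic subgroup $C$ in $E/F$ has order $1$ or $2$. Setting $C_0 = C \cap F$, this gives $[C : C_0] \in \{1, 2\}$, in particular a power of $2$.

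To finish, I would produce a loxodromic element of $G$ normalizing $C_0$ by taking $h_0 = h^n$ for a loxodromic $h \in E$. Because $n_2 \mid n$ and $\mu \mid n$, the exponent of $\dihedral[n_2]^k \times \mathbf E$ divides $n$, so $\phi(h_0)$ lies in $E/F \times \{1\} \times \{1\}$, while every $\phi(c)$ for $c \in C_0$ lies in $\{1\} \times \dihedral[n_2]^k \times \mathbf E$. These two subgroups of the ambient direct product commute elementwise, and injectivity of $\phi$ then yields $c h_0 c^{-1} = h_0$ in $E$. A nonzero power of a loxodromic isometry is still loxodromic, so $h_0$ is the loxodromic element of $G$ we need; as it centralizes (hence normalizes) $C_0$, the subgroup $C$ is a dihedral germ.

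The one point requiring care is the reading of \autoref{def: elemen structure}: the embedding furnished by the type condition must have its first coordinate equal to the canonical quotient $E \to E/F$, so that the kernel of the first projection is exactly $F$ and the identification $C \cap \ker \phi_1 = C_0$ is meaningful. Once that convention is fixed the whole argument reduces to straightforward bookkeeping in the direct product; no geometric input beyond the already assumed dihedral shape of $(E,F)$ enters the proof.
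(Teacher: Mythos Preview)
Your proof is correct and follows essentially the same route as the paper's. The paper's version is terser: it observes from \autoref{ass: elem subgroup} that $E$ embeds in $E/F \times E'$ with $E'$ of exponent $n$, so $g^n$ centralizes $F$ for any loxodromic $g \in E$, and then simply says ``conclude as in \autoref{res: dihedral germ in relation group}''; you have unpacked that concluding step explicitly. One small point to tighten: your sentence ``as $\dihedral$ contains only torsion of order at most $2$, the image of the elliptic subgroup $C$ in $E/F$ has order $1$ or $2$'' silently uses that every element of $C$ has torsion image in $E/F$; this follows because the action is gentle (so $F$ is exactly the set of elliptic elements of $E^+$, whence $C \cap E^+ = C \cap F$ and $[C:C_0] \leq [E:E^+] \leq 2$), but it is worth saying so. Your caveat about the first coordinate of the embedding being the canonical projection $E \to E/F$ is well taken and is indeed the convention used throughout the paper.
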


\begin{proof}
	Let $E$ be a loxodromic subgroup of $G$ and $F$ its maximal elliptic normal subgroup.
	Let $g \in E$ be a loxodromic element.
	According to \autoref{ass: elem subgroup} there exists a group $E'$ of exponent $n$ such that $E$ embeds in $E/F \times E'$.
	Hence $g^n$ centralizes $F$.
	We conclude as in \autoref{res: dihedral germ in relation group}.
\end{proof}

\begin{lemm}
\label{res : existence of central half-turn}
	Let $\bar v \in \bar{\mathcal V}$.
	The cone point $\bar v$ has order $n$.
	The group $\stab{\bar v}$ contains a central half-turn at $\bar v$.
	There exist $k \in \N$ and a morphism $\bar \phi \colon \stab{\bar v} \to \mathbf E$, where $\mathbf E \in \boldsymbol{\mathcal E}$ such that the geometric realization $q_{\bar v} \colon \stab{\bar v} \to \dihedral[n]$ together with $\bar \phi$ extend to an embedding from $\stab{\bar v}$ into $\dihedral[n] \times \dihedral[n_2]^k \times \mathbf E$.
\end{lemm}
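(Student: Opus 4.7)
The plan is to transport the algebraic structure of $\stab Y$ through the canonical isomorphism $\stab{\bar v} \cong \stab Y/H$ provided by the Small Cancellation Theorem. Fix a pair $(H,Y) \in \mathcal Q$ whose apex $v$ maps to $\bar v$; by \autoref{res: small cancellation}~\ref{enu: small cancellation - local embedding}, $\stab{\bar v} \cong \stab Y/H$. Using \autoref{ass: relation} I would write $H = \group{g^n}$ for a primitive $g \in \stab Y$. Letting $F$ be the maximal elliptic normal subgroup of $\stab Y$, \autoref{res: loxo have dihedral shape} says $(\stab Y, F)$ is a dihedral pair, and \autoref{ass: elem subgroup} then furnishes integers $k \in \N$, a group $\mathbf E \in \boldsymbol{\mathcal E}$, and a morphism $\phi \colon \stab Y \to \mathbf E$ that extends to an embedding
\begin{equation*}
	\Phi \colon \stab Y \into \mathbf L \times \dihedral[n_2]^k \times \mathbf E,
\end{equation*}
where $\mathbf L = \stab Y/F$ is either $\Z$ or $\dihedral$.

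The key computation I would carry out next is that $\Phi(g^n)$ is trivial in the last two factors, so that $\Phi$ descends. In each $\dihedral[n_2]$ factor every element has order dividing $n_2$ (rotations trivially, and reflections since $n_2 \geq 2$ is even), hence killed by the $n$-th power; in $\mathbf E$ the exponent divides $\mu \mid n$. This produces an injection $\bar\Phi \colon \stab{\bar v} \into \mathbf L_n \times \dihedral[n_2]^k \times \mathbf E$, where $\mathbf L_n = \mathbf L/n\mathbf L^+$. Because $g$ is primitive, $q(g)$ generates $\mathbf L^+\cong\Z$, so $\mathbf L_n$ is either $\cyclic[n]$ or $\dihedral[n]$ and embeds canonically into $\dihedral[n]$; composing yields the embedding of Point~3, whose first-factor projection is by construction $q_{\bar v}$ and whose $\mathbf E$-factor projection is the desired $\bar\phi$. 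Point~1 is then immediate, as the rotation subgroup of $\mathbf L_n$ is cyclic of order exactly $n$.

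For Point~2 I would propose $\bar g^{n/2}$ as the central half-turn. It is an involution since $(\bar g^{n/2})^2 = \bar g^n = 1$, and a strict rotation at $\bar v$ because its image under $q_{\bar v}$ is the unique order-$2$ element of the rotation subgroup of $\mathbf L_n$. The main (and essentially only) obstacle is to establish centrality in $\stab{\bar v}$, which I would check factor by factor through $\bar\Phi$. In $\dihedral[n]$, the image is the central involution $\mathbf r^{n/2}$. In $\mathbf E$, \autoref{ass: elem subgroup} guarantees $\mathbf E/Z(\mathbf E)$ has exponent dividing $n/2$, so $\phi(g)^{n/2} \in Z(\mathbf E)$. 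The delicate case is each $\dihedral[n_2]$-component: writing $n = n_2 m$ with $m$ odd, the element $x^{n/2} = (x^{n_2/2})^m$ is central in $\dihedral[n_2]$ for any $x$, because when $n_2 \geq 4$ the element $x^{n_2/2}$ is either trivial or the unique central involution of the rotation subgroup, while when $n_2 = 2$ the factor $\dihedral[2]$ is abelian. Since every factor-wise commutator with $\bar g^{n/2}$ vanishes, $\bar g^{n/2}$ is central in $\stab{\bar v}$, completing the argument.
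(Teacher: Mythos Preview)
Your proposal is correct and follows essentially the same approach as the paper: both descend the embedding $\stab Y \into \mathbf L \times \dihedral[n_2]^k \times \mathbf E$ through the quotient by $H = \group{g^n}$ using that the exponents of the last two factors divide $n$, and both exhibit $\bar g^{n/2}$ as the central half-turn. The only cosmetic difference is that the paper verifies centrality upstairs in $\stab Y$ via the relation $g^{n/2}ug^{-n/2} \in uH$ (splitting into the cases $u \in E^+$ and $u \notin E^+$), whereas you check directly that the image of $\bar g^{n/2}$ lands in the center of each factor of the product; one small correction is that the hypothesis on the exponent of $\mathbf E/Z(\mathbf E)$ is a standing assumption on the model collection $\boldsymbol{\mathcal E}$ rather than part of \autoref{ass: elem subgroup}.
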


\begin{rema*}	
	In particular, \autoref{ass: even exponent} holds.
\end{rema*}

\begin{proof}
	Fix $(H,Y) \in \mathcal Q$ such that $\pi \colon G \to \bar G$ maps $E=\stab Y$ onto $\stab{\bar v}$.
	Let $F$ be the maximal elliptic normal subgroup of $E$.
	According to \autoref{ass: elem subgroup}, there exist an integer $k \in \N$, and a morphism $\phi \colon E \to \mathbf E$, where $\mathbf E \in \boldsymbol{\mathcal E}$, such that $\phi$ extends to an embedding from $E$ into $E/F \times \dihedral[n_2]^k\times \mathbf E$.
	
	We write $\alpha \colon E \to \dihedral[n_2]^k\times \mathbf E$ for the map obtained by composing the embedding $E \into E/F \times \dihedral[n_2]^k\times \mathbf E$ with the natural projection onto $\dihedral[n_2]^k\times \mathbf E$.
	Note that $\alpha$ is one-to-one when restricted to $F$.
	Recall that $E/F$ is either $\cyclic$ or $\dihedral$.
	We denote by $\mathbf t$ a generator of the maximal infinite cyclic subgroup of $E/F$.
	By \autoref{ass: relation}, there exists a pre-image $g \in E$ of $\mathbf t$ such that $H = \group{g^n}$.
	In particular, $n$ is the order of $\bar v$.
	By assumption, the exponent of $\dihedral[n_2]^k\times \mathbf E$ divides $n$.
	Hence $\alpha(H)$ is trivial.
	It follows that $\alpha$ induces a map $\bar \alpha \colon \stab{\bar v} \to \dihedral[n_2]^k \times \mathbf E$ whose restriction to $\bar F$ (the image of $F$ in $\bar G$) is an embedding.
	Hence the morphism $\stab{\bar v} \to \dihedral[n] \times \dihedral[n_2]^k \times \mathbf E$ given by $q_{\bar v}$ and $\bar \alpha$ is an embedding.
	
	We are left to prove existence of a central half turn.
	To that end we claim that for every $u \in E$,
	\begin{equation}
	\label{eqn : existence of central half-turn}
		g^{n/2}ug^{-n/2} \in u H.
	\end{equation}
	Let $u$ in $E$.
	Recall that the exponents of $\mathbf E$ and $\mathbf E/Z(\mathbf E)$ respectively divide $n$ and $n/2$, hence 
	\begin{equation*}
		\mathbf g^{n/2}\mathbf u\mathbf g^{-n/2} = \mathbf u = \mathbf u\mathbf g^{-n},
		\quad  \text{where} \quad
		\mathbf g = \phi(g)\  \text{and}\ \mathbf u = \phi(u).
	\end{equation*}
	The same identities are also satisfied by the images of $g$ and $u$ is any factor $\dihedral[n_2]$.
	Assume now that $u$ belongs to $E^+$ (the maximal subgroup of $E$ preserving the orientation).
	Then $g^{n/2}$ commutes with $u$ (one checks indeed that in each factor of $E/F \times \dihedral[n_2]^k\times \mathbf E$, we have $g^{n/2}ug^{-n/2} = u$).
	Assume now that $u \in E\setminus E^+$.
	In other words the image of $u$ in $E/F$ is a reflection.
	We similarly check that $g^{n/2}ug^{-n/2} = ug^n$, which completes the proves of our claim.
	As we already observed $\alpha(g^n)=1$, hence the image of $g^{n/2}$ in $\stab{\bar v}$ is a non trivial rotation of order $2$ at $\bar v$.
	It follows from (\ref{eqn : existence of central half-turn}) that it is also central in $\stab{\bar v}$, thus it is a central half-turn at $\bar v$.
\end{proof}

\paragraph{Model collection for $\bar G$.}

We now prove that the elementary subgroups of $\bar G$ satisfy a condition similar to \autoref{ass: elem subgroup}.

\begin{prop}
\label{res: pushing assum elem}
	Dihedral pairs of $\bar G$ (for its action on $\bar X$) have type $(\boldsymbol{\mathcal E}, n_2)$.
\end{prop}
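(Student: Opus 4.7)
The plan is to dispatch by the nature of $\bar E$. Let $(\bar E, \bar C)$ be a dihedral pair of $\bar G$.

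\textbf{Case 1: $\bar E$ is loxodromic.} By \autoref{res: loxo have dihedral shape}, $\bar C$ is necessarily the maximal elliptic normal subgroup $\bar F$ of $\bar E$. I would apply \autoref{res: lifting loxodromic sbgp} to obtain a lift $F$ of $\bar F$, an elementary subgroup $E'$ of $G$ such that $(E',F)$ is a dihedral pair of $G$, and an epimorphism $\theta \colon \bar E \to E'$ with $\pi \circ \theta|_{\bar F} = \operatorname{id}_{\bar F}$ and an embedding $\bar E \hookrightarrow \bar E/\bar F \times E'$. In particular $\theta$ induces an isomorphism $\bar E/\bar F \xrightarrow{\sim} E'/F$. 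Feeding $(E',F)$ into \autoref{ass: elem subgroup} yields $k \in \N$, $\mathbf E \in \boldsymbol{\mathcal E}$ and a morphism $\phi' \colon E' \to \mathbf E$ extending to an embedding $E' \hookrightarrow E'/F \times \dihedral[n_2]^k \times \mathbf E$. Composing, the map $\bar g \mapsto (\bar g\bar F,\, \theta(\bar g) F,\, \beta(\theta(\bar g)),\, \phi'(\theta(\bar g)))$ is injective; since the first two factors are isomorphic via $\theta$, the map $\bar\phi = \phi' \circ \theta$ does the job.

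\textbf{Case 2: $\bar E$ is elliptic and liftable.} Let $E$ be an elliptic lift of $\bar E$ and $C$ the preimage of $\bar C$ under $\pi|_E$. By \autoref{res: lifting dihedral germ}, $C$ is a dihedral germ. Normality of $C$ in $E$ and the identification $E/C \cong \bar E/\bar C$ show that $(E,C)$ is a dihedral pair of $G$; \autoref{ass: elem subgroup} applied to $(E,C)$ then transfers along the isomorphism $\pi|_E$ to give the desired embedding of $\bar E$.

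\textbf{Case 3: $\bar E$ is elliptic and non-liftable.} By \autoref{res: dichotomy elliptic}, $\bar E$ lies in a unique $\stab{\bar v}$. Let $(H,Y) \in \mathcal Q$ realize $\bar v$ and set $E' = \stab Y$, $F$ its maximal elliptic normal subgroup, so that by \autoref{res : existence of central half-turn} we have an embedding $\stab{\bar v} \hookrightarrow \dihedral[n] \times \dihedral[n_2]^k \times \mathbf E$ via $(q_{\bar v}, \bar\beta, \bar\phi)$. Restricting gives $\bar E \hookrightarrow \dihedral[n] \times \dihedral[n_2]^k \times \mathbf E$. The plan is to show that the $\dihedral[n]$-factor can be replaced by $\bar E/\bar C$ at the cost of (at most) one extra $\dihedral[n_2]$ factor. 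The key algebraic fact is that if $\bar C_0 \subset \bar C$ is the $2$-index subgroup normalized by a loxodromic element $\bar h \in \bar G$, then $\bar C_0$ cannot contain a strict rotation at $\bar v$: otherwise $\bar v$ would be the unique apex fixed by that rotation (\autoref{res: local action of apex stab}\ref{enu: local action of apex stab - rot}), forcing the loxodromic $\bar h$ to fix $\bar v$, a contradiction. Hence $q_{\bar v}(\bar C_0) \subset q_{\bar v}(\bar F_v)$ is trivial, so $q_{\bar v}(\bar C)$ is a $2$-group in $\dihedral[n_2] \subset \dihedral[n]$, and in particular its image lands inside the $\dihedral[n_2]$-component of $\dihedral[n]$. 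Combining this with the natural quotient $\bar E \to \bar E/\bar C$ and the existing embedding, the map $\bar g \mapsto (\bar g \bar C,\, q_{\bar v}(\bar g),\, \bar\beta(\bar g),\, \bar\phi(\bar g))$ is injective into $\bar E/\bar C \times \dihedral[n_2]^{k+1} \times \mathbf E$, using the appropriate decomposition of the first factor.

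\textbf{Main obstacle.} Case 1 is essentially a clean application of the lifting machinery and Case 2 follows from \autoref{res: lifting dihedral germ}. The delicate point is Case 3: $\dihedral[n]$ does \emph{not} split as $\dihedral[n_2] \times \cyclic[n/n_2]$ when $n/n_2 > 1$, so one cannot simply project. I expect the argument to proceed by showing that the odd-order part of $q_{\bar v}(\bar E)$ is already visible in $\bar E/\bar C$ (via the quotient), because elements of $\bar C$ map to a $2$-subgroup of $\dihedral[n]$, while relying on the extra $\dihedral[n_2]$ factor only to separate the $2$-torsion coming from $q_{\bar v}$ that is lost when quotienting by $\bar C$. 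Pinning down this separation carefully is the technical heart of the proof.
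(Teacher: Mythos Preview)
Your case division and overall strategy match the paper's proof almost exactly. There are, however, a few genuine gaps and inaccuracies to fix.

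\textbf{Missing parabolic case.} A dihedral pair $(\bar E,\bar C)$ can have $\bar E$ parabolic; you never address this. The paper groups it with the liftable elliptic case: since $\bar E$ is an extension of the elliptic group $\bar C$ by a finitely generated group, $\bar E$ admits a generating set $\bar S$ with $\mov{\bar S,d}$ non-empty, so \autoref{res: lifting parabolic subgroups} applies and one proceeds exactly as in your Case~2.

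\textbf{Case 1.} Your claim that $\theta$ induces an \emph{isomorphism} $\bar E/\bar F \to E'/F$ is not justified (the paper only asserts an epimorphism). Fortunately this is irrelevant: all you need is that if $\bar g \in \bar F$ then $\theta(\bar g) \in F$, so the $E'/F$-component of $\theta(\bar g)$ is automatically trivial. Hence dropping the $E'/F$ factor from $\bar E \hookrightarrow \bar E/\bar F \times E'/F \times \dihedral[n_2]^k \times \mathbf E$ is still injective.

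\textbf{Case 3.} The right idea is there but several steps are misstated:
\begin{itemize}
\item $[\bar C:\bar C_0]$ is a \emph{power of $2$}, not $2$.
\item From ``$\bar C_0$ contains no strict rotation'' you only get that $q_{\bar v}(\bar C_0)$ lies in a reflection group, i.e.\ $\abs{q_{\bar v}(\bar C_0)}\leq 2$; you cannot conclude it is trivial. Combined with the previous point this gives that $[\bar C:\bar C\cap\bar F]$ is a power of $2$ (where $\bar F=\ker q_{\bar v}$), which is the fact you actually need.
\item There is no subgroup ``$\dihedral[n_2]\subset\dihedral[n]$'' in general, and the map you write lands in $\dihedral[n]$, not $\dihedral[n_2]$. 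What works is the \emph{quotient} $\dihedral[n]\to\dihedral[n_2]$; let $q'_{\bar v}$ be its composite with $q_{\bar v}$. The correct injectivity argument is: if $\bar g$ is trivial in $\bar E/\bar C\times\dihedral[n_2]^{k+1}\times\mathbf E$, then $\bar g\in\bar C$, so $q_{\bar v}(\bar g)$ has $2$-power order; but $q'_{\bar v}(\bar g)=1$ means $q_{\bar v}(\bar g)$ lies in $\ker(\dihedral[n]\to\dihedral[n_2])$, which consists of odd-order rotations; hence $q_{\bar v}(\bar g)=1$, so $\bar g\in\bar F$, and then injectivity of $\bar\psi|_{\bar F}$ forces $\bar g=1$.
\end{itemize}
Your ``Main obstacle'' paragraph correctly anticipates this; you just need to execute it with the quotient map rather than a nonexistent splitting.
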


\begin{rema}
\label{rem: pushing assum elem}
	Note that the model collection $\boldsymbol{\mathcal E}$ is the same as the one of \autoref{ass: elem subgroup}.
	In particular, we deduce as in \autoref{res: dihedral germ in loxodromic subgroup} that every elliptic subgroup contained in a loxodromic subgroup of $\bar G$ is a dihedral germ.
\end{rema}

\begin{proof}
	Let $(\bar E, \bar C)$ be a dihedral pair.
	We distinguish several cases.
	Assume first that $\bar E$ is either an elliptic subgroup that \emph{can} be lifted in $G$ or a parabolic subgroup.
	Note that in the latter case $\bar E$ can also be lifted in $G$.
	Indeed $\bar E$ is the extension of an elliptic subgroup, namely $\bar C$, by a finitely generated subgroup.
	Thus there exist $d \in \R_+$ and a finite subset $\bar S$ generated $\bar E$ such that $\fix{\bar S, d}$ is non-empty.
	It follows then from \autoref{res: lifting parabolic subgroups} that $\bar E$ can be lifted.
	Let $E$ be a lift of $\bar E$ and $C$ the pre-image of $\bar C$ in $E$.
	According to \autoref{res: lifting dihedral germ}, $C$ is a dihedral germ.
	It follows that $(E,C)$ is a dihedral pair.
	Hence the result follows from \autoref{ass: elem subgroup} applied to $(E,C)$.

	Assume now that $\bar E$ is a loxodromic subgroup.
	In particular, $\bar C$ is its maximal elliptic normal subgroup (\autoref{res: loxo have dihedral shape}).
	According to \autoref{res: lifting loxodromic sbgp} there exist a dihedral pair $(E',C)$ in $G$ where $C$ is an elliptic subgroup lifting $\bar C$ and an epimorphism $\theta \colon \bar E \onto E'$ with the following properties.
	\begin{enumerate}
		\item The morphism $\pi \circ \theta$ is the identity when restricted to $\bar C$.
		\item The map $\theta$ induces an embedding from $\bar E$ into $\bar E/\bar C \times E'$.
	\end{enumerate}
	Our assumption applied to $(E',C)$ says that there exist $k \in \N$, and a morphism $\phi \colon E' \to \mathbf E$ where $\mathbf E \in \boldsymbol{\mathcal E}$, which extends to an embedding $E' \into E'/C \times \dihedral[n_2]^k\times \mathbf E$.
	We claim that $\phi \circ \theta \colon \bar E \to \mathbf E$ extends to a embedding $\bar E \to \bar E / \bar C \times \dihedral[n_2]^k\times \mathbf E$.
	For simplicity we write $\psi \colon E' \to  \dihedral[n_2]^k \times \mathbf E$ for the composition of $E' \to E'/C \times \dihedral[n_2]^k\times \mathbf E$ with the canonical projection onto $\dihedral[n_2]^k\times \mathbf E$.
	It suffices to shows that $\psi \circ \theta \colon \bar E \to \dihedral[n_2]^k\times \mathbf E$ induces an embedding from $\bar E$ into $\bar E/\bar C \times \dihedral[n_2]^k \times \mathbf E$.
	Consider an element $\bar g \in \bar E$ which is trivial in $\bar E/\bar C \times \dihedral[n_2]^k \times \mathbf E$.
	In particular, $\bar g$ belongs to $\bar C$, hence $\theta(\bar g) \in C$.
	Moreover $\psi \circ \theta(\bar g)$ is trivial.
	Since $\psi$ extends to an embedding from $E' \into E'/C \times \dihedral[n_2]^k\times \mathbf E$, the element $\theta(\bar g)$ is trivial.
	On the other hand, $\theta$ induces an embedding from $\bar E$ into $\bar E/ \bar C \times E'$, thus $\bar g = 1$, which completes the proof our claim.

	We finally assume that $\bar E$ is an elliptic subgroup that \emph{cannot} be lifted in $G$.
	In particular, there exists an apex $\bar v \in \bar{\mathcal V}$ such that $\bar E$ is contained in $\stab{\bar v}$ (\autoref{res: dichotomy elliptic}).
	According to \autoref{res : existence of central half-turn} there exist $k \in \N$, and a morphism $\bar \phi \colon \stab{\bar v} \to \mathbf E$, where $\mathbf E \in \boldsymbol{\mathcal E}$, which combined with the geometric realization $q_{\bar v} \colon \stab{\bar v} \to \dihedral[n]$ provides an embedding from $\stab{\bar v}$ into $\dihedral[n] \times \dihedral[n_2]^k\times \mathbf E$.
	For simplicity we write $\bar \psi \colon \stab{\bar v} \to \dihedral[n_2]^k\times \mathbf E$ for the composition of $\stab{\bar v} \into \dihedral[n] \times \dihedral[n_2]^k\times \mathbf E$ with the natural projection onto $\dihedral[n_2]^k\times \mathbf E$.
	Composing the geometric realization $q_{\bar v} \colon \stab{\bar v} \to \dihedral[n]$ with the canonical projection $\dihedral[n] \to \dihedral[n_2]$ leads to a morphism that we denote $q'_{\bar v} \colon \stab{\bar v} \to \dihedral[n_2]$.
	We are going to prove that $q'_{\bar v}$ and $\bar \psi$ extend to an embedding from $\bar E$ into $\bar E/ \bar C \times \dihedral[n_2] \times \dihedral[n_2]^k \times \mathbf E$.

	Let $\bar F$ be the kernel of $q_{\bar v} \colon \stab{\bar v} \to \dihedral[n]$.
	We first claim that $[\bar C: \bar C \cap \bar F]$ is a power of $2$.
	Since $\bar C$ is a dihedral germ, it contains a subgroup $\bar C_0$ which is normalized by a loxodromic element and such that $[\bar C:\bar C_0] = 2^m$ for some $m \in \N$.
	Observe that $\bar C_0$ is contained in a reflection group at $\bar v$.
	Indeed otherwise, any element normalizing $\bar C_0$ would belong to $\stab{\bar v}$ thus they would be no \emph{loxodromic} element centralizing $\bar C_0$.
	In particular, $[\bar C_0 : \bar C_0 \cap \bar F]$ is at most $2$.
	On the other hand
	\begin{equation*}
		[\bar C : \bar C \cap \bar F] [\bar C \cap \bar F: \bar C_0 \cap \bar F] 
		= [\bar C : \bar C_0 \cap \bar F]
		= [\bar C : \bar C_0] [\bar C_0 : \bar C_0 \cap \bar F]
	\end{equation*}
	Consequently $[\bar C : \bar C \cap \bar F]$ divides $[\bar C : \bar C_0 \cap \bar F]$.
	In particular, it is a power of $2$ which completes the proof of our claim.

	Consider now $\bar g \in \bar E$ whose image in $\bar E/ \bar C \times \dihedral[n_2] \times \dihedral[n_2]^k \times \mathbf E$ is trivial.
	First observe that $\bar g$ belongs to $\bar C$.
	It follows from the previous claim that the order of $q_{\bar v}(\bar g)$ is a power of $2$.
	Nevertheless the kernel of the projection $\dihedral[n] \to \dihedral[n_2]$, which contains $q_{\bar v}(\bar g)$, consists only of element with odd order.
	Therefore $q_{\bar v}(\bar g)$ is trivial, i.e. $\bar g$ belongs to $\bar F$.
	Observe that the map $\bar \psi \colon \stab{\bar v} \to \dihedral[n_2]^k \times \mathbf E$ is an embedding when restricted to $\bar F$.
	Since $\bar \psi(\bar g) = 1$, the element $\bar g$ is trivial.
	This shows that $\bar E$ embeds in $\bar E/ \bar C \times \dihedral[n_2] \times \dihedral[n_2]^k \times \mathbf E$.
\end{proof}

\paragraph{The strong $\nu$-invariant.}
We now start our study of the strong $\nu$-invariant.
The ultimate goal is to prove the following statement.
\begin{prop}
\label{res: nu-inv}
	Assume that $2^{\nu +2}\mu$ divides $n$ where $\nu = \nu_{\rm{stg}}(G, X)$.
	Then
	\begin{equation*}
		\nu_{\rm{stg}}(\bar G, \bar X) \leq \max\left\{\nu_{\rm{stg}}(G, X), \mu+4\right\}
	\end{equation*}
\end{prop}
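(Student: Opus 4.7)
The plan is to fix a chain $\bar{\mathcal C} = (\bar g_0, \ldots, \bar g_N)$ of length $N = \max\{\nu, \mu+4\}$ in $\bar G$, where $\nu = \nu_{\rm stg}(G,X)$, together with a conjugating element $\bar h \in \bar G$ satisfying the hypotheses of \autoref{def: nu inv stg}, and to prove that $\langle \bar g_0, \bar h\rangle$ is elementary with dihedral shape. The elementary subgroup $\bar E = \langle \bar g_0, \ldots, \bar g_N\rangle$ drives the case analysis. The overall strategy is: transport the data to $G$, apply the hypothesis $\nu_{\rm stg}(G,X) \leq \nu$ there to obtain a dihedral pair, then descend this structure back to $\bar G$ via \autoref{res: pushing assum elem}.

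First I would analyze $\bar E$ according to its nature. When $\bar E$ is loxodromic, \autoref{res: lifting loxodromic sbgp} provides an abstract epimorphism $\theta \colon \bar E \onto E'$ onto an elementary subgroup $E' \subset G$ together with a dihedral pair $(E', F)$ and an embedding of $\bar E$ into $\bar E/\bar F \times E'$, where $\bar F$ is the maximal elliptic normal subgroup. When $\bar E$ is parabolic it can be lifted by \autoref{res: lifting parabolic subgroups}; when $\bar E$ is elliptic the dichotomy of \autoref{res: dichotomy elliptic} produces either a direct lift or an embedding into an apex stabilizer handled by \autoref{res : existence of central half-turn}. Setting $g_i = \theta(\bar g_i)$ yields a chain in $E'$, and \autoref{res: lifting automorphism} applied to $F$ produces $h_0 \in \Norm F$ whose conjugation action on $F$ matches that of $\bar h$ on $\bar F$; combining this with the explicit construction of $\theta$ in the proof of \autoref{res: lifting loxodromic sbgp} shows that $h_0$ may be arranged to conjugate $g_i$ to $g_{i+1}$, so that $(g_0, \ldots, g_\nu)$ is a chain in $G$ generating an elementary subgroup with $h_0$ as conjugating element.

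The main obstacle---the phenomenon flagged in the discussion preceding \autoref{exa: lysenok example}---is that even when $\bar h$ is loxodromic, its lift $h_0$ may fail to be loxodromic in $G$, and the chain $\langle g_0, \ldots, g_{\nu-1}\rangle$ need not obviously lie in a dihedral germ. This is where the length $\mu + 4$ and the divisibility $2^{\nu+2}\mu \mid n$ enter. By \autoref{res: pushing assum elem}, the pair $(\bar E,\bar C)$ (with $\bar C$ the maximal elliptic normal subgroup) has type $(\boldsymbol{\mathcal E}, n_2)$, so $\bar E$ embeds into $\bar E/\bar C \times \dihedral[n_2]^k \times \mathbf E$ for some $k$ and some $\mathbf E \in \boldsymbol{\mathcal E}$. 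Inside this product, apply \autoref{res: norm in prod of dihedral} to the subgroup $\mathbf A$ generated by the chain, whose reflection rank is bounded by $\nu$: the hypothesis $2^{\nu+2}\mu \mid n$ guarantees the condition $2^{r+3}\mu \mid n$ needed there, and yields an element $\bar u \in \bar E$ such that $\bar h^{n/4}\bar u^{-1}$ centralizes the chain. Replacing $\bar h$ by this algebraic modification, combined with \autoref{res: dihedral complexity} (which forces $\bar g_{\mu+2} \in \langle \bar g_0, \ldots, \bar g_{\mu+1}\rangle$ and so makes the extra length $\mu+4$ exploitable), lets us reconstruct a conjugating element $h_0$ in $G$ that either is loxodromic or for which the sub-chain sits inside a dihedral germ, so the hypothesis $\nu_{\rm stg}(G,X) \leq \nu$ applies.

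The push-back is then formal. The dihedral pair $(\langle g_0, h_0\rangle, C_0)$ of type $(\boldsymbol{\mathcal E}, n_2)$ in $G$ produced by $\nu_{\rm stg}(G,X) \leq \nu$ projects under $\pi$ to a subgroup containing $\langle \bar g_0, \bar h\rangle$. The image $\bar C_0$ of $C_0$ is a dihedral germ because being a dihedral germ passes to quotients (as seen from \autoref{res: dihedral germ in relation group} and the argument of \autoref{res: lifting dihedral germ}); the quotient $\pi(\langle g_0, h_0\rangle)/\bar C_0$ remains a quotient of a dihedral group, hence dihedral; and type $(\boldsymbol{\mathcal E}, n_2)$ is preserved by direct inspection of \autoref{def: elemen structure}. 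I expect the most delicate bookkeeping to be in the hard case above: matching the algebraic modification of $\bar h$ against the geometric lift $h_0$ while keeping track simultaneously of the chain, the reflection rank, and the way the type-$(\boldsymbol{\mathcal E}, n_2)$ structure of $\bar E$ is inherited from the lifted pair $(E', F)$ in $G$.
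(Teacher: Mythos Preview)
Your push-back step has a genuine gap that breaks the argument. The element $h_0$ you construct via \autoref{res: lifting automorphism} is \emph{not} a pre-image of $\bar h$: the conclusion of that proposition is only that $\bar h^{-1}\bar h_0$ centralizes $\bar F$, so $\pi(\langle g_0, h_0\rangle)$ contains $\bar g_0$ and $\bar h_0$, not $\bar h$. Knowing that $\langle g_0, h_0\rangle$ has dihedral shape in $G$ therefore says nothing direct about $\langle \bar g_0, \bar h\rangle$. Relatedly, your application of \autoref{res: norm in prod of dihedral} inside the product $\bar E/\bar C \times \dihedral[n_2]^k \times \mathbf E$ cannot work as stated: that lemma requires the normalizing element to live in the ambient product $\boldsymbol\Pi$, but $\bar h$ is generically outside $\bar E$ and so has no image there. (Also, the claim that being a dihedral germ passes to quotients is the wrong direction---the paper explicitly notes before \autoref{res: lifting dihedral germ} that this fails.)

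The paper circumvents this precisely by \emph{never} trying to relate the lifted conjugating element $h_0$ back to $\bar h$. Instead, via \autoref{res: nu-lift - dichotomy} it replaces $\bar{\mathcal C}$ by a strong chain $\bar{\mathcal C}'$ that agrees with $\bar{\mathcal C}$ on $\bar g_1,\dots,\bar g_{m-1}$ and can be lifted to a strong chain $\mathcal C'$ in $G$ (the quarter-turn from \autoref{res: norm in prod of dihedral} is used inside an apex stabilizer $\stab{\bar v}$, not inside $\bar E$, to perform this modification). Once in $G$, the hypothesis $\nu_{\rm stg}(G,X)\leq\nu$ gives $\langle g'_0,h_0\rangle$ dihedral shape, and then \autoref{res: dihedral complexity} is applied \emph{to the lifted chain} to produce containments such as $g'_1\in\langle g'_2,\dots,g'_{m-1}\rangle$---relations among chain elements only, not involving $h_0$. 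These containments push down to $\bar G$ verbatim, and since the middle elements of $\bar{\mathcal C}$ and $\bar{\mathcal C}'$ coincide, one concludes that $\bar h$ normalizes $\langle\bar g_1,\dots,\bar g_{m-2}\rangle$, whence $\langle\bar g_0,\bar h\rangle$ is a cyclic extension of a dihedral germ. The key idea you are missing is to extract, via \autoref{res: dihedral complexity}, information that is phrased purely in terms of the chain elements and therefore survives the passage between $G$ and $\bar G$ independently of how the conjugating element behaves.
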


For simplicity we adopt the following terminology.
\begin{defi}
\label{def: strong chain}
	A chain $\mathcal C = (g_0, \dots, g_m)$ of $G$ is a \emph{strong chain} if it satisfies the following holds
	\begin{enumerate}
		\item $g_0,\dots, g_m$ generate an elementary subgroup of $G$ (for its action on $X$).
		\item either $\mathcal C$ admits a loxodromic conjugating element or $\group{g_0,\dots, g_{m-1}}$ is a dihedral germ.
	\end{enumerate}
\end{defi}
We define \emph{strong chains} of $\bar G$ in the exact same way.
A strong chain $\bar {\mathcal C} = (\bar g_0, \dots, \bar g_m)$ \emph{can be lifted} if there exists a strong chain $\mathcal C = (g_0, \dots, g_m)$ of $G$ such that the quotient map $\pi \colon G \onto \bar G$ sends $g_k$ to $\bar g_k$ for every $k \in \intvald 0m$.
In this situation we also say that $\mathcal C$ \emph{lifts} $\bar {\mathcal C}$.
As suggested at the beginning of this section we first prove the following dichotomy.

\begin{prop}
\label{res: nu-lift - dichotomy}
	Let $m \geq 3$ such that $2^{m+2}\mu$ divides $n$.
	Let $\bar{\mathcal C} = (\bar g_0, \dots, \bar g_m)$ be a strong chain of $\bar G$ and $\bar h$ a conjugating element of $\bar {\mathcal C}$.
	Then one of the following holds.
	\begin{enumerate}
		\item \label{enu: nu-inv elliptic}
		There exists $\bar v \in \bar{\mathcal V}$ such that $\group{\bar g_0, \bar h}$ is contained in $\stab{\bar v}$.
		\item \label{enu: nu-inv loxo}
		The subgroup $\group{\bar g_0, \bar h}$ is loxodromic.
		\item \label{enu: nu-inv lifted}
		There exists a strong chain $\bar {\mathcal C}' = (\bar g'_0, \dots, \bar g'_m)$ of $\bar G$ which can be lifted and such that $(\bar g'_1, \dots, \bar g'_{m-1}) = (\bar g_1, \dots, \bar g_{m-1})$.
	\end{enumerate}
\end{prop}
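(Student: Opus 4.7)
My plan is to proceed by case analysis on $\bar h$ and the structure of $\bar E_-:=\group{\bar g_0,\dots,\bar g_{m-1}}$, reducing the failure of Cases \ref{enu: nu-inv elliptic} and \ref{enu: nu-inv loxo} to Case \ref{enu: nu-inv lifted}. If $\bar h$ is loxodromic then $\group{\bar g_0,\bar h}$ is loxodromic, giving Case \ref{enu: nu-inv loxo}. Otherwise the definition of strong chain (\autoref{def: strong chain}) forces $\bar E_-$ to be a dihedral germ, hence all $\bar g_k$ are elliptic. Applying \autoref{res: dichotomy elliptic} to $\bar E_-$ splits the analysis in two: either $\bar E_-$ can be lifted, or it contains a strict rotation and fixes a unique apex $\bar v\in\bar{\mathcal V}$.

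In the liftable case, I lift $\bar E_-$ to an elliptic subgroup $E_-\subset G$, which is automatically a dihedral germ by \autoref{res: lifting dihedral germ}. Since the chain relations $\bar h\bar g_k\bar h^{-1}=\bar g_{k+1}$ for $k\le m-2$ keep everything inside $\bar E_-$, \autoref{res: lifting automorphism} produces $h_0\in\Norm{E_-}$ with $\bar h_0^{-1}\bar h$ centralizing $\bar E_-$. Letting $g_k\in E_-$ be the unique preimage of $\bar g_k$ (\autoref{res: isom on non-loxo elem}), I get $h_0g_kh_0^{-1}=g_{k+1}$ for $k\le m-2$; setting $g_m:=h_0g_{m-1}h_0^{-1}\in E_-$, the centralizer property gives $\pi(g_m)=\bar g_m$. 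The chain $(g_0,\dots,g_m)$ sits in the dihedral germ $E_-$ with conjugator $h_0$, so it is a strong chain lifting $\bar{\mathcal C}$, establishing Case \ref{enu: nu-inv lifted} with $\bar g'_k=\bar g_k$. In the unliftable case, \autoref{res: dichotomy elliptic} provides the apex $\bar v$ fixed by $\bar E_-$. If $\bar h\bar v=\bar v$ then $\group{\bar g_0,\bar h}\subset\stab{\bar v}$, which is Case \ref{enu: nu-inv elliptic}.

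The remaining case, $\bar h\bar v\neq\bar v$, is the main obstacle. Here the middle elements $\bar g_1,\dots,\bar g_{m-1}$ lie in both $\bar E_-$ and $\bar h\bar E_-\bar h^{-1}$, so $\bar C:=\group{\bar g_1,\dots,\bar g_{m-1}}\subset\stab{\bar v}\cap\stab{\bar h\bar v}$; by \autoref{res: local action of apex stab}~\ref{enu: local action of apex stab - rot} it contains no strict rotation and therefore lifts by \autoref{res: dichotomy elliptic}. Since $\bar h$ genuinely moves $\bar v$, a single normalizer argument no longer suffices, and this is where I must use algebra rather than pure geometry. My plan is to exploit the embedding of $\stab{\bar v}$ into $\dihedral[n]\times\dihedral[n_2]^k\times\mathbf E$ provided by \autoref{res : existence of central half-turn} and apply \autoref{res: norm in prod of dihedral} in order to manufacture a ``correcting'' element $\bar u$ that centralizes $\bar C$ and has the effect that the modified chain $\bar{\mathcal C}'=(\bar g'_0,\bar g_1,\dots,\bar g_{m-1},\bar g'_m)$ with conjugator $\bar h':=\bar u\bar h$ falls back into the normalizer framework used in the liftable case. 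The divisibility hypothesis $2^{m+2}\mu\mid n$ is precisely what is needed to invoke \autoref{res: norm in prod of dihedral} at a reflection rank bounded by $m$, which is the worst case for the reflection rank of $\bar C$.
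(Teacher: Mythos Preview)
Your opening move is incorrect: ``If $\bar h$ is loxodromic then $\group{\bar g_0,\bar h}$ is loxodromic'' does not follow. Saying a subgroup is \emph{loxodromic} means it is elementary with two limit points; but nothing in the hypotheses forces $\bar g_0\in E(\bar h)$. The strong chain condition only tells you that $\bar E=\group{\bar g_0,\dots,\bar g_m}$ is elementary, and $\bar h$ need not lie in $\bar E$. So for $m$ smaller than $\nu(\bar G,\bar X)$ the pair $(\bar g_0,\bar h)$ can perfectly well generate a non-elementary subgroup (this is the very phenomenon the $\nu$-invariant measures, and \autoref{exa: lysenok example} exhibits it). This collapses your whole case split, because the genuinely hard situation --- $\bar E$ loxodromic --- can occur with $\bar h$ loxodromic or not, and you have no mechanism to handle it.

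There is a second gap in your ``liftable case''. You invoke \autoref{res: lifting automorphism}, which requires $\bar h\in\Norm{\bar E_-}$. But the chain relations only give $\bar h\bar g_k\bar h^{-1}=\bar g_{k+1}$ for $k\le m-2$; nothing says $\bar h\bar g_{m-1}\bar h^{-1}=\bar g_m$ lies in $\bar E_-$. So $\bar h$ need not normalize $\bar E_-$ and the lemma does not apply. The paper handles this with \autoref{res: lifting partial auto}, which lifts a partial conjugation $S_1\mapsto S_2$ rather than a full normalizer, and this distinction is essential.

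The paper's decomposition is different from yours: it splits on the type of the \emph{full} elementary group $\bar E$ (elliptic liftable or parabolic via \autoref{res: nu-lift - para and elliptic w/ lift}; elliptic non-liftable via \autoref{res: nu-lift - elliptic w/o lift}; loxodromic via \autoref{res: nu-lift - loxodromic}). The loxodromic case is the crux. There $\bar E$ splits as $\bar A\ast_{\bar C}\bar B$ with $\bar C$ the maximal normal elliptic subgroup, and one must analyse whether $\bar A,\bar B$ lift. The quarter-turn correction via \autoref{res: norm in prod of dihedral} enters precisely when one of $\bar A,\bar B$ fails to lift; it is applied not to $\bar C=\group{\bar g_1,\dots,\bar g_{m-1}}$ as in your sketch but to a carefully chosen normal subgroup $\bar F_0$ of $\stab{\bar v}$, and produces a new endpoint $\bar g'_0$ (or $\bar g'_m$) so that the corresponding factor becomes liftable. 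This then feeds back into the liftable-factors case (\autoref{res: lifting loxodromic sbgp w/ liftable elliptic}), possibly after a second iteration. Your plan gestures at the right algebraic tool but attaches it to the wrong decomposition.
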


We split the proof into several lemmas depending on the nature of the group generated by $\bar{\mathcal C}$.

\begin{lemm}
\label{res: nu-lift - para and elliptic w/ lift}
	Let $\bar{\mathcal C} = (\bar g_0, \dots, \bar g_m)$ be a strong chain of $\bar G$.
	If the subgroup $\bar E$ of $\bar G$ generated by $\bar {\mathcal C}$ is either elliptic and \emph{can be lifted} or parabolic, then $\bar{\mathcal C}$ can be lifted.
\end{lemm}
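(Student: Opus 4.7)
The plan is to lift $\bar E$ to a subgroup $E \subset G$ of the same type, then exploit the resulting isomorphism $E \simeq \bar E$ together with the partial-automorphism machinery to produce simultaneously a lifted chain and a lifted conjugating element, and finally verify the strong chain condition.

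First I would secure a lift $E$ of $\bar E$ in $G$. In the elliptic case this is part of the hypothesis. In the parabolic case, since $\bar E$ is finitely generated, $\bar S = \{\bar g_0, \dots, \bar g_m\}$ is a finite set of isometries fixing the unique parabolic point $\bar \xi \in \partial \bar X$; iterated application of \autoref{res: isom fixing xi moving geo} to a quasi-geodesic ray ending at $\bar \xi$ produces points moved by every $\bar g_k$ by less than $\rho/100$, so $\mov{\bar S, \rho/100}$ is non-empty and \autoref{res: lifting parabolic subgroups} applies to yield a parabolic lift $E$. In both cases \autoref{res: isom on non-loxo elem} ensures that $\pi$ restricts to an isomorphism $E \to \bar E$, so I would set $g_k \in E$ to be the unique pre-image of $\bar g_k$.

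Next I would apply \autoref{res: lifting partial auto} with $S_1 = \{g_0, \dots, g_{m-1}\}$, $S_2 = \{g_1, \dots, g_m\}$, and the given $\bar h$. The hypothesis $\mov{S_i, \rho/100} \neq \emptyset$ is immediate in the elliptic case and follows in the parabolic case from the same ray argument, since each $g_k \in E$ fixes the parabolic endpoint of $E$. The proposition yields $h_0 \in G$ with $h_0 g_k h_0^{-1} = g_{k+1}$ for every $k \in \intvald 0{m-1}$, making $\mathcal C = (g_0, \dots, g_m)$ into a chain inside the elementary subgroup $E$ that lifts $\bar{\mathcal C}$.

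It remains to verify $\mathcal C$ is strong. If the second alternative in \autoref{def: strong chain} holds for $\bar{\mathcal C}$, namely $\bar E_0 := \group{\bar g_0, \dots, \bar g_{m-1}}$ is an elliptic dihedral germ in $\bar G$, I would first argue that $E_0 := \group{g_0, \dots, g_{m-1}}$ is elliptic in $G$: trivial when $E$ is elliptic, and in the parabolic case obtained by ruling out the parabolic option for $E_0$ via \autoref{res: image parabolic subgroup} (applied with the generating set $S_1$, whose $\mov{\cdot, \rho/100}$ set is non-empty), since a parabolic $E_0$ would force $\bar E_0$ to be parabolic and so contradict its ellipticity. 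Then \autoref{res: lifting dihedral germ} upgrades $E_0$ to a dihedral germ. If instead $\bar{\mathcal C}$ has a loxodromic conjugating element $\bar h$, then \autoref{res: lifting partial auto}~\ref{res: lifting partial auto - loxo} furnishes the dichotomy: either $h_0$ is loxodromic, in which case $\mathcal C$ is already strong, or $\bar S_1$ lies in a reflection group at some apex $\bar v$. In the latter scenario $\bar E_0 \subset \stab{\bar v}$ is elliptic and lies in a reflection group at $\bar v$; the same $\mov{\cdot, \rho/100}$ argument shows $E_0$ is elliptic, and \autoref{res: reflection group provides germs} certifies $E_0$ as a dihedral germ. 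The main obstacle is subtle: in the parabolic case nothing forces the lift $E_0$ to be elliptic \emph{a priori}, while every dihedral-germ lemma available requires ellipticity. The remedy, via \autoref{res: image parabolic subgroup}, is that a parabolic subgroup with a sufficiently nice generating set must project to a parabolic subgroup, so the elliptic status of $\bar E_0$ bootstraps back to that of $E_0$.
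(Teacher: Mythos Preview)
Your proof is correct and follows essentially the same route as the paper: lift $\bar E$ (via \autoref{res: lifting parabolic subgroups} in the parabolic case), pull back the $\bar g_k$, apply \autoref{res: lifting partial auto} with $S_1=\{g_0,\dots,g_{m-1}\}$ and $S_2=\{g_1,\dots,g_m\}$ to obtain a conjugating element $h_0$, and then invoke \autoref{res: reflection group provides germs} and \autoref{res: lifting dihedral germ} to certify the strong-chain condition. The paper's proof is terser and simply asserts that either $h_0$ is loxodromic or $\group{g_0,\dots,g_{m-1}}$ is a dihedral germ, citing those two lemmas; you are more explicit in separating the two alternatives of the strong-chain definition and, in the parabolic case, in first checking that $E_0=\group{g_0,\dots,g_{m-1}}$ is genuinely \emph{elliptic} in $G$ (via \autoref{res: image parabolic subgroup}) before feeding it into the dihedral-germ lemmas, both of which do require ellipticity as a hypothesis. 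This extra care is warranted and fills a detail the paper leaves implicit.
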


\begin{proof}
	We first claim that $\bar E$ can always be lifted even if $\bar E$ is parabolic.
	Indeed since $\bar E$ is finitely generated \autoref{res: lifting parabolic subgroups} applies.
	Let $E$ be a lift of $\bar E$.
	For every $k \in \intvald 0m$, we denote by $g_k$ the pre-image of $\bar g_k$ in $E$.
	Note that, contrary to $(\bar g_0, \dots, \bar g_m)$, the tuple $(g_0, \dots, g_m)$ could not be a chain.
	Nevertheless, according to \autoref{res: lifting partial auto} applied with $S_1 = \{g_0, \dots, g_{m-1}\}$ and $S_2 = \{g_1, \dots, g_m\}$, there exists $h_0 \in G$ with the following properties
	\begin{enumerate}
		\item for every $k \in \intvald 0{m-1}$, we have $g_{k+1} = h_0g_kh_0^{-1}$.
		\item \label{enu: nu-inv - elliptic w/ lift - loxo}
		if $\bar h$ is loxodromic, then either $h_0$ is loxodromic of $\group{\bar g_0,\dots, \bar g_{\nu-1}}$ is contained in a reflection group.
	\end{enumerate}
	Thus $\mathcal C = (g_0, \dots, g_\nu)$ is actually a chain and $h_0$ a conjugating element of $\mathcal C$.
	Obviously $\mathcal C$ generates an elementary subgroup of $G$.
	Note that either $h_0$ is loxodromic or $\group{g_0, \dots, g_{\nu -1}}$ is a dihedral germ (Lemmas~\ref{res: reflection group provides germs} and \ref{res: lifting dihedral germ}) hence the proof is complete.
\end{proof}

\begin{lemm}
\label{res: nu-lift - elliptic w/o lift}
	Let $\bar{\mathcal C} = (\bar g_0, \dots, \bar g_m)$ be a strong chain of $\bar G$ which generates an elliptic subgroup $\bar E$ of $\bar G$ which \emph{cannot be lifted}.
	Let $\bar h$ be a conjugating element of $\bar{\mathcal C}$.	
	Then either $\bar E_0 = \group{\bar g_0, \dots, \bar g_{m-1}}$ contains a strict rotation and $\group{\bar g_0,\bar h}$ is contained in $\stab{\bar v}$ for some $\bar v \in \bar {\mathcal V}$, or $\bar{\mathcal C}$ can be lifted.
\end{lemm}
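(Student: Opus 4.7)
My plan is to first invoke \autoref{res: dichotomy elliptic}: since $\bar E$ is elliptic and cannot be lifted, there is a (unique) apex $\bar v \in \bar{\mathcal V}$ such that $\bar E \subset \stab{\bar v}$ and $\bar E$ contains a strict rotation at $\bar v$. In particular $\bar E_0 \subset \stab{\bar v}$, and I will split according to whether $\bar E_0$ itself contains a strict rotation.

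Suppose first that $\bar E_0$ contains a strict rotation $\bar s$. Then $\bar s \in \stab{\bar v}$ and, by the uniqueness of fixed vertex in \autoref{res: local action of apex stab}~\ref{enu: local action of apex stab - rot}, $\bar s$ is a strict rotation at $\bar v$. Its conjugate $\bar h \bar s \bar h^{-1} \in \bar h \bar E_0 \bar h^{-1} \subset \bar E \subset \stab{\bar v}$ is again a strict rotation (a global property invariant under conjugation), so it fixes both $\bar h \bar v$ and $\bar v$; uniqueness forces $\bar h \bar v = \bar v$, whence $\bar h \in \stab{\bar v}$ and the first alternative of the statement holds.

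Assume now that $\bar E_0$ contains no strict rotation. I will build a lift $(g_0, \dots, g_m)$ of $\bar{\mathcal C}$ that is a strong chain of $G$. Since being a strict rotation is invariant under conjugation, $\bar E_1 = \bar h \bar E_0 \bar h^{-1}$ also contains no strict rotation; both $\bar E_0, \bar E_1 \subset \stab{\bar v}$ can therefore be lifted by \autoref{res: dichotomy elliptic}. Fixing a pair $(H, Y) \in \mathcal Q$ whose apex projects to $\bar v$, \autoref{res: lifting elliptic in apex group} allows me to choose lifts $E_0$ and $E_1$ both \emph{inside} $\stab Y$. Picking $g_k \in E_0$ lifting $\bar g_k$ for $k \in \intvald 0{m-1}$ and $g'_k \in E_1$ lifting $\bar g_k$ for $k \in \intvald 1m$, \autoref{res: lifting partial auto} applied with $S_1 = \{g_0, \dots, g_{m-1}\}$ and $S_2 = \{g'_1, \dots, g'_m\}$ produces $h_0 \in G$ such that $h_0 g_k h_0^{-1} = g'_{k+1}$ for every $k$. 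Setting $g_k := g'_k$ for $k \geq 1$, the tuple $(g_0, g_1, \dots, g_m)$ is a chain with conjugating element $h_0$ that projects onto $\bar{\mathcal C}$. Because $\group{g_0, \dots, g_m} = \group{E_0, E_1} \subset \stab Y$, the generated subgroup is elementary, while $\group{g_0, \dots, g_{m-1}} = E_0$ is an elliptic subgroup of $\stab Y$, hence a dihedral germ by \autoref{res: dihedral germ in relation group}. This shows that $(g_0, \dots, g_m)$ is a strong chain of $G$ lifting $\bar{\mathcal C}$.

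The delicate step is the second case: with an arbitrary lift $E_0$ of $\bar E_0$ one has no reason to expect that $\group{E_0, h_0 E_0 h_0^{-1}}$ be elementary in $G$, nor can one easily control the nature of $h_0$ (which is what would govern condition~(2) of a strong chain). The trick is to concentrate both lifts $E_0$ and $E_1$ inside the \emph{same} elementary loxodromic subgroup $\stab Y$ via \autoref{res: lifting elliptic in apex group}; once this is arranged, both defining conditions of a strong chain come for free, independently of whether $\bar h$ fixes $\bar v$ or of whether $h_0$ turns out to be loxodromic.
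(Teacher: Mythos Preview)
Your approach follows the same route as the paper's, but there is a gap in the second case. After applying \autoref{res: lifting partial auto} you obtain $h_0 g_k h_0^{-1} = g'_{k+1}$ for $k \in \intvald 0{m-1}$, where $g_k \in E_0$ and $g'_{k+1} \in E_1$. You then redefine $g_k := g'_k$ for $k \geq 1$ and assert that $(g_0, g'_1, \dots, g'_m)$ is a chain with conjugating element $h_0$. But the chain condition at step $k \in \intvald 1{m-1}$ reads $h_0 g'_k h_0^{-1} = g'_{k+1}$, whereas what you actually established is $h_0 g_k h_0^{-1} = g'_{k+1}$ with the \emph{original} $g_k \in E_0$. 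Two elliptic lifts of the same $\bar g_k$ inside $\stab Y$ need not coincide a priori: for instance, distinct reflections of $\stab Y$ can project to the same reflection of $\stab{\bar v}$. The same issue reappears when you identify $\group{g_0, g'_1, \dots, g'_{m-1}}$ with $E_0$.

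The fix, which is exactly the observation the paper makes, is that $q_{\bar v}(\bar g_k) = 1$ for every $k \in \intvald 1{m-1}$. Since neither $\bar E_0$ nor $\bar E_1 = \bar h \bar E_0 \bar h^{-1}$ contains a strict rotation while $\bar E = \group{\bar E_0, \bar E_1}$ does, the images $q_{\bar v}(\bar E_0)$ and $q_{\bar v}(\bar E_1)$ must be two \emph{distinct} reflection groups; the elements $\bar g_1, \dots, \bar g_{m-1}$ lie in both, hence in their trivial intersection. Any elliptic lift in $\stab Y$ of such a $\bar g_k$ then lies in the maximal elliptic normal subgroup $F$, on which $\pi$ is injective, forcing $g_k = g'_k$. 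Once this is verified, your tuple is genuinely a chain and the rest of your argument goes through unchanged.
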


\begin{proof}
	There exists a (unique) apex $\bar v \in \bar{\mathcal V}$ such that $\bar E$ is a subgroup of $\stab{\bar v}$ (\autoref{res: dichotomy elliptic}).
	Let $q_{\bar v} \colon \stab{\bar v} \to \dihedral[n]$ be the canonical geometric realization map.
	Assume first that $\bar E_0 = \group{\bar g_0, \dots, \bar g_{m-1}}$ contains a strict rotation at $\bar v$.
	Consequently $\bar h \bar E_0 \bar h^{-1}$ contains a strict rotation at $\bar h \bar v$.
	Since strict rotations fix a unique cone point we get $\bar h\bar v = \bar v$.
	Thus $\group{\bar g_0, \bar h}$ is contained in $\stab{\bar v}$.
	
	Assume now that $\bar E_0$ does not contain a strict rotation at $\bar v$.
	Since $\bar E$ cannot be lifted, $q_{\bar v}$ maps $\bar g_0$ and $\bar g_m$ to two distinct reflections, and $\bar g_1, \dots, \bar g_{m - 1}$ to the identity.
	We denote by $\bar C$ the intersection of $\bar E$ with the kernel of $q_{\bar v}$ and let $\bar A = \group{\bar g_0, \bar C}$ and $\bar B = \group{\bar g_m, \bar C}$.
	Note that $\bar C$ is a subgroup of index $2$ in both $\bar A$ and $\bar B$.
	Let $(H,Y) \in \mathcal Q$ such that the projection $\pi \colon G \to \bar G$ maps $\stab Y$ onto $\stab {\bar v}$.
	We choose a lift $A$ (\resp $B$) of $\bar A$ (\resp $\bar B$) contained in $\stab Y$ so that $A \cap B$ is a lift of $\bar C$ that we denote by $C$ (see \autoref{res: lifting elliptic in apex group}).
	Let $a_0, \dots, a_{m-1}$ (\resp $b_1, \dots, b_m$) the pre-images of $\bar g_0, \dots, \bar g_{m-1}$ (\resp $\bar g_1, \dots, \bar g_m$) in $A$ (\resp $B$).
	As we already observed $\bar g_1, \dots, \bar g_{m-1}$ belong to $\bar C$, thus $a_k = b_k$, for all $k \in \intvald 1{m-1}$.
	Applying \autoref{res: lifting partial auto} with $S_1 = A$ and $S_2 = B$, we get that there exists $h_0 \in G$ such that for every $k \in \intvald 0{m-1}$ we have $b_{k+1} = h_0a_kh_0^{-1}$.
	If follows that $\mathcal C = (a_0, a_1, \dots, a_{\nu-1}, b_\nu)$ is a chain and $h_0$ a conjugating element of $\mathcal C$.
	In addition this chain generates a subgroup of $\stab Y$, which is therefore elementary.
	Recall that $\group{a_0, \dots, a_{\nu -1}}$ is an elliptic subgroup of $\stab Y$.
	Hence it is a dihedral germ (\autoref{res: dihedral germ in relation group}).
	Consequently $\mathcal C$ is a strong chain of $G$ lifting $\bar {\mathcal C}$.
\end{proof}

\begin{lemm}
\label{res: nu-lift - loxodromic}
	Let $m \geq 3$.
	Let $\bar {\mathcal C} = (\bar g_0, \dots, \bar g_m)$ be a strong chain generating a loxodromic subgroup $\bar E$ of $\bar G$.
	Let $\bar h$ be a conjugating element of $\bar {\mathcal C}$.
	If $2^{m + 2}\mu$ divides $n$ then one of the following holds
	\begin{enumerate}
		\item The subgroup $\group{\bar g_0, \bar h}$ is either loxodromic or contained in $\stab{\bar v}$ for some $\bar v \in \bar {\mathcal V}$.
		\item There exists a strong chain $\bar {\mathcal C}' = (\bar g'_0, \dots, \bar g'_m)$ which can be lifted and such that $(\bar g'_1, \dots, \bar g'_{m-1}) = (\bar g_1, \dots, \bar g_{m-1})$.
	\end{enumerate}
\end{lemm}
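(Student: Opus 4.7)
The plan is to exploit the structural lifting result \autoref{res: lifting loxodromic sbgp} together with the dihedral-pair algebra made available by \autoref{res: pushing assum elem}. Write $\bar F$ for the maximal elliptic normal subgroup of $\bar E$; then $(\bar E, \bar F)$ is a dihedral pair of type $(\boldsymbol{\mathcal E}, n_2)$, so in particular $\bar E$ embeds in $\bar E / \bar F \times \dihedral[n_2]^k \times \mathbf{E}$ for some $k \in \N$ and $\mathbf{E} \in \boldsymbol{\mathcal{E}}$. Applying \autoref{res: lifting loxodromic sbgp} furnishes an elementary subgroup $E'$ of $G$, a dihedral pair $(E', F)$ with $F$ a lift of $\bar F$, and an epimorphism $\theta \colon \bar E \onto E'$ such that $\pi \circ \theta$ restricts to the identity on $\bar F$ and extends to an embedding $\bar E \into \bar E / \bar F \times E'$.

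Set $\bar E_0 = \group{\bar g_0, \dots, \bar g_{m-1}}$, so that $\bar h \bar E_0 \bar h^{-1}$ is again contained in $\bar E$. If $\bar E_0$ is loxodromic then this conjugate is also loxodromic, and both have limit set equal to $\Lambda(\bar E)$; this forces $\bar h$ to preserve $\Lambda(\bar E)$ and hence lie in the maximal elementary subgroup containing $\bar E$, so $\group{\bar g_0, \bar h}$ is loxodromic and conclusion~(1) holds. Assume henceforth that $\bar E_0$ is elliptic; by the strong-chain hypothesis either $\bar h$ is loxodromic or $\bar E_0$ is a dihedral germ. The goal is to produce a strong chain $\bar{\mathcal C}' = (\bar g'_0, \dots, \bar g'_m)$ in $\bar G$ that lifts to $G$ and whose interior coincides with $(\bar g_1, \dots, \bar g_{m-1})$.

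The key step is to apply \autoref{res: norm in prod of dihedral} to the image $\mathbf A$ of $\bar E_0$ in the finite product $\dihedral[n_2]^k \times \mathbf{E}$: the reflection rank of $\mathbf A$ is bounded by $m-1$, so the divisibility assumption $2^{m+2}\mu \mid n$ is precisely what is required to extract an element $\bar a$ such that $\bar u = \bar h^{n/4} \bar a^{-1}$ centralizes $\bar E_0$ and commutes with the entire product up to a central element. Replacing the endpoints $\bar g_0, \bar g_m$ by suitable $\bar u$-conjugates, while leaving the interior terms untouched, yields a new chain whose conjugating element becomes compatible with $\theta$; invoking \autoref{res: lifting automorphism} for the normalizing action of this element on $F$ then produces a conjugating element $h_0 \in G$ that is loxodromic whenever $\bar h$ is — save for the alternative in \autoref{res: lifting automorphism}, in which $\bar E_0$ sits in a reflection group at some apex $\bar v$, forcing $\bar h$ to also fix $\bar v$ and placing $\group{\bar g_0, \bar h}$ inside $\stab{\bar v}$, which again delivers conclusion~(1). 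The main obstacle is ensuring that the endpoint adjustments assemble into a genuine chain with a \emph{single} conjugating element rather than piecemeal modifications: it is exactly the commutator clause of \autoref{res: norm in prod of dihedral} that enables the discrepancies to telescope away uniformly across the chain, and that pins down why the divisibility threshold $2^{m+2}\mu \mid n$ is tight in the length $m$ of the chain.
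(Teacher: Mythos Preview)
Your proposal has a genuine gap at its core. You want to apply \autoref{res: norm in prod of dihedral} inside the product $\dihedral[n_2]^k \times \mathbf E$ arising from the dihedral-pair embedding of $\bar E$, with the normalising element being (the image of) $\bar h$. But the conjugating element $\bar h$ is \emph{not} an element of $\bar E$ in general: the chain $(\bar g_0,\dots,\bar g_m)$ generates $\bar E$, while $\bar h$ is just some element of $\bar G$ sending $\bar g_k$ to $\bar g_{k+1}$. So there is no image of $\bar h$ in that product, and the expression $\bar h^{n/4}\bar a^{-1}$ has no meaning in the setting you describe. Even if $\bar h$ did lie in $\bar E$, the first factor $\bar E/\bar F$ of the embedding is $\dihedral$ (infinite), which is not of the shape $\dihedral[p_i]$ with $p_i\mid n$ required by \autoref{res: norm in prod of dihedral}. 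Your subsequent appeal to \autoref{res: lifting automorphism} is also garbled: that lemma has no loxodromic/reflection-group dichotomy (you are thinking of \autoref{res: lifting partial auto}), and in any case the alternative ``$\bar E_0$ sits in a reflection group at $\bar v$'' does not force $\bar h$ to fix $\bar v$.

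The paper's proof takes a different route. Once $\bar E_0$ is elliptic, it uses that $\bar E$ does not preserve orientation and hence splits as $\bar E = \bar A \ast_{\bar C} \bar B$ with $\bar A = \group{\bar g_0,\bar C}$, $\bar B = \group{\bar g_m,\bar C}$. It then does a case analysis on whether $\bar A$ and $\bar B$ can be lifted. If both can, \autoref{res: lifting loxodromic sbgp w/ liftable elliptic} conjugates one factor so that the lifts generate an elementary subgroup of $G$, and one builds the lifted chain directly (changing only one endpoint). If, say, $\bar A$ cannot be lifted, then $\bar A$ lives in $\stab{\bar v}$ for some apex $\bar v$, and \autoref{res : existence of central half-turn} embeds $\stab{\bar v}$ in $\dihedral[n]\times\dihedral[n_2]^k\times\mathbf E$; it is \emph{inside this apex stabiliser} that \autoref{res: norm in prod of dihedral} is applied, with $\mathbf h$ a preimage of a primitive rotation $\mathbf r$ (not $\bar h$), to produce a quarter-turn $\bar a$ centralising the relevant normal subgroup. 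Conjugating $\bar g_0$ by this $\bar a$ turns $\bar A$ into a liftable $\bar A'$, reducing to the previous case (possibly after one further iteration for $\bar B$). The divisibility hypothesis $2^{m+2}\mu\mid n$ enters because the reflection rank of the normal subgroup generated by $\bar g_0^2,\bar g_1,\dots,\bar g_{m-1}$ inside $\stab{\bar v}$ is at most $m-1$.
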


\begin{proof}
	Assume firs that $\bar E_0 = \group{\bar g_0, \dots, \bar g_{m-1}}$ contains a loxodromic element say $\bar t$.
	Since $\bar E_0$ and $\bar h\bar E_0 \bar h^{-1}$ generate an elementary subgroup, namely $\bar E$, both $\bar g_0$ and $\bar h$ are contained in the maximal elementary subgroup containing $\bar t$.
	In particular, $\group{\bar g_0, \bar h}$ is loxodromic.

	Assume now that $\bar E_0$ is elliptic (a subgroup of a loxodromic subgroup cannot be parabolic).
	Let $\bar C$ be the maximal normal elliptic subgroup of $\bar E$.
	Since $\bar E$ is generated by two elliptic subgroups (namely $\bar E_0$ and $\bar h \bar E_0 \bar h^{-1}$) it does not preserve the orientation.
	Hence the quotient $\bar E/ \bar C$ is isomorphic to $\dihedral$.
	We write
	\begin{equation*}
		 1 \to \bar C \to \bar E \xrightarrow q \dihedral \to 1
	\end{equation*}
	for the corresponding short exact sequence.
	One observes that $q$ maps $\bar g_0$ and $\bar g_m$ to two distinct reflections, while $\bar C$ is the normal subgroup of $\bar E$ generated by $\bar g_0^2, \bar g_1, \dots, \bar g_{m-1}, \bar g_m^2$.
	We let $\bar A = \group{\bar g_0, \bar C}$ and $\bar B = \group{\bar g_m, \bar C}$ so that $\bar E$ is isomorphic to $\bar A\ast_{\bar C} \bar B$.
	We now distinguish two cases.
	
	\paragraph{Case 1.}
	\emph{Assume first that both $\bar A$ and $\bar B$ can be lifted in $G$.}
	We denote by $A$ and $B$ a lift of $\bar A$ and $\bar B$ respectively.
	According to \autoref{res: lifting loxodromic sbgp w/ liftable elliptic} there exists $u \in G$ whose image $\bar u$ in $\bar G$ centralizes $\bar C$ such that $E_u = \group{A, uBu^{-1}}$ is elementary.
	We let $\bar g'_k = \bar g_k$, for every $k \in \intvald 0{m-1}$ and $\bar g'_m = \bar u \bar g_m \bar u^{-1}$.
	Since $\bar u$ centralizes $\bar C$ we observe that $\bar {\mathcal C}' = (\bar g'_0, \dots, \bar g'_m)$ is a chain and $\bar h_0 = \bar u \bar h$ a conjugating element of $\bar{\mathcal C}'$. 
	Note also that $\bar{\mathcal C}$ and $\bar{\mathcal C'}$ only eventually differ on the last element.
	We now focus on this new chain.
	Let $a'_0, \dots, a'_{m-1}$ be the lift of $\bar g'_0, \dots, \bar g'_{m -1}$ in $A$ and $b'_1, \dots, b'_m$ the lifts of $\bar g'_1, \dots, \bar g'_m$ in $uBu^{-1}$.
	We now proceed exactly as in the proof of \autoref{res: nu-lift - elliptic w/o lift}.
	We first observe that $\mathcal C' = (a'_0, a'_1, \dots, a'_{m-1}, b'_m)$ is a chain for some conjugating element $h'_0 \in G$.
	Moreover it generates a subgroup of $E_u$ which is therefore elementary.
	Recall that $\bar E_0 = \group{\bar g_0, \dots, \bar g_{m-1}}$ is an elliptic subgroup of the loxodromic subgroup $\bar E$, therefore it is a dihedral germ (\autoref{res: dihedral germ in loxodromic subgroup}).
	Hence $\bar {\mathcal C}'$ is a strong chain.
	Moreover, as a lift of $\group{\bar g_0, \dots, \bar g_{m-1}}$, the subgroup $\group{a'_0, \dots, a'_{m-1}}$ is a dihedral germ as well (\autoref{res: lifting dihedral germ}).
	Hence $\mathcal C'$ is a strong chain of $G$ lifting $\bar{\mathcal C}'$.
	
	\paragraph{Case 2.}
	\emph{Assume that either $\bar A$ or $\bar B$ cannot be lifted in $G$.}
	Up to replacing $(\bar g_0, \dots, \bar g_m)$ by $(\bar g_m, \dots, \bar g_0)$, we can assume that $\bar A$ cannot be lifted in $G$.
	In particular, there exists a (unique) apex $\bar v \in \bar {\mathcal V}$ such that $\bar A$ contains a strict rotation at $\bar v$ (\autoref{res: dichotomy elliptic}).
	We write $q_{\bar v} \colon \stab{\bar v} \to \dihedral[n]$ for the geometric realization map associated to $\bar v$ and $\mathbf r \in \dihedral[n]$ for a generator of the rotation group.
	Let $\bar E_1 = \group{\bar g_1, \dots, \bar g_{m-2}}$.
	Observe that $\bar A$ is generated by $\bar h^{-1}\bar E_1\bar h$ and $\bar C$.
	Moreover $\bar E_1$ is contained in $\bar A$.	
	If follows from \autoref{res: lifting loxodromic sbgp w/ unliftable elliptic} that either $\bar E_1$ contains a strict rotation, in which case $\group{\bar g_0, \bar h} $ is a subgroup of $\stab{\bar v}$, or there exists a reflection $\mathbf x \in \dihedral[n]$ such that
	\begin{equation*}
		q_{\bar v}( \bar C) = \group{\mathbf x} 
		\quad \text{and} \quad
		q_{\bar v}( \bar E_1) = \group{\mathbf x'},
		\quad \text{where} \quad \mathbf x' = \mathbf r^{n/4}\mathbf x\mathbf r^{-n/4},
	\end{equation*}
	See \autoref{fig: lifting chain}.
	\begin{figure}[htbp]
    	\centering
    	\includegraphics[page=7, width=\textwidth]{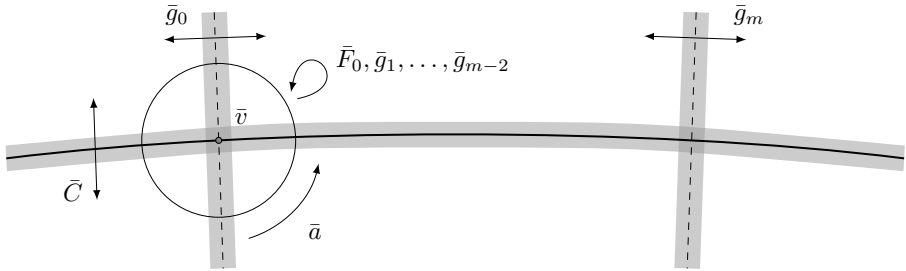}
    	\caption{
			Lifting chains. 
			The shaded areas represent $\fix{\bar A,10\bar \delta}$, $\fix{\bar B,10\bar \delta}$, and $\fix{\bar C,10\bar \delta}$ respectively.
			The isometry $\bar g_{m-1}$ is not on the picture.
			It is not clear a priori whether it is a reflection or locally trivial at $\bar v$.
		}
    	\label{fig: lifting chain}
    \end{figure}
	Let us explore further this second configuration.
	Recall also that there exist $k \in \N$, and an abstract group $\mathbf E \in \boldsymbol{\mathcal E}$, such that $\stab{\bar v}$ embeds in $\dihedral[n] \times \dihedral[n_2]^k\times \mathbf E$ (\autoref{res : existence of central half-turn}).
	Let $\bar F_0$ be the normal subgroup of $\stab{\bar v}$ generated by $\bar g_0^2, \bar g_1, \dots, \bar g_{m-1}$.
	Seen as a subgroup of $\dihedral[n] \times \dihedral[n_2]^k\times \mathbf E$, the reflection rank of $\bar F_0$ is at most $m-1$ (see \autoref{sec: invariants - mixed}).
	Recall that $2^{m+2}\mu$ divides $n$.
	According to \autoref{res: norm in prod of dihedral} there exists a pre-image $\bar a \in \stab{\bar v}$ of $\mathbf r^{n/4}$ such that 
	\begin{enumerate}
		\item \label{enu: nu-inv - loxodromic - case 2.1 - same action as 1/4 turn}
		$\bar a$ centralizes $\bar F_0$
		\item \label{enu: nu-inv - loxodromic - case 2.1 - double comm}
		$[[\bar a, \bar u_1],\bar u_2]=1$, for every $\bar u_1, \bar u_2 \in \stab{\bar v}$.
	\end{enumerate}
	We now let $\bar g'_0 = \bar a \bar g_0 \bar a^{-1}$ and $\bar g'_k = \bar g_k$ for every $k \in \intvald 1m$.
	By construction $\bar a$ commutes with $\bar g_1, \dots, \bar g_{m-1}$.
	Hence $\bar{\mathcal C}' = (\bar g'_0, \dots, \bar g'_m)$ is a chain with $\bar h_0  = \bar h \bar a^{-1}$ as conjugating element.	
	Note also that $\bar{\mathcal C}$ and $\bar{\mathcal C'}$ only differ on the first element.
	Identity \ref{enu: nu-inv - loxodromic - case 2.1 - double comm} also tells us that given $\bar c \in \bar C$ and $\epsilon \in \{\pm 1\}$, we have $[[\bar a, \bar g_0^\epsilon], \bar c] = 1$, which can be reformulated as
	\begin{equation*}
		(\bar g'_0)^{-\epsilon} \bar c (\bar g'_0)^\epsilon = \left(\bar a \bar g_0^{-\epsilon} \bar a^{-1}\right) \bar c \left( \bar a \bar g_0^\epsilon \bar a^{-1}\right) = \bar g_0^{-\epsilon} \bar c \bar g_0^\epsilon.
	\end{equation*}
	As $\bar g_0$ normalizes $\bar C$, so does $\bar g'_0$.
	Since $\bar a$ commutes with $\bar g_0^2$, we have $(\bar g'_0)^2 = \bar g_0^2$, hence $(\bar g'_0)^2$ belongs to $\bar C$.
	Consequently $\bar C$ is a subgroup of index $2$ of $\bar A' = \group{\bar g'_0, \bar C}$.
	In particular, $\bar E' = \group{\bar A', \bar B}$ is elementary. 
	Hence $\bar{\mathcal C}'$ generates an elementary subgroup.
	Recall that $\bar a$ is a preimage of $\mathbf r^{n/4}$.
	Hence 
	\begin{equation*}
		q_{\bar v}(\bar g'_0) = \mathbf r^{n/4}\mathbf x'\mathbf r^{-n/4} = \mathbf x
	\end{equation*}
	On the other hand, $q_{\bar v}$ maps $\bar C$ to $\group{\mathbf x}$.
	Consequently $q_{\bar v}(\bar A') = \group{\mathbf x}$ and $\bar A'$ can be lifted in $G$.
	As observed above $\bar{\mathcal C'}$ generates a subgroup $\bar E'$, which is either elliptic, parabolic or loxodromic.
	We let $\bar E'_0 = \group{\bar g'_0, \dots, \bar g'_{m-1}}$.
	Note that $\bar E'_0 = \bar a \bar E_0 \bar a^{-1}$.
	Since $\bar E_0$ is an elliptic subgroup of the loxodromic group $\bar E$ it is a dihedral germ (\autoref{rem: pushing assum elem}), hence so is $\bar E'_0$.
	Consequently $\bar {\mathcal C}'$ is a strong chain.
	We now distinguish again two cases.
	
	\subparagraph{Case 2.1} \emph{Assume that $\bar E'$ is not loxodromic.}
	Note that $\bar E'_0$ is contained in $\bar A'$.
	Since $\bar A'$ can be lifted $\bar E'_0$ does not contain a strict rotation.
	If $\bar E'$ is elliptic or parabolic, then Lemmas~\ref{res: nu-lift - para and elliptic w/ lift} and \ref{res: nu-lift - elliptic w/o lift} tell us that there exists a strong chain $\mathcal C'$ lifting $\bar{\mathcal C'}$.

	\subparagraph{Case 2.2} \emph{Assume that $\bar E'$ is loxodromic.}
	Observe that $\bar E'_0$ does not contain a loxodromic element (it lies in the elliptic subgroup $\bar A'$).
	We rerun the previous discussion replacing $\bar{\mathcal C}$ and $\bar E = \bar A \ast_{\bar C} \bar B$ by $\bar {\mathcal C}'$ and $\bar E' = \bar A' \ast_{\bar C} \bar B$.
	In particular, if $\bar B$ can be lifted, we are back to Case~1.
	This means that there exists a strong chain $\bar {\mathcal C}''$ which can be lifted and which coincides with $\bar{\mathcal C}'$ except maybe for the first or the last term.
	Assume now that $\bar B$ cannot be lifted.
	Note that $\bar E'_1  = \group{\bar g'_1, \dots, \bar g'_{m-1}}$ coincides with $\bar E_1$, and therefore does not contain a strict rotation.
	We permute in Case~2 the role of $\bar A$ and $\bar B$ and produce a new strong chain $\bar{\mathcal C}'' = (\bar g''_0, \dots, \bar g''_m)$ such that $(\bar g''_1, \dots, \bar g''_{m-1}) = (\bar g_1, \dots, \bar g_{m-1})$ which generates an elementary subgroup of the form $\bar E'' = \bar A'\ast_{\bar C}\bar B'$ where both $\bar A'$ and $\bar B'$ are elliptic subgroups which can be lifted.
	Following Case~1. we observe that there exists a strong chain $\bar{\mathcal C}'''$ which can be lifted and which coincides with $\bar{\mathcal C}''$ except maybe for the first or the last term.
\end{proof}

Observe that \autoref{res: nu-lift - dichotomy} is combination of  Lemmas~\ref{res: nu-lift - para and elliptic w/ lift}, \ref{res: nu-lift - elliptic w/o lift} and \ref{res: nu-lift - loxodromic}

\begin{proof}[Proof of \autoref{res: nu-inv}]
	Let $m \geq \max\{ \nu, \mu +4\}$.
	Let $\bar {\mathcal C} = (\bar g_0, \dots, \bar g_m)$ be a strong chain of $\bar G$.
	Let $\bar h$ be a conjugating element of $\bar{\mathcal C}$.
	According to \autoref{res: nu-lift - dichotomy} one of the following holds.
	\begin{enumerate}
		\item \label{enu: nu-inv elliptic}
		There exists $\bar v \in \bar{\mathcal V}$ such that $\group{\bar g_0, \bar h}$ is contained in $\stab{\bar v}$.
		\item \label{enu: nu-inv loxo}
		The subgroup $\group{\bar g_0, \bar h}$ is loxodromic,
		\item \label{enu: nu-inv lifted}
		There exists a strong chain $\bar {\mathcal C}' = (\bar g'_0, \dots, \bar g'_m)$ of $\bar G$ which can be lifted such that $(\bar g'_1, \dots, \bar g'_{m-1}) = (\bar g_1, \dots, \bar g_{m-1})$.
	\end{enumerate}
	We study each case separately.
	Assume first that $\group{\bar g_0, \bar h}$ is contained in $\stab{\bar v}$ for some $\bar v \in \bar{\mathcal V}$.
	Note that $\bar h$ cannot be loxodromic (it fixes $\bar v$).
	By the very definition of strong chain $\group{\bar g_0, \dots, \bar g_{m-1}}$ is a dihedral germ.
	According to \autoref{res : existence of central half-turn} there exist $k \in \N$ and $\mathbf E \in \boldsymbol {\mathcal E}$ such that $\stab{\bar v}$ embeds in $\dihedral[n] \times \dihedral[n_2]^k \times \mathbf E$.
	Since $m \geq \mu +2$, it follows from \autoref{res: dihedral complexity} that $\bar g_0$ and $\bar g_m$ respectively belong to $\group{\bar g_1, \dots, \bar g_m}$ and $\group{\bar g_0, \dots, \bar g_{m-1}}$.
	In other words $\bar h$ normalizes $\group{\bar g_0, \dots, \bar g_{m-1}}$.
	Hence $\group{\bar g_0, \bar h}$ is a cyclic extension of the dihedral germ $\group{\bar g_0, \dots, \bar g_{m-1}}$, therefore it has dihedral shape.
	
	Assume now that $\group{\bar g_0, \bar h}$ is loxodromic.
	Then it automatically has dihedral shape (\autoref{res: loxo have dihedral shape}).
	
	We are left with the last case.
	Let $\mathcal C' = (g'_0, \dots, g'_m)$ be a strong chain of $G$ lifting the chain $\bar {\mathcal C}'$ given by Point~\ref{enu: nu-inv lifted}.
	Let $h_0$ be a conjugating element of $\mathcal C'$.
	Recall that $m \geq \nu$.
	Thus $\group{g'_0, h_0}$ is an elementary subgroup with dihedral shape.
	Recall that by \autoref{ass: elem subgroup} every dihedral pair of $G$ has type $(\boldsymbol{\mathcal E}, n_2)$.
	Since $m-2 \geq \mu +2$, it follows from \autoref{res: dihedral complexity} that $g'_1$ and $g'_{m-1}$ respectively belong to $\group{g'_2, \dots, g'_{m-1}}$ and $\group{g'_1, \dots, g'_{m-2}}$.
	Recall that $\bar{\mathcal C}$ and $\bar {\mathcal C}'$ coincide everywhere but except maybe on the first and the last element.
	Pushing the previous observation in $\bar G$ we get that $\bar g_1$ and $\bar g_{m-1}$ respectively belong to $\group{\bar g_2, \dots, \bar g_{m-1}}$ and $\group{\bar g_1, \dots, \bar g_{m-2}}$.
	Hence $\bar h$ normalizes $\group{\bar g_1, \dots, \bar g_{m-2}}$ and a fortiori $\group{\bar g_0, \dots, \bar g_{m-1}}$.
	In particular, $\group{\bar g_0, \bar h}$ is elementary.
	If $\bar h$ is loxodromic, then $\group{\bar g_0, \bar h}$ has automatically dihedral shape (\autoref{res: loxo have dihedral shape})
	Otherwise $\group{\bar g_0, \bar h}$ is a cyclic extension of the dihedral germ $\group{\bar g_0, \dots, \bar g_{m-1}}$, and thus it has dihedral shape.
\end{proof}

%%%%%%%%%%%%%%%%%%%%%%%%%%%%%%%%%%%%%%%%%%%%%%%%%%%%%%%%%%%%%%%%%%%%%%%%%%%%%%%%%%%%
%%%%%%%%%%%%%%%%%%%%%%%%%%%%%%%%%%%%%%%%%%%%%%%%%%%%%%%%%%%%%%%%%%%%%%%%%%%%%%%%%%%%
%
\section{Periodic groups}
%
%%%%%%%%%%%%%%%%%%%%%%%%%%%%%%%%%%%%%%%%%%%%%%%%%%%%%%%%%%%%%%%%%%%%%%%%%%%%%%%%%%%%
%%%%%%%%%%%%%%%%%%%%%%%%%%%%%%%%%%%%%%%%%%%%%%%%%%%%%%%%%%%%%%%%%%%%%%%%%%%%%%%%%%%%
\label{sec: periodic}

%%%%%%%%%%%%%%%%%%%%%%%%%%%%%%%%%%%%%%%%%%%%%%%%%%%%%%%%%%%%%%%%%%%%%%%%%%%%%%%%%%%%
%
\subsection{Induction step}
%
%%%%%%%%%%%%%%%%%%%%%%%%%%%%%%%%%%%%%%%%%%%%%%%%%%%%%%%%%%%%%%%%%%%%%%%%%%%%%%%%%%%%
\label{sec: periodic - induction step}

The next proposition will play the role of the induction step in the final induction (see \autoref{res: partial periodic quotient - gal}).
It is the generalization of \cite[Lemma~6.2.1]{Delzant:2008tu} in the presence of even torsion.

\begin{prop}
\label{res: induction step}
	There exist positive constants $\delta_1$, $C_0$ and $C_1$ such that for every positive integer $\nu_0$, there exists a critical exponent $N_0 \in \N$ with the following properties.
	Let $\boldsymbol{\mathcal E}$ be a model collection of abstract groups whose exponent $\mu = \mu(\boldsymbol{\mathcal E})$ is finite.
	Let $N_1 \geq N_0$ and $n \geq N_1$ be a multiple of $2^{\nu_0 +2}\mu$.

	Let $G$ be a group acting by isometries on a $\delta$-hyperbolic length space $X$ for some $\delta \leq \delta_1$ and satisfying the following assumptions.
	\begin{enumerate}
		\item \label{enu: induction step - non elem}
		The action of $G$ on $X$ is gentle and non-elementary.
		\item \label{enu: induction step - elem subgroup}
		Dihedral pairs of $G$ have type $(\boldsymbol{\mathcal E},n_2)$, where $n_2$ is the largest power of $2$ dividing $n$.
		\item \label{enu: induction step - param}
		$A(G,X,400\delta)  \leq (\nu_0+5)C_0$, \\
		$\inj[X]G\geq 1/C_0\sqrt{N_1}$,\\
		$\max\{\nu_{\rm{stg}}(G,X),\mu + 4\} \leq \nu_0$.
	\end{enumerate}
	We denote by $P$ the set of all primitive loxodromic elements $h \in G$ such that $\norm h \leq 10\delta_1$.
	Let $K$ be the (normal) subgroup of $G$ generated by $\set{h^n}{h \in P}$ and $\bar G$ the quotient of $G$ by $K$.
	We write $\pi \colon G \onto \bar G$ for the corresponding quotient map.

	Then there exists a $\bar \delta$-hyperbolic length space $\bar X$, with $\bar \delta\leq \delta_1$, on which $\bar G$ acts by isometries satisfying \ref{enu: induction step - non elem}-\ref{enu: induction step - param}.
	In addition there exists a $\pi$-equivariant map $X \to \bar X$ with the following properties.
	\begin{itemize}
		\item The map $X \to \bar X$ is $C_1/\sqrt{N_1}$-Lipschitz.
		\item If $F$ is an elliptic (\resp parabolic) subgroup of $G$, then $\pi$ induces an isomorphism from $F$ onto its image $\bar F$ which is also elliptic (\resp elliptic or parabolic).
		\item Any elliptic subgroup of $\bar G$, is either the isomorphic image of an elliptic subgroup of $G$, or has finite exponent dividing $n$.
		\item Any finitely generated parabolic subgroup of $\bar G$ is the isomorphic image of a parabolic subgroup of $G$.
		\item Let $F_1$ and $F_2$ be two subgroups of $G$.
		Assume that $F_1$ is elliptic and $F_2$ is generated by a set $S_2$ such that $\fix{S_2,100\delta_1}$ is non-empty.
		If the images of $F_1$ and $F_2$ are conjugated in $\bar G$, then $F_1$ and $F_2$ are conjugated in $G$.
	\end{itemize}
\end{prop}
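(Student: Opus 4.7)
The plan is to apply the geometric small cancellation machinery of \autoref{sec: sc} to the $G$-invariant family
\begin{equation*}
\mathcal Q \;=\; \bigl\{ (\langle h^n\rangle, Y_h) : h \in P \bigr\},
\end{equation*}
where $Y_h$ denotes the cylinder of $h$ in $X$. By definition of $P$ each $h$ is primitive, so \autoref{ass: relation} is automatic, and \autoref{ass: elem subgroup} is hypothesis \ref{enu: induction step - elem subgroup}. After rescaling the metric on $X$ by a factor comparable to $1/\sqrt{N_1}$ (so that the hyperbolicity constant descends from $\delta_1$ to $\delta_0$), I would take $\bar X$ to be the quotient $\bar X = \dot X_\rho(\mathcal Q)/K$ for a fixed $\rho \geq \rho_0$, as in \autoref{res: small cancellation}. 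The rescaling factor is absorbed at the end into the $C_1/\sqrt{N_1}$ Lipschitz constant of the projection $X \to \bar X$.

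\textbf{Checking the hypotheses of \autoref{res: small cancellation}.} The three conditions to verify are hyperbolicity, short-overlap, and injectivity. Hyperbolicity is guaranteed by the choice of rescaling. For injectivity, any $h \in P$ satisfies $\snorm h \geq \inj[X]G \geq 1/C_0\sqrt{N_1}$, hence in the rescaled metric $\inj[X]{\mathcal Q} \gtrsim n/(C_0 N_1) \geq 1/C_0$; enlarging $N_0$ makes this exceed $10\pi\sinh\rho$. The core point is the overlap bound. Given two distinct pairs $(H_1,Y_1), (H_2,Y_2) \in \mathcal Q$ coming from $h_1, h_2 \in P$, these cannot generate an elementary subgroup, so \autoref{res: margulis lemma} applies and yields
\begin{equation*}
\diam\bigl(Y_1^{+5\delta}\cap Y_2^{+5\delta}\bigr) \;\leq\; C_0 + C_0 + (\nu_0+2)C_0 + A(G,X) + 191\delta_1,
\end{equation*}
a quantity that is independent of $n$. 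The rescaling by $1/\sqrt{N_1}$ then brings $\Delta(\mathcal Q, X)$ below $\Delta_0$ provided $N_0$ is sufficiently large.

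\textbf{Transferring the action and its invariants.} Once \autoref{res: small cancellation} applies, $\bar X$ is $\delta_1$-hyperbolic and $\bar G$ acts by isometries. Gentleness is \autoref{res: sc - gentle action}. Non-elementarity follows from \autoref{res: action quotient non elem} since every apex of $\bar{\mathcal V}$ has order exactly $n \geq 3$. The structural assumption \ref{enu: induction step - elem subgroup} is propagated by \autoref{res: pushing assum elem}. The three numerical bounds of \ref{enu: induction step - param} come from, respectively, \autoref{res: injectivity radius} (with $N=G$), \autoref{res: acyl quotient}, and \autoref{res: nu-inv}; the divisibility hypothesis $2^{\nu_0+2}\mu \mid n$ of the proposition is precisely what is needed to invoke \autoref{res: nu-inv}, and the condition $\mu+4\leq \nu_0$ is what makes the $\max$ in the output of that proposition collapse back to $\nu_0$. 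The properties of $\pi$ — isomorphism on elliptic/parabolic subgroups, classification of elliptic subgroups of $\bar G$, and conjugacy lifting — are collected from Propositions \ref{res: isom on non-loxo elem}, \ref{res: dichotomy elliptic}, \ref{res: lifting parabolic subgroups}, and \ref{res: lifting conj elliptic}.

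\textbf{The main obstacle.} The genuinely delicate task is not any single verification but the \emph{simultaneous} choice of the constants $\delta_1$, $C_0$, $C_1$, together with the threshold $N_0(\nu_0)$, so that each of the three numerical bounds in \ref{enu: induction step - param} is reproduced \emph{in precisely the same form} on the quotient side — so that the proposition can be iterated indefinitely. The acylindricity bound involves an additive term $(\nu(G,X)+4)\pi\sinh(4L_0\bar\delta)$ that must fit inside the slack in $(\nu_0+5)C_0$, and the strong $\nu$-bound requires that the extra $\mu+4$ appearing in \autoref{res: nu-inv} be dominated by $\nu_0$. The whole edifice rests on the fact that, thanks to the mixed invariant $\nu_{\rm stg}$ and the careful study of dihedral germs in \autoref{sec: invariants quotient - mixed}, the bookkeeping on the geometric side is compensated by the uniform algebraic control coming from the model collection $\boldsymbol{\mathcal E}$ — a compensation that would fail for the purely geometric invariant $\nu$ alone in the presence of even torsion.
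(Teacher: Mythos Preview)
Your outline follows the same architecture as the paper's proof: rescale $X$, apply \autoref{res: small cancellation} to the family $\mathcal Q$, and then transfer the three invariants using \autoref{res: acyl quotient}, \autoref{res: injectivity radius}, and \autoref{res: nu-inv}. The paper uses \autoref{res: thickening fix set} rather than \autoref{res: margulis lemma} for the overlap bound, but that is an inessential variant.

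There is, however, a genuine gap in your verification of $\Delta(\mathcal Q,X) \leq \Delta_0$. You assert that if $(H_1,Y_1) \neq (H_2,Y_2)$ then $h_1,h_2$ ``cannot generate an elementary subgroup''. This is not automatic: two distinct primitive elements $h_1,h_2$ can lie in the same maximal loxodromic subgroup $E$ (take $h_2 = h_1 u$ with $u$ a nontrivial element of the maximal elliptic normal subgroup $F$). In that case $Y_{h_1} = Y_{h_2}$, so the overlap $Y_1^{+5\delta} \cap Y_2^{+5\delta}$ is unbounded, and neither \autoref{res: margulis lemma} nor \autoref{res: thickening fix set} applies. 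What must be shown is that \emph{in this situation $\langle h_1^n\rangle = \langle h_2^n\rangle$}, so the pairs are not actually distinct. The paper does this by invoking hypothesis~\ref{enu: induction step - elem subgroup}: since $(E,F)$ has type $(\boldsymbol{\mathcal E},n_2)$, the group $E$ embeds in $E/F \times \dihedral[n_2]^k \times \mathbf E$, and after possibly replacing $h_2$ by $h_2^{-1}$ the two primitives have the same image in $E/F$; since the exponent of $\dihedral[n_2]^k \times \mathbf E$ divides $n$, one concludes $h_1^n = h_2^n$. This is the only place in the overlap estimate where the algebraic hypothesis \ref{enu: induction step - elem subgroup} is used, and it is essential.

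A smaller omission of the same flavour occurs in your appeal to \autoref{res: injectivity radius}: that proposition requires a lower bound on $\ell$, the infimum of $\snorm g$ over loxodromic $g \in G$ not stabilising any $Y_h$ with $h \in P$. Obtaining $\ell \geq \lambda C_0/2$ again uses the dihedral-pair structure to write an arbitrary loxodromic $g$ as $g_0^k u$ with $g_0$ primitive and $u$ elliptic commuting with a power of $g_0$, so that $\snorm g \geq \snorm{g_0}$ and one can reduce to the primitive case.
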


\begin{voca*}
	Assume that $\nu_0$, $n$ and the model collection $\boldsymbol{\mathcal E}$ have been already fixed.
	If $G$ is a group acting on a metric space $X$ satisfying the assumptions of the proposition, including Points~\ref{enu: induction step - non elem}-\ref{enu: induction step - param}, we say that $(G,X)$ \emph{satisfies the induction hypotheses relative to $(n, \boldsymbol{\mathcal E})$}.
	The proposition says among others that if $(G,X)$ satisfies the induction hypotheses relative to $(n, \boldsymbol{\mathcal E})$, then so does $(\bar G,\bar X)$
\end{voca*}

\begin{proof}
	We start by defining the various constants appearing in the statement.
	Let $\delta_0$, $\delta_1$, $\Delta_0$, and $\rho_0$ be the parameters given by the small cancellation theorem (\autoref{res: small cancellation}).
	We define $\kappa = \delta_1/\pi \sinh (10 \delta_1)$ (so that we can apply \autoref{res: injectivity radius}).
	We fix $C_0$ and $C_1$ as follows.
	\begin{align*}
		C_0 = \pi\sinh(800\delta_1)
		\quad \text{and} \quad
		C_1 & = \max \left\{ 10C_0\pi \sinh \rho , \frac 1{2C_0 \kappa \delta_1}\right\}.
	\end{align*}
	Observe that $\delta_1 \ll C_0 \ll \rho \ll C_1$.
	For every integer $N\in\N$ we define a rescaling parameter $\epsilon_N$ as follows
	\begin{equation*}
		\epsilon_N = \frac {C_1}{\sqrt N}
	\end{equation*}
	Let $\nu_0 \in \N$.
	The sequence $(\epsilon_N)$ converges to $0$ as $N$ tends to infinity.
	Therefore there exists a critical exponent $N_0\in \N$, such that for every integer $N \geq N_0$ we have
	\begin{align}
		\label{eqn: recale - hyp}
		\epsilon_N \delta_1 & \leq \delta_0, \\
		\label{eqn: recale - A}
		\epsilon_N (\nu_0 + 5)C_0 & \leq \min\{\Delta_0,C_0\} \\
		\label{eqn: recale - inj}
		\epsilon_N \kappa & \leq 1/2. 
	\end{align}
	Let $\boldsymbol{\mathcal E}$ be a collection of (abstract) groups and $\mu$ its exponent.
	We now fix $N_1 \geq N_0$.
	For simplicity we write $\epsilon$ instead of $\epsilon_{N_1}$.
	Let $n \geq N_1$ be a multiple of $2^{\nu_0 +2}\mu$.
	In particular, $\mu$ divides $n/2$.
	Consequently for every $\mathbf E \in \boldsymbol{\mathcal E}$, the exponents of $\mathbf E$ and $\mathbf E/Z(\mathbf E)$ respectively divide $n$ and $n/2$, which means that the model collection $\boldsymbol{\mathcal E}$ satisfies the assumptions stated in \autoref{sec: invariants quotient - mixed}.
	
	Let $G$ be a group acting on a $\delta$-hyperbolic space $X$ such that $(G,X)$ satisfies the induction hypotheses relative to $(n, \boldsymbol{\mathcal E})$.
	We denote by $P$ the set of all primitive loxodromic elements $h \in G$ such that $\norm h \leq 10\delta_1$.
	Let $K$ be the (normal) subgroup of $G$ generated by $\set{h^n}{h \in P}$ and $\bar G$ the quotient of $G$ by $K$.
	If $P$ is empty, then $\bar G = G$.
	Thus $\bar X = \epsilon X$ obviously satisfies the conclusion of the proposition.
	Otherwise, we are going to prove that $\bar G$ is a small cancellation quotient of $G$.
	To that end we consider the action of $G$ on the rescaled space $\epsilon X$.
	According to (\ref{eqn: recale - hyp}) this space is $\epsilon\delta$-hyperbolic where $\epsilon \delta\leq \delta_0$.
	We define the family $\mathcal Q$ by
	\begin{equation*}
		\mathcal Q = \set{\left(\fantomB\group {h^n}, Y_h\right)}{h \in P}.
	\end{equation*}
	
	\begin{lemm}
	\label{res: induction - check sc hyp}
		The family $\mathcal Q$ satisfies the small cancellation hypotheses, i.e. $\Delta(\mathcal Q, \epsilon X) \leq \Delta_0$ and $\inj[\epsilon X]{\mathcal Q} \geq 10 \pi \sinh \rho$.
	\end{lemm}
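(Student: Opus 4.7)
Both estimates are obtained by translating the numerical hypotheses on $(G,X)$ through the rescaling $\lambda = C_1/\sqrt{N_1} = 10C_0\pi\sinh\rho/\sqrt{N_1}$, and neither uses anything about the structure of $\bar G$. The splitting into the two bullet points of \autoref{res: induction - check sc hyp} is handled independently.

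For the injectivity radius, fix $(\langle h^n\rangle, Y_h) \in \mathcal{Q}$ and a non-trivial element $g = h^{kn}$. Stable translation lengths scale linearly with the metric, so hypothesis~\ref{enu: induction step - param} gives
\[
	\|g\|_{\lambda X} \geq \snorm[\lambda X]g = |k|n\lambda \snorm[X]h \geq \frac{n\lambda}{C_0\sqrt{N_1}} = \frac{10n\pi\sinh\rho}{N_1} \geq 10\pi\sinh\rho,
\]
where the last inequality uses $n \geq N_1$. This is the required lower bound on $\inj[\lambda X]{\mathcal Q}$.

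For the overlap, take distinct pairs $(\langle h_i^n\rangle, Y_{h_i}) \in \mathcal{Q}$ for $i=1,2$, coming from $h_1,h_2 \in P$. The cylinder of a loxodromic isometry depends only on its unordered pair of endpoints at infinity, so $Y_{h_1} \neq Y_{h_2}$ forces $\{h_1^-,h_1^+\} \neq \{h_2^-,h_2^+\}$; consequently $\langle h_1, h_2\rangle$ cannot be elementary. Moreover both the cylinders $Y_{h_i}$ and their thickenings $Y_{h_i}^{+5\delta}$ are the \emph{same} subsets of the underlying space whether one works with $\delta = \delta_1$ in $X$ or $\delta = \delta_{\lambda X} = \lambda\delta_1$ in $\lambda X$; only the diameter of the intersection scales by $\lambda$. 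By \autoref{res: cyl in mov}, each $Y_{h_i}$ is contained in $A_{h_i}^{+38\delta_1}$, and by \autoref{res: fix qc} the axes $A_{h_i}$ are $10\delta_1$-quasi-convex. Thus \autoref{res: intersection of thickened quasi-convex} combined with the Margulis bound \autoref{res: margulis lemma} (Point 2, available since $\langle h_1, h_2\rangle$ is non-elementary), together with the estimates $\|h_i\|_X < C_0$, $\nu(G,X) \leq \nu_{\mathrm{stg}}(G,X) \leq \nu_0$ and $A(G,X) \leq (\nu_0+5)C_0$, yields
\[
	\diam_X\left(Y_{h_1}^{+5\delta_1} \cap Y_{h_2}^{+5\delta_1}\right) \leq (2\nu_0+7)C_0 + 281\delta_1.
\]
Multiplying by $\lambda$ and invoking inequality~(\ref{eqn: recale - A}), which was engineered precisely into the choice of $N_0$, gives $\Delta(\mathcal{Q},\lambda X) \leq \Delta_0$.

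The proof is essentially bookkeeping; the only nontrivial point is verifying that distinct elements of $\mathcal{Q}$ always produce a non-elementary pair $h_1,h_2$ so that the global form of the Margulis lemma applies, after which the constants $C_0$, $C_1$ and $N_0$ have been rigged to absorb the Margulis bound after rescaling.
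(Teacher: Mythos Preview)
Your argument for the injectivity radius is correct and matches the paper's exactly.

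For the overlap bound there is a genuine gap in the step where you conclude that $\langle h_1,h_2\rangle$ is non-elementary. You assert that distinct pairs in $\mathcal Q$ have $Y_{h_1}\neq Y_{h_2}$, but this is not automatic: a priori two primitive elements $h_1,h_2\in P$ could share the same cylinder (equivalently the same endpoints at infinity) while $\langle h_1^n\rangle \neq \langle h_2^n\rangle$. In that case the two pairs are distinct, yet $Y_{h_1}^{+5\delta}\cap Y_{h_2}^{+5\delta}$ has infinite diameter and your Margulis estimate does not even apply. Ruling this out is precisely where the paper invokes hypothesis~\ref{enu: induction step - elem subgroup}: if $h_1,h_2$ lie in a common loxodromic subgroup $E$ with maximal elliptic normal subgroup $F$, the embedding $E\hookrightarrow E/F\times \dihedral[n_2]^k\times\mathbf E$ forces $h_1^n=h_2^n$ (after possibly replacing $h_2$ by $h_2^{-1}$), since both are primitive and the second factor has exponent dividing $n$. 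You need this algebraic input; the geometry alone does not supply it.

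Once non-elementarity is established, your route through \autoref{res: margulis lemma} and \autoref{res: intersection of thickened quasi-convex} is a legitimate alternative to the paper's use of \autoref{res: thickening fix set} (via the inclusion $Y_{h_i}^{+5\delta}\subset\mov{h_i,\lambda C_0+95\delta}$), and indeed yields the slightly sharper constant $(2\nu_0+7)C_0+281\delta_1$ against the paper's $(2\nu_0+8)C_0+(95\nu_0+494)\delta_1$; either is absorbed by~(\ref{eqn: recale - A}). Your observation that cylinders and their $5\delta$-thickenings are scale-invariant as point sets is correct and makes the ``compute in $X$, then multiply by $\lambda$'' bookkeeping valid.
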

	
	\begin{proof}
		We start with the upper bound of $\Delta(\mathcal Q, \epsilon X)$.
		Let $h_1$ and $h_2$ be two elements of $P$ such that $(\group {h_1^n}, Y_{h_1})$ and $(\group {h_2^n}, Y_{h_2})$ are distinct.
		We first claim that $h_1$ and $h_2$ generate a non-elementary subgroup.
		Assume on the contrary that it is not the case.
		Let $E$ be the maximal elementary subgroup containing $h_1$ and $h_2$. 
		This subgroup is necessarily loxodromic.
		We denote by $F$ its maximal elliptic normal subgroup, so that $(E,F)$ is a dihedral pair.
		According to our assumption there exists $k \in \N$ and $\mathbf E \in \boldsymbol{\mathcal E}$ such that $E$ embeds in $E/F \times \dihedral[n_2]^k\times \mathbf E$, where $n_2$ is the largest power of $2$ dividing $n$.
		Recall that $h_1$ and $h_2$ are primitive.
		Hence up to replacing $h_2$ by its inverse, we may assume that $h_1$ and $h_2$ have the same image in $E/F$.
		Since the exponents of $\dihedral[n_2]$ and $\mathbf E$ divides $n$, the images of $h_1^n$ and $h_2^n$ are trivial in $\dihedral[n_2]^n\times \mathbf E$.
		Consequently $h_1^n = h_2^n$ and thus $Y_{h_1} = Y_{h_2}$.
		This contradicts the fact that $(\group {h_1^n}, Y_{h_1})$ and $(\group {h_2^n}, Y_{h_2})$ are distinct and completes the proof of our claim.
		
		Recall that $h_i$ moves the points of $Y_{h_i}$ by at most $\norm[\epsilon X]{h_i} + 65\epsilon\delta$ (\autoref{res: cyl in mov}), while $\norm[\epsilon X]{h_i} \leq 10\epsilon\delta$.
		Consequently 
		\begin{equation*}
			Y_{h_1}^{+5\epsilon\delta} \cap Y_{h_2}^{+5\epsilon\delta} \subset \fix{\fantomB\{h_1,h_2\},85\epsilon\delta}.
		\end{equation*}
		Since $h_1$ and $h_2$ generate a non-elementary subgroup we obtain
		\begin{equation*}
			\diam\left(Y_{h_1}^{+5\epsilon\delta} \cap Y_{h_2}^{+5\epsilon\delta}\right) 
			\leq A(G,\epsilon X,400\epsilon\delta)
			\leq \epsilon A(G,X,400\delta)
			\leq \epsilon(\nu_0 + 5)C_0.
		\end{equation*}
		Using (\ref{eqn: recale - A}) we get $\Delta(\mathcal Q, \epsilon X) \leq \Delta_0$.
		Let us now focus on $\inj[\epsilon X]{\mathcal Q}$.
		It follows from our assumption on $\inj[X]{\mathcal Q}$ that 
		\begin{equation*}
			\inj[\epsilon X]G
			\geq \epsilon \inj[X]G
			\geq \frac {10C_0\pi \sinh \rho }{\sqrt{N_1}}\frac 1{C_0\sqrt{N_1}}
			\geq \frac{10\pi \sinh \rho}{N_1} 
			\geq \frac{10\pi \sinh \rho}n.
		\end{equation*}
		Let $(H,Y) \in \mathcal Q$.
		By construction, any element $g \in H$ is the $n$-th power of a loxodromic element of $G$.
		Consequently
		\begin{equation*} 
			\norm[\epsilon X]g
			\geq n \inj[\epsilon X]G
			\geq 10\pi \sinh \rho.
		\end{equation*}
		It follows that $\inj[\epsilon X]{\mathcal Q} \geq 10\pi \sinh \rho$.
	\end{proof}
	
	\paragraph{}On account of the previous lemma, we can now apply the small cancellation theorem (\autoref{res: small cancellation}) to the action of $G$ on the rescaled space $\epsilon  X$ and the family $\mathcal Q$.
	We denote by $\dot X$ the space obtained by attaching on $\epsilon X$ for every $(H,Y) \in \mathcal Q$, a cone of radius $\rho$ over the set $Y$.
	The space $\bar X$ is the quotient of $\dot X$ by $K$.
	According to \autoref{res: small cancellation}, $\bar X$ is a $\bar \delta$-hyperbolic length space with $\bar \delta \leq \delta_1$ and $\bar G$ acts by isometries on it.
	As usual we write $\mathcal V$ for the set of apices in $\dot X$ and $\bar{\mathcal V}$ for its image in $\bar X$.
	We now prove that the action of $\bar G$ on $\bar X$ satisfies the induction hypotheses relative to $(n, \boldsymbol{\mathcal E})$.
	This action is gentle (\autoref{res: sc - gentle action}) and non-elementary (\autoref{res: action quotient non elem}), which provides \ref{enu: induction step - non elem}.
	In addition dihedral pairs of $\bar G$ have type $(\boldsymbol{\mathcal E},n_2)$ (\autoref{res: pushing assum elem}).
	Thus \ref{enu: induction step - elem subgroup} holds.
	Point~\ref{enu: induction step - param} is a consequence of the following lemma.

	\begin{lemm}
		The parameters $A(\bar G, \bar X)$, $\inj[\bar X]{\bar G}$ and $\nu_{\rm{stg}}(\bar G, \bar X)$ satisfy
		\begin{enumerate}
			\item $A(\bar G,\bar X,400\bar \delta)  \leq (\nu_0+5)C_0$;
			\item $\inj[\bar X]{\bar G}\geq 1/C_0\sqrt{N_1}$;
			\item $\max\{\nu_{\rm{stg}}(\bar G,\bar X),\mu + 4\} \leq \nu_0$.
		\end{enumerate}
	\end{lemm}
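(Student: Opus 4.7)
The plan is to verify each of the three bounds separately; they follow from general statements already proved in \autoref{sec: invariants quotient}, combined with the rescaling inequalities (\ref{eqn: recale - hyp})--(\ref{eqn: recale - inj}) and the explicit sizes $\delta_1 \ll C_0 \ll \rho \ll C_1$ fixed at the outset of the proof of \autoref{res: induction step}.

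For the acylindricity bound, I would apply \autoref{res: acyl quotient} to the action of $G$ on the rescaled space $\lambda X$, obtaining
\begin{equation*}
    A(\bar G, \bar X) \leq A(G, \lambda X) + [\nu(G, \lambda X) + 4]\,\pi\sinh(4 L_0 \bar\delta).
\end{equation*}
Since $\bar\delta \leq \delta_1$, the definition $C_0 = \pi\sinh(4 L_0\delta_1)$ gives $\pi\sinh(4 L_0 \bar\delta) \leq C_0$. Moreover $\nu(G, \lambda X) \leq \nu_{\rm{stg}}(G, X) \leq \nu_0$ by hypothesis \ref{enu: induction step - param}, and by that same hypothesis combined with (\ref{eqn: recale - A bis}) we have $A(G, \lambda X) \leq \lambda(\nu_0 + 5)C_0 \leq C_0$. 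Adding the two contributions yields $A(\bar G, \bar X) \leq C_0 + (\nu_0 + 4)C_0 = (\nu_0 + 5)C_0$, as required. The $\nu_{\rm{stg}}$ bound is then immediate from \autoref{res: nu-inv}: since $2^{\nu_0 + 2}\mu$ divides $n$ and $\nu_{\rm{stg}}(G, X) \leq \nu_0$, we get $\nu_{\rm{stg}}(\bar G, \bar X) \leq \max\{\nu_{\rm{stg}}(G, X), \mu + 4\} \leq \nu_0$; the companion inequality $\mu + 4 \leq \nu_0$ is already part of hypothesis \ref{enu: induction step - param}.

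The injectivity bound is the most delicate: I would apply \autoref{res: injectivity radius} to $N = G$, which gives $\inj[\bar X]{\bar G} \geq \min\{\kappa \ell, \bar\delta\}$, where $\ell$ is the infimum of $\snorm[\lambda X] g$ over loxodromic $g \in G$ that do not lie in any $\stab Y$ with $(H, Y) \in \mathcal Q$. For such a $g$, let $h_0$ be a primitive element of the maximal elementary subgroup $E(g)$. Its cylinder equals $Y_g$, hence $g \in \stab Y_{h_0}$, forcing $h_0 \notin P$ and therefore $\norm[X] h_0 \geq C_0$. Since $g$ is loxodromic in $E(g)$, one has $\snorm[X] g \geq \snorm[X] h_0 \geq C_0 - 16\delta_1$, and in the rescaled space $\ell \geq \lambda(C_0 - 16\delta_1) \geq \tfrac{1}{2}\lambda C_0$. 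From the explicit values $\lambda = C_1/\sqrt{N_1}$ and $C_1 = 10 C_0 \pi\sinh\rho$ one checks that $\kappa\ell$ comfortably exceeds $1/(C_0\sqrt{N_1})$, while $\bar\delta \leq \delta_1$ likewise exceeds this quantity for every $N_1 \geq N_0$; hence $\inj[\bar X]{\bar G} \geq 1/(C_0\sqrt{N_1})$.

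The real mathematical content is in the three general results being invoked (\autoref{res: acyl quotient}, \autoref{res: nu-inv} and \autoref{res: injectivity radius}); at this stage the only task is to verify that the constants $\delta_1, C_0, C_1, \rho, \kappa, \lambda$ fixed at the beginning of the proof were calibrated so that each of the three inherited estimates closes up exactly on the targeted form. The main potential obstacle is the injectivity radius step, since one must confirm that the loxodromic elements of $G$ excluded from the small cancellation family are precisely those whose stable length scales like $C_0$ rather than $\inj[X]{G}$; the argument above resolves this via the primitive representative of $E(g)$.
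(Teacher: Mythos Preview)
Your proposal is correct and follows the paper's argument essentially verbatim: the same three results (\autoref{res: acyl quotient}, \autoref{res: injectivity radius}, \autoref{res: nu-inv}) are invoked, and the constants are assembled in the same way. The only wrinkle is in the injectivity step, where the paper uses (\ref{eqn: recale - inj}) to recognise that $\lambda\kappa C_0/2 \leq \delta_1$, hence the minimum in \autoref{res: injectivity radius} equals the first branch; your phrasing ``$\bar\delta \leq \delta_1$ likewise exceeds $1/(C_0\sqrt{N_1})$'' is slightly off, since $\bar\delta \leq \delta_1$ is an upper bound and gives no lower bound by itself---the needed inequality $\delta_1 \geq 1/(C_0\sqrt{N_1})$ follows precisely from (\ref{eqn: recale - inj}) combined with the bound you already established on $\kappa\ell$.
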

	
	\begin{proof}
		We start with the upper bound of $A(\bar G, \bar X)$.
		\autoref{res: acyl quotient preparatory} yields
		\begin{equation*}
			A(\bar G, \bar X,400\bar \delta) 
			\leq A(G, \epsilon X, d) + 1200\bar \delta
		\end{equation*}
		where $d = \pi\sinh(800\bar \delta)$.
		Applying \autoref{res: local-to-global acyl} in $\epsilon X$, we obtain that
		\begin{equation*}
			A(\bar G, \bar X,400\bar \delta) 
			\leq  A(G,  \epsilon X, 400\epsilon\delta) + \left[ \nu(G,X) +4 \right] \pi \sinh(800\delta_1).
		\end{equation*}
		Since $\nu(G,X)$ is bounded above by $\nu_{\rm{stg}}(G,X)$, hence by $\nu_0$, we get that 
		\begin{equation*}	
			A(\bar G, \bar X) \leq \epsilon (\nu_0+5)C_0 + (\nu_0+4)C_0.
		\end{equation*}
		Using (\ref{eqn: recale - A}) we obtain $A(\bar G, \bar X) \leq (\nu_0+5)C_0$.
		We now focus on the injectivity radius of $\bar G$.
		Let $g$ be a loxodromic isometry of $G$.
		Since dihedral pairs have type $(\boldsymbol{\mathcal E},n_2)$ we can write $g = g_0^ku$ where $k$ is a positive integer, $g_0$ a primitive element and $u$ an elliptic element centralized by some large power of by $g_0$.
		In particular, $\snorm[\epsilon X] g \geq \snorm[\epsilon X]{g_0}$.
		Assume now that $g$ does not stabilize any cylinder $Y_h$, where $h \in P$.
		It follows that $g_0$ does not belong to $P$.
		Thus by (\ref{eqn: regular vs stable length})
		\begin{equation*}
			\snorm[\epsilon X] {g_0} 
			\geq \norm [\epsilon X]{g_0} - 8\epsilon\delta 
			\geq 2\epsilon\delta_1
		\end{equation*}
		\autoref{res: injectivity radius} applied with $N = G$ yields 
		\begin{equation*}
			\inj[\bar X]{\bar G}		
			\geq \min\left\{ 2\epsilon \kappa\delta_1, \delta_1 \right\}
		\end{equation*}
		Combined with (\ref{eqn: recale - inj}) we obtain 
		\begin{equation*}
			\inj[\bar X]{\bar G}		
			\geq 2\epsilon \kappa\delta_1
			\geq \frac 1 {C_0\sqrt{N_1}}.
		\end{equation*}
		The upper bound for $\nu_{\rm{stg}}(\bar G, \bar X)$ directly follows from \autoref{res: nu-inv}.
	\end{proof}

	We now study the properties of the projection $\pi \colon G \to \bar G$.
	Recall that the map $\zeta \colon \epsilon X \to \bar X$ is $1$-Lipschitz.
	Hence $X \to \bar X$ is $\epsilon$-Lipschitz (and $\pi$-equivariant by construction).
	If $F$ is an elliptic (\resp parabolic) subgroup of $G$, then it follows from \autoref{res: isom on non-loxo elem} that $\pi \colon G \to \bar G$ induces an isomorphism from $F$ onto its image $\bar F$.
	Moreover, $\bar F$ is elliptic (\resp elliptic or parabolic).
	Let $\bar F$ be an elliptic subgroup of $\bar G$.
	If $\bar F$ is not the isomorphic image of an elliptic subgroup of $G$, then there exists an apex $\bar v \in \bar{\mathcal V}$ such that $\bar F$ is contained in $\stab {\bar v}$ (\autoref{res: dichotomy elliptic}).
	On the other hand, there exist $k \in \N$ and $\mathbf E \in \boldsymbol{\mathcal E}$ such that $\stab{\bar v}$ embeds in $\dihedral[n] \times \dihedral[n_2]^k \times \mathbf E$ (\autoref{res : existence of central half-turn}).
	Since the exponent of $\mathbf E$ divides $n$, the exponent of $\bar F$ is finite and divides $n$ as well.
	By \autoref{res: lifting parabolic subgroups} any finitely generated parabolic subgroup of $\bar G$ is the isomorphic image of a parabolic subgroup of $G$.
	Let $F_1$ and $F_2$ be two subgroups of $G$.
	Assume that $F_1$ is elliptic and $F_2$ is generated by a finite set $S_2$ such that $\fix{S_2,100\delta_1}$ is non-empty.
	It follows from our choice of $\rho$, that $100\delta_1 \leq \rho/100$.
	Thus, if the respective images $\bar F_1$ and $\bar F_2$ are conjugated in $\bar G$, then so are $F_1$ and $F_2$ (\autoref{res: lifting conj elliptic}).
	We have checked all the announced properties of the projection $\pi \colon G \to \bar G$, and the proof of the proposition is completed.
\end{proof}

%%%%%%%%%%%%%%%%%%%%%%%%%%%%%%%%%%%%%%%%%%%%%%%%%%%%%%%%%%%%%%%%%%%%%%%%%%%%%%%%%%%%
%
\subsection{Construction of periodic groups}
%
%%%%%%%%%%%%%%%%%%%%%%%%%%%%%%%%%%%%%%%%%%%%%%%%%%%%%%%%%%%%%%%%%%%%%%%%%%%%%%%%%%%%
\label{sec: construction of periodic groups}

The number of variables in the next statement can be confusing at first sight.
Basically we are stating the fact that the critical exponent $N_1$ does not depend on the group $G$, but only on certain parameters related to its action on a hyperbolic space $X$.
More precisely $N_1$ is a function of 
\begin{itemize}
	\item the hyperbolicity constant $\delta$ of $X$;
	\item the invariants $\inj[X]G$, $\nu_{\rm stg}(G,X)$, and $A(G,X,d)$ for some appropriate value of $d$ ;
	\item the structure of subgroups of $G$ with dihedral shape.
\end{itemize}
A fine understanding of these dependencies can be crucial sometimes, see for instance \cite{Coulon:2019ac}.

\begin{theo}
\label{res: partial periodic quotient - gal}
	Let $\delta, r \in \R_+^*$, $\nu, \mu \in \N$ and set $\nu_1 = \max\{ \nu +2, \mu + 6\}$.
	There exist $N_1 \in \N$ such that for every integer $n \geq N_1$ which is a multiple of $2^{\nu_1}\mu$, the following holds.
	
	Let $\boldsymbol{\mathcal E}$ be a model collection of groups whose exponent divides $\mu$.
	Let $G$ be a group acting on a $\delta$-hyperbolic length space $X$ such that
	\begin{itemize}
		\item the action of $G$ on $X$ is gentle and non-elementary;
		\item for every dihedral pair $(E,C)$ the group $E$ embeds in $E/C \times \mathbf E$ for some $\mathbf E \in \boldsymbol{\mathcal E}$.
		\item $A(G,X,400\delta) \leq r$, $\nu_{\rm{stg}}(G,X) \leq \nu$, and  $\inj[X]G \geq 1/r$
	\end{itemize}
	Then there exists a quotient $Q$ of $G$ with the following properties.
	\begin{enumerate}
		\item \label{enu: partial periodic quotient - gal - non loxo}
		For every elliptic (\resp parabolic) subgroup $F$ of $G$, the projection $G \onto Q$ induces an embedding from $F$ into $Q$.
		\item \label{enu: partial periodic quotient - gal - torsion}
		For every $q \in Q$, either $q^n = 1$ or $q$ is the image of an elliptic or parabolic element of $G$.
		\item \label{enu: partial periodic quotient - gal - max}
		The projection $G \onto G/G^n$ induces an epimorphism $Q \onto G/G^n$.
		In particular, if $G$ has no parabolic element, and every elliptic element of $G$ has finite order dividing $n$, then $Q = G/G^n$.
		\item \label{enu: partial periodic quotient - gal - inj on small balls}
		For every $x \in X$, the map $G \to Q$ is one-to-one when restricted to 
		\begin{equation*}
			\set{g \in G}{\dist {gx}x < r}.
		\end{equation*}
		\item \label{enu: partial periodic quotient - gal - infinite}
		There are infinitely many elements of $Q$ which are not the image of an elliptic or a parabolic element of $G$.
		\item \label{enu: partial periodic quotient - gal - kernel}
		The kernel $K$ of $G \onto Q$ is purely loxodromic (i.e. all its non-trivial elements are loxodromic).
		As a normal subgroup $K$ is not finitely generated.
	\end{enumerate}
\end{theo}

\begin{proof}
	The main ideas of the proof are the following.
	Using \autoref{res: induction step} we construct by induction a sequence of groups $G = G_0 \rightarrow G_1\rightarrow G_2 \rightarrow \dots$ where $G_{k+1}$ is obtained from $G_k$ by adding new relations of the form $h^n$ where $h$ is a primitive element of $G$.
	Then we choose for the quotient $Q = G/K$ the direct limit of these groups.
	
	\paragraph{Critical exponent.}
	Let us define first all the parameters leading to the critical exponent.
	For simplicity we let $\nu_0 = \max\{\nu, \mu + 4\}$ and $\nu_1 = \nu_0 + 2$.
	The parameters $\delta_1$, $C_0$, $C_1$, and $N_0$ are the one given by \autoref{res: induction step}.
	We choose $\epsilon > 0$ and an integer $N_1 \geq N_0$ such that 
	\begin{equation*}
		\epsilon \delta \leq \delta_1, \quad
		\epsilon r \leq \min \left\{(\nu_0+5)C_0,50\delta_1\right\}, \quad
		\frac \epsilon r \geq \frac 1{C_0\sqrt{N_1}}, 
		\quad \text{and} \quad 
		\frac{C_1}{\sqrt{N_1}} < 1.
	\end{equation*}
	We now a fix an integer $n \geq N_1$ which is divisible by $2^{\nu_1}\mu$. 
	
	\paragraph{The initialization.}
	Let $\boldsymbol{\mathcal E}$ be a model collection of groups and $G$ be a group acting on a $\delta$-hyperbolic length space $X$ as in the theorem.
	Let $X_0$ be the space $X$ whose metric has been rescaled by $\epsilon$.
	It follows from our choice of $\epsilon$ and $N_1$ that $X_0$ is $\epsilon\delta$-hyperbolic where $\epsilon \delta \leq \delta_1$, $A(G,X_0,400\epsilon\delta) \leq (\nu_0+5)C_0$, and $\inj[X_0]G \geq 1/C_0\sqrt{N_1}$.
	In addition $\max\{\nu_{\rm stg}(G, X_0),\mu + 4\} \leq \nu_0$.
	In other words, if $G_0 = G$, then $(G_0,X_0)$ satisfies the induction hypotheses relative to $(n, \boldsymbol{\mathcal E})$.

	\paragraph{The induction step.} 
	Let $k \in \N$.
	We assume that we already constructed the group $G_k$ and the space $X_k$ such that $(G_k,X_k)$ satisfies the induction hypotheses relative to $(n, \boldsymbol{\mathcal E})$.
	We denote by $P_k$ the set of primitive loxodromic elements $h \in G_k$ such that $\norm[X_k]h \leq10\delta_1$.
	Let $K_k$ be the normal subgroup of $G_k$ generated by $\{h^n, h \in P_k\}$.
	We write $G_{k+1}$ for the quotient of $G_k$ by $K_k$.
	According to \autoref{res: induction step}, there exists a metric space $X_{k+1}$ such that $(G_{k+1},X_{k+1})$ satisfies the induction hypotheses relative to $(n, \boldsymbol{\mathcal E})$.
	Moreover $X_{k+1}$ comes with a $C_1/\sqrt{N_1}$-Lipschitz map $X_k \to X_{k+1}$ which is $\pi_k$-equivariant, where $\pi_k \colon G_k \onto G_{k+1}$ is the canonical projection, and fulfills the following properties.
	\begin{labelledenu}[P]
		\item \label{enu: partial periodic quotient - gal - proof - one to one}
		If $F$ is an elliptic (\resp parabolic) subgroup of $G_k$, then $\pi_k$ induces an isomorphism from $F$ onto its image which is also elliptic (\resp elliptic or parabolic).
		\item \label{enu: partial periodic quotient - gal - proof - elliptic}
		Any elliptic subgroup of $G_{k+1}$ is either isomorphic to an elliptic subgroup of $G_k$ or a finite group whose exponent divides $n$.
		\item \label{enu: partial periodic quotient - gal - proof - parabolic}
		Any finitely generated parabolic subgroup of $G_{k+1}$ is the isomorphic image of a parabolic subgroup of $G_k$.
		\item \label{enu: partial periodic quotient - gal - proof - conj}
		Let $F_1$ and $F_2$ be two subgroups of $G_k$.
		Assume that $F_1$ is elliptic and $F_2$ is generated by a finite set $S_2$ such that $\fix{S_2,100\delta_1}$ is non-empty.
		If the images of $F_1$ and $F_2$ are conjugated in $G_{k+1}$, then $F_1$ and $F_2$ are conjugated in $G_k$.
	\end{labelledenu}

	\paragraph{Direct limit.} 
	The direct limit of the sequence $(G_k)$ is a quotient $Q = G/K$ of $G$.
	We claim that this group satisfies the announced properties.
	Let $E$ be a subgroup of $G$ which is either elliptic or parabolic.
	A proof by induction on $k\in \N$ using \ref{enu: partial periodic quotient - gal - proof - one to one} shows that for every $k \in \N$, the map $G \onto G_k$ induces an isomorphism from $E$ onto its image which is either elliptic or parabolic for the action of $G_k$ on $X_k$.
	It follows that $G \onto Q$ induces an isomorphism from $E$ onto its image, which proves \ref{enu: partial periodic quotient - gal - non loxo}.
	
	A proof by induction on $k\in \N$ using \ref{enu: partial periodic quotient - gal - proof - elliptic} and \ref{enu: partial periodic quotient - gal - proof - parabolic} shows that if $g \in G_k$ is elliptic or parabolic (for its action on $X_k$) then either $g^n=1$ or $g$ is the image of an elliptic or a parabolic element of $G$ (for its action on $X$).
	Let $q \in Q$ and $g \in G$ be a pre-image of $q$.
	For simplicity we still write $g$ for the image of $g$ in $G_k$.
	Since the map $X_k \to X_{k+1}$ is $C_1/\sqrt{N_1}$-Lipschitz, we get for every $k \in \N$,
	\begin{equation*}
		\snorm[X_k]g \leq \left(\frac{C_1}{\sqrt{N_1}}\right)^k \snorm[X]g.
	\end{equation*}
	As $C_1 / \sqrt{N_1} < 1$, there exists $k \in \N$ such that 
	\begin{equation*}
		\snorm[X_k]g < \frac 1{C_0\sqrt{N_1}} \leq \inj[X_k]{G_k}.
	\end{equation*}
	Consequently $g$ is elliptic or parabolic as an element of $G_k$.
	It follows from the previous observation that one of the following holds.
	\begin{itemize}
		\item The element $g$ coincide in $G_k$ with an elliptic or a parabolic element of $G$, hence $q$ is the image of an elliptic or a parabolic element of $G$.
		\item We have $g^n  = 1$ (in $G_k$), hence $q^n = 1$.
	\end{itemize}
	This completes the proof of \ref{enu: partial periodic quotient - gal - torsion}.
	
	All the relation we added to built the sequence of groups $(G_k)$ have the form $h^n = 1$.
	Hence the projection $G \to Q$ induces an epimorphism $Q \to G/G^n$, which gives \ref{enu: partial periodic quotient - gal - max}.
	
	Let $g$ be an elliptic or a parabolic element of $K$.
	It follows from \ref{enu: partial periodic quotient - gal - non loxo} that the map $G \onto G/K$ induces an isomorphism from $\langle g \rangle$ onto its image.
	Hence $g$ is trivial.
	Consequently $K$ is purely loxodromic.
	For every $k \in \N$, the action of $G_k$ on $X_k$ is non-elementary. 
	It follows that the sequence $(G_k)$ does not ultimately stabilize.
	Indeed, otherwise \ref{enu: partial periodic quotient - gal - torsion} would fail.
	Thus $K$ is infinitely generated as a normal subgroup, which completes the proof of \ref{enu: partial periodic quotient - gal - kernel}.

	Let $x \in X$.
	Let $g_1,g_2 \in G$ such that $\dist[X]{g_ix}x < r$.
	It follows from our choice of $\epsilon$ that $\fix{g,100\delta_1} \subset X_0$ is non empty, where $g = g_1^{-1}g_2$.
	Assume now that $g_1$ and $g_2$ have the same image in $Q$, i.e. $g$ is trivial in $Q$.
	In particular, there exists $i \in \N$ such that the image of $g$ in $G_i$ is trivial.
	Recall that the map $X_k \to X_{k+1}$ is $1$-Lipschitz for every $k \in \N$.
	In particular, $\fix{g,100\delta_1} \subset X_k$ is non-empty for every $k \in \N$.
	A proof by induction using \ref{enu: partial periodic quotient - gal - proof - conj} show that $g = 1$.
	Hence the quotient map $G \onto Q$ is one-to-one when restricted to the set 
	\begin{equation*}
		\set{g \in G}{\dist {gx}x < r},
	\end{equation*}
	whence \ref{enu: partial periodic quotient - gal - inj on small balls}.
	
	We are left to prove \ref{enu: partial periodic quotient - gal - infinite}.
	Let $S$ be the collection of all elements of $Q$ which are not the image of an elliptic or a parabolic element of $G$.
	Assume contrary to our claim that $S$ is finite.
	Let $S_0$ be a finite pre-image of $S$ in $G_0$.
	Using the same argument as above we observe that there exists $i \in \N$, such that the image of $S_0$ in $G_i$ only consists of elliptic and parabolic elements.
	As we already observed, the sequence is $(G_k)$ is not ultimately constant.
	Consequently there exists $j \geq i$ such that $P_j$ is non-empty (recall that $P_j$ is a set of primitive elements of $G_j$ such that $\norm[X_j] h \leq 10\delta_1$).
	We fix $g \in P_j$.
	We claim that $g$ does not coincide in $Q$ with an elliptic or a parabolic element of $G_j$.
	Assume on the contrary that it is the case.
	There exist an elliptic or a parabolic element $u \in G_j$ as well as an index $k > j$ such that $g$ and $u$ coincide in $G_k$.
	Note that the set $\fix{g,100\delta_1} \subset X_\ell$ is non empty, for every $\ell \geq j$.
	A proof by induction using \ref{enu: partial periodic quotient - gal - proof - conj} shows that $g$ and $u$ are conjugated as elements of $G_j$.
	It contradicts the fact that $g$ is loxodromic and $u$ is not, hence the claim is proved.
	Our claim has two consequences for the image $q$ of $g$ in $Q$.
	First $q$ is not the image of an elliptic or parabolic element of $G$, hence $q \in S$.
	Since every element of $S_0$ is elliptic in $G_j$, $q$ does not belong to $S$, a contradiction.
\end{proof}

%%%%%%%%%%%%%%%%%%%%%%%%%%%%%%%%%%%%%%%%%%%%%%%%%%%%%%%%%%%%%%%%%%%%%%%%%%%%%%%%%%%%
%
\subsection{Examples}
%
%%%%%%%%%%%%%%%%%%%%%%%%%%%%%%%%%%%%%%%%%%%%%%%%%%%%%%%%%%%%%%%%%%%%%%%%%%%%%%%%%%%%
\label{sec: examples}

One source of examples comes from groups acting acylindrically on a $\delta$-hyperbolic length space $X$.
Let us recall first the definition of acylindricity.
For our purpose we need to keep in mind the parameters that appear in the definition.

\begin{defi}[Acylindrical action]
\label{def: acylindricity}
	Let $N,L,d \in \R_+^*$.
	The group $G$ acts \emph{$(d,L,N)$-acylindrically} on $X$ if the following holds:
	for every $x, y \in X$ with $\dist xy \geq L$, the number of elements $u \in G$ satisfying $\dist{ux}x \leq d$ and $\dist{uy}y \leq d$ is bounded above by $N$.
	The group $G$ acts \emph{acylindrically} on $X$ if for every $d >0$ there exist $N, L > 0$ such that $G$ acts $(d,L,N)$-acylindrically on $X$.
\end{defi}

Since $X$ is a hyperbolic space, one can decide whether an action is acylindrical by looking at a single value of $d$.

\begin{prop}[Dahmani-Guirardel-Osin {\cite[Proposition~5.31]{Dahmani:2017ef}}]
\label{res: acylindrical action vs hyp space}
	The action of $G$ on $X$ is acylindrical if and only if there exist $N,L >0$ such that the action is $(100\delta, L, N)$-acylindrical.
\end{prop}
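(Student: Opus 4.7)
The forward implication is immediate: if the action is acylindrical in the sense of Definition \ref{def: acylindricity}, then specializing to the particular value $d = 100\delta$ provides constants $L, N$ such that the action is $(100\delta, L, N)$-acylindrical. The whole content of the proposition is the converse.

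For the converse, assume the action is $(100\delta, L_0, N_0)$-acylindrical. Fix $d > 0$; we may of course suppose $d \geq 100\delta$. The plan is to find constants $L' = L'(L_0, N_0, \delta, d)$ and $N' = N'(L_0, N_0, \delta, d)$ so that for every $x, y \in X$ with $\dist xy \geq L'$, the set
\[
	S(x, y) = \set{g \in G}{\dist{gx}x \leq d,\ \dist{gy}y \leq d}
\]
has cardinality at most $N'$. The key observation is that in a hyperbolic space, an isometry $g$ moving two far apart points by a bounded amount is quite rigid: by \autoref{res: quasi-convexity distance isometry}, for every point $z$ lying on a $(1,1)$-quasi-geodesic $\gamma$ joining $x$ to $y$, any $g \in S(x,y)$ moves $z$ by at most $d + 6\delta$. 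Hence all elements of $S(x,y)$ simultaneously almost-stabilize a long quasi-geodesic.

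The strategy is now to convert this common displacement-by-$d$ along $\gamma$ into a displacement by $100\delta$ of a pair of points. To that end, subdivide $\gamma$ into consecutive points $x = p_0, p_1, \dots, p_k = y$ with $\dist{p_i}{p_{i+1}} \approx L_0$. For each $g \in S(x, y)$ and each index $i$, we record the \emph{signature}
\[
	\sigma_i(g) = \left( \lfloor \gro{gp_i}{p_{i+1}}{p_i} \rfloor,\ \lfloor \gro{gp_{i+1}}{p_i}{p_{i+1}} \rfloor \right) \in \N^2,
\]
a finite piece of combinatorial data bounded above by $d + O(\delta)$. A pigeonhole argument over the two consecutive indices $i = 0, 1$ yields a subset $S_0 \subset S(x, y)$ of cardinality at least $|S(x, y)|/M$, with $M = M(d, \delta)$ explicit, all of whose elements share the same signatures at $p_0, p_1, p_2$. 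For any two elements $g, g' \in S_0$, the four point inequality (\ref{eqn : hyp four points - 1}) applied with the common signature then forces $\dist{(g^{-1}g')p_j}{p_j} \leq 100\delta$ for both $j = 1$ and $j = 2$. Since $\dist{p_1}{p_2} \geq L_0$, the $(100\delta, L_0, N_0)$-acylindricity hypothesis gives $|S_0| \leq N_0$ and thus $|S(x,y)| \leq N' := N_0 M$, provided $L'$ was chosen large enough that at least three subdivision points $p_0, p_1, p_2$ fit inside $\gamma$ with the required spacing.

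The main obstacle is the signature/pigeonhole step. I expect the naive signature above may need adjusting (for instance by using the Busemann-type function $z \mapsto \dist{gz}z - \dist{gx}x$ on $\gamma$, which is almost $2$-Lipschitz by the triangle inequality) to guarantee that coincidence of signatures really forces $g^{-1}g'$ to move two consecutive $p_j$ by at most $100\delta$ rather than by $50\delta + O(\delta)$. Once this combinatorial compression is arranged, the argument is completely formal: the total bound is $N' = N_0 \cdot M(d, \delta)$ with an explicit $M$ polynomial in $d/\delta$, and the required length is $L' = 3L_0$ plus a controlled additive error coming from the stability of quasi-geodesics.
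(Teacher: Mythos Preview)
Your overall architecture is right: the forward direction is trivial, and for the converse one wants a pigeonhole argument that, given $g,h\in S(x,y)$ landing in the same bin, forces $h^{-1}g$ to move two points $p,q$ with $\dist pq\geq L_0$ by at most $100\delta$. The paper does not reprove this proposition; it cites \cite[Proposition~5.31]{Dahmani:2017ef} and records in the subsequent remark the explicit bounds $L(d)=L+4d+100\delta$ and $N(d)=(d/5\delta+3)N$, which already tells you the shape of the intended argument: a \emph{single} scalar pigeonhole into $O(d/\delta)$ bins, not a multi-coordinate signature.

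The gap in your sketch is precisely the signature step, which you yourself flag. Knowing $\gro{gp_i}{p_{i+1}}{p_i}$ and $\gro{gp_{i+1}}{p_i}{p_{i+1}}$ only pins down the \emph{projection} of $gp_i$ onto $[p_i,p_{i+1}]$; in a tree, two elements $g,g'$ can share this projection while $gp_i$ and $g'p_i$ sit on different branches, so $\dist{gp_i}{g'p_i}$ need not be small and there is no reason for $g^{-1}g'$ to move $p_1,p_2$ by $O(\delta)$. The four-point inequality alone will not close this.

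The clean fix (and the Dahmani--Guirardel--Osin argument) is to pigeonhole on the single quantity $\beta(g)=\dist{gx}{y}-\dist xy\in[-d,d]$. Setting $\beta'(g)=\dist{gy}{x}-\dist xy$, the four-point inequality (\ref{eqn : hyp four points - 2}) applied to $x,y,gx,gy$ gives $|\beta(g)+\beta'(g)|\leq 2\delta$. For $p$ on a $(1,\delta)$-quasi-geodesic from $x$ to $y$ at distance $2d+50\delta$ from $x$, another application of (\ref{eqn : hyp four points - 2}) yields $\dist{gp}{y}=\dist py+\beta(g)+O(\delta)$ and, combined with the previous estimate, $\gro xy{gp}=O(\delta)$. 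Thus $gp$ lies $O(\delta)$-close to the quasi-geodesic at a position determined, up to $O(\delta)$, by $\beta(g)$. Partition $[-d,d]$ into intervals of width $5\delta$; for $g,h$ in the same interval one gets $\dist{gp}{hp}\leq C\delta$ and likewise for the symmetric point $q$ at distance $2d+50\delta$ from $y$. Since $\dist pq\geq L_0$, the hypothesis bounds the number of such $h^{-1}g$ by $N_0$, giving $|S(x,y)|\leq (d/5\delta+O(1))N_0$, in agreement with the remark following the proposition. Your parenthetical suggestion of a Busemann-type function was pointing in exactly this direction.
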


\begin{rema*}
	Dahmani, Guirardel and Osin work in a class of geodesic spaces.
	Nevertheless, following the proof of \cite[Proposition~5.31]{Dahmani:2017ef} one observes that the statement also holds for length spaces.
	Moreover one gets the following \emph{quantitative} statement.
	Assume that the action of $G$ on $X$ is $(100\delta, L, N)$-acylindrical, then for every $d > 0$ the action is $(d, L(d), N(d))$-acylindrical where
	\begin{eqnarray*}
		L(d) & = &  L + 4d + 100\delta, \\
		N(d) & = & \left(\frac d{5\delta} + 3 \right)N.
	\end{eqnarray*}
\end{rema*}

Assume now that the action of $G$ on $X$ is $(100\delta, L, N)$-acylindrical.
There exist parameters $C_{\rm inj}(\delta, L,N)$, $C_\nu(\delta, L, N)$ and $C_A(\delta, L, N)$, which only depend on $\delta$, $L$ and $N$, that control the various invariants defined in \autoref{sec: invariants}.
More precisely
\begin{enumerate}
	\item $\inj[X]G \geq C_{\rm inj}(\delta, L,N) > 0$, \cite[Lemma~3.9]{Coulon:2019ac}
	\item $\nu(G,X) \leq C_\nu(\delta, L, N)$ \cite[Lemmas~6.12]{Coulon:2016if}
	\item $A(G,X,400\delta) \leq C_A(\delta, L, N)$, \cite[Lemma~6.14]{Coulon:2016if}.
\end{enumerate}

Recall that given a group $\mathbf E$, its \emph{holomorph} is the semi-direct product $\hol{\mathbf E} = \aut{\mathbf E}\ltimes\mathbf E$.
If $\boldsymbol{\mathcal E}_0$ stands for a collection of groups, we let
\begin{equation*}
	\hol{\boldsymbol{\mathcal E}_0}
	= \set{\hol{\mathbf E}}{\mathbf E \in \boldsymbol{\mathcal E}_0}.
\end{equation*}

\begin{theo}
\label{res: partial periodic quotient - acyl}
	Let $\delta, L, r \in \R_+^*$ and $N \in \N$.
	Let $\boldsymbol{\mathcal E}_0$ be a finite collection of finite groups.
	We write $\mu$ for the exponent of $\hol{\boldsymbol{\mathcal E}_0}$.
	There exist $\nu_1,N_1 \in \N$ such that for every integer $n \geq N_1$ which is a multiple of $2^{\nu_1}\mu$ the following holds.
	
	Let $G$ be a group acting by isometries on a $\delta$-hyperbolic length space $X$.
	We assume that this action is $(100\delta, L, N)$-acylindrical and non-elementary.
	In addition we suppose that every finite subgroup of $G$ with dihedral shape is isomorphic to a group of $\boldsymbol{\mathcal E}_0$.
	Then there exists a quotient $Q$ of $G$ with the following properties.
	\begin{enumerate}
		\item \label{enu: partial periodic quotient - acyl - non loxo}
		For every elliptic subgroup $F$ of $G$, the projection $G \onto Q$ induces an embedding from $F$ into $Q$.
		\item \label{enu: partial periodic quotient - acyl - torsion}
		For every $q \in Q$, either $q^n = 1$ or $q$ is the image of an elliptic element of $G$.
		\item \label{enu: partial periodic quotient - acyl - max}
		The projection $G \onto G/G^n$ induces an epimorphism $Q \onto G/G^n$.
		\item \label{enu: partial periodic quotient - acyl - inj on small balls}
		For every $x \in X$, the map $G \to Q$ is one-to-one when restricted to 
		\begin{equation*}
			\set{g \in G}{\dist {gx}x < r}.
		\end{equation*}
		\item \label{enu: partial periodic quotient - acyl - infinite}
		There are infinitely many elements of $Q$ which are not the image of an elliptic element of $G$.
		\item \label{enu: partial periodic quotient - acyl - kernel}
		The kernel $K$ of $G \onto Q$ is purely loxodromic.
		As a normal subgroup $K$ is not finitely generated.
	\end{enumerate}
\end{theo}

\begin{proof}
	We are going to apply \autoref{res: partial periodic quotient - gal}.
	To that end we let we denote by $M$ the cardinality of the biggest group in $\boldsymbol{\mathcal E}_0$.
	Up to replacing $r$ by a largest value, we can assume that 
	\begin{equation*}
		r \geq  C_A(\delta, L, N)
		\quad \text{and} \quad
		\frac 1r \leq  C_{\rm inj}(\delta, L,N).
	\end{equation*}
	Recall that $\mu$ is the exponent of $\hol{\boldsymbol{\mathcal E}_0}$.
	We now set 
	\begin{equation*}
		\nu = \max\{C_\nu(\delta, L, N), M+1\}\quad \text{and} \quad 
		\nu_1 = \max\{\nu +2 , \mu + 6\}.
	\end{equation*}
	and denote by $N_1$ the critical exponent given by \autoref{res: partial periodic quotient - gal}.
	Let $n \geq N_1$ be an integer divisible by $2^{\nu_1}\mu$.
	
	Let $G$ be a group acting on a $\delta$-hyperbolic length space $X$ as in the theorem.
	It follows from our previous discussion that $A(G, X,400\delta) \leq r$, $\inj[X]G \geq 1/r$ and $\nu(G,X) \leq \nu$.
	Let us prove that $\nu_{\rm{stg}}(G,X) \leq \nu$.
	Let $g,h \in G$ and $m \geq \nu$.
	For every $k \in \N$, we write $g_k = h^kgh^{-k}$.
	Suppose that $E = \group{g_0, \dots, g_m}$ is elementary.
	Assume first that $h$ is loxodromic.
	According to our choice of $\nu$, we have $m \geq \nu(G,X)$, thus the elements $g$ and $h$ generate an elementary subgroup of $G$.
	Note that this group is necessarily loxodromic, hence has dihedral shape.
	Assume now that $E_0 = \group{g_0, \dots, g_{m-1}}$ is a dihedral germ.
	Since the action of $G$ is acylindrical, every loxodromic subgroup of $G$ is virtually cyclic.
	Hence $E_0$ is finite.
	As a dihedral germ it also has dihedral shape.
	Consequently $E_0$ is isomorphic to a group in $\boldsymbol{\mathcal E}_0$ hence contains at most $M$ element.
	Since $m \geq M+1$, there exist $i,j \in \intvald 0{m-1}$ with $i< j$ such that $g_i = g_j$.
	In particular, $g_{j-i}=g_0$.
	It follows that $h$ normalizes $\group{g_0, \dots, g_{m-1}}$.
	Hence the subgroup generated by $g$ and $h$ is elementary.
	More precisely, it is a cyclic extension of the dihedral germ $\group{g_0, \dots, g_{m-1}}$, thus it has dihedral shape.
	This proves that $\nu \geq \nu_{\rm{stg}(G,X)}$ and completes the proof of our claim.
	
	We now build an appropriate model collection $\boldsymbol{\mathcal E}$.
	Let $(E,C)$ be a dihedral pair where $E$ is infinite.
	Since $C$ is finite, it fits into the following short exact sequence 
	\begin{equation*}
		1 \to C \to E \to \mathbf L \to 1,
	\end{equation*}
	where $\mathbf L$ is either $\cyclic$ or $\dihedral$.
	We choose an element $g \in E$ whose image in $\mathbf L$ generates the maximal infinite cyclic subgroup of $\mathbf L$.
	Recall that $\mu$ is the exponent of $\hol{\boldsymbol{\mathcal E}_0}$.
	We claim that $\group{g^\mu}$ is a normal subgroup of $E$.
	Let $u \in E$.
	There exist $\epsilon \in \{\pm 1\}$ and $c \in C$ such that $ugu^{-1} = g^\epsilon c$.
	The value of $\epsilon$ depends whether the image of $u$ in $\mathbf L$ is a reflection or not.
	Since $g$ normalizes $C$ its action by conjugation on $C$ induces an automorphism of $C$ that we denote by $\phi$.
	One checks that for every $p \in \N$,
	\begin{equation*}
		ug^pu^{-1} = (g^\epsilon c)^p = g^{\epsilon p}\phi^{\epsilon(p-1)}(c)\cdots \phi^{\epsilon}(c)c.
	\end{equation*}
	However in the holomorph $\hol C$, whose exponent divides $\mu$, we have
	\begin{equation*}
		1 = (\phi^\epsilon,c)^\mu = (\phi^{\epsilon\mu}, \phi^{\epsilon(\mu-1)}(c)\cdots \phi^{\epsilon}(c)c).
	\end{equation*}
	Hence $ug^\mu u^{-1} = g^{\epsilon \mu}$.
	Consequently $\group{g^\mu}$ is a normal subgroup of $E$ as we announced.
	Note also that the exponent of $E/\group{g^\mu}$ divides $\mu$.
	Moreover $E$ embeds in $E/C \times E/ \group{g^\mu}$.

	We denote by $\boldsymbol{\mathcal E}_1$ the collection of quotients $E/\group{g^\mu}$ obtained as above, where $(E,C)$ runs over all dihedral pairs with $E$ infinite.
	In addition we let $\boldsymbol{\mathcal E} = \boldsymbol{\mathcal E}_0 \cup \boldsymbol{\mathcal E}_1$.
	It follows from the construction that the exponent of $\boldsymbol{\mathcal E}$ divides $\mu$.
	Moreover, for every dihedral pair $(E,C)$ there exists $\mathbf E \in \boldsymbol{\mathcal E}$ such that $E$ embeds in $E/C \times \mathbf E$.
	All the assumptions of \autoref{res: partial periodic quotient - gal} are satisfied and the result follows.
\end{proof}

\paragraph{Free Burnside groups and periodic groups.}

\begin{theo}
\label{res: free burnside groups even}
	Let $r \geq 2$.
	There exists $N_1 \in \N$ such that for every integer $n \geq N_1$ that is a multiple of $128$, the free Burnside group $\burn rn$ is not finitely presented and therefore infinite.
	Moreover if $n_2$ stands for the largest power of $2$ dividing $n$, then every finite subgroup of $\burn rn$ embeds in $\dihedral[n] \times \dihedral[n_2]^k$ for some $k \in \N$.
\end{theo}

\begin{proof}
	Let $X$ be the Cayley graph of the free group $\free r$ of rank $r$.
	It is $\delta$-hyperbolic with $\delta \in \R_+^*$ such that $400\delta < 1$.
	It follows that $A(\free r, X,400\delta) = 0$. 
	Moreover $\inj[X]{\free r} \geq 1$.
	Every dihedral germ is trivial. 
	Hence $\nu_{\rm{stg}}(G,X) = 1$.
	We choose for $\boldsymbol{\mathcal E}$ the class of group reduced to the trivial one. 
	Its exponent $\mu = \mu(\boldsymbol{\mathcal E})$ is $1$.
	Every subgroup with dihedral shape is trivial or infinite cyclic.
	Hence the assumption of \autoref{res: partial periodic quotient - gal} are fulfilled.
	Note that $\nu_1 = \max \{ \nu_{\rm{stg}}(G,X)+2, \mu +6\} = 7$.
	Hence there exists $N_1 \in \N$ such that for every $n \geq N_1$ that is a multiple of $128$, the group $\burn rn$ is not finitely presented, hence infinite.
	We now prove the second assertion.
	Let 
	\begin{equation*}
		G_0 = \free r \onto G_1 \onto G_2 \onto \dots \onto G_k \onto G_{k+1} \onto \dots
	\end{equation*}
	be the sequence of of groups produced in the proof of \autoref{res: partial periodic quotient - gal}, whose direct limit is $\burn rn$.
	Let $F$ be a finite subgroup of $\burn rn$.
	By construction there exist $\ell \in \N$ and an elliptic subgroup $F_\ell$ of $G_\ell$ such that $G_\ell \onto \burn rn$ maps $F_\ell$ onto $F$.
	We assume that $\ell$ is the smallest for this property.
	Recall that each map $G_k \onto G_{k +1}$ is one-to-one when restricted to an elliptic subgroup.
	Hence $F_\ell$ is actually isomorphic to $F$.
	According to our choice of $\ell$, the subgroup $F_\ell$ cannot be lifted.
	Therefore there exists $v$ in the vertex set $\mathcal V_\ell$ of $X_\ell$ such that $F_\ell$ is contained in $\stab v$.
	It follows from \autoref{res : existence of central half-turn} that $F_\ell$ embeds in $\dihedral[n] \times \dihedral[n_2]^k$ for some $k \in \N$.
\end{proof}

\begin{rema*}
	Although we have not written the details here, a careful reader can follow the induction in the proof of \autoref{res: partial periodic quotient - gal} to show, just as in Ivanov~\cite{Ivanov:1994kh} or Lysenok~\cite{Lysenok:1996kw}, that the word and the conjugacy problems are solvable in $\burn rn$.
	With the previous notations, the solution of the word problem is based on the following observation.
	Given an element $g \in \free r$, there exists $k \in \N$, \emph{which only depends on the word length} of $g$, such that $g$ is trivial in $\burn rn$ if and only if so is it in $G_k$.
	Hence it suffices to run the solution of the word problem in the hyperbolic group $G_k$.
	The conjugacy problem can be solved as follows.
	Given $g_1, g_2 \in \free r$, there exists $k \in \N$, \emph{which only depends on the word length} of $g_1$ and $g_2$, such that both $g_1$ and $g_2$ are elliptic in $G_k$.
	A proof by induction based on \autoref{res: lifting conj elliptic} shows that $g_1$ and $g_2$ are conjugated in $\burn rn$ if and only if so are they in $G_k$.
	Hence it suffices to run the solution of the conjugacy problem in $G_k$.
\end{rema*}

\begin{coro}[Compare with Ivanov~\cite{Ivanov:1994kh} and Lysenok~\cite{Lysenok:1996kw}]
\label{res: free burnside groups all}
	Let $r \geq 2$.
	There exists $N_1 \in \N$, such that for every integer $n \geq N_1$, the group $\burn rn$ is infinite.
\end{coro}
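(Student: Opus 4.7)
The plan is to combine Theorem~\ref{res: free burnside groups even}, which handles exponents divisible by $128$, with the Novikov--Adian theorem \cite{Novikov:1968tp}, which handles odd exponents, via the standard divisibility reduction mentioned in the \emph{Critical exponent} paragraph of the introduction. The key elementary fact is that whenever $m$ divides $n$, every $n$-th power $x^n = (x^{n/m})^m$ is an $m$-th power, so the normal subgroup of $\free r$ generated by all $n$-th powers is contained in the one generated by all $m$-th powers. Consequently $\burn rn$ surjects onto $\burn rm$, so infiniteness of $\burn rm$ forces infiniteness of $\burn rn$.

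First I would invoke Novikov--Adian: there exists an integer $N_{\mathrm{odd}} \in \N$ such that for every odd integer $m \geq N_{\mathrm{odd}}$, the group $\burn rm$ is infinite. Next I would invoke Theorem~\ref{res: free burnside groups even} to obtain a bound $N_{\mathrm{ev}} \in \N$ such that for every integer $n \geq N_{\mathrm{ev}}$ which is a multiple of $128$, the group $\burn rn$ is infinite.

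Now set
\begin{equation*}
    N_1 = \max\left\{\, N_{\mathrm{ev}},\ 2^{7} N_{\mathrm{odd}} \,\right\}.
\end{equation*}
Any integer $n \geq N_1$ admits a unique decomposition $n = 2^{a} m$ with $a \in \N$ and $m$ odd. If $a \geq 7$, then $128$ divides $n$ and $n \geq N_{\mathrm{ev}}$, so $\burn rn$ is infinite by Theorem~\ref{res: free burnside groups even}. Otherwise $a \leq 6$ and $m = n/2^{a} \geq n/64 \geq 2 N_{\mathrm{odd}} > N_{\mathrm{odd}}$, so $\burn rm$ is infinite by Novikov--Adian; since $m$ divides $n$, the surjection $\burn rn \onto \burn rm$ yields that $\burn rn$ is infinite as well.

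There is no real obstacle in the argument: the only nontrivial inputs are the two infiniteness theorems for odd and $128$-divisible exponents, both of which are already established. The proof therefore reduces to the elementary divisibility observation and the simple case split on the $2$-adic valuation of $n$.
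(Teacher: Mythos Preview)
Your proof is correct and follows essentially the same approach as the paper: both use the surjection $\burn rn \onto \burn rm$ for $m \mid n$ together with the known infiniteness for sufficiently large odd exponents, and your explicit case split on the $2$-adic valuation is just a more detailed version of the paper's one-line reduction.
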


\begin{proof}
	Recall that free Burnside groups of sufficiently large odd exponents are infinite, see for instance \cite{Delzant:2008tu,Coulon:2014fr} for a geometric proof.
	Hence it suffices to observe that given two integers $p,n \in \N$, the group $\burn r{pn}$ maps onto $\burn rn$.
\end{proof}

\begin{theo}[Compare with Ol'shanki\u\i-Ivanov \cite{Ivanov:1996va}]
	Let $G$ be a non-elementary hyperbolic group.
	There exist $p, N_1 \in \N$ such that for every integer $n \geq N_1$ that is a multiple of $p$, the quotient $G / G^n$ is infinite.
	Moreover 
	\begin{equation*}
		\bigcap_{n \geq 1} G^n =\{1\}.
	\end{equation*}

\end{theo}

\begin{proof}
	Let $\boldsymbol {\mathcal E}_0$ be the collection of all isomorphism classes of finite subgroups of $G$.
	Since $G$ is hyperbolic, $\boldsymbol {\mathcal E}_0$ is finite.
	We write $\mu$ for its exponent. 
	Let $X$ be the Cayley graph of $G$ relative to some finite generating set.
	The action of $G$ on this hyperbolic space is acylindrical	hence the assumptions of \autoref{res: partial periodic quotient - acyl} holds.
	Let $\nu_1,N_1 \in \N$ be the parameters given by \autoref{res: partial periodic quotient - acyl} and set $p = 2^{\nu_1}\mu$.
	Observe that the order of every elliptic element of $G$ divides $\mu$, hence $p$.
	Assume now that $n\geq N_1$ is a multiple of $p$.
	According to \autoref{res: partial periodic quotient - acyl} there exists an infinite quotient $Q$ of $G$ such that the projection $G \onto G/G^n$ induces a map $Q \onto G/G^n$.
	Moreover every element $q \in Q$ satisfies the following dichotomy.
	Either $q^n = 1$, or $q$ is the image of an elliptic element of $G$.
	In the latter case we have $q^p = 1$, and thus $q^n = 1$.
	In other words $Q$ is a quotient of $G/G^n$, hence $Q$ is isomorphic to $G/G^n$ which is therefore infinite.
	
	According to \autoref{res: partial periodic quotient - acyl} the quotient map $G \to G/G^n$ can be made one-to-one on arbitrarily large balls by enlarging the value of $n$.
	Hence the intersection of all the subgroups $G^n$, when $n$ runs over $\N\setminus\{0\}$ is trivial.
\end{proof}

\begin{rema*}
	As for free Burnside groups, one can give a precise description of the finite subgroups of $G/G^n$, provided $n$ is sufficiently large and divisible by $p$.
	One can also prove that the word and the conjugacy problem are solvable in these periodic quotients.
\end{rema*}

\paragraph{Relatively hyperbolic groups.}
Since Gromov's original paper \cite{Gromov:1987tk}, several different definitions of relatively hyperbolic groups have emerged, see for instance \cite{Bowditch:2012ga,Farb:1998wz}. 
These definitions have been shown to be almost equivalent \cite{Bowditch:2012ga,Szczepanski:1998fo,Hruska:2010iw}.
For our purpose we will use the following one.

\begin{defi}[{\cite[Definition 3.3]{Hruska:2010iw}}]
\label{def: relatively hyperbolic}
	Let $G$ be a group and $\{ P_1, \dots, P_m\}$ be a collection of subgroups of $G$.
	We say that $G$ is \emph{hyperbolic relative to} $\{P_1, \dots, P_m\}$ if there exist a proper geodesic $\delta$-hyperbolic space $X$ and a collection $\mathcal Y$ of pairwise disjoint open horoballs satisfying the following properties.
	\begin{enumerate}
		\item $G$ acts properly by isometries on $X$ and $\mathcal Y$ is $G$-invariant.
		\item If $U$ stands for the union of the horoballs of $\mathcal Y$ then $G$ acts co-compactly on $X \setminus U$.
		\item $\{P_1, \dots, P_m\}$ is a set of representatives of the $G$-orbits of $\set{\stab Y}{Y \in \mathcal Y}$.
	\end{enumerate}
\end{defi}

The action of $G$ on the space $X$ given by \autoref{def: relatively hyperbolic} is not acylindrical.
Indeed the subgroups $P_j$ can be parabolic. 
This cannot happen with an acylindrical action \cite[Lemma 2.2]{Bowditch:2008bj}.
More generally, the elementary subgroups of $G$ are exactly the virtually cyclic subgroups of $G$ and the ones which are conjugated to a subgroup of some $P_j$.
As in the case of groups with an acylindrical action, one can prove that $\inj[X]G$ is positive whereas $\nu(G,X)$ and $A(G,X,400\delta)$ are finite.
Proceeding as in \autoref{res: partial periodic quotient - acyl} we get the following result.
\begin{theo}
\label{res: SC - partial periodic quotient - rel hyp case}
	Let $G$ be a group and $\{P_1, \dots, P_m\}$ be a collection of subgroups of $G$ such that $G$ is hyperbolic relatively to $\{P_1, \dots, P_m\}$.
	Assume that there are only finitely many isomorphism classes of finite subgroups with dihedral shape.
	There exist $p,N_1\in \N$ such that every integer $n \geq N_1$ multiple of $p$, there exists a quotient $Q$ of $G$ with the following properties.
	\begin{enumerate}
		\item if $E$ is a finite subgroup of $G$ or conjugated to some $P_j$, then the projection $G \onto Q$ induces an isomorphism from $E$ onto its image;
		\item for every element $g \in Q$, either $g^n=1$ or $g$ is the image a non-loxodromic element of $G$;
		\item there are infinitely many elements in $Q$ which do not belong to the image of an elementary non-loxodromic subgroup of $G$.
	\end{enumerate}
\end{theo}

\begin{rema}
	Another possible strategy is to consider the action of $G$ on its coned-off Cayley $\Gamma$ -- see Bowditch \cite{Bowditch:2012ga} -- which has the following properties : the elliptic subgroup for the action of $G$ on $\Gamma$ are precisely the elliptic and parabolic subgroups for the action of $G$ on $X$; the action of $G$ on $\Gamma$ is acylindrical -- see Osin \cite{Osin:2016gv}.
\end{rema}

\paragraph{Mapping class groups.}
Let $\Sigma$ be a compact surface of genus $g$ with $k$ boundary components.
In the rest of this paragraph we assume that its complexity $3g+k -3$ is larger than $1$.
The \emph{mapping class group} $\mcg \Sigma$ of $\Sigma$ is the group of orientation preserving self homeomorphisms of $\Sigma$ defined up to homotopy.
A mapping class $f \in \mcg \Sigma$ is
\begin{enumerate}
	\item \emph{periodic}, if it has finite order;
	\item \emph{reducible}, if it permutes a collection of essential non-peripheral curves (up to isotopy);
	\item \emph{pseudo-Anosov}, if there exists an homeomorphism in the class of $f$ that preserves a pair of transverse foliations and rescale these foliations in an appropriate way.
\end{enumerate}
It follows from Thurston's work that any element of $\mcg \Sigma$ falls into one these three categories \cite[Theorem 4]{Thurston:1988fa}.
The \emph{complex of curves} $X$ is a simplicial complex associated to $\Sigma$.
It has been first introduced by Harvey \cite{Harvey:1981tg}.
A $d$-simplex of $X$ is a collection of $d+1$ homotopy classes of curves of $\Sigma$ that can be disjointly realized.
Masur and Minsky proved that this new space is hyperbolic  \cite{Masur:1999hc}.
By construction, $X$ is endowed with an action by isometries of $\mcg \Sigma$.
Moreover Bowditch showed that this action is acylindrical \cite[Theorem 1.3]{Bowditch:2008bj}.
This action provides an other characterization of the elements of $\mcg \Sigma$.
An element of $\mcg \Sigma$ is periodic or reducible (\resp pseudo-Anosov) if and only it is elliptic (\resp loxodromic) for the action on the complex of curves \cite{Masur:1999hc}.
Recall that $\mcg \Sigma$ contains only finitely many conjugacy classes of finite subgroups \cite[Theorem~7.14]{Farb:2012ws}.
Hence the next statement is a direct application of \autoref{res: partial periodic quotient - acyl}.

\begin{theo}
\label{res: partial periodic quotient mcg}	
	Let $\Sigma$ be a compact surface of genus $g$ with $k$ boundary components such that $3g +k - 3 >1$.
	There exist $p,N_1 \in \N$ such that for every integer $n \geq N_1$ which is a multiple of $p$, there exists a quotient $Q$ of $\mcg \Sigma$ with the following properties.
	\begin{enumerate}
		\item If $E$ is a subgroup of $\mcg \Sigma$ that does not contain a pseudo-Anosov element, then the projection $\mcg \Sigma \onto Q$ induces an isomorphism from $E$ onto its image.
		\item Let $f$ be a pseudo-Anosov element of $\mcg \Sigma$.
		Either $f^n = 1$ in $Q$ or $f$ coincide in $Q$ with a periodic or a reducible element.
		\item There are infinitely many elements in $Q$ which are not the image of a periodic or reducible element of $\mcg \Sigma$.
		Any non-trivial element in the kernel of $\mcg \Sigma \onto Q$ is pseudo-Anosov.
	\end{enumerate}
\end{theo}

\paragraph{Amalgamated product.}
Let $G$ be a group.
A subgroup $H$ of $G$ is \emph{malnormal} if for every $g \in G$, we have $gHg^{-1} \cap H = \{1\}$ unless $g$ belongs to $H$.

\begin{theo}
\label{res: quotient amalgamated products}
	Let $A$ and $B$ be two groups.
	Let $C$ be a subgroup of $A$ and $B$ malnormal in $A$ or $B$.
	Assume that there exists $M \in \N$ such that every subgroup of $A$ (\resp $B$) that is isomorphic to the extension of a $2$-group by finite cyclic or dihedral group contains at most $M$ elements.
	There exist $p,N_1$ such that for every integer $n \geq N_1$ which is a multiple of $p$, there exists a quotient $Q$ of $A*_CB$ with the following properties.
	\begin{enumerate}
		\item The natural projection $A*_CB \onto Q$ induces an embedding of $A$ and $B$ into $Q$.
		\item For every $g \in Q$, if $g$ is not a conjugate of an element of $A$ or $B$ then $g^n=1$.
		\item There are infinitely many elements in $Q$ which are not conjugate of elements of $A$ or $B$.
	\end{enumerate}
\end{theo}

\begin{proof}
	We denote by $X$ the Bass-Serre tree associated to the amalgamated product $G= A*_CB$ \cite{Serre:1977wy}.
	As $C$ is malnormal in $A$ or $B$, the action of $G$ on $X$ is acylindrical.
	Moreover every loxodromic subgroup is either $\Z$ or $\dihedral$.
	Hence every dihedral germ is necessarily a $2$-group.
	Consequently, a finite group with dihedral shape is isomorphic to the extension of a $2$-group by a finite cyclic or dihedral group.
	Being finite, such a group is contained in a conjugate of $A$ or $B$.
	Thus is follows from our assumption that $G$ admits only finitely many isomorphism classes of finite subgroups with dihedral shape.
	The conclusion follows from \autoref{res: partial periodic quotient - acyl}.
\end{proof}

\bigskip 

\noindent
\emph{R\'emi Coulon} \\
Univ Rennes, CNRS \\
IRMAR - UMR 6625 \\
F-35000 Rennes, France\\
\texttt{remi.coulon@univ-rennes1.fr} \\
\texttt{http://rcoulon.perso.math.cnrs.fr}

%\makebiblio

\todos
\end{document}